\numberwithin{equation}{section}
\newtheorem{theorem}{Theorem}[section]
\newtheorem{corollary}[theorem]{Corollary}
\newtheorem{lemma}[theorem]{Lemma}
\newtheorem{proposition}[theorem]{Proposition}
\theoremstyle{definition}
\newtheorem{remark}[theorem]{Remark}
\newtheorem{definition}[theorem]{Definition}
\newtheorem{example}[theorem]{Example}
\DeclareMathOperator{\arcsinh}{arcsinh}
\newcommand{\C}{\mathbb{C}}
\newcommand{\D}{\mathbb{D}}
\newcommand{\B}{\mathbb{B}}
\newcommand{\N}{\mathbb{N}}
\newcommand{\R}{\mathbb{R}}
\newcommand{\id}{\mathrm{id}}
\renewcommand{\H}{\mathbb{H}}
\renewcommand{\Im}{{\operatorname{Im}\,}}
\renewcommand{\Re}{{\operatorname{Re}\,}}
\begin{document}
	
	\title{The Julia-Wolff-Carath\'eodory theorem in convex finite type domains}
	
	\author{Leandro Arosio$^1$ and Matteo Fiacchi$^2$}
	
	\address{Leandro Arosio,
		Dipartimento Di Matematica, Universit\`a di Roma \lq\lq Tor Vergata\rq\rq, Via Della Ricerca Scientifica 1, 00133 Roma, Italy}
	\email{arosio@mat.uniroma2.it}
\address{Matteo Fiacchi,
		Dipartimento Di Matematica, Universit\`a di Roma \lq\lq Tor Vergata\rq\rq, Via Della Ricerca Scientifica 1, 00133 Roma, Italy}
	\email{fiacchi@mat.uniroma2.it}
	
	\thanks{\textit{2020 Mathematics Subject Classification.}  Primary: 32A40. Secondary: 32H40, 32F18.}
	
	\thanks{\textit{Key words and phrases:} convex domain, finite type, boundary behavior of holomorphic maps}
	
	\thanks{${}^1$  Partially supported by   INdAM, by  PRIN {\sl  Real and Complex Manifolds: Geometry and Holomorphic Dynamics} n. 2022AP8HZ9, and by the MUR Excellence Department Project MatMod@TOV
		CUP:E83C23000330006}
	
	\thanks{${}^2$ Partially supported by the European Union (ERC Advanced grant HPDR, 101053085 to Franc Forstneri\v c), the research program P1-0291 from ARIS, Republic of Slovenia, and by the MUR Excellence Department Project MatMod@TOV CUP:E83C23000330006}
	\begin{abstract}
Rudin's version of the classical   Julia-Wolff-Carath\'eodory theorem is a cornerstone of holomorphic function theory in the unit ball of $\C^d$. 
In this paper we obtain a complete generalization of Rudin's  theorem  for a holomorphic map $f\colon D\to D'$ between   convex domains of finite type. In particular, given a point   $\xi\in \partial D$ with finite dilation we show that 
the $K$-limit  of $f$ at $\xi$ exists and is a point $\eta\in \partial D'$,
and we obtain asymptotic estimates for all entries of the Jacobian matrix of the differential $df_z$ in terms of the multitypes at the points  $\xi$ and at $\eta$. We  introduce a generalization  of Bracci-Patrizio-Trapani's pluricomplex Poisson kernel which, together with the dilation at $\xi$, gives a formula for the restricted $K$-limit of the normal component of the normal derivative
$\langle df_z(n_\xi),n_\eta\rangle$.
Our principal tools are methods from Gromov hyperbolicity theory,  a scaling in the normal direction, and the strong asymptoticity of complex geodesics.
To obtain our main result we prove a conjecture by Abate on the Kobayashi type of a vector $v$, proving that it is equal to the reciprocal of the line type of $v$, and we give new extrinsic characterizations of both  $K$-convergence and restricted  convergence to a point $\xi\in \partial D$ in terms of the multitype at $\xi$.
	\end{abstract}
	
	\maketitle
	\tableofcontents

	\section{Introduction}
	
	The Julia--Wolff--Carath\'eodory theorem    is a classical result in the theory of one complex variable collecting  results obtained  by the three authors in the decade 1920-1930 \cite{Julia,Wolff,Carath}.   The starting point is what is nowadays known as the Julia lemma, which is a boundary version of the Schwarz lemma. Let $\D\subset \C$ be the unit disc, let $\xi\in \partial \D$, and let $f\colon \D\to\D$ be a holomorphic map.    Assume that, as $z\to \xi$,  the image $f(z)$ goes to  the boundary $\partial \D$ at least as fast as  $z$, meaning that the {\sl dilation}
	$$\lambda_\xi:=\liminf_{z\to\xi}\frac{1-|f(z)|}{1-|z|}$$ is finite\footnote{Notice that the dilation is never 0.}.  The Julia lemma says that every horosphere of radius $R>0$ centered at $\xi$ (which is the open disc of Euclidean radius $R/(R+1)$ internally tangent to $\xi$) is mapped by $f$ in a horosphere of radius $\lambda_\xi R$ centered at a point $\eta\in \partial \D$, and as an immediate consequence $f$ has non-tangential limit 
	$$\angle\lim_{z\to\xi} f(z)=\eta,$$
	that is, $f$ has limit $\eta$ along any sequence converging to $\xi$ inside a cone with vertex $\xi$ and aperture $<\pi$.
	One could say that the dilation is playing the role of the absolute value of the derivative of $f$ at $\xi$, even if $f$ is in general  not even continuous at $\xi$.
	The  Julia--Wolff--Carath\'eodory theorem shows that this intuition is correct.
	\begin{theorem}[Julia--Wolff--Carath\'eodory]\label{JWC1d} Let $f\colon\D\to\D$ be a holomorphic self-map and let $\xi\in\partial\D$ be a point with finite dilation $\lambda$. Then there exists  $\eta\in\partial\D$ such that 
		$$\angle\lim_{z\to\xi} f(z)=\eta,\quad \angle\lim_{z\to\xi} f'(z)=\lambda_\xi\eta\bar{\xi}.$$
	\end{theorem}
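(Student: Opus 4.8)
The plan is to reduce the multivariable-looking statement to the genuinely one-dimensional situation and then run the classical Schwarz–Pick machinery. First I would normalize: compose with automorphisms of $\D$ so that $\xi = 1$ and $\eta = 1$, replacing $f$ by $z \mapsto \overline{\eta}\, f(\xi z)$; this turns the claimed limit $\lambda_\xi \eta \bar\xi$ into simply $\lambda$ and lets me work at the single boundary point $1$. The key analytic device is the Julia lemma already quoted in the introduction: finiteness of the dilation $\lambda$ forces each horodisc $E(1,R) = \{|1-z|^2 < R(1-|z|^2)\}$ to be carried into $E(1,\lambda R)$, i.e.
$$\frac{|1-f(z)|^2}{1-|f(z)|^2} \le \lambda\,\frac{|1-z|^2}{1-|z|^2}, \qquad z \in \D.$$
From this inequality the non-tangential limit $\angle\lim_{z\to 1} f(z) = 1$ is immediate, because inside a Stolz angle at $1$ the ratio $|1-z|^2/(1-|z|^2)$ stays bounded, so $f(z)$ lies in a fixed horodisc, hence approaches $1$.

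Next I would handle the derivative. The natural auxiliary function is
$$g(z) = \frac{1 - f(z)}{1 - z},$$
which is holomorphic on $\D$; the Julia inequality rearranges to $\Re\, g(z) \ge \tfrac{1}{\lambda}\,\Re\!\big(\tfrac{1}{1-z}\big)\cdot|1-z|^{2}/|1-z|^{2}$ — more precisely it shows $g$ has positive real part bounded below in terms of $\lambda$, so that $1/g$, or rather an appropriate Cayley-type transform of $g$, maps $\D$ into a half-plane. The crucial point is then a Lindelöf-type principle: a bounded holomorphic function (here $g$, after one checks it is bounded on Stolz angles via the horodisc estimate) that has a radial limit at $1$ automatically has the same non-tangential limit. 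So I would first compute the radial limit: using $\liminf_{r\to 1^-}\frac{1-|f(r)|}{1-r} = \lambda$ (one must argue the liminf along the radius equals the global dilation after the normalization, which is where the choice of $\eta$ enters) together with the horodisc containment, one shows $\lim_{r\to 1^-} g(r) = \lambda$. Then Lindelöf upgrades this to $\angle\lim_{z\to 1} g(z) = \lambda$.

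Finally I would pass from $g$ to $f'$. Writing $f(z) = 1 - (1-z)g(z)$ and differentiating gives $f'(z) = g(z) - (1-z)g'(z)$, so it suffices to show $\angle\lim_{z\to 1}(1-z)g'(z) = 0$. This is the step I expect to be the main obstacle: controlling $g'$ requires more than boundedness of $g$. The standard route is a Cauchy-estimate / normal-families argument: for $z$ in a Stolz angle, $g$ is bounded on a disc of radius comparable to $1-|z|$ around $z$, so $|g'(z)| = O(1/(1-|z|))$, which only gives $(1-z)g'(z) = O(1)$, not $o(1)$. To get the decay one uses that $g$ converges non-tangentially, so on shrinking discs $g$ is close to the constant $\lambda$; quantifying this — e.g. by applying the Schwarz lemma to $\frac{g-\lambda}{\text{(something)}}$ on these discs, or by the classical trick of writing $g = \lambda + (1-z)h(z)$ with $h$ of slow growth and iterating — yields the $o(1)$ bound. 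With $\angle\lim g = \lambda$ and $\angle\lim (1-z)g' = 0$ in hand, $\angle\lim f' = \lambda$, which after undoing the normalization is exactly $\lambda_\xi \eta\bar\xi$, completing the proof.
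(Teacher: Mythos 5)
The paper does not prove this statement—it is quoted in the introduction as the classical Julia--Wolff--Carath\'eodory theorem with references to Julia, Wolff and Carath\'eodory (and later to Abate's book)—so there is no in-paper argument to compare against. Your sketch is the standard classical proof (Julia's lemma for the existence of $\eta$ and the horodisc inequality; the auxiliary function $g(z)=(1-f(z))/(1-z)$, which is bounded on Stolz angles and whose radial limit is shown to be $\lambda$ by combining the global $\liminf$ with the upper bound $|g(r)|^2\le \lambda\,\frac{1-|f(r)|^2}{1-r^2}$ and the lower bound $\Re g(r)\ge \frac{1-|f(r)|}{1-r}$; Lindel\"of to upgrade to a non-tangential limit; and Cauchy estimates applied to $g-\lambda$ on discs of radius comparable to $1-|z|$ to get $(1-z)g'(z)\to 0$), and each of the steps you flag as needing detail can be completed exactly as you indicate. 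The one inaccuracy is the parenthetical claim that $g$ itself has positive real part or maps into a half-plane—what Julia's inequality actually yields is $\Re\frac{1+f(z)}{1-f(z)}\ge\frac{1}{\lambda}\Re\frac{1+z}{1-z}$, and the quotient of two functions with positive real part need not have positive real part—but since you never use that remark, the argument stands.
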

	
	In his book on function theory in the unit ball \cite{Rudin}, Rudin proved in 1980 a remarkable generalization of this theorem for holomorphic maps $f\colon \B^d\to \B^q$. There are two main issues which make this setting radically different from the case of the unit disc. 
	The first  (obvious) one, is that instead of dealing with just a derivative, we have now a whole Jacobian matrix to consider, and as we will see the asymptotics of different entries will be different.
	
	The second issue, more subtle, is that in $\B^d$ there are two relevant generalizations of the concept of non-tangential convergence to a boundary point, and both are  different   from the concept one would at first expect, that is, convergence inside  a  cone with vertex  $\xi$ and aperture $<\pi$. 
	Indeed, as first remarked in 1969 by Kor\'anyi  \cite{Kor} (see also  Stein \cite{KorSte,Stein})
	in his work on  Fatou's theorem in several complex variables, such definition of convergence is not natural and too restrictive in the unit ball, ultimately due to fact that the Kobayashi distance $k_{\B^d}$ of the ball behaves differently in the complex tangential directions w.r.t. the complex normal direction,  in other words it is {\sl anisotropic}.
	Instead, Kor\'anyi introduced a natural concept of convergence to $\xi$: a sequence  $K$-converges to $\xi$ if it is contained in a $K$-region with vertex  $\xi$.  
	Such regions have an anisotropic shape: they look like cones near $\xi$ if intersected with the complex normal line, but they are tangent to the boundary $\partial \B^d$ in the complex tangential directions. $K$-convergence of a sequence $(z_n)$  can be simply characterized by looking at the normal and complex tangent components of $z_n-\xi$, which we denote $N(z_n)$ and $T(z_n)$ respectively.
	Indeed $z_n \stackrel{K}\to \xi$ iff 
	\begin{equation}\label{cf1}
		N(z_n)=O(\delta_{\B^d}(z_n)),\quad T(z_n)=O(\delta_{\B^d}(z_n)^{\frac{1}{2}}),
	\end{equation}
	where $\delta_{\B^d}$ denotes the distance from the boundary (see Remark \ref{restrictedball} for a discussion about this characterization).
	The Julia lemma easily generalizes to this setting: if a point $\xi$ has finite dilation then there exists a point $\eta\in \partial \B^q$ such that $f(z_n)\to\eta$ for every sequence $z_n \stackrel{K}\to \xi$, that is, $f$ has $K$-limit $\eta$ at $\xi$.

	The second relevant notion of convergence is slightly more restrictive but still more general than convergence inside cones. It stems out of the Lindel\"of principle, which is a main ingredient in the proof of the Julia--Wolff--Carath\'eodory theorem. The classical Lindel\"of principle in one variable states that a bounded holomorphic function   defined on $\D$ which admits limit along a continuous curve with endpoint $\xi\in \partial \D$ actually admits non-tangential limit at $\xi$.
	Surprisingly enough, the $K$-limit is not the right generalization of the non-tangential limit to consider for a Lindel\"of principle in the ball. There are indeed bounded holomorphic functions defined on $\B^d$ which have limit on the radial segment with endpoint $\xi\in \partial \B^d$ but do not have $K$-limit at $\xi$, a simple example with $\xi=(1,0)$ is given by
	\begin{equation}\label{rudinexample}
		f(z,w)=\frac{w^2}{1-z^2},
	\end{equation}
	since $$f(t,\lambda\sqrt{1-t^2})=\lambda^2,\quad t\in [0,1),\lambda\in \D.$$
	However \v{C}irka \cite{cirka} proved that if $f$ admits limit along some ``special'' curve (e.g. the radial segment) with endpoint $\xi$, then
	there exists $L\in \C$ such that
	$f(z_n)\to L$ for every sequence  $(z_n)$ which $K'$-converge\footnote{Such sequences are also called {\sl restricted}.} to $\xi$, that is every sequence $(z_n)$ such that
	\begin{equation}\label{cf2}
		N(z_n)=O(\delta_{\B^d}(z_n)),\quad T(z_n)=o(\delta_{\B^d}(z_n)^{\frac{1}{2}}).
	\end{equation}
	One  says that $h$ has  $K'$-limit $L$ at $\xi$.
	It is easy to see that the existence of the $K$-limit implies the existence of the $K'$-limit, which in turn implies the existence of the  non-tangential limit defined with cones.

	Given two functions $f,g\colon \B^d\to \C$, with $g$ never zero, we write 
	$f=O_K(g)$  if the quotient $f/g$ is bounded on every $K$-region with vertex $\xi$, and we write $f=o_{K'}(g)$ if  $f/g$ has
	$K'$-limit 0 at $\xi$. We are now in condition to state Rudin's generalization of the Julia--Wolff--Carath\'eodory theorem, describing the asymptotics of all entries of the Jacobian matrix of $f$ and of its Jacobian determinant.
	
	\begin{theorem}[Rudin \cite{Rudin}]
		Let $f\colon\B^d\to\B^q$ be a holomorphic map and let  $\xi\in\partial\B^d$ such that
		$$\lambda_\xi:=\liminf_{z\to\xi}\frac{1-\|f(z)\|}{1-\|z\|}<+\infty.$$
		Then there exists $\eta\in \partial \B^q$ such that $f$ has $K$-limit $\eta$ at $\xi$. Moreover, if $\xi, v_1,\dots, v_{d-1}$ and $\eta,u_1,\dots, u_{q-1}$ are orthonormal  bases  of $\C^d$ and $\C^q$  respectively, then
		\begin{itemize}
			\item[(i)] $\langle df_z(\xi),\eta\rangle=O_K(1)$  and has  $K'$-limit $\lambda_\xi$ at $\xi$;
			\item[(ii)]  $\langle df_z(v_j),\eta \rangle$ is both $O_K(\delta_{\B^d}(z)^{\frac{1}{2}})$ and   $o_{K'}(\delta_{\B^d}(z)^{\frac{1}{2}})$ for all $j$;
			\item[(iii)]  $\langle df_z(\xi),u_i\rangle$ is both $O_K(\delta_{\B^d}(z)^{-\frac{1}{2}})$ and   $o_{K'}(\delta_{\B^d}(z)^{-\frac{1}{2}})$ for all $i$; 
			\item[(iv)]  $\langle df_z(v_j),u_i \rangle=O_K(1)$ for all $i,j$.
		\end{itemize}
		Finally if $d=q$, then $\det(df_z)=O_K(1).$
	\end{theorem}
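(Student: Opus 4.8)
The plan is to follow Rudin's classical argument: establish a boundary Julia lemma in $\B^d$ to produce $\eta$ and the $K$-limit, reduce part (i) to Theorem~\ref{JWC1d} on a complex geodesic together with \v{C}irka's Lindel\"of principle, obtain (ii)--(iv) from Cauchy estimates on disks whose radii encode the anisotropy of the ball, and deduce the determinant bound by multiplying the estimates. Composing with unitary automorphisms of $\B^d$ and $\B^q$ we may assume $\xi=e_1$ and, once $\eta$ is produced, $\eta=e_1$ as well. Choose $z_n\to\xi$ realizing the $\liminf$ defining $\lambda_\xi$; along a subsequence $f(z_n)\to\eta$, and $\lambda_\xi<\infty$ forces $\eta\in\partial\B^q$. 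Feeding the convergence of $(z_n)$ into the Schwarz--Pick inequality for the Kobayashi distance $k_{\B^d}$ (whose explicit formula is classical) yields the horosphere inclusion
\[
f\bigl(E(\xi,R)\bigr)\subseteq E(\eta,\lambda_\xi R)\quad(R>0),\qquad E(\zeta,R):=\bigl\{w:\ |1-\langle w,\zeta\rangle|^2<R(1-\|w\|^2)\bigr\}.
\]
This shows $\eta$ is independent of the chosen subsequence, and since every $K$-region with vertex $\xi$ lies in some $E(\xi,R)$ while $\overline{E(\eta,\lambda_\xi R)}$ meets $\partial\B^q$ only at $\eta$, we get $f\stackrel{K}{\to}\eta$ at $\xi$.

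\emph{The scalar function and part (i).} Set $h:=\langle f,\eta\rangle\colon\B^d\to\D$ and $\psi:=(1-h)/(1-\langle\,\cdot\,,\xi\rangle)$. Taking in the horosphere inclusion the critical radius $R=|1-\langle z,\xi\rangle|^2/(1-\|z\|^2)$ and using $1-\|f(z)\|^2\le 2\,|1-h(z)|$ gives $|1-h(z)|\le 2\lambda_\xi\,|1-\langle z,\xi\rangle|^2/(1-\|z\|^2)$, which on a $K$-region is $\le C\,|1-\langle z,\xi\rangle|$; hence $\psi=O_K(1)$, $1-h=O_K(\delta_{\B^d})$, and $\langle f,u_i\rangle=O_K(\delta_{\B^d}^{1/2})$ (because $\sum_i|\langle f,u_i\rangle|^2=\|f\|^2-|h|^2\le 2|1-h|$). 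Restricting to the geodesic $\zeta\mapsto\zeta\xi$, the self-map $g(\zeta):=h(\zeta\xi)$ of $\D$ has dilation $\lambda_\xi$ at $1$ (again from the horosphere inclusion), so Theorem~\ref{JWC1d} gives $g'(\zeta)\to\lambda_\xi$ and $(1-g(\zeta))/(1-\zeta)\to\lambda_\xi$ as $\zeta\to1$; thus $\psi\to\lambda_\xi$ along the radius, and \v{C}irka's Lindel\"of principle upgrades this to: $\psi$ has $K'$-limit $\lambda_\xi$ at $\xi$. Finally, writing $\langle df_z(\xi),\eta\rangle=\partial_\xi h(z)$ (the derivative of $h$ in the direction $\xi$) as a Cauchy integral of $h$ over a circle of radius $\asymp|1-\langle z,\xi\rangle|$ in the $\xi$-direction and inserting the bounds on $\psi$, we obtain $\langle df_z(\xi),\eta\rangle=O_K(1)$ with $K'$-limit $\lambda_\xi$, which is (i).

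\emph{Parts (ii)--(iv) and the determinant.} Each directional derivative is controlled by the Cauchy estimate $|\partial_v F(z)|\le r^{-1}\sup_{|\zeta|<r}|F(z+\zeta v)|$ on a disk $\{z+\zeta v:|\zeta|<r\}$ that stays in a fixed $K$-region, with $r\asymp\delta_{\B^d}(z)$ when $v=\xi$ and $r\asymp\delta_{\B^d}(z)^{1/2}$ when $v$ is tangential --- the quantitative form of the anisotropy. Taking $F=1-h=O_K(\delta_{\B^d})$ and $v=v_j$ gives the $O_K$ bound of (ii) --- equivalently, since $v_j\perp\xi$ one has the identity $\langle df_z(v_j),\eta\rangle=-(1-\langle z,\xi\rangle)\,\partial_{v_j}\psi(z)$ with $1-\langle z,\xi\rangle=O_K(\delta_{\B^d})$ and $\partial_{v_j}\psi=O_K(\delta_{\B^d}^{-1/2})$; taking $F=\langle f,u_i\rangle=O_K(\delta_{\B^d}^{1/2})$ and $v=\xi$ gives the $O_K$ bound of (iii); and taking the same $F$ with $v=v_j$ gives (iv) (which also follows from a one-variable Schwarz--Pick estimate, as $z\mapsto\langle f(z),u_i\rangle$ maps every complex line of $\B^d$ into $\D$). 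The ``$o_{K'}$'' refinements in (ii) and (iii) follow, as in Rudin, by a more careful use of \v{C}irka's Lindel\"of principle: the relevant quotients are $O_K(1)$, so it suffices to show they tend to $0$ along the radius, and for (ii) this comes from $\psi$ having a finite $K'$-limit (a \emph{constant}), so that its tangential derivatives, suitably rescaled, vanish in the restricted limit --- Rudin's example \eqref{rudinexample} shows the exponents are sharp. For the determinant with $d=q$, write the matrix $M$ of $df_z$ in the orthonormal bases $(\xi,v_1,\dots,v_{d-1})$ and $(\eta,u_1,\dots,u_{d-1})$: by (i)--(iv) its $(1,1)$ entry is $O_K(1)$, the rest of the first row is $O_K(\delta_{\B^d}^{1/2})$, the rest of the first column is $O_K(\delta_{\B^d}^{-1/2})$, and the lower-right $(d-1)\times(d-1)$ block is $O_K(1)$. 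In the Leibniz expansion, a permutation fixing the index $1$ contributes a product of $d$ entries each $O_K(1)$, whereas a permutation moving $1$ uses exactly one off-diagonal entry of the first row and one of the first column, whose factors $\delta_{\B^d}^{1/2}$ and $\delta_{\B^d}^{-1/2}$ cancel, the rest being $O_K(1)$; hence $\det M=O_K(1)$, and since the change to the standard basis is unitary, $\det(df_z)=O_K(1)$.

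\emph{Main obstacle.} The algebra above is routine once two nontrivial inputs are available: \v{C}irka's Lindel\"of principle in $\B^d$ (the quotients have no a priori reason to possess genuine boundary limits, only radial ones, so the passage to $K'$-limits is the real point), and the correct identification of the anisotropic scales --- $\delta_{\B^d}$ in the normal direction, $\delta_{\B^d}^{1/2}$ in the tangential directions --- which is exactly what makes the Cauchy estimates land on the precise exponents in (i)--(iv) and, in particular, makes the ``$o_{K'}$'' refinements come out.
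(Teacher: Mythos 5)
This theorem is quoted in the paper as classical background from Rudin's book and is not proved there, so there is no internal proof to compare against; what you have written is a sketch of Rudin's own argument. The strategy is correct and standard: the explicit horosphere inclusion (Julia's lemma) produces $\eta$ and the $K$-limit; part (i) reduces to Theorem \ref{JWC1d} on the radius plus \v{C}irka's Lindel\"of principle applied to the $K$-bounded quotient $\psi=(1-\langle f,\eta\rangle)/(1-\langle\cdot,\xi\rangle)$; parts (ii)--(iv) come from Cauchy estimates on disks of radius $\asymp\delta_{\B^d}(z)$ in the normal direction and $\asymp\delta_{\B^d}(z)^{1/2}$ in the tangential directions; and your Leibniz-expansion cancellation for the determinant is exactly how the $O_K(1)$ bound is obtained. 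This is also the template the paper generalizes in Theorem \ref{genJWC}, with the exponent $\tfrac12$ replaced by reciprocals of multitype entries and the Cauchy estimates run along complex geodesics.

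The one step that is genuinely missing is the $o_{K'}$ refinement in (iii). You substantiate the refinement in (ii) via the identity $\langle df_z(v_j),\eta\rangle=-(1-\langle z,\xi\rangle)\,\partial_{v_j}\psi$ and the lemma on rescaled derivatives of a function with a finite $K'$-limit; but the analogous treatment of (iii) requires first upgrading $\langle f(z),u_i\rangle=O_K(\delta_{\B^d}(z)^{1/2})$ to $o_{K'}(\delta_{\B^d}(z)^{1/2})$, and this does not follow from anything you wrote. It is a separate use of the Julia inequality: one shows that both $\frac{1-|\langle f(z),\eta\rangle|^2}{|1-\langle z,\xi\rangle|}$ and $\frac{1-\|f(z)\|^2}{|1-\langle z,\xi\rangle|}$ have restricted limit $2\lambda_\xi$, so their difference, which equals $\sum_i|\langle f(z),u_i\rangle|^2/|1-\langle z,\xi\rangle|$ up to the sign of the terms, tends to $0$ restrictedly (this is the content of Proposition \ref{fmenopifn}(ii) in the paper's general setting). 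Without that input, the Cauchy estimate on a normal disk applied to $\langle f,u_i\rangle$ only returns $O_K(\delta_{\B^d}(z)^{-1/2})$, never the small-$o$ statement. A minor gloss of the same kind: you assert that $g(\zeta)=\langle f(\zeta\xi),\eta\rangle$ has dilation exactly $\lambda_\xi$ at $1$; the horosphere inclusion only gives $\le\lambda_\xi$, and the reverse inequality needs $1-|g|\ge 1-\|f\|$ together with the definition of $\lambda_\xi$ as a liminf over all of $\B^d$. Both points are repairable within your framework, so I would classify this as an essentially correct reproduction of the classical proof with one substantive step left unproved.
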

	Notice that the vectors $(v_j)_{j=1}^{d-1}$ (resp. $(u_i)_{i=1}^{q-1}$) span the complex tangent space $T_\xi^\C\partial \B^d$ (resp. $T_\eta^\C\partial \B^q$).
	Rudin also shows that the exponents $\frac{1}{2}$ and $-\frac{1}{2}$ appearing in his result are sharp. One may   wonder about their geometrical significance, and we will give an answer to this question below.

	It is natural to try to generalize this result to other bounded domains in several complex variables.
	In  1990  Abate  \cite{Abatelindelof, AbateSymp}   generalized   
	Rudin's  theorem to the case of a holomorphic map $f\colon D\to D'$ between bounded strongly convex domains with $C^3$ boundary.
	A crucial point of his approach is the following: instead of working with objects defined in Euclidean terms, he redefines the dilation, horospheres, $K$-regions and $K'$-convergence in terms of the Kobayashi distance $k_D$, so as to fully exploit the fact that $f$ does not expand the Kobayashi distance.
	As an example, the dilation $\lambda_{\xi,p,p'}$ at a point $\xi\in \partial D$ is defined by
	\begin{equation}\label{defdil}\log\lambda_{\xi,p,p'}=\liminf_{z\to\xi}k_D(p,z)-k_{D'}(p',f(z)),
	\end{equation}
	where $p\in D,p'\in D'$ are given base-points.
	Notice that, if $f\colon\B^d\to \B^q$ is a holomorphic map and $\xi\in \partial \B^d$, then a  simple computation shows 
	\begin{equation}\label{dilatazionekob}
		\lambda_\xi:=\liminf_{z\to \xi}\frac{1-\|f(z)\|}{1-\|z\|}={\rm exp}\left(\liminf_{z\to\xi}{k_{\B^d}}(z,0)-k_{\B^q}(f(z),0)\right)=\lambda_{\xi,0,0}.
	\end{equation}

	Another important tool is Lempert's theory of complex geodesics, which for example yields the existence of well-defined horospheres in strongly convex domains with $C^3$ boundary.
	Other domains of $\C^d$ have been considered, such as polydiscs \cite{Abatepoly} and strongly pseudoconvex domains \cite{AbateSymp,BST}. A Julia-Wolff-Carath\'eodory  theorem was recently obtained  \cite{BrSho,AbateRaissy} for infinitesimal generators of holomorphic semigroups in the unit ball, see also \cite{Raissy}. 
	
	We are interested in the case of convex domains of finite type. 
	The D'Angelo finite type condition \cite{DAngelopaper} plays a capital role in several complex variables, due to its equivalence to the existence of subelliptic estimates for the $\bar{\partial}$-Neumann problem \cite{Kohn, Diederich-Fornaess, Catlin}.
	For a  convex domain with $C^\infty$ boundary, McNeal \cite{mcneal} (see also Boas-Straube \cite{BS}) showed that the D'Angelo finite type condition is equivalent to the finite line type condition, which we now recall.
	If $\xi\in \partial D$ and $v$ is a nonzero vector in $\C^d$, then the {\sl line type} $m_\xi(v)$ of $v$ is the order of vanishing of the defining function of $D$ along the complex affine line $\zeta\mapsto\xi+\zeta v$.
	The {\sl line type} of $\xi$ is defined as $\sup_{v\in \C^d\setminus\{0\}}m_\xi(v)$. 
	The domain $ D$ has finite line type if  there exists $L\geq 2$ such that the line type of  all $\xi\in\partial D$ is  $\leq L$.
	Every vector in $\C^d\setminus T^\C_\xi\partial D$ has line type 1, while every nonzero complex tangent vector   has line type $\geq 2$.
	In strongly convex domains every nonzero complex tangent vector has line type 2. In this paper we consider only the line type.
	
	A partial Julia--Wolff--Carath\'eodory theorem  in this setting was obtained by Abate-Tauraso \cite{AbTau} in 2002. They  studied the case of a holomorphic function $f\colon D\to \D$ defined on a bounded convex domain of finite type and with values in the unit disc. Their result needs  technical assumptions:  $D$ is assumed to be a convex domain of finite type  with $C^\infty$ boundary which is also strictly linearly convex, meaning that the complex tangent $T_\xi^\C \partial D$ intersects the boundary $\partial D$ only at the point $\xi$.   They  need to assume the existence of a complex geodesic $\varphi\colon \D\to D$ with $\varphi(1)=\xi$ such that the radial limit $\lim_{t\to 1}\varphi'(t)$ exists finitely. Notice that even in the  ``egg'' domain $\mathbb{E}_m:=\{z\in\C^2:|z_0|^2+|z_1|^{m}<1\}\subset \C^2$  
	there are complex geodesics with $\varphi(1)=(1,0)$ such that  $\|\varphi'(t)\|$ explodes as $\R\ni t\to 1^-$, see Example \ref{egggeodesics}.
	Finally their result is expressed in terms of regions which are smaller than the $K$-regions (which they call $T$-regions).
	Their result highlights however a very interesting phenomenon: as $z\to \xi$ the asymptotic behavior of the partial derivative $\frac{\partial f}{\partial v}(z)$ with $v\in \C^d\setminus\{0\}$ is controlled by the asymptotic behavior of the Kobayashi--Royden metric $\kappa_D(z,v)$, that is, if $s>0$ is such that
	$\kappa_D(z,v)=O_K(\delta_D(z)^{-s}),$ then $$\frac{\partial f}{\partial v}(z)=O_K(\delta_D(z)^{1-s}).$$
	Define the {\sl Kobayashi type} at $s_\xi(v)$ of a nonzero vector $v\in \C^d$ as the infimum of the set
	$$\{s>0: \kappa_D(z,v)=O_K(\delta_D(z)^{-s})\}.$$
	Abate--Tauraso \cite{AbTau} show that every vector in $\C^d\setminus T^\C_\xi\partial D$ has Kobayashi  type 1,  and that for all nonzero complex tangent vector $v$ one has 
	$\frac{1}{L}\leq s_\xi(v)\leq 1-\frac{1}{L},$ where $L$ is the line type of the point $\xi$.
	In \cite{Absurv} Abate conjectures that  the Kobayashi type equals the reciprocal of the line type\footnote{Abate considers the D'Angelo type in the direction $v$ but by a result of Boas--Straube \cite{BS} this coincides with the line type in the direction $v$} $m_\xi(v)$, and leaves as an open question whether the infimum is attained.
	
	In this paper we prove Abate's conjecture, we show that the infimum is attained, and  we give a complete generalization of Rudin's theorem for a holomorphic map $f\colon D\to D'$ between convex domains of finite line type, with minimal  regularity assumptions: we only require
	that there exists $L\geq 2$ such that $\partial D$ is $C^L$ and has line type $\leq L$ at all points of the boundary, and analogously for $D'$  (we actually need these assumptions only locally near $\xi$ and $\eta$ but for the sake of clarity we will ignore this here).
	As in Rudin's theorem we are able to describe the  asymptotics of all entries of the Jacobian matrix of $f$ and of its Jacobian determinant. Before stating our main result, we need to choose the right bases in which to write the Jacobian matrix.
	Given $\xi\in \partial D$, denote $n_\xi$ the outer normal versor of $\partial D$ at $\xi$. Set $v_0=n_\xi$ and complete it to an orthonormal  basis
	$(v_j)_{j=0}^{d-1}$ of $\C^d$ which respects the multitype flag introduced by Yu in \cite{Yu} (see Section \ref{scalingsect} for a detailed description of this construction).  
	For a point $\eta\in \partial D'$ we choose analogously an orthonormal basis $(u_i)_{i=0}^{q-1}$  of $\C^q$ with $u_0=n_\eta$.
	We denote the {\sl multitypes} at $\xi$ and $\eta$ as $(m_j)_{j=0}^{d-1}:=(m_\xi(v_j))$ 
	and $(n_i)_{i=0}^{q-1}:=(m_\eta(u_i))$. They coincide with the multitypes in the sense of Catlin if $\partial D$ is $C^\infty$ \cite{Yu}. 
	\begin{theorem}\label{maint}
		Let $D\subset \C^d$ and $D'\subset \C^q$ be bounded convex domains of finite type. Let $f\colon D\to D'$ be a holomorphic map.
		Assume that the dilation at $\xi$ is finite.
		Then there exists $\eta\in \partial D$ such that $f$ has $K$-limit $\eta$ at $\xi$, and  for all $i,j$,
		\begin{equation}\label{estimateimproves} \langle df_z(v_j),u_i \rangle=O_K\biggl(\delta_D(z)^{\frac{1}{n_i}-\frac{1}{m_j}}\biggr).
		\end{equation}
		Moreover
		\begin{itemize}
			\item[(i)] $\langle df_z(n_\xi),n_\eta\rangle$  has $K'$-limit $\alpha>0$ at $\xi$;
			\item[(ii)]  $\langle df_z(v_j),n_\eta \rangle=   o_{K'}\biggl(\delta_D(z)^{1-\frac{1}{m_j}}\biggr)$ for all\,  $1\leq j\leq d-1$;
			\item[(iii)]  $\langle df_z(n_\xi),u_i\rangle=o_{K'}\biggl(\delta_D(z)^{\frac{1}{n_i}-1}\biggr)$ for all\, $1\leq i\leq q-1$.
		\end{itemize}
		Finally if $d=q$, then $\det(df_z)=O_K\biggl(\delta_D(z)^{\sum_{j=0}^{d-1}\frac{1}{n_j}-\frac{1}{m_j}}\biggr).$
	\end{theorem}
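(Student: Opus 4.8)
The plan is to follow the classical architecture of Rudin and Abate — first a Julia-type lemma producing the boundary point $\eta$, then estimates on the differential obtained from the distance-decreasing property of $f$, then a Lindel\"of-type principle upgrading limits along curves to restricted limits — but carried out with the three tools advertised in the introduction: Gromov hyperbolicity, the scaling in the normal direction, and the strong asymptoticity of complex geodesics. \emph{Step 1 (Julia's lemma).} Since $D$ and $D'$ are complete Kobayashi hyperbolic and Gromov hyperbolic, the finite dilation hypothesis forces $f$ to map small horospheres centered at $\xi$ into horospheres centered at a well-defined point $\eta\in\partial D'$; this yields both that $f$ has $K$-limit $\eta$ at $\xi$ and the comparison $\delta_{D'}(f(z))\asymp\delta_D(z)$, valid on every $K$-region with vertex $\xi$. (I would extract this from the material of the earlier sections.)

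\emph{Step 2 (the $O_K$ estimates \mref{estimateimproves}).} Here I would use $\kappa_{D'}(f(z),df_z(v_j))\le\kappa_D(z,v_j)$ together with two anisotropic metric estimates. On the source, Abate's conjecture — proven in this paper, with the infimum attained — gives the Kobayashi type bound $\kappa_D(z,v_j)=O_K(\delta_D(z)^{-1/m_j})$. On the target, McNeal's lower estimate for the Kobayashi--Royden metric of a convex finite type domain, read in the multitype-adapted basis $(u_i)$, gives $\kappa_{D'}(w,\zeta)\gtrsim\sum_i|\langle\zeta,u_i\rangle|\,\delta_{D'}(w)^{-1/n_i}$. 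Feeding $w=f(z)$, $\zeta=df_z(v_j)$ and using $\delta_{D'}(f(z))\asymp\delta_D(z)$ from Step 1 immediately yields $|\langle df_z(v_j),u_i\rangle|=O_K(\delta_D(z)^{1/n_i-1/m_j})$ for all $i,j$. In particular, taking $i=0$ (so $n_0=1$) gives the big-$O$ part of (ii), and taking $j=0$ (so $m_0=1$) the big-$O$ part of (iii).

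\emph{Step 3 (the restricted statements).} I would first prove a Lindel\"of principle for convex finite type domains: a bounded holomorphic function on $D$ admitting a limit along a special curve landing at $\xi$ (in particular along the trace of a complex geodesic) has a $K'$-limit there; the proof rests on the new extrinsic characterization of $K'$-convergence in terms of Yu's multitype flag ($N(z_n)=O(\delta_D(z_n))$ and the $j$-th flag component of $z_n-\xi$ equal to $o(\delta_D(z_n)^{1/m_j})$). Then, following Rudin's argument, I would apply this to the bounded function $g(z):=\langle f(z)-\eta,n_\eta\rangle$: from Step 1 its modulus is $O_K(\delta_D(z))$ and, along restricted sequences, the directional derivatives of $g$ transverse to the normal are of strictly smaller order than the big-$O$ bounds predict, which produces the $o_{K'}$ in (ii) and (iii). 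For (i), the decisive ingredient is a complex geodesic $\varphi\colon\D\to D$ with $\varphi(1)=\xi$: by strong asymptoticity (combined with the normal scaling) $\varphi(\zeta)-\xi$ is asymptotic to a positive multiple of $(1-\zeta)n_\xi$ as $\zeta\to1$, the composition $f\circ\varphi\colon\D\to D'$ has finite dilation at $1$, and a one-variable Julia--Wolff--Carath\'eodory analysis of $f\circ\varphi$ — together with the generalization of Bracci--Patrizio--Trapani's pluricomplex Poisson kernel, which packages the correction relating the dilation at $\xi$ to the value of $\langle df_z(n_\xi),n_\eta\rangle$ — identifies the limit $\alpha>0$ of $\langle df_z(n_\xi),n_\eta\rangle$ along $\varphi$; the Lindel\"of principle then promotes it to the asserted $K'$-limit.

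\emph{Step 4 (the Jacobian determinant, and the main difficulty).} For the last assertion, note that $df_z$ maps the unit Kobayashi--Royden indicatrix $I_z$ at $z$ into the one $I_{f(z)}$ at $f(z)$, so $|\det_\C df_z|^2=|\det_\R df_z|\le\mathrm{vol}(I_{f(z)})/\mathrm{vol}(I_z)$; McNeal's volume estimates give $\mathrm{vol}(I_z)\asymp\delta_D(z)^{2\sum_j 1/m_j}$ on $K$-regions and the analogous estimate in $D'$, and with $\delta_{D'}(f(z))\asymp\delta_D(z)$ this gives $\det(df_z)=O_K(\delta_D(z)^{\sum_j(1/n_j-1/m_j)})$. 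I expect the main obstacle to be Step 3(i): producing a complex geodesic landing at $\xi$ whose behavior is controlled strongly enough to be usable — delicate in finite type, since Example~\ref{egggeodesics} shows that $\|\varphi'\|$ may blow up along the radius — and extracting the precise constant $\alpha$ through the generalized pluricomplex Poisson kernel, all the while ensuring the Lindel\"of principle is robust enough to reach restricted, and not merely conical, approach.
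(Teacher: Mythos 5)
Your architecture matches the paper's (Julia lemma $\to$ anisotropic $O_K$ estimates $\to$ Lindel\"of principle $\to$ one-variable analysis along the normal), and you correctly isolate the normalized dilation via the pluricomplex Poisson kernel and the replacement of the geodesic by the normal segment for (i). Two of your steps take genuinely different routes. In Step 2 you want the pointwise inequality $\kappa_{D'}(f(z),df_z(v_j))\le\kappa_D(z,v_j)$ plus a component-wise lower bound $\kappa_{D'}(w,\zeta)\gtrsim\sum_i|\langle\zeta,u_i\rangle|\delta_{D'}(w)^{-1/n_i}$ in the \emph{fixed} multitype frame at $\eta$, uniformly on $K$-regions; that is not a direct quotation of McNeal, whose extremal bases are attached to interior points and vary with them, so this lower bound is itself a nontrivial lemma (provable by the scaling, but unproven here). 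The paper avoids it entirely: it proves only the source-side upper bound $\kappa_D(z,v_j)=O_K(\delta^{-1/m_j})$ (the Kobayashi-type theorem) and gets the target-side anisotropy from the extrinsic characterization of $K$-regions applied to $f(z)$, i.e.\ $\langle f(z)-\eta,u_i\rangle=O_K(\delta_D(z)^{1/n_i})$, feeding both into a Cauchy integral over a complex geodesic disc through $z$ in the direction $v_j$. Similarly, your indicatrix-volume argument for $\det df_z$ would work but again needs McNeal's volume estimates in the boundary frame; the paper simply expands the determinant using \eqref{estimateimproves}.

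The genuine gap is in Step 3, in two places. First, the $o_{K'}$ improvement in (ii) — that the tangential derivatives $\langle df_z(v_j),n_\eta\rangle$ beat the exponent $1-1/m_j$ by a little-$o$ — is asserted with no mechanism ("of strictly smaller order than the big-$O$ bounds predict"). This is the technical core of the whole theorem: the paper proves it by slicing $D$ with $\xi+{\rm span}_\C\{n_\xi,v_j\}$, inserting an egg domain $\mathbb{E}_{m_j}$ into the slice, and running an explicit Rudin-type computation (an auxiliary function $g(z_0)=\tfrac12\partial_{z_1}f(z_0,0)(1-z_0)^{s-1}$, the Schwarz--Pick lemma applied to a truncation $h$, and curves $\zeta_a(t)=t+ia(1-t)$ with $a\to\infty$). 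Nothing in your sketch produces this gain, and it does not follow from the Julia lemma or the Lindel\"of principle alone. Second, you propose to obtain (iii) from derivatives of $g(z)=\langle f(z)-\eta,n_\eta\rangle$, but (iii) concerns $\partial\langle f-\eta,u_i\rangle/\partial n_\xi$ for $i\ge1$, which is a normal derivative of the \emph{tangential} components of $f$, not of $g$; it requires separately that $\langle f(z)-\eta,u_i\rangle=o_{K'}(\delta^{1/n_i})$ (the restricted version of the extrinsic characterization, transported through $f$) and then a Cauchy-formula/dominated-convergence argument on small normal discs. As written, Step 3 would not close.
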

	Some remarks are in order.
	We prove the following formula for the $K'$-limit $\alpha$ of $\langle df_z(n_\xi),n_\eta\rangle$:
	$$\alpha=\lambda_{\xi,p,p'}\frac{\Omega^D_\xi(p)}{\Omega^{D'}_\eta(p')},$$ where   $\Omega^D_\xi(z)\colon D\to \R$ is
	a generalization of Bracci--Patrizio--Trapani's pluricomplex Poisson kernel in strongly convex domains \cite{BP,BPT}. We define
	$$\Omega^D_\xi(z):=-\frac{1}{\angle\lim_{\zeta\to 1}\langle \varphi'(\zeta),n_\xi\rangle},$$ where $\varphi$ is a complex geodesic such that $\varphi(0)=z$ and endpoint $\xi$.
	Notice that in general the non-tangential limit of $\varphi'$ does not necessarily exist finite, but we show that the
	non-tangential limit of its normal component does.  Moreover, it is not known whether such complex geodesic is unique, so we have to prove that this is well-defined.
	
	Our result gives an interpretation for the geometrical significance of the exponents $\frac{1}{2}$ and $-\frac{1}{2}$ in Rudin's theorem: they are obtained as $1-\frac{1}{2}$ and $\frac{1}{2}-1$ respectively, due to the fact that in the ball the type of all nonzero complex tangential vectors is 2. It is also interesting to notice that for convex domains of finite type the Jacobian $\det df_z$ and the entries  $\langle df_z(v_j),u_i \rangle$ with $i,j\geq 1$ are in general no longer $O_K(1)$, instead their behavior now depends on the multitypes at $\xi$ and $\eta$. As an example,  if the harmonic mean of the multitype at $\eta$ is strictly smaller than the harmonic mean of the multitype at $\xi$, then 
	$$K\textrm{-}\lim_{z\to \xi} \det df_z=0.$$

	Another point to stress is that the choice of the bases  $(v_j)$ and $(u_i)$   is essential in this result. One could indeed expect that for any nonzero vectors $v\in \C^d,u\in \C^q$ one has
	$$\langle df_z(v),u \rangle=O_K\biggl(\delta_D(z)^{\frac{1}{m_\eta(u)}-\frac{1}{m_\xi(v)}}\biggr),$$ but this is false in general (see Example \ref{exJWC}). 
	Instead it follows from our result that $$\langle df_z(v),u \rangle=O_K\biggl(\delta_D(z)^{\frac{1}{M_\eta(u)}-\frac{1}{m_\xi(v)}}\biggr),$$
	where $M_\eta(u)$ is a dual notion of type called {\sl cotype} that we introduce.
	
	Finally, as a consequence of the minimality of our regularity  assumptions, Theorem \ref{maint} extends to the case of $C^2$ boundary Abate's Julia--Wolff--Carath\'eodory theorem for strongly convex domains \cite{Abatelindelof}. Notice that in this case  Lempert's theory of complex geodesics is not available.
	
	Our tools consist principally in  methods from Gromov hyperbolic theory and 
	a scaling technique along the normal direction, and both are new in the context of the Julia--Wolff--Carath\'eodory theorem. Gromov hyperbolicity theory was introduced in several complex variables by Balogh-Bonk \cite{BaBo}. Interactions between the two fields have been recently flourishing \cite{GS,BZ,Zimsub,BGZ,BNT,Fia,ADAF}, we cite in particular Zimmer's proof of the Gromov hyperbolicity of convex domains of finite type endowed with the Kobayashi distance \cite{Zim}, and the proof of the Julia lemma for proper geodesic Gromov hyperbolic metric spaces (\cite{AFGG}, see also \cite{AFGK}).
	The scaling technique is taken from Gaussier \cite{Gauss}, and is weighted with the multitype at the boundary point. 
	This is used in \cite{AFGG} to prove strong asymptoticity of complex geodesics and the existence of horospheres in convex domains of finite type. Due to the fact that we assume finite type only locally around $\xi\in \partial D$,$\eta\in \partial D'$ we  need to prove local versions of these statements. The study of strong asymptoticity  also allows us to perform several key computations along the normal direction instead of along a complex geodesic, bringing considerable  simplification.
	We will use the scaling method in particular to prove Abate's conjecture in \cite{Absurv} (Theorem \ref{KtypeLtype}) and the following extrinsic characterization of both $K$-convergence and $K'$-convergence, which will be important in the proof of our main theorem. Let $D\subset\C^d$ be a bounded convex domain of finite type, $\xi\in\partial D$, and let be $(v_j)_{j=0}^{d-1}$ be a  basis of $\C^d$ chosen as in Theorem \ref{maint}.
	\begin{theorem}[see Theorem \ref{extrinsicspecial}]
		Let $(z_n)$ be a sequence in $D$ converging to $\xi$. Then 
		\begin{itemize}
			\item[(i)]$z_n\stackrel{K}\to \xi$  if and only if 
			$$
			\langle z_n-\xi,v_j\rangle=O\left(\delta_D(z_n)^{1/m_j}\right) ,\quad \forall\, 0\leq j\leq d-1 ;
			$$
			\item[(ii)]
			$z_n\stackrel{K'}\to \xi$ if and only if 
			$\langle z_n-\xi,n_\xi\rangle=O(\delta_D(z_n))$, and
			$$
			\langle z_n-\xi,v_j\rangle=o\left(\delta_D(z_n)^{1/m_j}\right),\quad \forall\, 1\leq j\leq d-1.
			$$
		\end{itemize}
		
	\end{theorem}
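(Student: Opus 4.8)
The plan is to reduce the statement to two analytic facts about $k_D$ near $\xi$ and then read off (i)--(ii) from Abate's intrinsic definitions. Fix a base point $o\in D$ and recall that $(z_n)$ $K$-converges to $\xi$ precisely when $z_n\to\xi$ and $\sup_n\bigl(\beta_\xi(z_n)+k_D(o,z_n)\bigr)<+\infty$, where $\beta_\xi(z)=\limsup_{w\to\xi}\bigl(k_D(z,w)-k_D(o,w)\bigr)$ is the horofunction at $\xi$ (equivalently, the tail of $(z_n)$ lies in a Korányi region), and that $(z_n)$ $K'$-converges when it is moreover \emph{special}: fixing a complex geodesic $\varphi\colon\D\to D$ with $\varphi(0)=o$, $\varphi(1)=\xi$ and its holomorphic retraction $\rho_\xi\colon D\to\varphi(\D)$, speciality is a condition, phrased through $\varphi$ and $\rho_\xi$, requiring $(z_n)$ to reach $\xi$ asymptotically along $\varphi$. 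By Zimmer's theorem \cite{Zim} that $(D,k_D)$ is Gromov hyperbolic, a Korányi region is, up to an additive constant, a $k_D$-neighborhood of a geodesic ray $\gamma_\xi$ from $o$ to $\xi$, and any two such rays stay at bounded distance; so it suffices to control (a) the quantity $\beta_\xi(z)+k_D(o,z)$ near $\xi$, and (b) the ray $\gamma_\xi$ and the special sequences, after rescaling.

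The core is the following estimate, which I would prove by the weighted scaling of \cite{Gauss} as developed in \cite{AFGG}. Work in Yu's normalizing coordinates at $\xi$ adapted to the multitype flag, so that $v_j$ has weight $1/m_j$, and apply the anisotropic dilation $\Lambda_\varepsilon\colon w_j\mapsto\varepsilon^{-1/m_j}w_j$. Suitably localized around $\xi$, the scaling results underlying strong asymptoticity give that $\Lambda_\varepsilon(D)$ converges, locally uniformly, to a convex model domain $D_\infty=\{\Re w_0+P(w')<0\}$ with $P$ weighted homogeneous of weight one, together with $k_{\Lambda_\varepsilon(D)}\to k_{D_\infty}$ on compacta; by convexity $\delta_D(z)\asymp-\Re w_0(z)-\widetilde P(z)$ near $\xi$, with $\widetilde P$ built from $P$. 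Taking $\varepsilon=\delta_D(z)$ transfers the behaviour of $k_D$ near $\xi$ to that of $k_{D_\infty}$ at $\Lambda_{\delta_D(z)}(z)$, and on $D_\infty$ the one-parameter automorphism group $(w_0,w')\mapsto(\lambda w_0,\lambda^{1/m_1}w_1,\dots,\lambda^{1/m_{d-1}}w_{d-1})$, $\lambda>0$, together with $\delta_{D_\infty}\asymp-\Re w_0-P(w')$, makes all the relevant quantities explicit; the exponents $1/m_j$ are exactly those supplied by Theorem \ref{KtypeLtype}. Assembling this, one should obtain a neighborhood $U$ of $\xi$ and a constant $C\ge1$ such that for all $z\in D\cap U$,
\[
\Bigl|\,\beta_\xi(z)+k_D(o,z)-\log\max_{0\le j\le d-1}\frac{|\langle z-\xi,v_j\rangle|}{\delta_D(z)^{1/m_j}}\,\Bigr|\le C,
\]
the $j=0$ term being present because $n_\xi$ has line type $1$.

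Granting this, (i) follows at once: $\sup_n\bigl(\beta_\xi(z_n)+k_D(o,z_n)\bigr)<+\infty$ iff $\max_j|\langle z_n-\xi,v_j\rangle|\,\delta_D(z_n)^{-1/m_j}$ is bounded, i.e.\ iff $\langle z_n-\xi,v_j\rangle=O(\delta_D(z_n)^{1/m_j})$ for every $0\le j\le d-1$. For (ii) one further translates speciality: by the (local) strong asymptoticity of complex geodesics the ray $\gamma_\xi$ --- equivalently the disc $\varphi(\D)$ --- rescales to the normal geodesic $\{(w_0,0):\Re w_0<0\}$ of $D_\infty$, so that speciality of $(z_n)$ becomes the requirement that the rescaled tangential coordinates $\delta_D(z_n)^{-1/m_j}\langle z_n-\xi,v_j\rangle$ tend to $0$ for $1\le j\le d-1$ while the rescaled normal coordinate remains bounded. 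Together with $K$-region membership --- which already encodes $\langle z_n-\xi,n_\xi\rangle=O(\delta_D(z_n))$ --- this is exactly the condition on the right of (ii); conversely those two conditions force $(z_n)$ into a Korányi region and make it special.

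The main obstacle is the displayed estimate. Three points demand care: since finite type is assumed only near $\xi$, one needs \emph{local} versions of both the scaling convergence and of strong asymptoticity; one must show that $\beta_\xi$, which a priori sees the global geometry of $D$, is near $\xi$ pinned down up to $O(1)$ by the horofunction of the model $D_\infty$ at its distinguished boundary point, which uses Gromov hyperbolicity together with the visibility and geodesic-stability of convex finite type domains; and one must check that $D_\infty$ itself is tame enough --- Kobayashi complete, with the stated homogeneous automorphism group and $\delta_{D_\infty}\asymp-\Re w_0-P(w')$ --- for the computations on it to be valid. Once this is in place, (i) and (ii) are bookkeeping against Abate's definitions.
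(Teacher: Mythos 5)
Your overall strategy (multitype scaling plus the comparability of $K$-regions with geodesic regions) is the same one the paper uses for Theorem \ref{extrinsicspecial}, but you route the entire argument through the displayed two-sided estimate
\[
\Bigl|\,\beta_\xi(z)+k_D(o,z)-\log\max_{0\le j\le d-1}|\langle z-\xi,v_j\rangle|\,\delta_D(z)^{-1/m_j}\Bigr|\le C ,
\]
which you never prove, and that is where the whole proof lives. The scaling procedure you describe (set $\varepsilon=\delta_D(z)$ and pass to the model) only produces convergence along sequences, so it yields qualitative statements of the form \lq\lq the rescaled points $A_nz^{(n)}$ stay in a compact subset of $D_H$ if and only if \dots\rq\rq; it does not by itself give a uniform additive-constant pinching of the horofunction valid for all $z$ near $\xi$. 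Such a pinching is a genuinely stronger, Balogh--Bonk-type statement, and it is not needed. A sign that the estimate has not actually been derived: already in $\B^d$ with $\xi=e_0$ one has $h_{\xi,0}(z)+k_{\B^d}(0,z)\sim 2\log\bigl(|1-z_0|/\delta_{\B^d}(z)\bigr)$, so your formula is off by a factor of $2$. The paper avoids all of this by arguing directly at the level of sequences: with $\lambda_n=\delta_D(z^{(n)})^{-1}$, boundedness of $(A_nz^{(n)})$ in $\C^d$ is shown to be equivalent to relative compactness in $D_H$ (via $\lambda_n r(A_n^{-1}\cdot)\to r_{D_H}$ and the bilipschitz comparison of $r$ with $\delta_D$), which in turn is equivalent to boundedness of $k_D(z^{(n)},\varphi(-\lambda_n^{-1}))$ by Proposition \ref{scaling1} and Corollary \ref{zimmerconvergence}(iii), i.e.\ to membership in a geodesic region (Proposition \ref{abatek}).

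For (ii) there is a second concrete gap. To identify the limit of $A_nz^{(n)}$ with a point of the normal slice $\{(\hat\zeta,0)\}$ of $D_H$, you must first show that the parameters $\zeta_n$ realizing $k_D(z_n,\varphi(\zeta_n))\to 0$ satisfy $c_1\le\lambda_n|\Re\zeta_n|\le C_1$; otherwise $A_n\varphi(\zeta_n)$ need not subconverge in $D_H$ and the limit of $A_nz^{(n)}$ cannot be computed. The paper obtains this normalization from Lemma \ref{stimekob} combined with Mercer's inequality $k_D(p,q)\ge|\log(\delta_D(p)/\delta_D(q))|$; your sketch asserts the conclusion (\lq\lq the rescaled tangential coordinates tend to $0$ while the normal one stays bounded\rq\rq) without this step. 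Until the pinching estimate is actually proved, or replaced by the sequence-based compactness argument, the proposal is a plan rather than a proof.
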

	Compare this with \eqref{cf1} and \eqref{cf2}.
	Once again, the choice of the basis $(v_j)$ is essential in this result.
	
	\medskip {\bf Acknowledgements.} We want to thank  the anonimous Referee for the careful reading of the paper and for many useful comments.
	
	\section{Preliminaries}
	
	We use the following notations:
	\begin{itemize}
		\item We denote by $(e_j)_{j=0}^{d-1}$ the canonical basis of $\C^d$.
		\item For $u,v\in\C^d$ we denote with $\langle u,v\rangle$ the standard Hermitian product of $\C^d$ and $\|u
		\|:=\sqrt{\langle u,u\rangle}$ the  standard Euclidean norm of $\C^d$.
		\item Let $\D:=\{\zeta\in\C:|\zeta|<1\}$ and $\B^d:=\{z\in\C^d:\|z\|<1\}.$ We denote by $\H:=\{\zeta\in\C:\Re\zeta<0\}$ the left half-plane. We will use the left half-plane instead of the more conventional right (or upper) half-plane because in this way  the normal outer vector at the origin is 1, which makes the computations cleaner.
		\item Denote $$\mathscr{C}(\zeta)=\frac{\zeta-1}{\zeta+1},\quad \mathscr{C}^{-1}(\zeta)=\frac{1+\zeta}{1-\zeta}.$$
		The biholomorphism  $\mathscr{C}\colon\D\to\H$ is the Cayley transform which sends $1\in\partial\D$ to $0\in\partial\H$.
		\item If $D\subsetneq\C^d$ is a domain and $p\in\C^d$ let  
		$$\delta_D(p):=\min\{\|p-\xi\|:\xi\in\partial D\}.$$
		\item If $D\subset \C^d$ is a Kobayashi hyperbolic domain, we denote by $k_D$ its Kobayashi distance and by $\kappa_D$ its Kobayashi--Royden metric. For all $z\in D$ and $r>0$, we denote 
		$$B_D(z,r):=\{w\in D:k_D(z,w)<r\}.$$
		\item By {\sl curve} in $\C^d$ we always mean a \emph{continuous} function $\gamma\colon I\to \C^d$ defined on an interval $I$.
	\end{itemize}

	We use the following normalizations (see e.g. Kobayashi \cite{kobayascione}) for the
	Poincar\'e metric and distance in the disc $\D\subset \C$:
	$$\kappa_\D(z,v)=\frac{2|v|}{1-|z|^2},\quad k_\D(0,z)=\log\frac{1+|z|}{1-|z|}= 2 \,{\rm arctanh}(|z|).$$
	As a consequence, if  $D\subset \C^d$ is a domain,  the Kobayashi--Royden 
	pseudometric is  defined as follows: if $z\in D$, $v\in \C^d$,
	\begin{equation}\label{kr} \kappa_D(z,v)=\inf \left\{\frac{2}{R}\,:\, R>0        , \exists \varphi\in \textrm{Hol}(\D,D), \varphi(0)=z,\, \varphi'(0)=Rv   \right\}.
	\end{equation}
	 An important  advantage of this  choice of normalization  is that, as a result, the definition of a central concept such as   the dilation of a map at a boundary point is given by the same exact formula \eqref{defdil} in both the complex case and in the metric non-expanding case. In other words, the dilation of a holomorphic map $f$ coincides with the dilation of $f$ considered  as a non-expanding map between metric spaces,  see also  \cite{AFGG,AFGK}.
	Some authors normalize differently the Poincar\'e metric  in the disc, dividing it by 2. As a consequence they define the  Kobayashi--Royden pseudometric $\kappa_D(z,v)$ as  the above formula \eqref{kr} divided by 2.  Similar considerations hold for the Kobayashi distance. 
	
	We now review some basic definitions for convex domains of $\C^d$.
	\begin{definition}[$\C$-proper]
		A convex domain  $D\subset \C^d$ is {\sl $\C$-proper} if $D$ does not contain a complex affine line. 
	\end{definition}
	\begin{remark}Harris \cite{Harris} and Barth \cite{Barth} (see also \cite{Bracci-Saracco}) showed that for a convex domain $D\subset \C^d$ the following are equivalent:
		\begin{enumerate}
			\item $D$ is $\C$-proper,
			\item $D$ is Kobayashi hyperbolic,
			\item $D$ is complete Kobayashi hyperbolic.
		\end{enumerate} 
	\end{remark}

	\begin{definition}[Line type]
		\label{Def_Type}
		Let $D\subseteq \C^d$ be a convex domain. Let $L\geq 2$,  and assume that $\partial D$ is of class $C^L$ in a neighborhood of a boundary point   $\xi\in\partial D$. Let
		$r$ be a defining function of class $C^L$ for $\partial D$ in a neighborhood of $\xi$.
		We say that the point $\xi$ has {\sl finite (line)  type} $L$ if:
		\begin{enumerate}
			\item for every	$v\in\C^d\setminus\{0\}$   the function
			$\zeta \mapsto r(\xi+\zeta v)$, defined on  $\C$, has order of vanishing 
			smaller than or equal to $L$  at $0$ (we denote such number as $m_\xi(v)$ and we call it  the {\sl type} of $v$ at $\xi$), and
			\item there exists $v\in \C^{d}\setminus\{0\}$ with type $m_\xi(v)=L$.
		\end{enumerate}
		Notice that the type $m_\xi(v)$ of a vector is necessarily an even number.
		
		We say that a point $\xi\in \partial D$ has {\sl locally  finite  (line) type} if there exist $L\geq 2$ and a neighborhood $U$ of $\xi$ such that $\partial D\cap U$ is of class $C^L$ and every point in $\partial D\cap U$ has line type at most $L$.
		Finally, we say that the convex domain $D$ has {\sl finite (line) type} if there exists  $L\geq 2$ such that the boundary $\partial D$ is of class $C^L$ and every point has line type at most $L$.
	\end{definition}
	
	\begin{remark}
		At a strongly convex boundary point $\xi\in \partial D$ the line type is $2$. The same is true at a strongly pseudoconvex boundary point of $D$. 
		If  $\partial D$ is $C^\infty$  then 
		McNeal \cite{mcneal} proved that $\xi$  has finite line type $L$ if and only if it has finite D'Angelo type $L$.
		It then follows from \cite[Theorem 2 p. 131]{DAngelo} that if  $\partial D$ is $C^\infty$ then $\xi$ has finite line type if and only if it has locally finite line type.
	\end{remark}
	In this paper we consider only the line type, hence we will  call it type \emph{tout-court}.

	\section{Gromov hyperbolicity methods}
	
	Zimmer \cite{Zim} proved that if $D\subset\C^d$ is a  bounded convex domain of finite type, then the metric space $(D,k_D)$ is Gromov hyperbolic and its Gromov compactification is canonically homeomorphic to $\overline D$.  Gromov hyperbolicity is a coarse notion of negative curvature that has numerous applications in complex geometry (see for example  \cite{GS,BZ,Zimsub,BGZ,BNT,Fia,ADAF,AFGG,AFGK}).
	In this section we review the metric properties of bounded convex domains of finite type inherited from  Gromov hyperbolicity, and then we show how to adapt  them to our setting (that is, near a point of locally finite type of a $\C$-proper convex domain) via a localization result.

	\begin{definition}(Geodesics, almost-geodesics, quasi-geodesics)
		Let $D\subset \C^d$ be a Kobayashi hyperbolic domain. 
		A {\sl  geodesic} is a map $\gamma$ from an interval $I\subset \R$ to $D$ which is an isometry with respect to the Euclidean distance on $I$ and the Kobayashi distance on $D$, that is for all $s,t\in I$, $$k_D(\gamma(s),\gamma(t))=|t-s|.$$
		If the interval is $\R_{\geq 0}$ (resp. closed and bounded) we call $\gamma$ a {\sl geodesic ray}  (resp. {\sl geodesic segment}).
		
		We say that a map $\gamma\colon \R_{\geq 0}\to D$ is  an {\sl almost-geodesic ray} if for all $\varepsilon>0$ there exists $T_\varepsilon>0$ such that for all $s,t\geq T_\varepsilon$,
		$$|t-s|-\varepsilon\leq k_{D}(\gamma(s),\gamma(t))\leq |t-s|.$$
		
		Now fix $A\geq1$, $B\geq0$. 
		A (non-necessarily continuous) map $\gamma\colon \R_{\geq 0}\to D$ is  a $(A,B)$ {\sl quasi-geodesic ray} if for every $t,s\geq0$
		$$A^{-1}|t-s|-B\leq k_D(\gamma(s),\gamma(t))\leq A|t-s|+B.$$ Clearly every almost-geodesic is a $(1,B)$ quasi-geodesic for some $B\geq 0$.

		Finally, a map  $\gamma\colon [0,T)\to D$, where $T\in (0,+\infty]$, is said to have {\sl endpoint} $\gamma(T)=\xi\in \partial D$ if $\lim_{t\to T}\gamma(t)=\xi$.
	\end{definition}
	\begin{remark}
	If $D\subset \C^d$ is  a $\C$-proper convex domain and  $\xi\in\partial D$ is  a point of  locally finite type, then it follows by
	the generalized Hopf-Rinow theorem that any two points of $D$ can be connected by a geodesic segment, and it follows from Proposition \ref{convCgeo} below that, given any point  $p\in D$, there exists a geodesic ray $\gamma$ with endpoint $\xi$ and with $\gamma(0)=p$.
	\end{remark}
	
	\begin{definition}(Gromov product)
		Let $D\subset \C^d$ be a Kobayashi hyperbolic domain. For all $z,w,p\in D$ define the {\sl Gromov product} $(z|w)_p$ as follows:
		$$(z|w)_p:=\frac{1}{2}[k_D(z,p)+k_D(w,p)-k_D(z,w)].$$ 
	\end{definition}
	
	\begin{theorem}\label{shadowingft}
		Let $D\subset \C^d$ be a bounded convex domain of finite type. 
		\begin{enumerate}
			
			\item[(i)] (Boundary and Gromov product) If $\xi,\eta$ are two distinct points in $\partial D$, then 
			$$\limsup_{(z,w)\to(\xi,\eta)}(z|w)_p<+\infty.$$
			
			\item[(ii)] \label{4point}  (4-point condition) there exists $\delta\geq 0$ such that for all $z_1,z_2,z_3,p\in D$ we have
			$$(z_1|z_2)_p\geq\min\{(z_1|z_3)_p,(z_2|z_3)_p\}-\delta.$$
			
			\item[(iii)] (Asymptoticity of (1,B) quasi-geodesics)  Let $\gamma_1,\gamma_2\colon\R_{\geq 0}\to D$ be two $(1,B)$ quasi-geodesic rays with the same endpoint $\xi\in \partial D$. Then
			$$\sup_{t\geq0} k_D(\gamma_1(t),\gamma_2(t))<\infty.$$
		\end{enumerate}
		
	\end{theorem}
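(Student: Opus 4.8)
\medskip
\noindent\textbf{Proof sketch.}
The plan is to deduce all three statements from Zimmer's theorem \cite{Zim} together with the general theory of Gromov hyperbolic metric spaces, the finite type hypothesis entering only through that citation and no further estimate on $k_D$ being needed. Recall that \cite{Zim} gives that $(D,k_D)$ is a proper geodesic Gromov $\delta$-hyperbolic space and that the inclusion $D\hookrightarrow\overline D$ extends to a homeomorphism of the Gromov compactification $D\cup\partial_G D$ onto $\overline D$; concretely, for a sequence $(z_n)$ in $D$ and $\xi\in\partial D$ one has $z_n\to\xi$ in $\overline D$ if and only if $(z_n)$ converges to infinity in the sense of Gromov and its class in $\partial_G D$ is identified with $\xi$. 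In particular, two sequences converging in $\overline D$ to the same boundary point have their $p$-Gromov products tending to $+\infty$, while sequences converging to distinct boundary points are inequivalent. Statement (ii) is then immediate, since the four-point inequality for Gromov products is one of the standard equivalent formulations of Gromov hyperbolicity for geodesic spaces (possibly with a larger constant $\delta$), hence part of the content of \cite{Zim}.

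For (i) I would argue by contradiction: if $\limsup_{(z,w)\to(\xi,\eta)}(z|w)_p=+\infty$, then I can choose $z_n\to\xi$ and $w_n\to\eta$ in $\overline D$ with $(z_n|w_n)_p\to+\infty$. By the recollection above, $(z_n)$ and $(w_n)$ converge to infinity and represent $\xi$ and $\eta$ respectively; but a standard consequence of the four-point condition (ii) is that two sequences converging to infinity whose matched $p$-Gromov products diverge must be equivalent (indeed $(z_i|w_j)_p\ge\min\{(z_i|w_i)_p,(w_i|w_j)_p\}-\delta$). Hence $\xi=\eta$ in $\partial_G D\cong\partial D$, contradicting $\xi\neq\eta$.

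For (iii) I would work directly with Gromov products based at $p:=\gamma_1(0)$, writing $C_0:=k_D(\gamma_1(0),\gamma_2(0))$. Since each $\gamma_i$ is a $(1,B)$ quasi-geodesic ray --- and here the restriction $A=1$, not merely $A\ge1$, is essential, since it forces $\gamma_i$ to be parametrized at coarsely unit speed --- the triangle inequality gives, for all $s\ge t\ge0$, the estimate $(\gamma_i(t)|\gamma_i(s))_p\ge t-\tfrac{3}{2}B-C_0$; that is, the Gromov product of two points on the same ray is, up to a bounded error, the distance from $p$ to the nearer one. Since $\gamma_1$ and $\gamma_2$ share the endpoint $\xi$, the recollection above yields $(\gamma_1(S)|\gamma_2(S'))_p\to+\infty$ as $S,S'\to+\infty$. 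Applying the four-point inequality (ii) twice --- inserting first $\gamma_1(S)$ and then $\gamma_2(S')$ with $S,S'\ge t$, and then letting $S,S'\to+\infty$ for $t$ fixed --- one obtains a constant $C=C(\delta,B,C_0)$ with $(\gamma_1(t)|\gamma_2(t))_p\ge t-C$ for all $t\ge0$. Since $k_D(p,\gamma_1(t))\le t+B$ and $k_D(p,\gamma_2(t))\le C_0+t+B$, the definition of the Gromov product gives $k_D(\gamma_1(t),\gamma_2(t))=k_D(p,\gamma_1(t))+k_D(p,\gamma_2(t))-2(\gamma_1(t)|\gamma_2(t))_p\le 2B+C_0+2C$, uniformly in $t$, which is (iii). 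Alternatively one could invoke the Morse lemma to replace each $\gamma_i$ by an honest geodesic ray with endpoint $\xi$ and use that geodesic rays asymptotic to the same boundary point are strongly asymptotic; the care needed there is to upgrade the Hausdorff-distance bound of the Morse lemma to a bound at matched parameters, which again uses $A=1$.

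The only genuine ingredient is Zimmer's theorem, which we may assume; the step requiring attention is (iii), namely establishing the fellow-traveling at matched parameters rather than merely in Hausdorff distance, and it is there that the hypothesis $A=1$ rather than $A\ge1$ cannot be dropped.
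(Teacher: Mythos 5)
Your proposal is correct and follows essentially the same route as the paper: everything is deduced from Zimmer's theorem that $(D,k_D)$ is proper, geodesic, Gromov hyperbolic with Gromov compactification homeomorphic to $\overline D$, combined with standard facts about hyperbolic spaces (the paper simply cites \cite{BH,CDP} for (i)--(ii) and \cite[Lemma 5.8]{AFGG} for (iii), where your Gromov-product computation is the standard proof of that lemma).
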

	\begin{proof}
		It is proved in \cite{Zim} that $(D,k_D)$ is Gromov hyperbolic and that the identity map $\id_D\colon D\to D$ extends to an homeomorphism between the Gromov compactification and $\overline D$.  
		Points (i) and (ii) are classical results for proper geodesic Gromov hyperbolic metric spaces, see e.g. \cite{BH,CDP}. For a proof of  (iii), see  \cite[Lemma 5.8]{AFGG}.
	\end{proof}

	In order to adapt these properties to our setting, we will use the following localization argument.
	\begin{remark}\label{localizationremark}
		
		Let $D\subset\C^d$ be a $\C$-proper convex domain and let $\xi\in\partial D$   be a point of  locally finite type.
		Let $L\geq 2$ be given by Definition \ref{Def_Type}. Let $W$ be a neighborhood of $\xi$ such that 
		\begin{enumerate}
			\item $\tilde{D}:=D\cap W$ is a bounded  convex domain;
			\item the boundary $\partial \tilde{D}$ is of class $C^L$;
			\item   every point of $\partial \tilde{D}$ has line type at most $L$.
		\end{enumerate}
		The domain  $\tilde D$ enjoys the visibility property (see for instance \cite{BNT} for the definition of visibility), hence it follows from  \cite[Theorem 1.4]{BNT} that  for every neighborhood  $V\subset\subset W$  of $\xi$  there exists $C\geq 0$ such that
		\begin{equation}\label{BNTloc}k_D(z,w)\leq k_{\tilde{D}}(z,w)\leq k_D(z,w)+C,\quad  \forall\, z,w\in V.\end{equation}
		
	\end{remark}

	Let $D\subset\C^d$ be a $\C$-proper convex domain. We denote by $\partial^*D$ the boundary of $D$ in the one-point compactification of $\C^d$.
	\begin{proposition}\label{shadowing}
		Let $D\subset \C^d$ be a $\C$-proper convex domain, and let $\xi\in \partial D$ be a point of locally finite type.
		\begin{itemize}
			\item[(i)]
			Let $\eta\in \partial^* D\setminus\{\xi\}$ and $p\in D$. Then 
			$$\limsup_{(z,w)\to(\xi,\eta)}(z|w)_p<+\infty.$$
			
			\item[(ii)] \label{4pointloc}  There exist $\delta'\geq 0$ and a neighborhood $V$ of $\xi$ such that  for all $z_1,z_2,z_3,p\in D\cap V$ we have
			$$(z_1|z_2)_p\geq\min\{(z_1|z_3)_p,(z_2|z_3)_p\}-\delta'.$$

			\item[(iii)]   If $\gamma_1,\gamma_2\colon \R_{\geq 0}\to D$ are two $(1,B)$ quasi-geodesic rays with endpoint $\xi$, then
			$$\sup_{t\geq0}k_D(\gamma_1(t),\gamma_2(t))<+\infty.$$
		\end{itemize}
	\end{proposition}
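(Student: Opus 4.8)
The plan is to derive all three statements from the corresponding assertions for the bounded convex domain of finite type $\tilde D:=D\cap W$ provided by Remark \ref{localizationremark}, transporting them from $\tilde D$ to $D$ by means of the two-sided comparison \eqref{BNTloc}. Working directly in $D$ is not an option, since $D$ is merely $\C$-proper convex and is not known to be Gromov hyperbolic. Fix $W$, $\tilde D$ as in Remark \ref{localizationremark}, and a neighbourhood $V'\subset\subset W$ of $\xi$ for which \eqref{BNTloc} holds with constant $C\geq 0$. The only two facts I shall use are: (a) if $z,w,p\in V'\cap D$ then $|(z|w)^D_p-(z|w)^{\tilde D}_p|\leq C$, where I write the superscript to indicate the ambient domain; this follows by expanding the definition of the Gromov product and using that each of the three differences $k_{\tilde D}-k_D$ evaluated at $(z,p)$, $(w,p)$, $(z,w)$ lies in $[0,C]$; and (b) the elementary change-of-basepoint bound $|(z|w)_p-(z|w)_q|\leq k_D(p,q)$.

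Granting this, \textbf{(ii)} is immediate with $V:=V'$: for $z_1,z_2,z_3,p\in D\cap V'\subset\tilde D$ apply the $4$-point condition in $\tilde D$ (Theorem \ref{shadowingft}(ii), with its constant $\delta$, valid for every quadruple in $\tilde D$) and replace every $(\cdot|\cdot)^{\tilde D}_p$ by $(\cdot|\cdot)^D_p$ at the cost of $C$ each time; this gives the claim with $\delta'=\delta+2C$. For \textbf{(iii)}, since $\gamma_1(t),\gamma_2(t)\to\xi$ there is $T\geq 0$ with $\gamma_i(t)\in V'$ for all $t\geq T$; then for $s,t\geq T$ the bound \eqref{BNTloc} upgrades the $(1,B)$ quasi-geodesic inequality for $\gamma_i$ in $D$ to a $(1,B+C)$ quasi-geodesic inequality in $\tilde D$, so that (after the shift $t\mapsto t+T$) we obtain two $(1,B+C)$ quasi-geodesic rays in $\tilde D$, both with endpoint $\xi\in\partial\tilde D$. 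Theorem \ref{shadowingft}(iii), applied in $\tilde D$, bounds $k_{\tilde D}(\gamma_1(t),\gamma_2(t))$ for $t\geq T$, and $k_D\leq k_{\tilde D}$ turns this into a bound for $k_D$ on $[T,\infty)$; on $[0,T]$ one estimates $k_D(\gamma_1(t),\gamma_2(t))$ crudely by the triangle inequality and the quasi-geodesic inequality for $\gamma_1,\gamma_2$ in $D$.

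The substance of the proposition is \textbf{(i)}, the difficulty being that $w$ tends to $\eta\neq\xi$, so the localization \eqref{BNTloc} cannot be applied to the pair $(z,w)$ directly. By (b) we may assume $p\in V'$. Since $\eta\neq\xi$ (and $\eta$ may be the point at infinity when $D$ is unbounded) we fix a neighbourhood $V$ of $\xi$ with $\overline V$ compact, $\overline V\subset V'$, and $\eta\notin\overline V$ (taking $V$ bounded if $\eta=\infty$). The key point is an \emph{exit-point trick}: if $z\in V\cap D$ and $w\in D\setminus\overline V$, a $k_D$-geodesic from $z$ to $w$ (recall $(D,k_D)$ is a geodesic metric space) is a continuous path leaving $\overline V$, hence meets $\partial V$ at some point $q\in\partial V\cap D$, and then $k_D(z,w)=k_D(z,q)+k_D(q,w)$; by the triangle inequality this yields
\[
(z|w)^D_p\ \leq\ (z|q)^D_p\ \leq\ (z|q)^{\tilde D}_p+C .
\]
Thus it suffices to show $\displaystyle\limsup_{z\to\xi}\ \sup_{q\in\partial V\cap D}(z|q)^{\tilde D}_p<+\infty$. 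Suppose not: pick $z_n\to\xi$ and $q_n\in\partial V\cap D$ with $(z_n|q_n)^{\tilde D}_p\to+\infty$. Passing to a subsequence, $q_n\to\zeta\in\overline V\cap\overline D$ (a compact set), and $\zeta\neq\xi$ because $q_n\in\partial V$ while $\xi$ belongs to the open set $V$. If $\zeta\in D$, then $\zeta\in\tilde D$ and $(z_n|q_n)^{\tilde D}_p\leq k_{\tilde D}(q_n,p)\to k_{\tilde D}(\zeta,p)<+\infty$, a contradiction. If $\zeta\in\partial D$, then, since $\zeta\in W$, we have $\zeta\in\partial\tilde D\setminus\{\xi\}$ while $\xi\in\partial\tilde D$, so Theorem \ref{shadowingft}(i) applied in the bounded convex finite type domain $\tilde D$ gives $\limsup_n(z_n|q_n)^{\tilde D}_p<+\infty$, again a contradiction. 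This proves the required bound, hence (i).

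The main obstacle is precisely this last step: reducing the "mixed" Gromov product $(z|w)_p$, to which no localization applies, to a Gromov product $(z|q)_p$ with $q$ confined to the set $\partial V\cap D$ near $\xi$, where both \eqref{BNTloc} and the boundary estimate for $\tilde D$ are available; the compactness argument is needed to rule out the exit point $q$ drifting back toward $\xi$ or escaping along $\partial\tilde D$ in an uncontrolled way. Everything else is routine once the comparison (a) and the geodesic exit principle are in place.
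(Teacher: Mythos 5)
Your proposal is correct and follows essentially the same route as the paper: for (ii) and (iii) you transfer the statements from the bounded finite-type localization $\tilde D$ via \eqref{BNTloc} exactly as the paper does, and for (i) your exit-point trick on a geodesic segment (giving $(z|w)_p\leq(z|q)_p$ for the exit point $q$, followed by compactness of the exit set and the boundary Gromov-product estimate in $\tilde D$) is precisely the paper's argument with $\partial V$ in place of the boundary of a small Euclidean ball around $\xi$. No gaps.
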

	\proof
	Let $W$, $\tilde{D}$ and $C$ be given by Remark \ref{localizationremark}. Let $V\subset \subset W$ be a neighborhood of $\xi$.
	
	(i)  
	Denote  by $(\cdot|\cdot)^D_p$   the Gromov product with respect to $k_D$.
	Notice that we can suppose that $p\in V$, indeed for all $z,w,p,q\in D$, by the triangle inequality we have $|(z|w)^D_p-(z|w)^D_q|\leq k_D(p,q)$.
	
	By contradiction,  assume there exist $\eta\in\partial^* D\backslash\{\xi\}$ and $z_n\to\xi$, $w_n\to\eta$ such that
	$$(z_n|w_n)^D_p\to+\infty.$$
	For all $n\geq 0$  let  $\gamma_n\colon [0,T_n]\to D$ be a  geodesic segment with $\gamma_n(0)=z_n$, $\gamma_n(T_n)=w_n$. Let $B\subset \subset V$ be a small Euclidean ball centered in $\xi$ such that  $\eta\notin B$. 
	Let $N\geq 0$ such that $z_n\in B$ for all $n\geq N$. Then for all $n\geq N$
	there exists $t_n\in(0,T_n]$ such that $\gamma_n(t_n)\in D\cap \partial B$. Set $w'_n:=\gamma_n(t_n)$. We have
	\begin{align*}
		2(z_n|w_n)^D_p&=k_D(z_n,p)+k_D(w_n,p)-k_D(z_n,w_n)\\&=k_D(z_n,p)+k_D(w_n,p)-k_D(z_n,w'_n)-k_D(w'_n,w_n)\\&\leq k_D(z_n,p)+k_D(w'_n,p)-k_D(z_n,w'_n)=2(z_n|w'_n)^D_p.
	\end{align*}
	Since $p\in V$ and $z_n,w'_n\in V$ for all $n\geq N$, by (\refeq{BNTloc}) we have
	$$(z_n|w_n')^{\tilde{D}}_p\geq (z_n|w_n')^D_p-\frac{C}{2},$$
	and thus $(z_n|w_n')^{\tilde{D}}_p\to+\infty.$
	It easily follows that the sequence $(w_n')$ is not relatively compact in $\tilde D$, and thus, up to a subsequence, we can assume that it converges to a point $\eta'\in \partial\tilde{D}\cap\partial B$. This contradicts (i) of Theorem \ref{shadowingft} because $\eta'\neq\xi$.

	(ii) By (\refeq{BNTloc}) and (ii) in Theorem \ref{shadowingft} there exists $\delta\geq0$ such that for all $z_1,z_2,z_3,p\in V$
	$$(z_1|z_2)^{D}_p\geq\min\{(z_1|z_3)^{D}_p,(z_2|z_3)^{D}_p\}-\frac{3C}{2}-\delta.$$

	(iii) Let $T\geq 0$ be such that  $\gamma_1(t),\gamma_2(t)\in V$ for all $t\geq T$.
	By \eqref{BNTloc} the curves  $\gamma_1|_{[T,+\infty)}$ and $\gamma_2|_{[T,+\infty)}$ are $(1,B+C)$ quasi-geodesics with respect to $k_{\tilde{D}}$, so we conclude with (iii) of Theorem \ref{shadowingft}.
	\endproof

	Point (i) of the previous proposition immediately implies the following.
	\begin{corollary}\label{convergestesso}
		Let $D\subset \C^d$ be a $\C$-proper convex domain, and let $\xi\in \partial D$ be a point of locally finite type. If $(z_n),(w_n)$ are sequences in $D$ such that $z_n\to\xi$ and $(k_D(z_n,w_n))_n$ is bounded, then $w_n\to\xi$.
	\end{corollary}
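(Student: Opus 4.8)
The plan is to argue by contradiction using Proposition \ref{shadowing}(i), essentially because a bounded Kobayashi distance cannot make the Gromov product blow up along sequences landing at two distinct boundary points. Suppose $(w_n)$ does not converge to $\xi$. Since $D$ is $\C$-proper convex, by the Remark recalled above it is complete Kobayashi hyperbolic, so closed Kobayashi balls are compact and $D\cup\partial^*D$ is compact in the one-point compactification of $\C^d$; hence, after passing to a subsequence, we may assume $w_n\to\eta$ for some $\eta\in(D\cup\partial^*D)\setminus\{\xi\}$.

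First I would eliminate the case $\eta\in D$. Fix $p\in D$. Because $(D,k_D)$ is complete and $z_n\to\xi\in\partial D$, we have $k_D(z_n,p)\to+\infty$, and in particular $k_D(z_n,\eta)\to+\infty$. On the other hand, $k_D(z_n,\eta)\le k_D(z_n,w_n)+k_D(w_n,\eta)$, and the right-hand side is bounded since $k_D(z_n,w_n)$ is bounded by hypothesis and $k_D(w_n,\eta)\to 0$ by continuity of $k_D$ on $D$ — a contradiction. Thus $\eta\in\partial^*D\setminus\{\xi\}$.

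In the remaining case I would estimate the Gromov product directly:
$$2(z_n|w_n)_p=k_D(z_n,p)+k_D(w_n,p)-k_D(z_n,w_n)\ \ge\ k_D(z_n,p)-k_D(z_n,w_n),$$
using $k_D(w_n,p)\ge 0$. Since $k_D(z_n,p)\to+\infty$ and $k_D(z_n,w_n)$ stays bounded, we get $(z_n|w_n)_p\to+\infty$. But $(z_n,w_n)\to(\xi,\eta)$ with $\eta\neq\xi$, so Proposition \ref{shadowing}(i) gives $\limsup_n (z_n|w_n)_p<+\infty$, a contradiction. Hence every subsequential limit of $(w_n)$ in $D\cup\partial^*D$ equals $\xi$, i.e. $w_n\to\xi$.

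The only point requiring a little care — the ``obstacle'', such as it is — is ruling out accumulation of $(w_n)$ at an interior point or at $\infty$; both are handled by completeness of $(D,k_D)$ (equivalently, properness of closed Kobayashi balls), which is available here precisely because $D$ is $\C$-proper convex. Everything else is the triangle inequality together with Proposition \ref{shadowing}(i).
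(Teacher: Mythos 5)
Your proof is correct and is precisely the argument the paper has in mind (the paper simply asserts that the corollary "immediately" follows from Proposition \ref{shadowing}(i)): the Gromov product $(z_n|w_n)_p$ blows up along any subsequence of $(w_n)$ accumulating at a boundary point other than $\xi$, and completeness of $(D,k_D)$ rules out interior accumulation points. No gaps.
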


	We end this section with an estimate of the Kobayashi distance near the boundary which will be useful later on.
	\begin{lemma}\label{stimekob}
		Let $D\subset \C^d$ be a $\C$-proper convex domain, and let $\xi\in \partial D$ be a point of locally finite type.
		There exists a neighborhood $V$ of $\xi$ such that for all $p\in D$ there exists $c\geq 0$ such that
		$$-\log\delta_D(z)-c\leq k_D(z,p)\leq -\log\delta_D(z)+c, \quad \forall\,z\in D\cap V.$$
	\end{lemma}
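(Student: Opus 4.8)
The plan is to obtain the two inequalities from the standard comparison of the Kobayashi distance with the distance to the boundary, localized near $\xi$ by means of Remark \ref{localizationremark}. First I would fix the neighbourhood $W$ and the bounded convex domain $\tilde D = D\cap W$ from Remark \ref{localizationremark}, together with a smaller neighbourhood $V\subset\subset W$ of $\xi$ and the constant $C\geq 0$ with $k_D(z,w)\leq k_{\tilde D}(z,w)\leq k_D(z,w)+C$ for all $z,w\in V$. Since $\tilde D$ is a bounded convex domain with $C^L$ boundary of finite type, Zimmer's work (and the classical comparison estimates for the Kobayashi distance on such domains) gives, after possibly shrinking $V$, constants $c_1\geq 0$ and a base point $q\in\tilde D$ with
$$-\tfrac12\log\delta_{\tilde D}(z) - c_1 \leq k_{\tilde D}(z,q)\leq -\tfrac12\log\delta_{\tilde D}(z)+c_1,\qquad z\in \tilde D\cap V.$$
Actually the precise normalization of the logarithm is immaterial: what matters is the existence of some affine-in-$\log\delta$ upper and lower bound. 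For the record, with the normalization of $k_\D$ used in this paper the coefficient is $-1$ rather than $-\tfrac12$; I would simply invoke whichever comparison estimate matches, e.g. from the finite-type literature cited (McNeal, Zimmer) or directly from the fact that $(\tilde D,k_{\tilde D})$ is Gromov hyperbolic with an explicit boundary estimate. The key point is only boundedness of the discrepancy $k_{\tilde D}(z,q)+\log\delta_{\tilde D}(z)$ on $\tilde D\cap V$.

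Next I would pass from $\delta_{\tilde D}$ to $\delta_D$: since $\tilde D = D\cap W$ and $V\subset\subset W$, there is a neighbourhood $V'\subset V$ of $\xi$ on which $\partial\tilde D$ and $\partial D$ coincide, so $\delta_{\tilde D}(z)=\delta_D(z)$ for $z$ close enough to $\xi$; more precisely $\delta_{\tilde D}(z)\geq\min\{\delta_D(z),\operatorname{dist}(z,\partial W)\}$ and the second term is bounded below by a positive constant on $V'$, while trivially $\delta_{\tilde D}(z)\leq\delta_D(z)$. Hence on $V'$ the quantities $\log\delta_{\tilde D}(z)$ and $\log\delta_D(z)$ differ by a bounded amount. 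Combining this with \eqref{BNTloc} I get, for a fixed base point $p_0\in D\cap V'$, a constant $c_2\geq 0$ with
$$-\log\delta_D(z)-c_2\leq k_D(z,p_0)\leq -\log\delta_D(z)+c_2,\qquad z\in D\cap V'.$$

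Finally, to get the statement for an \emph{arbitrary} $p\in D$ I would use the triangle inequality: $|k_D(z,p)-k_D(z,p_0)|\leq k_D(p_0,p)=:c_3$, so the displayed inequalities hold on $D\cap V'$ with $c:=c_2+c_3$, which depends on $p$ as allowed. Renaming $V'$ as $V$ completes the proof.

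The only genuine obstacle is the first step: having a clean reference (or short self-contained argument) for the estimate $k_{\tilde D}(z,q)\asymp -\log\delta_{\tilde D}(z) + O(1)$ on a bounded convex domain of finite type. The upper bound $k_{\tilde D}(z,q)\leq -\log\delta_{\tilde D}(z)+O(1)$ is easy and holds for any bounded domain with $C^2$ boundary (embed a suitably shrinking disc into $\tilde D$ in the normal direction). The lower bound is where finite type (or at least Kobayashi hyperbolicity with good boundary behaviour) is used; here I would cite the estimates available in the finite-type setting — this is exactly the kind of bound underlying Zimmer's Gromov hyperbolicity result — rather than reprove it. Everything after that is bookkeeping with constants.
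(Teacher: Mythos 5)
Your proposal is correct and follows essentially the same route as the paper: localize via Remark \ref{localizationremark}, invoke the standard estimate $k_{\tilde D}(z,q)=-\log\delta_{\tilde D}(z)+O(1)$ on the bounded convex finite-type domain $\tilde D$ (the paper cites Lemmas 1.1 and 1.3 of Abate--Tauraso for exactly this), identify $\delta_{\tilde D}$ with $\delta_D$ near $\xi$, and transfer back to $k_D$ using \eqref{BNTloc} plus the triangle inequality for the base point. The one step you flagged as needing a reference is precisely the step the paper also handles by citation, so there is no gap.
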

	\proof
	Let $\tilde D$ and $V$ be given by Remark \ref{localizationremark}.
	By   \cite[Lemma 1.1, Lemma 1.3]{AbTau} given $z_0\in  V\cap D$ there exists a constant $c'\geq 0$ such that 
	$$-\log\delta_{\tilde D}(z)-c'\leq k_{\tilde D}(z,z_0)\leq -\log\delta_{\tilde D}(z)+c',\quad\forall z\in \tilde D.$$
	Up to taking a smaller neighborhood $V$ of $\xi$ we clearly have $\delta_D=\delta_{\tilde D}$ on $V\cap D$. 
	We conclude using \eqref{BNTloc}.
	\endproof
	
	\section{Complex geodesics}
	
	\begin{definition}[Complex geodesics]\label{Cgeo}
		Let $D\subset \C^d$ be a Kobayashi hyperbolic domain. An {\sl extremal map} is a holomorphic map $\varphi\colon \D\to D$ 
		such that there exist distinct points  $\zeta_1,\zeta_2\in \D$ satisfying
		$$k_D(\varphi(\zeta_1),\varphi(\zeta_2))=k_\D(\zeta_1,\zeta_2).$$
		A {\sl complex geodesic} is a holomorphic map $\varphi\colon \D\to D$ such that $k_D(\varphi(\zeta_1),\varphi(\zeta_2))=k_\D(\zeta_1,\zeta_2)$ for all $\zeta_1,\zeta_2\in \D$.  
		If $\varphi\colon \D\to D$  is a complex geodesic, a holomorphic map $\tilde{\rho}\colon D\to\D$  is called a {\sl left inverse} of $\varphi$ if $\tilde \rho \circ\varphi=\id_\D.$ 
		Analogous definitions hold for holomorphic maps defined in the left half-plane $\H$. In particular we say that a holomorphic map $\varphi\colon \H\to D$ is a complex geodesic if  $k_D(\varphi(\zeta_1),\varphi(\zeta_2))=k_\H(\zeta_1,\zeta_2)$ for all $\zeta_1,\zeta_2\in \H$. 
	\end{definition}
	
	\begin{proposition}\label{extrcgeo}
		Let $D\subset \C^d$ be a $\C$-proper convex domain.
		Then 
		\begin{itemize}
			\item[(i)] every extremal map $\varphi\colon \D\to D$ is a complex geodesic;
			\item[(ii)] for any two distinct points $z,w\in D$ there exists a complex geodesic $\varphi\colon \D\to D$ such that $\varphi(0)=z$ and $\varphi(\tanh(k_D(z,w)/2))=w$; 
			\item[(iii)] every complex geodesic admits a left inverse.
		\end{itemize}
	\end{proposition}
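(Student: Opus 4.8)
The plan is to reduce all three parts to a single classical input, the key lemma of Lempert's theory of complex geodesics, which I would quote from the literature rather than reprove. Write the \emph{Lempert function} of $D$ as
\[
\ell_D(z,w):=\inf\{k_\D(\zeta_1,\zeta_2):\varphi\in\mathrm{Hol}(\D,D),\ \varphi(\zeta_1)=z,\ \varphi(\zeta_2)=w\}.
\]
By the distance-decreasing property of holomorphic maps one always has $k_D\le\ell_D$. Lempert's lemma states: \emph{if $D$ is a $\C$-proper convex domain and $\varphi\colon\D\to D$ is a holomorphic map which realizes the Lempert function between two of its values, i.e.\ $\ell_D(\varphi(\zeta_1),\varphi(\zeta_2))=k_\D(\zeta_1,\zeta_2)$ for some $\zeta_1\ne\zeta_2$, then there is a holomorphic left inverse $\tilde\rho\colon D\to\D$, $\tilde\rho\circ\varphi=\id_\D$.} I would cite this from Lempert, from Royden-Wong, or from the monographs of Abate and of Jarnicki-Pflug; for an unbounded $\C$-proper $D$ one may first reduce to the bounded case by exhausting $D$ with bounded convex subdomains $D_n:=\{z\in D:\|z\|<n\}$, using $k_{D_n}\downarrow k_D$ and passing to a normal-families limit at the end.

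Granting the lemma, the observation used three times is: \emph{if $\varphi\colon\D\to D$ has a holomorphic left inverse $\tilde\rho$, then $\varphi$ is a complex geodesic}, since for all $\zeta_1,\zeta_2\in\D$ distance-decrease under $\tilde\rho$ and under $\varphi$ forces
\[
k_\D(\zeta_1,\zeta_2)=k_\D\bigl(\tilde\rho(\varphi(\zeta_1)),\tilde\rho(\varphi(\zeta_2))\bigr)\le k_D\bigl(\varphi(\zeta_1),\varphi(\zeta_2)\bigr)\le k_\D(\zeta_1,\zeta_2),
\]
hence equality holds throughout. For \textbf{(i)}, let $\varphi$ be an extremal map and pick $\zeta_1\ne\zeta_2$ with $k_D(\varphi(\zeta_1),\varphi(\zeta_2))=k_\D(\zeta_1,\zeta_2)$; then $\varphi$ itself witnesses $\ell_D(\varphi(\zeta_1),\varphi(\zeta_2))\le k_\D(\zeta_1,\zeta_2)$, while $\ell_D\ge k_D$ gives the reverse inequality, so $\varphi$ realizes the Lempert function. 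Lempert's lemma produces $\tilde\rho$, and the observation makes $\varphi$ a complex geodesic. For \textbf{(iii)}, any complex geodesic is a fortiori an extremal map (take any $\zeta_1\ne\zeta_2$), so the argument for (i) supplies its left inverse.

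For \textbf{(ii)} it remains to exhibit, for distinct $z,w\in D$, a holomorphic disc through $z$ and $w$ realizing $\ell_D$. Since $D$ is $\C$-proper convex it is complete Kobayashi hyperbolic, so every closed ball $\overline{B_D(z,r)}$ is compact; hence $\mathcal F:=\{\varphi\in\mathrm{Hol}(\D,D):\varphi(0)=z\}$ is locally uniformly bounded (for $|\zeta|\le r$ one has $\varphi(\zeta)\in\overline{B_D(z,k_\D(0,r))}$), so it is a normal family whose locally uniform limits stay in $\mathcal F$. Intersecting $D$ with the complex affine line through $z$ and $w$ yields a proper simply connected planar convex domain, biholomorphic to $\D$; composing the Riemann map with a suitable automorphism of $\D$ shows that $S:=\{t\in(0,1):\exists\,\varphi\in\mathcal F,\ \varphi(t)=w\}\ne\emptyset$. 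Put $t_*:=\inf S$; then $t_*>0$, else a locally uniform limit $\varphi_*$ of $\varphi_n\in\mathcal F$ with $\varphi_n(t_n)=w$, $t_n\downarrow 0$, would satisfy $\varphi_*(0)=z$ and $\varphi_*(0)=w$. The same procedure with $t_n\downarrow t_*$ gives $\varphi_*\in\mathcal F$ with $\varphi_*(t_*)=w$, and moving one argument to $0$ by $\Aut(\D)$ shows $\ell_D(z,w)=\inf\{k_\D(0,t):t\in S\}=k_\D(0,t_*)$, so $\varphi_*$ realizes the Lempert function. By Lempert's lemma $\varphi_*$ has a left inverse, so by the observation it is a complex geodesic; in particular $k_D(z,w)=k_D(\varphi_*(0),\varphi_*(t_*))=k_\D(0,t_*)$, i.e.\ $t_*=\tanh(k_D(z,w)/2)$, and $\varphi_*(\tanh(k_D(z,w)/2))=w$, which is the required normalization.

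The whole difficulty sits in Lempert's lemma itself: the construction of the holomorphic retraction $\tilde\rho\colon D\to\D$ out of convexity --- by erecting supporting real hyperplanes along $\varphi(\partial\D)$ and assembling the associated dual (``stationary'') extremal map --- is the deep analytic content, and it is the single step I would cite rather than reprove; everything else above (distance-decrease, normal families, completeness of $(D,k_D)$) is soft.
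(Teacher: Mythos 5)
Your argument is correct, and it rests on exactly the same foundation as the paper's proof, which simply cites Lempert theory for ($\C$-proper, possibly unbounded) convex domains as developed in Bracci--Saracco and in Jarnicki--Pflug (Propositions 11.1.4, 11.1.7 and Theorem 11.2.1). You have merely unpacked the soft reductions (normal families via completeness, normalization by $\Aut(\D)$, the planar slice giving $S\neq\emptyset$) while outsourcing the same hard kernel --- the construction of the holomorphic left inverse for a disc realizing the Lempert function --- that the paper outsources.
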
 
	\proof
	Follows from \cite{Bracci-Saracco} and Proposition 11.1.4, Proposition 11.1.7 and Theorem 11.2.1 in \cite{JarPfl}.
	\endproof
	The following result was proved by Abate \cite{Abateeglistesso} in the case of bounded strongly convex domains with $C^3$ boundary. Our proof is based on a Gromov hyperbolicity method.
	\begin{proposition}\label{convCgeo}
		Let $D\subset\C^d$ be a $\C$-proper convex domain and let $\xi\in\partial D$ be a  point of locally finite  type.
		Let $p\in D$. Then there exists a  complex geodesic $\varphi\colon \D\to D$ with $\varphi(0)=p$ and 
		$\lim_{\R\ni t\to 1^-}\varphi(t)=\xi.$
	\end{proposition}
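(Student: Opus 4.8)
The plan is to produce $\varphi$ as a limit of complex geodesics joining $p$ to a sequence of interior points tending to $\xi$, and then to use the localized Gromov-product estimate of Proposition~\ref{shadowing}(i) to force the endpoint of the limiting curve to be $\xi$.

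\emph{Step 1: construction of a limit geodesic.} Pick a sequence $(w_n)\subset D$ with $w_n\to\xi$. Since $D$ is $\C$-proper it is complete hyperbolic, so $T_n:=k_D(p,w_n)\to+\infty$, and by Proposition~\ref{extrcgeo}(ii) there is for each $n$ a complex geodesic $\varphi_n\colon\D\to D$ with $\varphi_n(0)=p$ and $\varphi_n(\tanh(T_n/2))=w_n$. I would reparametrize by setting $\gamma_n(t):=\varphi_n(\tanh(t/2))$, so that $\gamma_n\colon[0,T_n]\to D$ is a geodesic segment from $p$ to $w_n$. Because $D$ is complete hyperbolic its closed Kobayashi balls are compact, and a complex geodesic satisfies $\varphi_n(\{|\zeta|\le r\})\subseteq\overline{B_D(p,k_\D(0,r))}$ for every $r<1$; hence $(\varphi_n)$ is locally uniformly bounded in $D$, and by Montel's theorem a subsequence converges locally uniformly to a holomorphic $\varphi\colon\D\to D$. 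Passing to the limit in $k_D(\varphi_n(\zeta_1),\varphi_n(\zeta_2))=k_\D(\zeta_1,\zeta_2)$ and using continuity of $k_D$ shows that $\varphi$ is a complex geodesic with $\varphi(0)=p$; correspondingly $\gamma_n\to\gamma$ uniformly on each interval $[0,S]$ (once $T_n>S$), where $\gamma(t):=\varphi(\tanh(t/2))$ is a geodesic ray based at $p$.

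\emph{Step 2: identifying the endpoint.} It remains to show that $\gamma(t)\to\xi$ as $t\to+\infty$, which is equivalent to $\varphi(t)\to\xi$ as $\R\ni t\to1^-$. Here I would argue by contradiction. Since $k_D(p,\gamma(t))=t\to+\infty$ and closed balls are compact, $\gamma(t)$ leaves every compact subset of $D$, so in the one-point compactification its cluster set as $t\to+\infty$ is a nonempty subset of $\partial^*D$. Let $\eta\in\partial^*D$ be a cluster point, say $\gamma(t_k)\to\eta$ with $t_k\to+\infty$, and assume $\eta\neq\xi$. For each $k$ choose $n(k)$ so large that $T_{n(k)}>t_k$ and $\|\gamma_{n(k)}(t_k)-\gamma(t_k)\|<1/k$, which is possible since for fixed $t_k$ one has $\gamma_n(t_k)\to\gamma(t_k)$ and $T_n\to+\infty$. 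Putting $z_k:=\gamma_{n(k)}(t_k)$ and $u_k:=w_{n(k)}$, so that $z_k\to\eta$ and $u_k\to\xi$, I would use that $z_k$ and $u_k$ are the points at parameters $t_k$ and $T_{n(k)}$ of the geodesic segment $\gamma_{n(k)}$ issuing from $p$, which gives
\[
(z_k\,|\,u_k)_p=\tfrac12\bigl(k_D(z_k,p)+k_D(u_k,p)-k_D(z_k,u_k)\bigr)=\tfrac12\bigl(t_k+T_{n(k)}-(T_{n(k)}-t_k)\bigr)=t_k\longrightarrow+\infty .
\]
By symmetry of the Gromov product this contradicts Proposition~\ref{shadowing}(i) applied to the distinct boundary points $\xi$ and $\eta$. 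Hence every cluster point of $\gamma(t)$ equals $\xi$, so $\gamma(t)\to\xi$, which concludes the argument.

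\emph{Main obstacle.} The construction and the normal-family limit in Step 1 are routine; the delicate point is Step 2, since the compactness procedure by itself only yields a geodesic ray whose cluster set could a priori contain boundary points other than $\xi$ (or the point at infinity when $D$ is unbounded). Ruling this out is exactly where local finiteness of the type enters, through the boundary estimate of Proposition~\ref{shadowing}(i); a minor point to keep track of is that each $\gamma_n$ is only defined on $[0,T_n]$, so convergence $\gamma_n\to\gamma$ is uniform only on those $[0,S]$ with $S<T_n$.
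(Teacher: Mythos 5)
Your proposal is correct and follows essentially the same route as the paper: a normal-families/tautness limit of complex geodesics from $p$ to $w_n$, followed by a contradiction via the Gromov-product boundary estimate of Proposition~\ref{shadowing}(i), using that two points on a common geodesic segment issuing from $p$ have Gromov product equal to the smaller of their distances to $p$. The paper phrases the second step with a sequence $s_k\to 1$ in the disc rather than with the reparametrized ray, but the computation is identical.
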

	\begin{proof}
		
		Let $p\in D$.  Let $(z_n)$ be a sequence in $D$ converging to $\xi$.
		For all $n\geq 0$ let $\varphi_n$ be a complex geodesic such that $\varphi_n(0)=p$ and $\varphi_n(t_n)=z_n$, 
		where $t_n:=\tanh(k_D(p,z_n)/2)\to 1.$
		
		Since $D$ is taut,  up to a subsequence we can assume that $\varphi_n\to \varphi$ uniformly on compact sets. Clearly $\varphi$ is a complex geodesic of $D$.
		We   show that $\lim_{t\to1^-}\varphi(t)=\xi$. Assume by contradiction that this is false, that is,  there exists a sequence  $(s_k)$ in $[0,1)$ converging to $1$ such that $\varphi(s_k)$ converges to a point  $\eta\in \partial^* D$ different from $\xi$.
		Since $\varphi_n\to\varphi$, it follows that for all $k\geq0$ we can find $n_k$ such that
		$$\|\varphi(s_k)-\varphi_{n_k}(s_k)\|<\frac{1}{k},$$
		in particular $\varphi_{n_k}(s_k)\to\eta$. 
		
		Now$$(\varphi_{n_k}(t_{n_k})|\varphi_{n_k}(s_k))_p=\min\{k_\D(0,t_{n_k}),k_\D(0,s_k)\}\to+\infty,$$
		which contradicts (i) of Proposition \ref{shadowing}.

	\end{proof}

	\begin{definition}
		Denote by $\gamma_\D\colon \R_{\geq 0}\to \D$ the geodesic ray in $\D$ with starting point $0$ and endpoint $1$, that is 
		$\gamma_\D(t)={\rm \tanh} (t/2).$
		Denote by $\gamma_\H\colon \R_{\geq 0}\to D$ the geodesic ray in $\H$ with starting point $-1$ and endpoint $0$,
		that is $\gamma_\H(t)=\mathscr{C}\circ \gamma_\D=-e^{-t}.$

		Let now $D\subset \C^d$ be a $\C$-proper convex domain.
		If  $\varphi\colon \D\to D$ (resp. $\varphi\colon \H\to D$) is a holomorphic map, we denote by $\tilde \varphi\colon \R_{\geq 0}\to D$ the curve $\tilde \varphi=\varphi\circ \gamma_\D$ (resp. $\varphi\circ \gamma_\H$).
		If the curve $\tilde \varphi$ has an endpoint $\xi\in \partial D$, then we say that $\xi$ is the   {\sl endpoint} of $\varphi$.
		Notice that if $\varphi$ is a complex geodesic, then  $\tilde \varphi$ is a  geodesic ray.

	\end{definition}
	
	\begin{definition}\label{contacttemp}
		Let $D\subset\C^d$ be a $\C$-proper convex domain. Let $\varphi\colon \D\to D$   (resp. $\varphi\colon \H\to D$)  be a holomorphic map. We say that the point $1\in \partial \D$ (resp. $0\in\partial \H$) is a   {\sl regular contact point} of $\varphi$ 
		if the curve $ \tilde\varphi$ is an almost-geodesic ray of $D$ with endpoint of locally finite type $\xi\in\partial D$.
		By Corollary \ref{convergestesso}  this implies
		that $\varphi$ has non-tangential limit $\xi$ at  $1$ (resp. at 0).
	\end{definition}
	
	\begin{remark}
		In Section \ref{Julia} we will show that  regular contact points  can be defined equivalently as 
		points of finite dilation (see Definition \ref{defregcont} and Corollary \ref{eqdefcontact}). We will also extend the definition of regular contact points  to the case of holomorphic maps between two  $\C$-proper convex domains.
		
	\end{remark}

	\begin{remark}
		Clearly if $\varphi\colon \D\to D$ (resp. $\varphi\colon \H\to D$) is a complex geodesic in a $\C$-proper convex domain $D$ with endpoint of locally finite type $\xi\in \partial D$, then 1 (resp. 0) is a  regular contact point. 
	\end{remark}
	Let $D\subset \C^d$ be a bounded strongly convex domain with $C^3$ boundary, and let $\varphi\colon \D\to D$ be a complex geodesic with endpoint $\xi\in \partial D$. Then $\varphi$ extends to $\overline \D$ as a $C^1$ map , and $\varphi'(1)$ is transversal to $T_\xi D$ (see e.g. \cite{Abatebook}). This result plays an important role in Abate's proof of the Julia--Wolff--Carath\'eodory theorem in strongly convex domains.
	If $D$ is  a bounded convex domain  of  finite type the situation  is radically different. Indeed, in this case  
	$\|\varphi'(t)\|$ may explode and $\varphi(t)$ may converge to $\xi$ tangentially when $\R\ni t\to1^-$,
	as the following example shows.
	\begin{example}[Complex geodesics of the egg domain]\label{egggeodesics}
		Given an even integer $m\geq 2$ define the {\sl egg domain} 
		\begin{equation}\label{eggdomain}
			\mathbb{E}_m:=\{(z_0,z_1)\in\C^2:|z_0|^2+|z_1|^{m}<1\}\subset \C^2.
		\end{equation}
		The points of $\partial \mathbb{E}_m$ with 
		$z_1=0$ have type $m$, and all other points have type 2.
		For all $a\in\C$ the map
		$\varphi_a\colon \D\to \mathbb{E}_m$ defined by 
		\begin{equation}\label{formulageodetiche}
			\varphi_a(\zeta):=\left(\frac{\zeta+|a|^{m}}{1+|a|^{m}},a\left(\frac{1-\zeta}{1+|a|^{m}}\right)^{2/m}\right)
		\end{equation}
		is a complex geodesic with endpoint $(1,0)$, see \cite{Huang} and \cite{JPZ}. We are using the principal value of the $m$-th root. 
	\end{example}
	However, one  can still say something about the derivative of $\varphi$: it is proved in  \cite[Lemma 5.12]{AFGG} that the normal component of  $\varphi'(z)$  admits a positive non-tangential limit as $z\to 1$. In the next result, which will be crucial in what follows,  we show  that the same is true with only a local finite type assumption  around $\xi$ and for any holomorphic map $\varphi\colon \D\to D$  with a regular contact point  at 1 and endpoint $\xi$.
	
	\begin{proposition}\label{normalderivative}
		Let $D\subset\C^d$ be a $\C$-proper convex domain and let $\xi\in\partial D$ be a point of locally finite type.
		Let $\varphi\colon \D\to D$ (resp. $\varphi\colon \H\to D$)  be a holomorphic map with a regular contact point  at 1 (resp. at 0) and endpoint $\xi$. Then the function 
		$$z\mapsto\langle \varphi'(z),n_\xi\rangle$$
		admits a positive non-tangential limit as $z\to 1$ (resp. as $z\to  0$), which we denote by $\varphi'_N(1)$ (resp.  $\varphi'_N(0)$).
	\end{proposition}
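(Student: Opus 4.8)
The plan is to extract from the hypotheses one quantitative fact — the exponential decay of the normal component of $\tilde\varphi$ along the geodesic ray — and then feed it into the one-variable Julia--Wolff--Carath\'eodory theorem.

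It suffices to treat the case $\varphi\colon\D\to D$ with a regular contact point at $1$: if $\varphi\colon\H\to D$ has a regular contact point at $0$, then $\varphi\circ\mathscr{C}\colon\D\to D$ has a regular contact point at $1$ (since $\gamma_\H=\mathscr{C}\circ\gamma_\D$, so the associated curve is unchanged), and the chain rule together with $\mathscr{C}'(1)=\tfrac{1}{2}\neq 0$ and the fact that $\mathscr{C}$ preserves non-tangential approach transports the conclusion back to $\H$. So assume $\varphi\colon\D\to D$, regular contact point $1$, endpoint $\xi$. Since $\partial D$ is $C^1$ near $\xi$ and $D$ is convex with outer unit normal $n_\xi$, the domain $D$ lies in the half-space $\{w:\Re\langle w-\xi,n_\xi\rangle<0\}$; hence $F:=\mathscr{C}^{-1}\circ\langle\,\cdot\,-\xi,n_\xi\rangle$ is a well-defined holomorphic map $D\to\D$, and
$$h(z):=\langle\varphi(z)-\xi,n_\xi\rangle\colon\D\to\H,\qquad g:=\mathscr{C}^{-1}\circ h=F\circ\varphi\colon\D\to\D.$$
As $\langle\,\cdot\,,n_\xi\rangle$ is $\C$-linear, $h'(z)=\langle\varphi'(z),n_\xi\rangle$ and $h=\mathscr{C}\circ g$. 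So, by the chain rule, once we know $g$ has finite dilation $\lambda$ at $1$ and non-tangential limit $1$ there, Theorem~\ref{JWC1d} gives $\angle\lim_{z\to 1}g'(z)=\lambda$, whence $\angle\lim_{z\to 1}\langle\varphi'(z),n_\xi\rangle=\mathscr{C}'(1)\lambda=\tfrac{1}{2}\lambda$, which is positive because the dilation of a holomorphic self-map of $\D$ is never $0$; this number is then $\varphi'_N(1)$.

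Everything thus rests on the estimate
$$-\Re\langle\tilde\varphi(t)-\xi,n_\xi\rangle\ \asymp\ e^{-t}\qquad (t\to+\infty).$$
For the lower bound, convexity gives $\delta_D(\tilde\varphi(t))\le-\Re\langle\tilde\varphi(t)-\xi,n_\xi\rangle$, while $\tilde\varphi$ being an almost-geodesic ray yields $k_D(\tilde\varphi(t),p)\le t+O(1)$, and Lemma~\ref{stimekob} then gives $\delta_D(\tilde\varphi(t))\gtrsim e^{-t}$. For the upper bound I localize via Remark~\ref{localizationremark}: let $\tilde D=D\cap W$ be the bounded convex finite-type domain, with $V\subset\subset W$ and $C\ge 0$ as there, and pick $t_1$ with $\tilde\varphi(t)\in V$ for $t\ge t_1$; by \eqref{BNTloc}, $s\mapsto\tilde\varphi(t_1+s)$ is a $(1,B+C)$ quasi-geodesic ray of $\tilde D$ with endpoint $\xi$. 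By Proposition~\ref{convCgeo} there is a complex geodesic $\chi\colon\D\to\tilde D$ with endpoint $\xi$, and since $\tilde D$ has finite type Theorem~\ref{shadowingft}(iii) gives $\sup_{s\ge 0}k_{\tilde D}(\tilde\chi(s),\tilde\varphi(t_1+s))=:M<+\infty$. As $F$ restricts to a holomorphic map $\tilde D\to\D$, this forces $k_\D(F(\tilde\chi(s)),F(\tilde\varphi(t_1+s)))\le M$; since points at bounded Kobayashi distance in $\D$ have comparable distance to $\partial\D$, and since $1-|F(w)|\asymp-\Re\langle w-\xi,n_\xi\rangle$ for $w$ near $\xi$ (from $1-|\mathscr{C}^{-1}(u)|^2=-4\,\Re u/|1-u|^2$ with $u=\langle w-\xi,n_\xi\rangle\to 0$), we obtain $-\Re\langle\tilde\varphi(t_1+s)-\xi,n_\xi\rangle\asymp-\Re\langle\tilde\chi(s)-\xi,n_\xi\rangle$. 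Finally, \cite[Lemma 5.12]{AFGG} applied to the complex geodesic $\chi$ of $\tilde D$ gives a positive non-tangential limit $c:=\angle\lim_{z\to 1}\langle\chi'(z),n_\xi\rangle$; writing $H(z)=\langle\chi(z)-\xi,n_\xi\rangle$ and integrating $\tfrac{d}{ds}H(\gamma_\D(s))=H'(\gamma_\D(s))\gamma_\D'(s)$ from $s$ to $+\infty$ (using $H(\gamma_\D(s))\to 0$ and $\gamma_\D'(s)\sim 2e^{-s}$) yields $-\Re\langle\tilde\chi(s)-\xi,n_\xi\rangle\sim 2ce^{-s}$, completing the proof of the estimate.

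Granting the estimate, $1-|g(\gamma_\D(t))|=1-|F(\tilde\varphi(t))|\asymp-\Re\langle\tilde\varphi(t)-\xi,n_\xi\rangle\asymp e^{-t}\asymp 1-|\gamma_\D(t)|$, so $\liminf_{z\to 1}(1-|g(z)|)/(1-|z|)<\infty$, i.e.\ $g$ has finite dilation at $1$; moreover $g(\gamma_\D(t))=\mathscr{C}^{-1}(\langle\tilde\varphi(t)-\xi,n_\xi\rangle)\to\mathscr{C}^{-1}(0)=1$ along the radius, so by Theorem~\ref{JWC1d} the non-tangential limit of $g$ at $1$ equals $1$. The computation of the first paragraph then finishes the disc case, and the half-plane case follows as explained, with $\varphi'_N(0)$ defined analogously. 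The step I expect to be delicate is precisely the upper bound $-\Re\langle\tilde\varphi(t)-\xi,n_\xi\rangle=O(e^{-t})$: a priori nothing prevents an almost-geodesic from approaching $\xi$ so tangentially that its normal component decays strictly more slowly than $\delta_D$, which would make the dilation of $g$ infinite; excluding this is exactly where one must compare $\tilde\varphi$ with an honest complex geodesic of the finite-type localization and import the normal-derivative asymptotics of \cite[Lemma 5.12]{AFGG}, and it is the only place where the finite-type hypothesis is used in an essential way.
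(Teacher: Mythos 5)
Your proof is correct, and its skeleton is the same as the paper's: project onto the complex normal direction, transport to the disc by the Cayley transform, show the resulting self-map of $\D$ has finite dilation and radial limit $1$ at $1$ by shadowing $\tilde\varphi$ with a reference (quasi-)geodesic, and then invoke the one-variable Julia--Wolff--Carath\'eodory theorem. The genuine difference is the choice of reference curve. The paper compares $\tilde\varphi$ with the inner normal segment $s\mapsto \xi-\varepsilon e^{-s}n_\xi$, which is a $(1,\log 2)$ quasi-geodesic by \cite[Proposition 11.1]{Zim}; since its normal component is literally $-\varepsilon e^{-s}$, the shadowing estimate of Proposition \ref{shadowing}(iii) immediately gives $k_\H(\langle\tilde\varphi(t)-\xi,n_\xi\rangle,-e^{-t})\leq M$, hence the finite dilation, with no need for the two-sided asymptotics $-\Re\langle\tilde\varphi(t)-\xi,n_\xi\rangle\asymp e^{-t}$ nor for any prior knowledge about complex geodesics. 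You instead compare with a complex geodesic $\chi$ of the finite-type localization $\tilde D$ and then import \cite[Lemma 5.12]{AFGG} --- that is, the already-known special case (complex geodesics in bounded convex domains of finite type) of the very statement being generalized --- to pin down the normal component of $\tilde\chi$; your subsequent integration of $H'(\gamma_\D(s))\gamma_\D'(s)$ giving $-\Re\langle\tilde\chi(s)-\xi,n_\xi\rangle\sim 2ce^{-s}$ is correct. This is logically sound, since that lemma is an independent external result, but it makes the argument lean on the heavier input; the paper's choice of the normal segment is precisely what lets the proposition strictly extend \cite[Lemma 5.12]{AFGG} rather than depend on it. You correctly identify the upper bound $-\Re\langle\tilde\varphi(t)-\xi,n_\xi\rangle=O(e^{-t})$ as the crux --- in the paper this is exactly what shadowing against the normal segment delivers for free.
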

	
	\proof
	Assume first that the domain of $\varphi$ is $\H$.
	Up to a translation we can assume that $\xi=0$. Moreover up to a unitary change of coordinates of $\C^d$, we can assume that the outer normal versor $n_0$ is $e_0$, and thus the real tangent plane of $\partial D$ at $0$ is $\{{\rm Re}\,z_0=0$\}.
	By convexity of $D$ it follows that $\pi(D)\subseteq\H,$ where
	$\pi\colon\C^d\to\C$ denotes the projection to the coordinate $z_0$. Write $\varphi(\zeta)=(\varphi_0(\zeta),\varphi_1(\zeta))\in\C\times\C^{d-1}$ and notice that $\langle \varphi(\zeta),n_\xi\rangle=\varphi_0(\zeta)$.

	By \cite[Proposition 11.1]{Zim} there exists $\varepsilon>0$ such that the inner normal segment at the origin $s\mapsto(-\varepsilon e^{-s},0)$ is a $(1,\log 2)$ quasi-geodesic ray\footnote{Actually due to the different normalization of the Kobayashi distance, in \cite[Proposition 11.1]{Zim} it is shown that   $s\mapsto(-\varepsilon e^{-2s},0)$ is a $(1,\log \sqrt 2)$ quasi-geodesic} with respect to $k_D$, so by (iii) in Proposition \ref{shadowing} {there exists $M\geq 0$ such that, for all $t\geq 0$,
		\begin{align*}k_\H(\varphi_0(-e^{-t}),-e^{-t})&\leq k_\H(\varphi_0(-e^{-t}),-\varepsilon e^{-t})+k_\H(-\varepsilon e^{-t},-e^{-t})\\&\leq k_D(\tilde\varphi(t),(-\varepsilon e^{-t},0))+|\log\varepsilon|\leq M.\end{align*} 
		Hence $$\liminf_{t\to0^-}k_\H(t,-1)-k_\H(\varphi_0(t),-1)\leq M.$$
		Taking into account \eqref{dilatazionekob}, this shows that the map $\mathscr{C}^{-1}\circ \varphi_0\circ\mathscr{C}\colon \D\to \D$ has finite dilation at $1$ and has non-tangential limit 1 at 1. 
		By   the one-dimensional Julia--Wolff--Carath\'eodory theorem (Theorem \ref{JWC1d})   it follows that  $\varphi_0'(z)$ has positive non-tangential limit at $0$.

		Now assume that $\varphi\colon \D\to D$ is a holomorphic map with a regular contact point at $1$ with non-tangential limit $\xi$.
		Define
		$\psi\colon \H\to D$  as
		$\psi=\varphi\circ \mathscr{C}^{-1}$. Since $(\mathscr{C}^{-1})'(0)=2$ we have
		\begin{equation}\label{changenormal}
			\varphi'_N(1)=\angle\lim_{z\to 1} \langle \varphi'(z),n_\xi\rangle=\frac{1}{2}\angle\lim_{z\to 0} \langle \psi'(z),n_\xi\rangle=\frac{1}{2}\psi'_N(0)>0.
		\end{equation}
		
		\endproof

		\section{Multitype}
		
		We review the flag of complex subspaces of $\C^d$ introduced by Yu \cite{Yu}.
		Let $D\subset \C^d$ be a $\C$-proper convex domain and let $\xi\in \partial D$ be a point of finite type $L\geq 2$. Set $m_\xi(0):=+\infty$.
		For all integers $m\geq1$ define
		$$S_m:=\{v\in\C^d:m_\xi(v)\geq m\}.$$
		Yu shows, using the convexity of $D$, that the sets $S_m$ are complex linear subspaces.
		It immediately follows from the definition that $S_1=\C^d$,  $S_{m_2}\subseteq S_{m_1}$ for $m_1\leq m_2$ and that $S_{m}=\{0\}$ if $m>L$. Moreover $S_2$ coincides with the complex tangent subspace $T_\xi^\C\partial D$, indeed
		$$v\in S_2\iff \frac{\partial}{\partial \zeta}r(\xi+\zeta v)|_{\zeta=0}=0\iff \sum_{j=0}^{d-1}\frac{\partial r}{\partial z_j}(\xi)v_j=0.$$

		Now set $l_1=1$. For all $j\geq 1$  define recursively $l_{j}>l_{j-1}$ as the smallest integer such that $S_{l_{j}}\subsetneq S_{l_{j-1}}$. This procedure stops when we find an integer $j$ such that 
		$l_{j}=L$. We denote such integer $k$.
		We thus obtain the {\sl multitype  flag} of complex subspaces of $\C^d$:
		$$\{0\}\subsetneq S_L\subsetneq S_{l_{k-1}}\subsetneq\cdots\subsetneq S_{1}=\C^d.$$
		We say that  an orthonormal basis $v_0,\dots ,v_{d-1}$ of $\C^d$ is  a {\sl multitype basis} at $\xi$ if
		$v_0$ is the outer normal versor $n_\xi$ at $\xi$, and if the basis $v_0,\dots ,v_{d-1}$
		is adapted to  the multitype  flag (and presented in reverse order),  that is,
		for all  $j=0,\dots, k-1$ the last ${\rm dim}\, S_{l_{k-j}}$ vectors  are a basis of the subspace  $S_{l_{k-j}}$.
		For all $j=1,\dots , d-1$ set  $m_j:=m_\xi(v_j)$. 
		
		Following Yu, we call the vector  $(m_0,m_1,\cdots,m_{d-1})\in \N^d$ the {\sl (linear) multitype}  of $\partial D$ at $\xi$. If $\partial D$ is smoothly bounded, then it is proved in \cite{Yu} that $(m_0,m_1,\cdots,m_{d-1})$ coincides with the multitype in the sense  Catlin (see \cite{Cat}). Notice that $m_0=1$ and $2\leq m_j\leq L$ if $j=1,\dots,d-1$.

		By an affine unitary change of coordinates we can assume that $\xi=0$ and that  $v_j =e_j$ for $j=0,\dots, d-1$. 
		We call the new coordinates {\sl multitype coordinates}. In  multitype coordinates the outer normal  versor at the origin is $n_0=e_0$ and
		the hypersurface $\partial D$ has the following defining function in a neighborhood of the origin (see e.g. \cite{Gauss})
		\begin{equation}\label{normalform}
			r(z)=\Re z_0+H(z_1,\cdots,z_{d-1})+R(z),
		\end{equation}
		where $H\colon\C^{d-1}\longrightarrow\R$ is a convex non-negative polynomial which is 
		\begin{enumerate}
			\item {\sl  non-degenerate}, that is the set $\{H=0\}$ does not contain any complex line;
			\item {\sl weighted homogeneous} with respect to $(m_1,\dots,m_{d-1})$, i.e. for all $t>0$ and $(z_1,\cdots,z_{d-1})\in\C^{d-1}$ we have
			$$H\left(t^{1\slash m_1}z_1,\dots,t^{1\slash m_{d-1}}z_{d-1}\right)=tH(z_1,\cdots,z_{d-1}).$$
		\end{enumerate}
		
		The remainder satisfies
		$$R(z)=o\bigg(|z_0|+\sum_{j=1}^{d-1}|z_j|^{m_j}\bigg).$$
		
		We end this section introducing a dual version of the type of a vector at $\xi$. Denote by $(\C^d)^*$ the dual vector space of $\C^d$.
		\begin{definition}
			Let $D\subset \C^d$ be a $\C$-proper convex domain and let $\xi\in \partial D$ be a point of finite type $L\geq 2$.
			If $\theta\in (\C^d)^*\setminus\{0\}$, then we define its {\sl cotype} $M_\xi(\theta)$ at $\xi$ as the integer $1\leq l_{j_0}\leq L$, where $$j_0:=\max_{1\leq j\leq k}\{j:\theta\not\in {\rm Ann}(S_j)\},$$
			and $ {\rm Ann}(S_j)$ denotes the annihilator of $S_j$ in $(\C^d)^*$.
			Notice that, since $S_j$ is a decreasing sequence of subspaces, if $\theta\in  {\rm Ann}(S_j)$, then $\theta\in  {\rm Ann}(S_i)$ for all $i\geq j$.
			If $v\in \C^d\setminus\{0\}$ we
			define the {\sl cotype}  $M_\xi(v)$ of $v$ as the cotype of the linear functional $w\mapsto \langle w,v\rangle $.
		\end{definition}
		\begin{remark}
			If $v_0,\dots, v_{d-1}$ is a multitype basis at $\xi$ and $v=\sum a_j v_j\in \C^{d}\setminus\{0\}$, then 
			it is easy to see that $M_\xi(v)=\max\{m_j: a_j\neq0\}$.
			Notice that $m_\xi(v)=\min\{m_j: a_j\neq0\}$, so in particular we have $M_\xi(v_j)=m_\xi(v_j)$ for all $j=0,\dots,d-1$.

		\end{remark}
		\section{Scaling}\label{scalingsect}
		In this section we recall Gaussier's scaling in the normal direction and we use it to prove several properties of complex geodesics with endpoint of locally finite type.
		\begin{definition}
			If $H\colon\C^{d-1}\to \R$ is a convex non-negative non-degenerate weighted homogeneous polynomial 
			we call the domain $$D_H:=\{(z_0,w)\in\C\times \C^{d-1}: \Re z_0+H(w)<0\},\quad $$ a {\sl scaling model}.
			If $D$ is a $\C$-proper convex domain with a point of locally finite type at the origin  in multitype coordinates,
			then we say that the domain $D_H$, where $H$ is the polynomial given by \eqref{normalform}, is the {\sl scaling model} of $D$.
		\end{definition}
		We recall that the {\sl Hausdorff distance} between two compact sets $X,Y\subset \C^d$ is given by
		$$d_H(X,Y):=\max\left\{\sup_{x\in X}\inf_{y\in Y}\|x-y\|,\sup_{y\in Y}\inf_{x\in X}\|x-y\|\right\}.$$
		Moreover, a sequence of convex domains $(D_n)$ in $\C^d$ converges in the {\sl local Hausdorff topology} to a convex domain $D_\infty\subset \C^d$ if, for all $r>0$, 
		$$\lim_{n\to+\infty}d_H(\overline{D_n}\cap \overline{r\B^d},\overline{D_\infty}\cap \overline{r\B^d})=0.$$
		
		The following theorem is proved in \cite{Gauss}.
		\begin{theorem}
			Let $D\subset\C^d$ be a $\C$-proper convex domain  with a point of locally finite type at the origin  in multitype coordinates.
			Let $(\lambda_n)$ be a sequence in $\R_{>0}$ converging to $+\infty$ and for all $n\geq 0$ define the linear map $A_n\colon \C^d\to \C^d$ by
			$$A_n(z)=(\lambda_n z_0,\lambda_n^{1/m_1}z_1,\cdots,\lambda_n^{1/m_{d-1}}z_{d-1}).$$ Then the domain $A_nD$ converges in the local Hausdorff topology to the scaling model $D_H$ of $D$. 
		\end{theorem}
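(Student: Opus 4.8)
This is Gaussier's theorem; the plan for a proof is the following. Fix a neighbourhood $U$ of $0$ on which $\partial D$ is of class $C^L$ and $D\cap U=\{r<0\}\cap U$, with $r$ in the normal form \eqref{normalform}. First I would dispose of the part of $D$ away from the origin: every $z\in D\setminus U$ satisfies $\|z\|\ge\varepsilon_0$ for some fixed $\varepsilon_0>0$, so some coordinate $z_j$ has $|z_j|\ge\varepsilon_0/\sqrt d$, and since $m_j\le L$ we get $\|A_nz\|\ge\lambda_n^{1/m_j}|z_j|\ge\lambda_n^{1/L}\varepsilon_0/\sqrt d\to+\infty$ (for $n$ large), uniformly in $z\in D\setminus U$. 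Hence for every $R>0$ and all $n$ large, $A_nD\cap B(0,R)=A_n(D\cap U)\cap B(0,R)$, and since $A_n^{-1}B(0,R)\subset U$ for $n$ large this set equals $\{w\in B(0,R):r(A_n^{-1}w)<0\}$.

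The core of the argument is the scaling quasi-invariance of $r$. Because $H$ is weighted homogeneous with respect to $(m_1,\dots,m_{d-1})$, one computes
\[
r_n(w):=\lambda_n\,r(A_n^{-1}w)=\Re w_0+H(w_1,\dots,w_{d-1})+\lambda_n\,R(A_n^{-1}w),
\]
so it suffices to show that the remainder $\lambda_n R(A_n^{-1}w)$ tends to $0$ uniformly on compacta. This is exactly where the estimate $R(z)=o\bigl(|z_0|+\sum_j|z_j|^{m_j}\bigr)$ enters: on a fixed ball $\overline{B(0,R)}$ one has $A_n^{-1}w\to0$ uniformly, while $|(A_n^{-1}w)_0|+\sum_j|(A_n^{-1}w)_j|^{m_j}=\lambda_n^{-1}\bigl(|w_0|+\sum_j|w_j|^{m_j}\bigr)\le\lambda_n^{-1}C_R$, whence $|\lambda_n R(A_n^{-1}w)|\le\varepsilon C_R$ for $n$ large. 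Therefore $r_n\to r_\infty:=\Re w_0+H(w_1,\dots,w_{d-1})$ uniformly on compacta, and $\{r_\infty<0\}=D_H$.

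It remains to turn this uniform convergence of defining functions into local Hausdorff convergence of the sublevel sets. The point is that $r_\infty$ is strictly increasing in the direction $\Re w_0$, so one may correct by a vertical shift. Set $\sigma_n:=\|\lambda_n R\circ A_n^{-1}\|_{\infty,\overline{B(0,R+1)}}\to0$. If $r_\infty(w)\le0$ and $\|w\|\le R$, then $(w_0-\sqrt{\sigma_n},w_1,\dots,w_{d-1})$ lies in $A_nD$ (for $n$ large) and is $\sqrt{\sigma_n}$-close to $w$; conversely, if $w\in\overline{A_nD}$ and $\|w\|\le R$, then $r_n(w)\le0$, hence $r_\infty(w)\le\sigma_n$, so $(w_0-\sigma_n,w_1,\dots,w_{d-1})\in\overline{D_H}$ is $\sigma_n$-close to $w$. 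These two estimates give $A_nD\to D_H$ in the local Hausdorff topology.

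The step I expect to be delicate is the uniform control of the remainder under the anisotropic dilation $A_n$: one must be sure that it is the weighted quantities $|z_j|^{m_j}$, rather than the Euclidean ones, that absorb $R$ — a fact encoded in the normal form \eqref{normalform} and ultimately resting on the convexity of $D$ through Yu's construction of the multitype flag. The remaining steps — discarding $D\setminus U$ and upgrading the $C^0$-convergence of the $r_n$ to Hausdorff convergence of the domains — are routine.
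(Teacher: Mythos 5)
The paper does not prove this statement; it is quoted verbatim from Gaussier \cite{Gauss}, so there is no internal proof to compare against. Your sketch is a correct reconstruction of the standard scaling argument: the weighted homogeneity of $H$ makes $\lambda_n r\circ A_n^{-1}$ reproduce $\Re w_0+H$ exactly, the estimate $R(z)=o\bigl(|z_0|+\sum_j|z_j|^{m_j}\bigr)$ is precisely calibrated to the anisotropic dilation so that $\lambda_n R(A_n^{-1}w)\to0$ uniformly on compacta, the points of $D$ outside the coordinate neighbourhood are expelled from every fixed ball because every dilation factor is at least $\lambda_n^{1/L}$, and the strict monotonicity of $\Re w_0\mapsto r_\infty$ converts uniform convergence of the defining functions into local Hausdorff convergence of the sublevel sets. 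I see no gap.
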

		
		Points (i) and (ii) of the following corollary are proved in \cite[Lemma 4.4]{Zim} and \cite[Theorem 9.1]{GausZim}   respectively. For lack of a reference of point (iii) we give a short proof.
		\begin{corollary}\label{zimmerconvergence}
			In the assumptions of the previous theorem,
			\begin{itemize}
				\item[(i)] every compact subset $K\subset D_H$ is eventually contained in the domain $A_nD$;
				\item[(ii)] we have $\lim_{n\to+\infty}\kappa_{A_nD}=\kappa_{D_H}$ and $\lim_{n\to+\infty}k_{A_nD}=k_{D_H}$
				uniformly on compact sets of $D_H\times\C^d$ and $D_H\times D_H$, respectively;
				\item[(iii)] if $0<r<R$ and $z\in D_H$, then eventually $B_{A_nD}(z,r)\subseteq B_{D_H}(z, R).$
			\end{itemize}
		\end{corollary}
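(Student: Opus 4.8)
The plan is to prove point (iii); points (i) and (ii) are quoted from the references. First I would recall that, since $H$ is non-degenerate, the scaling model $D_H$ is a $\C$-proper convex domain, hence complete Kobayashi hyperbolic; in particular every closed Kobayashi ball of $D_H$ has compact closure inside $D_H$.

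Now fix $z\in D_H$ and $0<r<R$. I would pick an auxiliary radius $\rho\in(r,R)$ and a number $\varepsilon\in(0,\rho-r)$, and set $V:=B_{D_H}(z,\rho)$. By completeness $\overline V$ is a compact subset of $D_H$. By (i) there is $N_1$ with $\overline V\subseteq A_nD$ for all $n\ge N_1$, and by (ii) there is $N_2$ with
$$\sup_{(w,w')\in\overline V\times\overline V}\bigl|k_{A_nD}(w,w')-k_{D_H}(w,w')\bigr|<\varepsilon,\qquad n\ge N_2.$$
Setting $N:=\max\{N_1,N_2\}$, the goal becomes the inclusion $B_{A_nD}(z,r)\subseteq V$ for $n\ge N$, which suffices since trivially $V\subseteq B_{D_H}(z,R)$.

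To prove this inclusion, fix $n\ge N$ and $w\in B_{A_nD}(z,r)$. From the very definition of $k_{A_nD}$ through chains of analytic discs joining $z$ to $w$ of total length $<r$ (measured with $k_\D$), concatenating the corresponding radial paths yields a continuous curve $\gamma\colon[0,1]\to A_nD$ with $\gamma(0)=z$, $\gamma(1)=w$ and $k_{A_nD}(z,\gamma(t))<r$ for all $t$. Suppose, for contradiction, that $\gamma(t)\notin V$ for some $t$, and let $t_0:=\inf\{t\in[0,1]:\gamma(t)\notin V\}$. Since $z\in V$ and $V$ is open we have $t_0>0$ and $\gamma([0,t_0))\subseteq V$, hence $\gamma(t_0)\in\overline V$ by continuity; thus $\gamma(t)\in\overline V$ for all $t\le t_0$. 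The estimate above then gives, for every such $t$,
$$k_{D_H}(z,\gamma(t))\le k_{A_nD}(z,\gamma(t))+\varepsilon<r+\varepsilon<\rho,$$
so that $\gamma(t)\in V$; in particular $\gamma(t_0)\in V$, and since $V$ is open this contradicts the minimality of $t_0$. Hence $\gamma([0,1])\subseteq V$ and, in particular, $w\in V$, completing the proof of (iii).

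The one point that requires care is ruling out that $B_{A_nD}(z,r)$ reaches far away from $z$, into a region where $A_nD$ and $D_H$ are unrelated: this is exactly what the trapping argument along the interior curve $\gamma$ achieves, reducing everything to the uniform convergence (ii) on the single fixed compact neighbourhood $\overline V$ of $z$. The remaining verifications — completeness of $D_H$, continuity and path behaviour of $k_{A_nD}$ — are routine.
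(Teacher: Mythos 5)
Your argument is correct, and for the crucial step it takes a genuinely different route from the paper. Both proofs share the same skeleton: first trap $B_{A_nD}(z,r)$ inside a fixed compact subset of $D_H$, then invoke the uniform convergence in (ii) on that compact set. The difference is in the trapping mechanism. The paper appeals to an external result (\cite[Lemma 4.6]{Zim}: for $n$ large, every holomorphic disc $\varphi\colon\D\to A_nD$ with $\varphi(0)=z$ satisfies $\varphi(\delta\D)\subset D_H$) and then uses a complex geodesic of $A_nD$ joining $z$ to $w$, restricted to $\delta\D$, to bound $k_{D_H}(z,w)$ by a uniform constant $C$; this traps the ball in $\overline{B_{D_H}(z,C)}$ with $C$ possibly much larger than $R$, after which (ii) finishes. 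You instead bootstrap from (i) and (ii) alone via an open–closed (first-exit-time) argument along a curve realizing $k_{A_nD}(z,\cdot)<r$, trapping the ball directly in $B_{D_H}(z,\rho)$ with $\rho<R$. Your route is more self-contained — it avoids Zimmer's disc lemma entirely — at the cost of using that $k_{A_nD}$ is an inner distance (points at distance $<r$ are joined by a curve staying at distance $<r$ from $z$), which is immediate here since $A_nD$ is convex and one may take a single complex geodesic (Proposition \ref{extrcgeo}) rather than a chain of discs. One cosmetic remark: since $V$ is open in $\C^d$, the set $\gamma^{-1}(\C^d\setminus V)$ is closed, so its infimum $t_0$ already satisfies $\gamma(t_0)\notin V$, and your conclusion $\gamma(t_0)\in V$ is an outright contradiction with no further appeal to minimality needed. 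The preliminary facts you quote (non-degeneracy of $H$ implies $D_H$ is $\C$-proper, hence complete hyperbolic with compact closed Kobayashi balls) are standard and used implicitly throughout the paper.
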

		\proof
		Let $0<r<R$ and let $z\in D_H$. Notice that by (i) the point $z$ is eventually contained in $A_nD$.
		Fix $0<\delta<1$ such that $k_\D(0,\delta)>r$. Then by \cite[Lemma 4.6]{Zim} there exists $N_1\geq 0$ such that for all $n\geq N_1$, if $\varphi\colon \D\to A_nD$ is holomorphic with $\varphi(0)=z$, then $\varphi(\delta\D)\subset D_H$. 
		
		We claim that there exists $C> 0$ such that  for all $n\geq N_1$ we have $\overline{B_{A_nD}(z,r)}\subseteq \overline{B_{D_H}(z,C)}$. Indeed, let $n\geq N_1$, let $w\in \overline{B_{A_nD}(z,r)}$ and  consider  a complex geodesic $\varphi\colon \D\to A_nD$ such that $\varphi(0)=z$ and $\varphi(t)=w$ with $t:=\tanh(k_D(z,w)/2)$.
		The map $\varphi_\delta:\D\to \C^d$ given by $\varphi_\delta(\zeta):=\varphi(\delta\zeta)$ has image in $D_H$. Notice that $t\leq \tanh(r/2)<\delta$, so
		$$k_{D_H}(z,w)=k_{D_H}(\varphi_\delta(0),\varphi_\delta(t/\delta))\leq k_\D(0,t/\delta)\leq k_\D(0,\tanh(r/2)\slash\delta)=:C,$$
		which proves the claim. 
		The result now follows using (ii) on the compact subset $K:=\overline{B_{D_H}(z,C)}$.
		\endproof
		Notice that in  multitype coordinates the inner normal segment $\sigma\colon[t_0,1)\to\C^d$ with endpoint $\xi=0$ is given by 
			$\sigma(t)=(t-1,0)\in\C\times\C^{d-1}.$
		\begin{corollary}\label{scalingrelcomp}
			In the assumptions of the previous theorem,
			if  $(z_n)$ is a sequence in $D$ converging to 0 and there exists $M\geq 0$ such that $k_D(z_n,\sigma)\leq M$ for all $n\geq 0$, then there exist $\lambda_n\to +\infty$ and $N\geq 0$ such that the sequence
			$(A_nz_n)_{n\geq N}$ is relatively compact in $D_H$.
		\end{corollary}
		\begin{proof}
		 Let $(a_n)$ be a sequence of points in $\R_{>0}$   such that $k_D(z_n,(-a_n,0))\leq M$  for all $n\geq 0$. Clearly  $a_n\to 0$. Set $\lambda_n:=\frac{1}{a_n}.$ Then for all $n\geq 0$ we have  $A_n(-a_n,0)=(-1,0)$  and 
		$k_{A_nD}(A_nz_n,(-1,0))\leq M$. By (iii) of  Corollary  \ref{zimmerconvergence}, given  $M'>M$ there exists $N\geq 0$ such that for all $n\geq N$ we have 
		$$A_nz_n\in B_{A_nD}((-1,0),M)\subset B_{D_H}((-1,0),M')\subset\subset D_H.$$
		
		\end{proof}
		
		\begin{definition}
			Let $D\subset\C^d$ be a $\C$-proper convex domain and let $\xi$ be a point of locally finite type in $\partial D$.
			Two holomorphic maps  $\varphi, \psi\colon \D\to D$  with a regular contact point at 1 and with endpoint $\xi$ are {\sl strongly asymptotic}
			if there exists
			$T\in \R$  such that
			$$\displaystyle \lim_{t\to +\infty}k_D(\tilde \varphi(t), \tilde\psi(t+T))=0.$$	
		\end{definition}

		In the next proposition we prove that if we rescale any holomorphic map $\varphi:\H\to D$ with a  regular contact point at $0$ and endpoint at the origin we obtain in the limit the ``slice'' complex geodesic $\zeta\mapsto(\varphi'_N(0)\zeta,0)$ of the model domain $D_H$.
		This considerably generalizes  \cite[Lemma 5.13]{AFGG}, where  $D$ is assumed  to be a bounded convex domain of  finite type and where $\varphi$ is assumed to be a complex geodesic.
		
		\begin{proposition}\label{scaling1}
			Let $D\subset\C^d$ be a $\C$-proper convex domain with a point of locally finite type at the origin in multitype coordinates.
			Let $D_H$ be the scaling model of $D$.
			Let $\varphi\colon \H\to D$ be a holomorphic map with a regular contact point at $0$ and endpoint the origin.  Let $(\lambda_n)$ be a sequence in $\R_{>0}$ converging to $+\infty$. Then the sequence $(A_n\varphi(\lambda_n^{-1}\cdot))$ converges  uniformly on compact subsets to the complex geodesic $\hat\varphi\colon \H\to D_H$ defined by $$\hat{\varphi}(\zeta)=(\varphi'_N(0)\zeta,0).$$ 
		\end{proposition}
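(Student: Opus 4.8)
The plan is to use the scaling machinery (Corollary \ref{zimmerconvergence}), the strong asymptoticity of almost-geodesics (Proposition \ref{shadowing}(iii)), and the already-established fact that the normal component of $\varphi'$ has a positive non-tangential limit $\varphi'_N(0)$ (Proposition \ref{normalderivative}). Write $\varphi=(\varphi_0,\varphi_1)\in\C\times\C^{d-1}$ in multitype coordinates, so that $\langle\varphi,n_0\rangle=\varphi_0$ and $\pi(D)\subseteq\H$ where $\pi$ is projection onto $z_0$. Set $\varphi_n(\zeta):=A_n\varphi(\lambda_n^{-1}\zeta)$, a sequence of holomorphic maps $\H\to A_nD$. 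First I would show this sequence is precompact for local uniform convergence: by Corollary \ref{zimmerconvergence}(i) any compact $K\subset D_H$ is eventually contained in $A_nD$, and since the $A_nD$ eventually contain a fixed neighborhood of any point of $D_H$ while staying inside the fixed half-space $\{\Re z_0<0\}$-type bound coming from convexity, a normal families / Montel argument gives a subsequential limit $\hat\varphi\colon\H\to\overline{D_H}$ with image in $D_H$ (using that $\varphi_n(\gamma_\H(t))$ stays at bounded Kobayashi distance from a fixed inner normal segment, hence is eventually relatively compact in $D_H$ by Remark \ref{scalingrelcomp}, so the limit does not escape to the boundary).

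Next I would identify the limit. The curve $t\mapsto\tilde\varphi(t)=\varphi(\gamma_\H(t))=\varphi(-e^{-t})$ is an almost-geodesic ray with endpoint $0$; pairing it against the inner normal segment $\sigma(t)=(-e^{-t},0)$, which is a $(1,B)$ quasi-geodesic ray by \cite[Proposition 11.1]{Zim}, Proposition \ref{shadowing}(iii) gives $\sup_t k_D(\tilde\varphi(t),\sigma(t))<\infty$. Applying $A_n$ and passing to the limit via Corollary \ref{zimmerconvergence}(ii), the rescaled curve $t\mapsto\hat\varphi(-e^{-t})$ stays at bounded $k_{D_H}$-distance from the inner normal ray of $D_H$, hence is an almost-geodesic in $D_H$ with endpoint $0$; in particular $\hat\varphi$ is non-constant. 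To pin down $\hat\varphi$ exactly, I would track the first coordinate separately: $(\varphi_n)_0(\zeta)=\lambda_n\varphi_0(\lambda_n^{-1}\zeta)$, and since $\varphi_0\colon\H\to\H$ fixes $0$ with $\angle\lim_{z\to 0}\varphi_0'(z)=\varphi_0'(0)=\varphi'_N(0)>0$ (Proposition \ref{normalderivative}), the dilation structure of one-variable maps forces $\lambda_n\varphi_0(\lambda_n^{-1}\zeta)\to\varphi'_N(0)\,\zeta$ locally uniformly on $\H$ — this is the standard parabolic/hyperbolic rescaling limit of a holomorphic self-map of $\H$ with a contact point, and can be extracted cleanly from the Julia lemma / Lindelöf principle in one variable. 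Thus the $z_0$-component of $\hat\varphi$ is $\varphi'_N(0)\zeta$. For the remaining components, the weighted homogeneity of $H$ and the defining inequality $\Re((\varphi_n)_0)+H((\varphi_n)_1)<0$, together with $\Re((\varphi_n)_0(\zeta))\to\varphi'_N(0)\Re\zeta$, forces $H(\hat\varphi_1(\zeta))\le -\varphi'_N(0)\Re\zeta$; but $\hat\varphi_1$ is holomorphic on $\H$ and $H$ is a non-negative non-degenerate convex polynomial, and one checks (using that $\hat\varphi$ already attains the extremal boundary behavior forced by the bounded distance to the normal ray, so $\hat\varphi$ is a complex geodesic of $D_H$ through $(\varphi'_N(0),0)$-type points) that $\hat\varphi_1\equiv 0$. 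Hence $\hat\varphi(\zeta)=(\varphi'_N(0)\zeta,0)$, which is indeed a complex geodesic of $D_H$, and since the limit is independent of the subsequence the whole sequence converges.

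The main obstacle I expect is ruling out that the rescaled components $(\varphi_n)_1$ develop a nonzero limit — i.e. showing $\hat\varphi_1\equiv 0$ — since a priori the tangential part of $\varphi$ could, after the anisotropic dilation $A_n$ (which stretches the $j$-th coordinate by $\lambda_n^{1/m_j}\gg\lambda_n^0$ relative to... wait, by $\lambda_n^{1/m_j}$ which is much smaller than $\lambda_n$ but still unbounded), survive in the limit. The resolution is that $\hat\varphi$ must be a complex geodesic of $D_H$ with the prescribed endpoint and prescribed normal derivative $\varphi'_N(0)$ at $0$: uniqueness/rigidity of such slice geodesics in the model domain $D_H$ (the half-space over a non-degenerate weighted-homogeneous convex polynomial) forces the slice form. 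Concretely, from $H(\hat\varphi_1(\zeta))\le-\varphi'_N(0)\Re\zeta$ and the fact that equality is forced in the limit along the normal ray (bounded distance), the function $\zeta\mapsto H(\hat\varphi_1(\zeta))+\varphi'_N(0)\Re\zeta$ is a non-negative pluriharmonic-dominated quantity vanishing along a curve to the boundary, and non-degeneracy of $H$ (no complex line in $\{H=0\}$) then kills $\hat\varphi_1$. I would fill in this rigidity step carefully, as it is where the finite-type hypothesis is genuinely used. All other steps are applications of results already in the excerpt.
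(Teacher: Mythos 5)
Your overall architecture matches the paper's: precompactness of the rescaled family, identification of the first coordinate via the one-variable Julia--Wolff--Carath\'eodory rescaling $\lambda_n\varphi_0(\lambda_n^{-1}\zeta)\to\varphi_N'(0)\zeta$, the observation that the limit is a complex geodesic of $D_H$, and finally the vanishing of the tangential components. The first three steps are essentially sound (the paper proves extremality by a direct distance computation using the almost-geodesic property of $\tilde\varphi$; your route via bounded distance to the rescaled normal ray, or simply via the left inverse $z\mapsto z_0/\varphi_N'(0)$ once $\hat\varphi_0(\zeta)=\varphi_N'(0)\zeta$ is known, is equivalent in spirit).

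The genuine gap is the last step, $\hat\varphi_1\equiv 0$, which is exactly the step you flag as the main obstacle and then resolve incorrectly. First, a sign slip: $H(\hat\varphi_1(\zeta))+\varphi_N'(0)\Re\zeta\le 0$ on $\H$, not $\ge 0$. More seriously, the mechanism you propose --- the domain inequality $H(\hat\varphi_1(\zeta))\le-\varphi_N'(0)\Re\zeta$ plus non-degeneracy of $H$ (no complex line in $\{H=0\}$) plus the geodesic property with linear first component --- does not force $\hat\varphi_1\equiv 0$. Take the scaling model $D_H=\{\Re z_0+(\Re z_1)^2<0\}$ (the tube domain of Remark \ref{restrictedball}; here $H(z_1)=(\Re z_1)^2$ is convex, non-negative, weighted homogeneous and non-degenerate). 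Then for any $\beta\in\R\setminus\{0\}$ the map $\zeta\mapsto(c\zeta,i\beta)$ satisfies your inequality with $H(\hat\varphi_1)\equiv 0$, has linear first component, and is even a complex geodesic of $D_H$; yet its tangential component is not zero. So neither non-degeneracy of $H$ nor ``rigidity of slice geodesics'' can do the job --- and the paper explicitly warns that uniqueness of complex geodesics with prescribed endpoint is not known, so there is no uniqueness statement in the excerpt to appeal to. What actually kills $\hat\varphi_1$ is the additional information that $\hat\varphi$ arises as the limit of the anisotropic rescalings of a map converging (non-tangentially) to the origin, i.e.\ one must show that $\lambda_n^{1/m_j}\varphi_j(\lambda_n^{-1}\zeta)\to 0$, which is a quantitative estimate on the tangential components of $\varphi$ near $0$ and is precisely the content of the proof of \cite[Lemma 5.13]{AFGG} that the paper invokes at this point. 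Your claim that ``all other steps are applications of results already in the excerpt'' therefore does not cover the decisive step, and the sketch you give of it would fail.
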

		\begin{proof}
			By the proof of \cite[Lemma 5.13]{AFGG} the sequence  $(A_n\varphi(\lambda_n^{-1}\cdot))$ converges (up to a subsequence) uniformly on compact subsets to a holomorphic
			map $\hat\varphi\colon \D\to D_H$ such that if we write $$\hat\varphi(\zeta)=(\hat\varphi_0(\zeta),\hat\varphi_1(\zeta))\in\C\times\C^{d-1},$$ we have
			$\hat\varphi_0(\zeta)=\varphi'_N(0)\zeta.$
			Fix $s,t<0$. We claim that $k_{D_H}(\hat \varphi(s),\hat \varphi(t)))=k_\H(s,t).$
			Indeed
			$$ k_{D_H}(\hat \varphi(s),\hat \varphi(t))=\lim_{n\to+\infty} k_{A_nD}(A_n\varphi(\lambda_n^{-1}s),A_n\varphi(\lambda_n^{-1}t))\\=\lim_{n\to+\infty} k_{D}(\varphi(\lambda_n^{-1}s),\varphi(\lambda_n^{-1}t)).$$
			Clearly for all $n\geq 0$ we have $k_{D}(\varphi(\lambda_n^{-1}s),\varphi(\lambda_n^{-1}t))\leq  k_\H(\lambda_n^{-1}s,\lambda_n^{-1}t)=k_\H(s,t)$.
			On the other hand, fix $\varepsilon>0$.  
			If $n$ is large enough, then $\lambda_n^{-1}s$ and $\lambda_n^{-1}t$ are $>-1,$ and thus
			$$ k_{D}(\varphi(\lambda_n^{-1}s),\varphi(\lambda_n^{-1}t))= k_D(\tilde \varphi(-\log(-\lambda_n^{-1}s)),\tilde \varphi(-\log(-\lambda_n^{-1}t))).$$
			Since $\tilde\varphi$ is an almost geodesic, if $n$ is large enough we have $$k_D(\tilde \varphi(-\log(-\lambda_n^{-1}s)),\tilde \varphi(-\log(-\lambda_n^{-1}t)))\geq \left|\log\frac{s}{t}\right|-\varepsilon=k_\H(s,t)-\varepsilon,$$
			and thus $k_{D_H}(\hat \varphi(s),\hat \varphi(t)))\geq k_\H(s,t)-\varepsilon$, and the claim is proved.
			It follows  that $\hat \varphi\colon \H\to D_H$ is an extremal map and thus by Proposition \ref{extrcgeo} the map  $\hat\varphi$ is a complex geodesic.
			It now follows from the proof of \cite[Lemma 5.13]{AFGG} that $\hat\varphi_1\equiv0$, and thus
			$\hat{\varphi}(\zeta)=(\varphi'_N(0)\zeta,0).$
		\end{proof}

		\begin{corollary}\label{approchfintyp+}
			Let $D\subset\C^d$ be a $\C$-proper convex domain and let $\xi\in\partial D$ be a point of locally finite type.
			Let $\varphi,\psi\colon\D\to D$ be two holomorphic maps with a regular contact point at 1 and endpoint $\xi$, and satisfying $\varphi'_N(1)=\psi'_N(1)$. Then	$$\angle\lim_{z\to 1}k_{D} (\varphi(z),\psi(z))=0.$$
		\end{corollary}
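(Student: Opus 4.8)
The plan is to reduce everything to the half-plane model and then feed the rescaling result of Proposition \ref{scaling1} into the convergence of Kobayashi distances provided by Corollary \ref{zimmerconvergence}. First I would compose with the Cayley transform: put $\varphi_\H:=\varphi\circ\mathscr{C}^{-1}$ and $\psi_\H:=\psi\circ\mathscr{C}^{-1}$. These are holomorphic maps $\H\to D$ with a regular contact point at $0$ and endpoint $\xi$, since $\widetilde{\varphi_\H}=\varphi_\H\circ\gamma_\H=\varphi\circ\mathscr{C}^{-1}\circ\mathscr{C}\circ\gamma_\D=\tilde\varphi$ (and likewise for $\psi_\H$), so the almost-geodesic property and the endpoint are inherited; moreover by \eqref{changenormal} the hypothesis $\varphi'_N(1)=\psi'_N(1)$ translates into $(\varphi_\H)'_N(0)=(\psi_\H)'_N(0)=:\beta$, which is positive by Proposition \ref{normalderivative}. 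Since the Cayley transform carries non-tangential approach to $1\in\partial\D$ into non-tangential approach to $0\in\partial\H$, it suffices to prove $\angle\lim_{z\to 0}k_D(\varphi_\H(z),\psi_\H(z))=0$. Finally I would apply an affine unitary change of coordinates to put $\xi$ at the origin in multitype coordinates, so that $D$ has a scaling model $D_H$ as in Section \ref{scalingsect}.

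Next, take an arbitrary sequence $(z_n)$ in $\H$ converging non-tangentially to $0$, and set $\lambda_n:=|z_n|^{-1}\to+\infty$ and $w_n:=\lambda_n z_n$, so $|w_n|=1$. The Stolz condition $|z_n|\le M(-\Re z_n)$ gives $-\Re w_n\ge 1/M$, so the $w_n$ lie in a fixed compact subset of $\H$; passing to a subsequence, $w_n\to w_\infty\in\H$. Applying Proposition \ref{scaling1} to $\varphi_\H$ and to $\psi_\H$ with this sequence $(\lambda_n)$, the rescaled maps $A_n\varphi_\H(\lambda_n^{-1}\,\cdot\,)$ and $A_n\psi_\H(\lambda_n^{-1}\,\cdot\,)$ converge, uniformly on compact subsets of $\H$, to the \emph{same} complex geodesic $\hat\varphi(\zeta)=(\beta\zeta,0)$ of $D_H$ (here is where the equality $\varphi'_N=\psi'_N$ is used). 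Combining uniform convergence on compacts with $w_n\to w_\infty$, and noting $\lambda_n^{-1}w_n=z_n$, we get $A_n\varphi_\H(z_n)\to P$ and $A_n\psi_\H(z_n)\to P$, where $P:=(\beta w_\infty,0)\in D_H$.

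Since each $A_n$ is a linear automorphism of $\C^d$ carrying $D$ biholomorphically onto $A_nD$, it is a $k$-isometry, so
\[
k_D(\varphi_\H(z_n),\psi_\H(z_n))=k_{A_nD}\bigl(A_n\varphi_\H(z_n),\,A_n\psi_\H(z_n)\bigr).
\]
For $n$ large, both arguments lie in a fixed compact neighborhood of $P$ in $D_H$; on such a compact set $k_{A_nD}\to k_{D_H}$ uniformly by Corollary \ref{zimmerconvergence}(ii), and $k_{D_H}$ is continuous, so the right-hand side tends to $k_{D_H}(P,P)=0$. Thus $k_D(\varphi_\H(z_n),\psi_\H(z_n))\to 0$ along the subsequence. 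Running this argument along an arbitrary subsequence of $(z_n)$, every subsequence of $\bigl(k_D(\varphi_\H(z_n),\psi_\H(z_n))\bigr)$ has a further subsequence tending to $0$, hence the whole sequence tends to $0$; as $(z_n)$ was an arbitrary non-tangential sequence, $\angle\lim_{z\to 0}k_D(\varphi_\H(z),\psi_\H(z))=0$, which is the claim.

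The only genuinely delicate point is the bookkeeping in the last two paragraphs: one must scale precisely by $\lambda_n=|z_n|^{-1}$ so that the inner argument $w_n=\lambda_n z_n$ stays in a compact subset of $\H$ — this is exactly where the non-tangential hypothesis is consumed — and then one must combine the uniform convergence of the metrics $k_{A_nD}$ on a compact set with the convergence of the two evaluation points to the common limit $P$. Everything else is a direct application of Proposition \ref{scaling1} and Corollary \ref{zimmerconvergence}.
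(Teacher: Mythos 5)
Your proposal is correct and follows essentially the same route as the paper's proof: pass to the half-plane, rescale by the normal displacement (the paper uses $\lambda_n=|\Re\zeta_n|^{-1}$ rather than $|z_n|^{-1}$, which is immaterial under the Stolz condition), and use Proposition \ref{scaling1} together with Corollary \ref{zimmerconvergence}(ii) to see that both rescaled evaluation points converge to the same point of $D_H$, forcing the Kobayashi distance to vanish. The only cosmetic difference is that you argue directly with subsequences while the paper phrases it as a proof by contradiction.
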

		
		\proof
		Consider   multitype coordinates.  We work with $\H$ instead of $\D$, the result will follow by composing with the Cayley transform.
		Let  $\varphi,\psi\colon\H\to D$ be two holomorphic maps with a regular contact point at 0 and endpoint $\xi$, and satisfying $\varphi'_N(0)=\psi'_N(0)=:c$.
		By contradiction, suppose that there exists a sequence $(\zeta_n)$ in $\H$ converging non-tangentially to 0 such that
		$$\lim_{n\to+\infty}k_{D} (\varphi(\zeta_n),\psi(\zeta_n))=M>0.$$
		For all $n\geq 0$ set $\lambda_n:=|\Re\zeta_n|^{-1}$ and  define the  maps $\varphi_n(z):=A_n\varphi(\lambda_n^{-1}z)$ and $\psi_n(z):=A_n\psi(\lambda_n^{-1}z)$. By Proposition \ref{scaling1} the two sequences converge uniformly on compact subsets to
		$$\hat{\varphi}(\zeta)=(c\zeta,0)=\hat{\psi}(\zeta).$$
		
		Since $\zeta_n\to0$ non-tangentially, there exists $A>0$ such that $|\Im\zeta_n|<A|\Re\zeta_n|$, and thus $\lambda_n\zeta_n$ converges up to a subsequence to $\hat{\zeta}\in\{\zeta\in\C: \Re\zeta=-1, |\Im\zeta|\leq A\}\subset\subset\H$. It follows that
		$$0<M=\lim_{n\to+\infty}k_{D} (\varphi(\zeta_n),\psi(\zeta_n))=\lim_{n\to+\infty}k_{A_nD} (A_n\varphi(\zeta_n),A_n\psi(\zeta_n))=k_{D_H}(\hat\varphi(\hat\zeta),\hat\psi(\hat\zeta))=0,$$
		which gives  a contradiction. 
		\endproof
		\begin{corollary}\label{approaching}
			Let $D\subset\C^d$ be a $\C$-proper convex domain and let $\xi\in\partial D$ be a point of  locally finite type. 
			Let $\varphi,\psi\colon\D\to D$ be two holomorphic maps with a regular contact point at 1 and with endpoint $\xi$. 
			Then $\varphi$ and $\psi$ are strongly asymptotic.
		\end{corollary}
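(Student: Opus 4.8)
The plan is to reduce to Corollary \ref{approchfintyp+} after reparametrizing $\psi$ by a hyperbolic automorphism of $\D$ fixing $1$, chosen so that the two normal derivatives at the contact point coincide. By Proposition \ref{normalderivative} the numbers $a:=\varphi'_N(1)$ and $b:=\psi'_N(1)$ are strictly positive. For $s\in\R$ let $m_s(\zeta):=e^{-s}\zeta$ and set $h_s:=\mathscr{C}^{-1}\circ m_s\circ\mathscr{C}\in\Aut(\D)$; this is the hyperbolic automorphism of $\D$ fixing $1$, it extends holomorphically past $1$ with $h_s(1)=1$ and $h_s'(1)=e^{-s}$, and since $\mathscr{C}\circ\gamma_\D=\gamma_\H$ one checks directly that $h_s(\gamma_\D(t))=\gamma_\D(t+s)$ for all $t\geq 0$.

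First I would put $T:=\log(b/a)\in\R$ and consider the holomorphic map $\psi\circ h_T\colon\D\to D$. Its associated curve satisfies $\widetilde{\psi\circ h_T}(t)=\psi(h_T(\gamma_\D(t)))=\psi(\gamma_\D(t+T))$, which for $t$ large equals $\tilde\psi(t+T)$; being a time-translate of the almost-geodesic ray $\tilde\psi$, and the almost-geodesic condition being invariant under time-translation with the endpoint unchanged, $\widetilde{\psi\circ h_T}$ is again an almost-geodesic ray with endpoint $\xi$ of locally finite type, so $1$ is a regular contact point of $\psi\circ h_T$. Moreover $h_T'(1)=e^{-T}=a/b$ and $h_T$ maps non-tangential approach regions at $1$ into non-tangential approach regions at $1$; hence, by the chain rule together with Proposition \ref{normalderivative} applied to $\psi$,
$$(\psi\circ h_T)'_N(1)=h_T'(1)\,\psi'_N(1)=\frac{a}{b}\cdot b=a=\varphi'_N(1).$$

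Then I would apply Corollary \ref{approchfintyp+} to the pair $\varphi,\ \psi\circ h_T$, which have a regular contact point at $1$, endpoint $\xi$, and equal normal derivatives there, obtaining $\angle\lim_{\zeta\to 1}k_D(\varphi(\zeta),(\psi\circ h_T)(\zeta))=0$. Evaluating this non-tangential limit along the geodesic ray $\gamma_\D(t)=\tanh(t/2)$, which tends to $1$ inside a non-tangential approach region, gives
$$\lim_{t\to+\infty}k_D\bigl(\tilde\varphi(t),\widetilde{\psi\circ h_T}(t)\bigr)=\lim_{t\to+\infty}k_D\bigl(\tilde\varphi(t),\tilde\psi(t+T)\bigr)=0,$$
so $\varphi$ and $\psi$ are strongly asymptotic with shift $T=\log(b/a)$.

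I do not expect a genuine obstacle here: all the analytic substance already lies in Proposition \ref{normalderivative} and Corollary \ref{approchfintyp+}, and what remains is the choice of the correct automorphism. The only points requiring (routine) care are the translation-invariance of the almost-geodesic condition, used to check that $\psi\circ h_T$ still has a regular contact point at $1$, and the elementary behaviour of the Möbius map $h_T$ near its fixed boundary point $1$ (that it extends holomorphically past $1$ and preserves non-tangential approach regions), used to compute $(\psi\circ h_T)'_N(1)$; equivalently, conjugating by the Cayley transform turns $h_T$ into the dilation $m_T$ of $\H$ fixing $0$, which trivially preserves cones with vertex $0$.
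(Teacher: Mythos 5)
Your proof is correct and follows essentially the same route as the paper: the paper also reparametrizes $\psi$ so that the normal derivatives at the contact point agree and then invokes Corollary \ref{approchfintyp+}, the only difference being that it performs the reparametrization directly in the half-plane model as $\theta(z)=\psi(\lambda z)$ with $\lambda=\varphi'_N(0)/\psi'_N(0)$, whereas you conjugate that dilation back to $\D$ via the Cayley transform. The resulting shift $T=\log(\psi'_N(1)/\varphi'_N(1))$ agrees with the paper's (cf. Proposition \ref{Tderiv}).
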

		\begin{proof}
			Let $\lambda:=\varphi'_N(0)/\psi'_N(0)$, and define a holomorphic map $\theta\colon\H\to D$ by $\theta(z)=\psi(\lambda z)$.
			Then $$\tilde \theta(t)=\psi(-e^{\log\lambda-t})=\tilde\psi(t-\log\lambda),$$
			and $\theta'_N(0)=\varphi'_N(0)$, hence the result follow from Corollary \ref{approchfintyp+}.
		\end{proof}
		
		As another application of the scaling method we prove the following lemma, which will be an important tool in the proof of our main Theorem \ref{maint}. The result is interesting already in the particular case of a complex geodesic $\varphi\colon \D\to D$.
		\begin{lemma}\label{normalrestricted}
			Let $D\subset\C^d$ be a $\C$-proper convex domain and let $\xi\in\partial D$ be a point of locally finite type. 
			Let $\varphi\colon \D\to D$ be a holomorphic map with a regular contact point at 1 and with endpoint $\xi$, let $n_\xi$ denote the outer normal versor in $\xi$. Then  if we parametrize the inner normal segment at $\xi$  as
			$$\sigma(t)=\xi+(t-1)\varphi_N'(1)n_\xi, \quad t\in[t_0,1),$$
			we have
			$$\lim_{t\to1^-}k_D(\varphi(t),\sigma(t))=0.$$
		\end{lemma}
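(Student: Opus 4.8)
The plan is to deduce this from Gaussier's scaling, Proposition~\ref{scaling1}: after rescaling in the normal direction, both $\varphi(t)$ and $\sigma(t)$ will be carried onto the same point of the slice geodesic of the scaling model $D_H$, and the conclusion will follow from the convergence of Kobayashi distances in Corollary~\ref{zimmerconvergence}(ii).

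First I would pass to multitype coordinates, so that $\xi=0$, $n_\xi=e_0$, and $D$ has the normal form \eqref{normalform} with scaling model $D_H$. Write $c:=\varphi_N'(1)>0$; this limit exists and is positive by Proposition~\ref{normalderivative}, and in these coordinates $\sigma(t)=\bigl((t-1)c,0\bigr)\in\C\times\C^{d-1}$, which lies in $D$ for $t$ close to $1$ since the inner normal direction points into $D$. It is enough to show that $k_D(\varphi(t_n),\sigma(t_n))\to0$ for an arbitrary sequence $(t_n)$ in $[t_0,1)$ with $t_n\to1^-$.

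Next I would set $\psi:=\varphi\circ\mathscr{C}^{-1}\colon\H\to D$. Since $\gamma_\H=\mathscr{C}\circ\gamma_\D$ one has $\tilde\psi=\tilde\varphi$, so $0\in\partial\H$ is a regular contact point of $\psi$ with endpoint the origin, and by \eqref{changenormal} one has $\psi_N'(0)=2c$. Put $\lambda_n:=\frac{1+t_n}{1-t_n}\to+\infty$ and let $A_n(z)=(\lambda_n z_0,\lambda_n^{1/m_1}z_1,\dots,\lambda_n^{1/m_{d-1}}z_{d-1})$. On the one hand, $A_n\sigma(t_n)=\bigl(-(1+t_n)c,0\bigr)\to(-2c,0)\in D_H$. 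On the other hand, since $\mathscr{C}(t_n)=-\lambda_n^{-1}$ we have $\varphi(t_n)=\psi(-\lambda_n^{-1})$, and Proposition~\ref{scaling1} applied to $\psi$ with the sequence $(\lambda_n)$ gives that $A_n\psi(\lambda_n^{-1}\,\cdot\,)$ converges uniformly on compact subsets of $\H$ to $\hat\psi(\zeta)=(2c\zeta,0)$; evaluating at $\zeta=-1$ yields $A_n\varphi(t_n)\to\hat\psi(-1)=(-2c,0)$. Thus both rescaled sequences converge to the same point of $D_H$.

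Finally, since $A_n$ is a linear automorphism, $k_D(\varphi(t_n),\sigma(t_n))=k_{A_nD}(A_n\varphi(t_n),A_n\sigma(t_n))$, and as both arguments eventually lie in a fixed compact neighbourhood of $(-2c,0)$ in $D_H$, Corollary~\ref{zimmerconvergence}(ii) together with the continuity of $k_{D_H}$ forces this quantity to converge to $k_{D_H}\bigl((-2c,0),(-2c,0)\bigr)=0$. As $(t_n)$ was arbitrary, this proves that $\lim_{t\to1^-}k_D(\varphi(t),\sigma(t))=0$. The argument is short once the scaling machinery is available; the only points that require care are verifying that $\psi=\varphi\circ\mathscr{C}^{-1}$ inherits a regular contact point at $0$ so that Proposition~\ref{scaling1} genuinely applies, and bookkeeping the factor $2$ coming from the Cayley transform so that the rescalings of $\varphi(t_n)$ and of $\sigma(t_n)$ are normalized consistently and hit the same limit point of $D_H$.
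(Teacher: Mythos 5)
Your proposal is correct and follows essentially the same route as the paper: pass to $\H$ via the Cayley transform, rescale with $\lambda_n\asymp(1-t_n)^{-1}$, use Proposition~\ref{scaling1} (with the factor $2$ from \eqref{changenormal}) to see that both $A_n\varphi(t_n)$ and $A_n\sigma(t_n)$ converge to the same point $(-2\varphi_N'(1),0)$ of $D_H$, and conclude by the local uniform convergence of the Kobayashi distances in Corollary~\ref{zimmerconvergence}(ii). The only difference is cosmetic: the paper argues by contradiction and also transports $\sigma$ through the Cayley transform, whereas you compute the rescaled normal segment directly on the disc parametrization.
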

		\proof
		Consider  multitype coordinates. Then $\sigma(t)=(\varphi'_N(1)(t-1),0)$. Set $\psi:=\varphi\circ\mathscr{C}^{-1}$, $\theta(t):=\sigma\circ \mathscr{C}^{-1}(t)=\left(\frac{2t}{1-t}\varphi'_N(1),0\right)$. We will show that
		$$\lim_{t\to0^-}k_D(\psi(t),\theta(t))=0.$$
		Assume by contradiction that this is not the case. Then there exist a sequence $(t_n)$ in $\R_{<0}$ converging to 0 and $M>0$ such that
		$$\lim_{n\to+\infty}k_D(\psi(t_n),\theta(t_n))=M>0.$$
		Set $\lambda_n:=|t_n|^{-1}$. By Proposition \ref{scaling1}  the rescaled sequence $(A_n\psi(\lambda^{-1}_n z))$ converges uniformly on compact subsets to the map $\zeta\mapsto(2\varphi_N'(1)\zeta,0)$ (since by \eqref{changenormal}  $\psi'_N(0)=2\varphi_N'(1)$).
		On the other hand,  the rescaled sequence $(A_n\theta(\lambda^{-1}_n t))$ converges uniformly on compact subsets to the map $t\mapsto (2\varphi_N'(1)t,0)$, so with an argument similar to the one in Corollary \ref{approchfintyp+} we obtain a contradiction.
		\endproof

		The strong asymptoticity of complex geodesics yields  the existence of horospheres centered at points of locally finite type of $\partial D$. The proof is similar  to the proof of \cite[Theorem 3.5]{AFGG}, but due to its importance in this paper we include a shortened version for the convenience of the reader.
		
		\begin{theorem}[Existence of horospheres]\label{existencehorospheres} Let $D\subset\C^d$ be a $\C$-proper convex domain and let $\xi\in\partial D$ be a point of locally finite type. Fix a base-point  $p\in D$. Then as $w\to \xi$  the function $ k_D(\cdot,w)-k_D(w,p)$ converges uniformly on compact subsets of $D$ to a function $h_{\xi,p}$.
		\end{theorem}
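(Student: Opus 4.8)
The plan is to first pin down the candidate limit along a single geodesic ray landing at $\xi$, where convergence follows from monotonicity and Dini's theorem, and then to upgrade this to convergence along an arbitrary sequence $w\to\xi$. The second step is where convexity and finite type genuinely enter, through the strong asymptoticity of complex geodesics together with the scaling of Section~\ref{scalingsect}; it is the part of the argument that mirrors, rather than simplifies, the proof of \cite[Theorem~3.5]{AFGG}.

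\emph{Step 1: the horofunction along a ray.} By Proposition~\ref{convCgeo} I would fix a complex geodesic $\varphi\colon\D\to D$ with $\varphi(0)=p$ and endpoint $\xi$, and let $\gamma:=\tilde\varphi\colon\R_{\geq0}\to D$ be the associated geodesic ray, so that $\gamma(0)=p$, $k_D(\gamma(s),\gamma(t))=|t-s|$, and $\gamma(t)\to\xi$. For $z\in D$ put $f_t(z):=k_D(z,\gamma(t))-t$. The triangle inequality $k_D(z,\gamma(t))\leq k_D(z,\gamma(s))+|t-s|$ shows that $t\mapsto f_t(z)$ is non-increasing, while $k_D(z,\gamma(t))\geq k_D(\gamma(t),p)-k_D(z,p)=t-k_D(z,p)$ shows that $f_t(z)\geq-k_D(z,p)$; hence $h_{\xi,p}(z):=\lim_{t\to+\infty}f_t(z)$ exists for every $z$. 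Each $f_t$ is $1$-Lipschitz for $k_D$, so the pointwise limit $h_{\xi,p}$ is $1$-Lipschitz, in particular continuous; since $(f_t)$ decreases pointwise to the continuous function $h_{\xi,p}$, Dini's theorem yields $f_t\to h_{\xi,p}$ uniformly on compact subsets of $D$.

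\emph{Step 2: arbitrary approach to $\xi$.} The family $\{g_w:=k_D(\cdot,w)-k_D(w,p)\}_w$ is uniformly $1$-Lipschitz and satisfies $g_w(p)=0$, so by Arzel\`a--Ascoli every sequence $w_n\to\xi$ has a subsequence along which $g_{w_n}$ converges uniformly on compacta; it thus suffices to prove that every such subsequential limit equals $h_{\xi,p}$, and then $g_w\to h_{\xi,p}$ uniformly on compacta as $w\to\xi$. Writing $g_w(z)=k_D(z,p)-2(z|w)_p$ and $f_t(z)=k_D(z,p)-2(z|\gamma(t))_p$ through the Gromov product, the claim becomes $(z|w)_p\to\lim_{t\to+\infty}(z|\gamma(t))_p$ as $w\to\xi$, uniformly for $z$ in compacta. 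From Proposition~\ref{shadowing}(i)--(ii) and the standard theory of Gromov hyperbolic spaces (e.g.\ \cite{BH}) one gets immediately that the functions $\overline h_{\xi,p}(z):=\limsup_{w\to\xi}g_w(z)$ and $\underline h_{\xi,p}(z):=\liminf_{w\to\xi}g_w(z)$ lie within a bounded distance (controlled by the hyperbolicity constant) of each other and of $h_{\xi,p}$. To close this gap I would argue as in \cite[Theorem~3.5]{AFGG}: given $w_n\to\xi$, join $p$ to $w_n$ by geodesics and compare them with the reference ray $\gamma$ using the strong asymptoticity of holomorphic maps sharing a regular contact point and the endpoint $\xi$ (Corollaries~\ref{approchfintyp+} and \ref{approaching}); passing to the scaling model $D_H$ by Proposition~\ref{scaling1}, where all these rescaled geodesics converge to the single slice geodesic $\zeta\mapsto(\varphi'_N(1)\zeta,0)$ of $D_H$, this boundary rigidity forces the comparison to be asymptotically exact, so that $\overline h_{\xi,p}=\underline h_{\xi,p}=h_{\xi,p}$ and the convergence is uniform on compacta.

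\emph{Main obstacle.} Gromov hyperbolicity alone only controls everything up to the additive hyperbolicity constant, so the heart of the proof is to see that this error vanishes, most delicately for sequences $w_n\to\xi$ approaching tangentially, for which the geodesic from $p$ to $w_n$ genuinely diverges from $\gamma$ well before reaching $w_n$. The way around this is the boundary rigidity furnished by the strong asymptoticity of complex geodesics, together with Lemma~\ref{normalrestricted} and the scaling of Proposition~\ref{scaling1}; this is precisely the extra input, beyond abstract Gromov hyperbolicity, that convexity and finite type of $D$ supply.
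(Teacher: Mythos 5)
Your overall architecture is the same as the paper's: Step 1 (the monotone, $1$-Lipschitz family $k_D(\cdot,\gamma(t))-t$ converging locally uniformly to a Busemann function) is exactly what the paper does and is complete. The issue is in Step 2, where the decisive computation is replaced by the assertion that ``boundary rigidity forces the comparison to be asymptotically exact.'' That sentence is the whole theorem; as written, your argument only establishes (via Proposition \ref{shadowing}) that $\overline h_{\xi,p}$ and $\underline h_{\xi,p}$ agree up to an additive constant, and you have not exhibited the mechanism that kills that constant. Moreover, the geodesics you propose to compare are the wrong ones: comparing the segments from $p$ to $w_n$ with the reference ray $\gamma$ does not produce the needed cancellation at an arbitrary point $z$ of the compact set.

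What the paper actually does to close the gap is the following. Fix $z$ and a subsequence along which $g_{w_{n_k}}(z)$ converges. Take complex geodesics $\varphi_k$ with $\varphi_k(0)=z$, $\varphi_k(r_k)=w_{n_k}$ \emph{and} complex geodesics $\psi_k$ with $\psi_k(0)=p$, $\psi_k(s_k)=w_{n_k}$; after passing to subsequences these converge to complex geodesics $\varphi,\psi$ with endpoint $\xi$ (by the Gromov-product argument of Proposition \ref{convCgeo}). One then reparametrizes $\psi$ by a disc automorphism $\tau_a$ so that $\Psi:=\psi\circ\tau_a$ satisfies $\Psi'_N(1)=\varphi'_N(1)$, whence $k_D(\varphi(t),\Psi(t))\to0$ by Corollary \ref{approchfintyp+} --- this is where strong asymptoticity enters, between the two limit geodesics through $z$ and through $p$, not between a geodesic and $\gamma$. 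The additive error then vanishes because $\varphi_k(t)$ lies \emph{on} a geodesic from $z$ to $w_{n_k}$, so the triangle inequality $k_D(z,w_{n_k})\le k_D(z,\varphi_k(t))+k_D(\varphi_k(t),w_{n_k})$ is an equality; combined with the two triangle inequalities through $\varphi_k(t)$ and $\Psi_k(t)$ for $k_D(w_{n_k},p)$ one gets the sandwich $B_\varphi(z,p)\le\lim_k g_{w_{n_k}}(z)\le B_\varphi(z,p)+2k_D(\varphi(t),\Psi(t))$, and letting $t\to1^-$ finishes. (Independence of $B_\varphi$ from $\varphi$ is Corollary \ref{approaching} plus \cite[Proposition 3.3]{AFGG}.) Your appeal to Proposition \ref{scaling1} and Lemma \ref{normalrestricted} is not needed once Corollary \ref{approchfintyp+} is available, but without the two-family-of-geodesics construction and the exact additivity along them, the proof is not complete.
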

		
		\proof
	
		Let $\varphi\colon \D\to D$ be a complex geodesic with endpoint $\xi$.
		Then the family of functions $$( k_D(\cdot,\varphi(t))-k_D(\varphi(t),\varphi(0)))_{t\in[0,1)}=( k_D(\cdot,\varphi(t))-t)_{t\in[0,1)}$$ is pointwise non-increasing, locally uniformly bounded  family of 1-Lipschitz functions. Hence as $t\to1^-$ it converges to a function which can be easily seen to be $1$-Lipschitz, and thus by Dini's theorem the convergence is uniform on compact subsets. It follows that 
		for every $p\in D$ the family $$( k_D(\cdot,\varphi(t))-k_D(\varphi(t),p))_{t\in[0,1)}$$ converges uniformly on compact subsets to a function $B_\varphi(z,p)$.
		 By Corollary \ref{approaching} two complex geodesics of $D$ with endpoint $\xi$ are strongly asymptotic. It easily follows (see  \cite[Proposition 3.3]{AFGG}), that  the function  $B_{\varphi}(\,\cdot\,,p)$ does not depend on the choice of $\varphi$.
		
		We show that for any sequence $w_n\to \xi$ in $D$ we have
		$$k_D(\,\cdot\,,w_n)-k_D(w_n,p)\stackrel{n\to+\infty}\longrightarrow B_{\varphi}(\,\cdot\,,p),$$ uniformly on compact subsets of $D$.
		By the Ascoli--Arzel\`a Theorem it is enough to prove that for all $z\in D$, every convergent subsequence $\left(k_D(z,w_{n_{k}})-k_D(w_{n_{k}},p)\right)$ of the sequence  $(k_D(z,w_n)-k_D(w_n,p))$ converges to  $B_{\varphi}(z,p)$.
		Let $(\varphi_k\colon\D\to D)$ be a sequence of complex geodesics with $\varphi(0)=z$ and $\varphi(r_k)=w_{n_k}$ with $r_k\in[0,1)$. Up to extracting another subsequence we can assume that $(\varphi_k)$ converges uniformly on compact subsets to a complex geodesic $\varphi$. Arguing as in   Proposition \ref{convCgeo} we see that $\varphi$ has  endpoint $\xi$.
		Similarly we obtain a sequence   $(\psi_k\colon\D\to D)$ of complex geodesics with $\psi_k(0)=p$ and $\psi_k(s_k)=w_{n_k}$ with $s_k\in[0,1)$, converging uniformly on compact subsets to a complex geodesic $\psi$ with endpoint $\xi$.
		For $a\in (-1,1)$ define the automorphism of the disc $\tau_a(z)=\frac{z-a}{1-az}$. Since $\tau'(1)=\frac{1+a}{1-a}$, there exists $a\in (-1,1)$ such that 
		$\tau_a'(1)=\varphi_N'(1)/\psi'_N(1).$
		Define $\Psi=\psi\circ \tau_a$ and $\Psi_k=\psi_k\circ \tau_a$ for all $k$.
		Then $\Psi'_N(1)=\varphi'_N(1)$ and thus by Corollary \ref{approchfintyp+} we have 
		$$\displaystyle \lim_{t\to1^-} k_D(\varphi(t), \Psi(t))=0.$$
		Fix $|a|\leq t<1$. 
		For $k$ large enough we have
		\begin{align*}
			k_D(z, w_{n_k})&=k_D (z, \varphi_k(t))+k_D(\varphi_k(t), w_{n_k}),\\[5pt]
			k_D(w_{n_k},p)&\leq k_D(w_{n_k},\varphi_k(t))+ k_D(\varphi_k(t),p),\\[5pt]
			k_D(w_{n_k},p)&= k_D(w_{n_k},\Psi_k(t))+k_D (\Psi_k(t),p)\\
			&\geq  k_D(w_{n_k},\varphi_k(t))+k_D (\varphi_k(t),p)- 2 k_D(\varphi_k(t),\Psi_k(t)),
		\end{align*}
		and thus
		\begin{align*}
			k_D(z, w_{n_k})-k_D(w_{n_k},p)&\geq k_D (z,\varphi_k(t))- k_D(\varphi_k(t),p),\\[5pt]
			k_D(z, w_{n_k})-k_D(w_{n_k},p)&\leq k_D (z,\varphi_k(t))- k_D(\varphi_k(t),p)+2 k_D(\varphi_k(t),\Psi_k(t)).
		\end{align*}
		Taking the limit as $k\to \infty$ we obtain 
		$$ k_D (z,\varphi(t))- k_D(\varphi(t),p)\leq \lim_{k\to\infty}k_D(z,w_{n_k})-k_D(w_{n_k},p)\leq  k_D (z,\varphi(t))- k_D(\varphi(t),p)+2 k_D(\varphi(t),\Psi(t)),$$
		and by letting $t\to1^-$ it follows $ \displaystyle \lim_{k\to\infty}k_D(z,w_{n_k})-k_D(w_{n_k},p)=  B_\varphi(z,p)$, as desired.

\endproof
		
\begin{definition}
Let $D\subset\C^d$ be a $\C$-proper convex domain and let $\xi\in\partial D$ be a point of locally finite type. The {\sl horosphere} centered at $\xi$ of radius $R>0$ and with base-point $p\in D$ is the level set
$$E_p(\xi,R)=\{z\in D\colon h_{\xi,p}(z)<\log R\}.$$
\end{definition}
	\begin{remark}\label{intersezchiusura} Notice that if $p,q\in D$, then 
			\begin{equation*}\label{cambiaorofunzione}
				h_{\xi,q}=h_{\xi,p}+h_{\xi,q}(p),
			\end{equation*} hence changing the base-point leaves the family of horospheres centered in $\xi$ invariant, and amounts to multiplying the radius  by a fixed constant. 
		\end{remark}
		
\begin{proposition}	
Let $D\subset\C^d$ be a $\C$-proper convex domain and let $\xi\in\partial D$ be a point of locally finite type.
Then, for all $R>0$,
\begin{equation}\label{ginevra1} \overline{E_p(\xi,R)}^*\cap\partial^* D=\{\xi\},
\end{equation}	where $\overline{E_p(\xi, R)}^*$ denotes the closure of $E_p(\xi, R)$ in the one-point compactification of $\C^d$.
Moreover,
\begin{equation}\label{ginevra2} \bigcap_{R>0}\overline{E_p(\xi,R)}^*=\{\xi\}.
\end{equation}
	\end{proposition}
\proof
The proof is similar to \cite[Proposition 6.5]{AFGG}, but we include it here for completeness.
 Fix $R>0$ and $p\in D$. Let $\gamma\colon\R_{\geq0}\to D$ be a geodesic ray with $\gamma(0)=p$ and endpoint $\xi$. Then for all $t\geq0$
$$h_{\xi,p}(\gamma(t))=-t,$$
which means that $\gamma(t)\in E_p(\xi,R)$ if $t>-\log R$, and so $\xi\in \overline{E_p(\xi,R)}^*\cap\partial^* D$.
Conversely, given a   sequence  $(z_n)$ in $D$ converging to $\eta\in\partial^* D\backslash\{\xi\}$, we want to show that $\lim_{n\to+\infty}h_{\xi,p}(z_n)=+\infty$. Let $(w_m)$ be a sequence in $D$ converging to $\xi$. By (i) of Proposition \ref{shadowing} there exist $M\geq 0$ and $N\geq 0$ such that for all $n,m\geq N$
$$(z_n|w_m)_p:=\frac{1}{2}[k_D(z_n,p)+k_D(w_m,p)-k_D(z_n,w_m)]\leq M.$$
Now for all $n\geq N$
$$h_{\xi,p}(z_n):=\lim_{m\to\infty}k_D(z_n,w_m)-k_D(w_m,p)\geq-2M+k_D(z_n,p)$$
so $\lim_{n\to+\infty}h_{\xi,p}(z_n)=+\infty$. This proves \eqref{ginevra1}, and  \eqref{ginevra2} easily follows.
\endproof
		
		\section{$K$-convergence and $K'$-convergence}
		In this section we discuss $K$-convergence and $K'$-convergence (or restricted convergence)  to a boundary point of locally finite type of a $\C$-proper convex domain. We first deal with $K$-convergence.
		Given a boundary point $\xi$ of a  bounded strongly convex domain with $C^3$ boundary $D$,
		Abate (see e.g. \cite{Abatebook}) introduced a generalization of the classical Kor\'anyi regions in the ball, namely the {\sl $K$-region} with vertex $\xi$, base-point $p\in D$ and amplitude $M>1$ as the set
		$$K_p(\xi,M):=\{z\in D\colon h_{\xi,p}(z)+k_D(z,p)<2\log M\}.$$
		Notice that $h_{\xi,p}(z)+k_D(p,z)\geq 0$.
		$K$-regions are comparable to Kor\'anyi--Stein admissible regions \cite{Stein,KorSte} and to Krantz admissible regions \cite{krantz}.
		Thanks to the existence of horospheres  (Theorem \ref{existencehorospheres}) this definition carries on verbatim to the case where $D$ is a $\C$-proper convex domain and $\xi\in \partial D$ is a point of locally finite type, and allows us to define $K$-convergence and $K$-limits in such domains.
		\begin{definition}
			Let $D\subset\C^d$ be a $\C$-proper convex domain and let $\xi\in\partial D$ be a point of locally finite type. Let $(z_n)$ be a sequence in $D$ converging to $\xi$. We say that  $(z_n)$ {\sl $K$-converges} to $\xi$ if it is contained in a $K$-region with vertex $\xi$. 
			Let $f\colon D\to \C$ be a function.
			We say that $f$ has {\sl $K$-limit} $L\in \C$ at $\xi$ if for every sequence $(z_n)$ $K$-converging to $\xi$ we have $f(z_n)\to L$. In this case we write $$K\textrm{-}\lim_{z\to\xi}f(z)=L.$$
			Finally we say that  $f$ is {\sl $K$-bounded} at $\xi$ if it is bounded in every $K$-region with vertex $\xi$.
		\end{definition}
		Notice that the previous definitions do not depend on the chosen base-point. Indeed by Remark \ref{intersezchiusura}	for all $p,q\in D$
		\begin{equation}\label{Kregpolo}
			K_q(\xi,M)\subseteq K_p\left(\xi,Me^{\frac{h_{\xi,p}(q)+k_D(q,p)}{2}}\right).
		\end{equation}

		The notion of geodesic region  was introduced in \cite{AAG} (for bounded strongly convex domains with $C^3$ boundary) as regions which are comparable to $K$-regions and  thus define the same notions of convergence, limit and boundedness, but are often much easier to work with. See also \cite{AFGG,AFGK}, where such notion is used to define a generalization of the $K$-limit  to the context of proper geodesic Gromov hyperbolic metric spaces.

		\begin{definition}[Geodesic region]
			Let $D\subset\C^d$ be a $\C$-proper convex domain and let $\xi\in\partial D$ be a point of locally finite type.
			Let $\gamma\colon \R_{\geq 0}\to D$ be a  geodesic ray with endpoint $\xi$, and let $R>0$.
			We define the {\sl geodesic region} $A(\gamma,R)$ as the open connected subset defined by
			$$A(\gamma,R)=\left\{z\in D: \inf_{t\geq0}k_D(z,\gamma(t))<R\right\}.$$
			We say that the point $\xi$ is the {\sl vertex} of $A(\gamma,R)$.
			If $\varphi$ is a complex geodesic with endpoint $\xi$, by a slight abuse of notation we denote
			by $A(\varphi,R)$ the geodesic region $A(\tilde \varphi,R),$ where
			$\tilde \varphi\colon \R_{\geq 0}\to D$ is the  geodesic ray $\tilde \varphi(t)=\varphi({\rm tanh}(t/2)).$
		\end{definition}
		
		\begin{lemma}[Properties of  geodesic regions]\label{kvshoro}
			Let $D\subset\C^d$ be a $\C$-proper convex domain and let $\xi\in\partial D$ be a point of locally finite type. Let $\gamma\colon \R_{\geq 0}\to D$ be a  geodesic ray with endpoint $\xi$.
			\begin{itemize}
				\item[(i)] If $\theta\colon \R_{\geq 0}\to D$ is another  geodesic ray with endpoint $\xi$, then there exists $S>0$ such that, for all $R>0$, 
				$$A(\gamma,R)\subseteq A(\theta, R+S), \quad A(\theta,R)\subseteq A(\psi, R+S).$$
				\item[(ii)] For all $R>0$ we have that  $\overline{A(\gamma, R)}^*\cap \partial^* D=\{\xi\}$,
				where $\overline{A(\gamma, R)}^*$ denotes the closure of $A(\gamma, R)$ in the one-point compactification of $\C^d$.
				\item[(iii)] Let $R>0$, then for all $z\in A(\gamma,R)$ $$h_{\xi,\gamma(0)}(z)<  2R-k_D(z,\gamma(0));$$
				in particular a sequence converging  to $\xi$ inside a geodesic region is eventually contained in every horosphere centered at $\xi$.
			\end{itemize}
		\end{lemma}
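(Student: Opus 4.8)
The plan is to prove the three items in order, exploiting the Gromov hyperbolicity package already assembled (Theorem \ref{shadowingft}, Proposition \ref{shadowing}), the existence of horospheres (Theorem \ref{existencehorospheres}), and the asymptoticity statements for geodesic rays with a common endpoint.

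For item (i): two geodesic rays $\gamma,\theta$ with the same endpoint $\xi$ are in particular $(1,0)$ quasi-geodesic rays, so by part (iii) of Proposition \ref{shadowing} there exists $S_0\geq 0$ with $\sup_{t\geq 0}k_D(\gamma(t),\theta(t))\leq S_0$. Given $z\in A(\gamma,R)$ pick $t$ with $k_D(z,\gamma(t))<R$; then $k_D(z,\theta(t))\leq k_D(z,\gamma(t))+k_D(\gamma(t),\theta(t))<R+S_0$, so $z\in A(\theta,R+S_0)$. Taking $S:=S_0$ (symmetrically for the other inclusion, replacing $\psi$ by $\gamma$ as the statement evidently intends) gives the claim; the constant $S$ depends only on $\gamma,\theta$, not on $R$.

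For item (ii): since $A(\gamma,R)\supseteq\{z: k_D(z,\gamma(t))<R\}$ for any fixed $t$, the region is nonempty and $\xi\in\overline{A(\gamma,R)}^*$ because $\gamma(t)\to\xi$. To see that no other boundary point of $\partial^*D$ is in the closure, suppose $z_n\in A(\gamma,R)$ with $z_n\to\eta\in\partial^*D$. Choose $t_n\geq 0$ with $k_D(z_n,\gamma(t_n))<R$. By Corollary \ref{convergestesso} the boundedness of $k_D(z_n,\gamma(t_n))$ forces $\gamma(t_n)$ and $z_n$ to converge to the same boundary point; since $z_n\to\eta$ we get $\gamma(t_n)\to\eta$. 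But $\gamma$ is a geodesic ray with a single endpoint $\xi$ (it cannot accumulate at two distinct boundary points, by part (i) of Proposition \ref{shadowing} applied to the Gromov product of $\gamma(s_k)$ and $\gamma(s_k')$ along subsequences, or simply because its image is a geodesic whose closure meets $\partial^*D$ only at $\xi$), hence $\eta=\xi$.

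For item (iii): this is the quantitative heart. Fix $z\in A(\gamma,R)$ and $t\geq 0$ with $k_D(z,\gamma(t))<R$. Using the defining family for the horofunction, for $s>t$ we have the cocycle-type bound
\[
k_D(z,\gamma(s))-k_D(\gamma(s),\gamma(0))\leq k_D(z,\gamma(t))+k_D(\gamma(t),\gamma(s))-k_D(\gamma(s),\gamma(0)).
\]
Since $\gamma$ is a geodesic ray with $\gamma(0)$ on it, $k_D(\gamma(t),\gamma(s))=s-t$ and $k_D(\gamma(s),\gamma(0))=s$, so the right-hand side equals $k_D(z,\gamma(t))-t<R-t$. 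Letting $s\to\infty$ and recalling that $h_{\xi,\gamma(0)}(z)=\lim_{s\to\infty}\bigl(k_D(z,\gamma(s))-k_D(\gamma(s),\gamma(0))\bigr)$ (Theorem \ref{existencehorospheres}), we get $h_{\xi,\gamma(0)}(z)\leq R-t$. Finally $k_D(z,\gamma(0))\leq k_D(z,\gamma(t))+t<R+t$, so $t>k_D(z,\gamma(0))-R$, whence $h_{\xi,\gamma(0)}(z)<R-(k_D(z,\gamma(0))-R)=2R-k_D(z,\gamma(0))$, which is the asserted inequality. The last sentence follows: if $z_n\to\xi$ inside $A(\gamma,R)$ then $k_D(z_n,\gamma(0))\to+\infty$ (by Lemma \ref{stimekob}, since $\delta_D(z_n)\to 0$), so $h_{\xi,\gamma(0)}(z_n)\to-\infty$ and eventually $h_{\xi,\gamma(0)}(z_n)<\log R'$ for every $R'>0$, i.e. $z_n$ eventually lies in $E_{\gamma(0)}(\xi,R')$; changing the base-point only rescales the radius by \eqref{cambiaorofunzione}.

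I expect item (iii) to be the main obstacle, essentially because it is the only place where one must interact with the precise definition of the horofunction as a limit rather than just its Lipschitz/monotonicity properties; the inequality $k_D(\gamma(t),\gamma(s))=s-t$ uses crucially that $\gamma$ is a genuine geodesic ray (not merely almost-geodesic), so some care is needed if one later wants the analogous statement for geodesic regions built from almost-geodesics. Items (i) and (ii) are routine consequences of the already-established asymptoticity and properness results.
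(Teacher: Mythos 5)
Your proof is correct and follows essentially the same route as the paper: (i) via the boundedness of $k_D(\gamma(t),\theta(t))$ from Proposition \ref{shadowing}(iii), (ii) via Corollary \ref{convergestesso}, and (iii) via the same triangle-inequality computation (the paper bounds $h_{\xi,\gamma(0)}(z)\leq k_D(z,\gamma(t^*))-t^*$ and then uses $-t^*\leq k_D(z,\gamma(t^*))-k_D(z,\gamma(0))$, which is your rearrangement). The only nitpick is in (ii): Corollary \ref{convergestesso} should be applied in the direction $\gamma(t_n)\to\xi\Rightarrow z_n\to\xi$ (after noting that completeness forces $t_n\to\infty$), since $\eta$ is not known a priori to be of locally finite type.
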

		\begin{proof}
			By (iii) of Proposition \ref{shadowing} there exists $S>0$ such that
			$k_D(\gamma(t),\theta(t))\leq S$ for each $t\geq0$, which implies (i). Point (ii) immediately follows from Corollary \ref{convergestesso}.
			We now prove (iii). Let $z\in A(\gamma,R)$ and consider $t^*\geq0$ such that $k_D(z,\gamma(t^*))<R$, then
			\begin{align*}h_{\xi,\gamma(0)}(z)&=\lim_{t\to1^-}k_D(z,\gamma(t))-k_D(\gamma(t),\gamma(0))\\&\leq\limsup_{t\to1^-} k_D(z,\gamma(t^*))+k_D(\gamma(t^*),\gamma(t))-k_D(\gamma(t),\gamma(0))\\&=  k_D(z,\gamma(t^*))-k_D(\gamma(t^*),\gamma(0))\\&\leq  2k_D(z,\gamma(t^*)) -k_D(z,\gamma(0))\\&<2R-k_D(z,\gamma(0)).
			\end{align*}

		\end{proof}
		
		We now show that geodesic regions and $K$-regions are comparable. The proof of \cite[Lemma 7.8]{AAG}
		does not work in this context since it is based on the Gromov hyperbolicity of the domain $D$.
		
		\begin{proposition}\label{abatek}
			Let $D\subset\C^d$ be a $\C$-proper convex domain and let $\xi\in\partial D$ be a point of locally finite type. Then every geodesic region with vertex $\xi$ is contained in a $K$-region with vertex $\xi$, and vice versa.
		\end{proposition}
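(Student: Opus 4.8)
The plan is to establish the two inclusions separately. The inclusion of a geodesic region into a $K$-region is essentially immediate from Lemma \ref{kvshoro}(iii): given a geodesic ray $\gamma$ with endpoint $\xi$ and $R>0$, that lemma gives $h_{\xi,\gamma(0)}(z)+k_D(z,\gamma(0))<2R$ for every $z\in A(\gamma,R)$, that is $A(\gamma,R)\subseteq K_{\gamma(0)}(\xi,e^{R})$, and \eqref{Kregpolo} lets one pass to a $K$-region with any prescribed base-point at the cost of enlarging the amplitude. So I would dispose of this direction in one line.

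The converse inclusion is the heart of the statement, and since the Gromov hyperbolic argument of \cite[Lemma 7.8]{AAG} cannot be run on $D$ itself, the idea is to localize and run it instead on a bounded convex domain of finite type. Fix $K_p(\xi,M)$ and let $W$, $\tilde D:=D\cap W$, $V\subset\subset W$ and $C$ be as in Remark \ref{localizationremark}; after possibly moving the base-point (which, by \eqref{cambiaorofunzione} and \eqref{Kregpolo}, only rescales the family of $K$-regions) I may assume $p\in V$. Pick a complex geodesic $\varphi\colon\D\to D$ with $\varphi(0)=p$ and endpoint $\xi$ (Proposition \ref{convCgeo}) and set $\gamma:=\tilde\varphi$. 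Since $K_p(\xi,M)\subseteq E_p(\xi,M^{2})$, Remark \ref{intersezchiusura} gives $\overline{K_p(\xi,M)}^{*}\cap\partial^{*}D=\{\xi\}$, so $K_p(\xi,M)\setminus V$ is relatively compact in $D$ and is therefore contained in $A(\gamma,R_0)$ with $R_0:=1+\sup\{k_D(z,p):z\in K_p(\xi,M)\setminus V\}<\infty$.

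For the remaining points $z\in K_p(\xi,M)\cap V$ I would pass to $\tilde D$. It is a bounded convex domain of finite type, hence $(\tilde D,k_{\tilde D})$ is Gromov hyperbolic by \cite{Zim} with Gromov boundary $\partial\tilde D$; moreover, by \eqref{BNTloc} one has $k_D\leq k_{\tilde D}\leq k_D+C$ on $V$, and, letting the auxiliary point run to $\xi$, $|h_{\xi,p}-h^{\tilde D}_{\xi,p}|\leq 2C$ on $V$, where $h^{\tilde D}_{\xi,p}$ is the horofunction of $\tilde D$ centered at $\xi$ with base-point $p$. Consequently $z$ lies in a $K$-region of $\tilde D$ of amplitude $Me^{3C/2}$, and now the argument of \cite[Lemma 7.8]{AAG} goes through in $\tilde D$, since it only uses the Gromov hyperbolicity of the ambient domain: it produces a $k_{\tilde D}$-geodesic ray $\tilde\gamma$ with $\tilde\gamma(0)=p$ and endpoint $\xi$, and a radius $R'>0$ independent of $z$, such that $\inf_{t\geq0}k_{\tilde D}(z,\tilde\gamma(t))<R'$.

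Finally I would compare $\gamma$ and $\tilde\gamma$ inside $D$. Both are $(1,B)$ quasi-geodesic rays of $D$ with endpoint $\xi$ for a suitable $B$ — for $\tilde\gamma$ because $k_D\leq k_{\tilde D}\leq k_D+C$ near $\xi$, the bounded initial portion of $\tilde\gamma$ being absorbed into $B$ — so Proposition \ref{shadowing}(iii) yields $S:=\sup_{t\geq0}k_D(\gamma(t),\tilde\gamma(t))<\infty$. Using the global inequality $k_D\leq k_{\tilde D}$ (Kobayashi monotonicity, since $\tilde D\subseteq D$), for $z\in K_p(\xi,M)\cap V$ we obtain
$$\inf_{t\geq0}k_D(z,\gamma(t))\leq S+\inf_{t\geq0}k_D(z,\tilde\gamma(t))\leq S+\inf_{t\geq0}k_{\tilde D}(z,\tilde\gamma(t))<S+R',$$
and therefore $K_p(\xi,M)\subseteq A(\gamma,R)$ with $R:=\max\{R_0,S+R'\}$. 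I expect the main obstacle to be precisely this localization bookkeeping: one has to make sure that the horofunctions, the (quasi-)geodesic rays and the near-boundary estimates of $D$ and of $\tilde D$ match up under \eqref{BNTloc}, and confirm that the Gromov-hyperbolic comparison of \cite[Lemma 7.8]{AAG} is genuinely applicable to $\tilde D$ (for which it suffices that $(\tilde D,k_{\tilde D})$ be a proper geodesic Gromov hyperbolic space with Gromov boundary $\partial\tilde D$, as established in \cite{Zim}).
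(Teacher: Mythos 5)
Your proposal is correct, and the first inclusion is handled exactly as in the paper. For the harder inclusion ($K$-region inside a geodesic region) you take a genuinely different route. The paper does \emph{not} transplant \cite[Lemma 7.8]{AAG} to the localized domain $\tilde D$; instead it works directly in $D$ with the localized four-point condition of Proposition \ref{shadowing}(ii): for $z\in K_q(\xi,M)\cap V$ it compares $z$ with the point $z_\gamma:=\gamma(k_D(z,q))$ on a geodesic ray contained in $V$, lets the auxiliary point $\gamma(t)$ tend to $\xi$ so that the Gromov products converge to expressions in $h_{\xi,q}(z)$ and $k_D(z,q)$, and obtains the explicit bound $k_D(z,\gamma)\leq\max\{h_{\xi,q}(z)+k_D(z,q),0\}+2\delta'\leq 2\log M+2\delta'$. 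Your version instead delegates this estimate to the Gromov-hyperbolic lemma applied in $\tilde D$ (legitimate, since $\tilde D$ is a bounded convex domain of finite type, so \cite{Zim} applies and the objection raised in the paper against using \cite[Lemma 7.8]{AAG} concerns $D$ itself, not $\tilde D$), at the price of extra bookkeeping: comparing $h_{\xi,p}$ with $h^{\tilde D}_{\xi,p}$, checking that the $k_{\tilde D}$-geodesic ray is a $(1,B)$ quasi-geodesic of $D$, and invoking the shadowing property (iii) of Proposition \ref{shadowing}. All of these steps are sound as you describe them. The trade-off is that the paper's argument is shorter, self-contained and quantitative (an explicit radius $2\log M+2\delta'$ plus the compact part), whereas yours treats the Gromov-hyperbolic input as a black box and therefore needs the back-and-forth between $D$ and $\tilde D$; but it does confirm that the localization strategy announced in Remark \ref{localizationremark} suffices on its own.
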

		
		\proof
		Let $\gamma\colon\R_{\geq 0}\to D$ be a geodesic ray with endpoint $\xi$ and set $p=\gamma(0)$. Let $z\in A(\gamma,R)$, then by (iii) in Lemma \ref{kvshoro} we have
		$$h_{\xi,p}(z)+k_D(z,p)<2R,$$
		which implies $A(\gamma,R)\subseteq K_p(\xi,e^R)$.
		
		We now prove the converse. 
		
		Let $\delta'>0$ and $V$ be given by (ii) of Proposition \ref{shadowing}. 
		Let $\gamma\colon\R_{\geq 0}\to D$ be a geodesic ray with endpoint $\xi$ that is contained in $V$ and set $q:=\gamma(0)$.
		We will show that every $K$-region $K_q(\xi,M)$ is contained in a geodesic region with vertex $\xi$. By 
		\eqref{Kregpolo} this will prove the result.
		Notice that $K_q(\xi,M)\cap V^\complement$ is relatively compact in $D$, so we can find $R>0$ such that $ K_q(\xi,M)\cap V^\complement\subseteq A(\gamma,R)$. Now assume that $z\in K_q(\xi,M)\cap V$,  and consider $z_\gamma:=\gamma(k_D(z,q))$. Let $t>k_D(z,q)$, then by (ii) in Proposition \ref{shadowing}
		\begin{equation}\label{fourpoints}
			(z|z_\gamma)_q\geq\min\{(z|\gamma(t))_q,(z_\gamma|\gamma(t))_q\}-\delta'.
		\end{equation}
		Now
		$$(z|\gamma(t))_q=\frac{1}{2}[k_D(z,q)+k_D(\gamma(t),q)-k_D(z,\gamma(t))]\stackrel{t\to+\infty}\longrightarrow \frac{1}{2}[k_D(z,q)-h_{\xi,q}(z)]$$
		and
		$$(z_\gamma|\gamma(t))_q=\frac{1}{2}[k_D(z_\gamma,q)+k_D(\gamma(t),q)-k_D(z_\gamma,\gamma(t))]=k_D(z_\gamma,q)=k_D(z,q).$$
		Expanding the Gromov product in left-hand side of \eqref{fourpoints} we have
		$$2k_D(z,q)-k_D(z,z_\gamma)\geq\min\{k_D(z,q)-h_{\xi,q}(z),2k_D(z,q)\}-2\delta',$$
		which implies
		$$k_D(z,\gamma)\leq k_D(z,z_\gamma)\leq\max\{h_{\xi,q}(z)+k_D(z,q),0\}+2\delta'\leq2\log M+2\delta',$$
		so $K_q(\xi,M)\subseteq A(\gamma,R')$, where $R':=\max\{R,2\log M+2\delta'\}.$
		\endproof

		There are many instances in which $K$-convergence is the natural generalization to several complex variables of non-tangential convergence in the disc (see e.g. the Julia Lemma in Section \ref{Julia}). There is however a notable exception, namely the Lindel\"of principle. Indeed, one can find a bounded holomorphic function defined on the ball $\B^d$ which admits radial limit at $e_0$ but does not admit $K$-limit at $e_0$, see e.g. \eqref{rudinexample}.  This is the main reason to consider a second, more restrictive definition of convergence to a boundary point $\xi$, for which the Lindel\"of principle holds: $K'$-convergence (also called restricted convergence). For the classical  definition in the unit ball we refer to \cite{Rudin}. $K'$-convergence has been  generalized by Abate   (see e.g. \cite{Abatebook}) to the case of strongly convex domains, and by Abate--Tauraso \cite[Section 3]{AbTau} to the case of convex domains of finite type.
		In our context  we introduce the following definition.
		\begin{definition}\label{restricted}
			Let $D\subset\C^d$ be a $\C$-proper convex domain and let $\xi\in\partial D$ be a point of locally finite type. Let $\varphi\colon\D\to D$ be a complex geodesic  with endpoint $\xi$, and let  $(z_n)$ be a sequence in $D$ converging to $\xi$.
			We say that  $(z_n)$  {\sl $K'$-converges} to $\xi$ (or is  {\sl restricted}) if it $K$-converges to $\xi$ and if
			\begin{equation}\label{eqrestricted}
				\lim_{n\to+\infty}k_D(z_n,\varphi(\D))=0.
			\end{equation}
			If $f\colon D\to\C$ is a function, we say that  $f$ has  {\sl $K'$-limit} (or {\sl restricted $K$-limit})  $L\in\C$ at $\xi$  if $f(z_n)\to L$ for any restricted sequence $(z_n)$ converging to $\xi$. In this case we write
			$$K'\textrm{-}\lim_{z\to\xi}f(z)=L.$$
		\end{definition}
		
		\begin{remark}\label{rmkspecialnontg}
			Let $(z_n)$ be a restricted sequence converging to $\xi$ and let $\varphi\colon \D\to D$ be a complex geodesic with endpoint $\xi$.
			Let $(\zeta_n)$ be a sequence in $\D$ such that $k_D(z_n,\varphi(\zeta_n))\to 0$. Then $\zeta_n\to 1$ (non-tangentially), since for all $R>0$,
			\begin{equation}\label{geodesicintersection}
				\varphi (A^{\D}({\sf id}_{\D},R))=A^D(\varphi,R)\cap\varphi(\D).
			\end{equation}
			
		\end{remark}
		\begin{remark}\label{rmkAbrestricted}
			This  notion of restricted convergence  is a priori more general than the one given by Abate--Tauraso \cite[Section 3]{AbTau} for a convex domain of finite type $D\subset \C^d$. Given  a complex geodesic $\varphi$  with endpoint $\xi\in\partial D$ they choose a  left inverse $\tilde \rho\colon D\to \D$ of $\varphi$. Their definition is (equivalent to) the following:
			a sequence $(z_n)$ converging to  $\xi$ is restricted if it $K$-converges to $\xi$ and if $$k_D(\gamma(t),\varphi(\tilde \rho(z_n)))\to0.$$ Clearly  a sequence with this property satisfies (\refeq{eqrestricted}), but the converse is unclear. It is also unclear whether Abate--Tauraso's definition depends on the choice of the complex geodesic $\varphi$. The next result show that  Definition \ref{restricted} does not depend on the choice of the complex geodesic $\varphi$.
		\end{remark}
		\begin{proposition}
			Let $D\subset\C^d$ be a $\C$-proper convex domain and let $\xi\in\partial D$ be a point of locally finite type.
			Let $\varphi,\psi$ be two complex geodesics with the same endpoint $\xi$.
			Then a sequence $(z_n)$ in $D$ converging to $\xi$ is restricted w.r.t. $\varphi$ if and only if it is restricted w.r.t. $\psi$.
		\end{proposition}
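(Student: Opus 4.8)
The plan is to deduce this from the strong asymptoticity of complex geodesics ending at $\xi$, in the sharp non-tangential form supplied by Corollary~\ref{approchfintyp+}. First, observe that the $K$-convergence requirement in Definition~\ref{restricted} is intrinsically independent of the chosen complex geodesic: $K$-regions are defined through the horospheres $h_{\xi,p}$, which by Theorem~\ref{existencehorospheres} depend only on $\xi$ and the base-point. Hence, by the symmetry of the statement in $\varphi$ and $\psi$, it suffices to prove that if $(z_n)$ is restricted with respect to $\varphi$, then $k_D(z_n,\psi(\D))\to 0$. So assume $k_D(z_n,\varphi(\D))\to 0$ and choose $\zeta_n\in\D$ with $k_D(z_n,\varphi(\zeta_n))\to 0$; note that $\varphi(\zeta_n)\to\xi$ by Corollary~\ref{convergestesso}.

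The key step is to show that $\zeta_n\to 1$ \emph{non-tangentially}, and this is where $K$-convergence enters. Since $(z_n)$ $K$-converges to $\xi$, by Proposition~\ref{abatek} and Lemma~\ref{kvshoro}(i) there is $R>0$ with $(z_n)\subset A(\tilde\varphi,R)$, the geodesic region built on the geodesic ray associated to $\varphi$. As $k_D(z_n,\varphi(\zeta_n))\to 0$, eventually $\varphi(\zeta_n)\in A(\tilde\varphi,R+1)\cap\varphi(\D)$, which by \eqref{geodesicintersection} equals $\varphi\bigl(A^{\D}(\mathrm{id}_{\D},R+1)\bigr)$; since $\varphi$ is injective, $\zeta_n$ eventually lies in the region $A^{\D}(\mathrm{id}_{\D},R+1)$ of $\D$, which is a Stolz-type region at $1$ whose closure meets $\partial\D$ only at $1$. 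Combined with $\zeta_n\to\partial\D$ (forced by $\varphi(\zeta_n)\to\xi$), this gives $\zeta_n\to 1$ non-tangentially.

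It remains to compare $\varphi$ with a reparametrization of $\psi$ having the same normal derivative at $1$. By Proposition~\ref{normalderivative} the positive limits $\varphi'_N(1)$ and $\psi'_N(1)$ exist, so, exactly as in the proof of Theorem~\ref{existencehorospheres}, there is $a\in(-1,1)$ such that $\tau_a(\zeta)=\tfrac{\zeta-a}{1-a\zeta}$ satisfies $\tau_a'(1)=\varphi'_N(1)/\psi'_N(1)$. Then $\Psi:=\psi\circ\tau_a$ is a complex geodesic with endpoint $\xi$, $\Psi(\D)=\psi(\D)$, and $\Psi'_N(1)=\varphi'_N(1)$ by the chain rule. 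Corollary~\ref{approchfintyp+} now yields $\angle\lim_{z\to 1}k_D(\varphi(z),\Psi(z))=0$, hence $k_D(\varphi(\zeta_n),\Psi(\zeta_n))\to 0$ along our non-tangential sequence; since $\Psi(\zeta_n)\in\psi(\D)$, the triangle inequality gives
\[
k_D(z_n,\psi(\D))\le k_D(z_n,\varphi(\zeta_n))+k_D(\varphi(\zeta_n),\Psi(\zeta_n))\longrightarrow 0 .
\]
Together with $K$-convergence this shows that $(z_n)$ is restricted with respect to $\psi$, and exchanging the roles of $\varphi$ and $\psi$ completes the argument.

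The step I expect to be the real obstacle is the middle one: upgrading the soft information that $(z_n)$ is $K$-convergent and $k_D(z_n,\varphi(\zeta_n))\to 0$ to the quantitative fact that all the $\zeta_n$ sit in a single fixed non-tangential region of $\D$. Note that strong asymptoticity of $\varphi$ and $\psi$ alone (Corollary~\ref{approaching}) does not suffice, since $\varphi(\zeta_n)$ need not lie on the geodesic ray $\tilde\varphi$; one genuinely needs the \emph{non-tangential} limit of Corollary~\ref{approchfintyp+}, and hence uniform control of $\zeta_n$, which is exactly what the identity \eqref{geodesicintersection} between geodesic regions of $D$ along $\varphi$ and Stolz regions of $\D$ provides. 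The reparametrization trick and the final estimate are then routine.
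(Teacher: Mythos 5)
Your proof is correct and follows essentially the same route as the paper's: extract $\zeta_n$ with $k_D(z_n,\varphi(\zeta_n))\to 0$, show $\zeta_n\to 1$ non-tangentially (your middle step is exactly the content of Remark~\ref{rmkspecialnontg}, which the paper simply cites), reparametrize $\psi$ by a disc automorphism fixing $1$ so that the normal derivatives at $1$ agree, and conclude via Corollary~\ref{approchfintyp+} and the triangle inequality. The only difference is that you spell out the details the paper leaves to the cited remark and to the phrase ``up to a change of parametrization.''
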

		\proof
		Let $\zeta_n\in\D$ such that $k_D(z_n,\varphi(\zeta_n))\to 0$. By Remark \ref{rmkspecialnontg} $\zeta_n\to1$ non-tangentially. Up to a change of parametrization of $\psi$ we can suppose $\varphi'_N(1)=\psi'_N(1)$, so by Proposition \ref{approchfintyp+}
		$$k_D(z_n,\psi(\zeta_n))\leq k_D(z_n,\varphi(\zeta_n))+k_D(\varphi(\zeta_n),\psi(\zeta_n))\to 0.$$
		\endproof

		We now  prove  the following extrinsic characterization for $K$-convergence and for $K'$-convergence. Notice the analogy with the classic definitions in the ball (see (2.2.33) and (2.2.34) in \cite{Abatebook}).
		\begin{theorem}\label{extrinsicspecial}
			Let $D\subset\C^d$ be a $\C$-proper convex domain and let $\xi\in\partial D$ be a point of locally finite type.  Let $(v_j)_{j=0}^{d-1}$ be a multitype  basis at $\xi$, and 
			let $(z_n)$ be a sequence in $D$. Then 
			\begin{itemize}
				\item[(i)]$(z_n)$ $K$-converges to $\xi$  if and only if 
				\begin{equation}\label{extrinsicspecial1}
					\langle z_n-\xi,v_j\rangle=O\left(\delta_D(z_n)^{1/m_j}\right),\quad \forall \,0\leq j\leq d-1;
				\end{equation} 
				\item[(ii)]
				$(z_n)$  $K'$-converges to $\xi$ if and only if 
				\eqref{extrinsicspecial1} holds for $j=0$ and
				\begin{equation}\label{extrinsicspecial2}
					\langle z_n-\xi,v_j\rangle=o\left(\delta_D(z_n)^{1/m_j}\right), \quad \forall \,1\leq j\leq d-1.
				\end{equation}
			\end{itemize}
		\end{theorem}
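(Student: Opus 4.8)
The plan is to push everything through Gaussier's anisotropic scaling, reducing both equivalences to a single statement about relative compactness in the scaling model. Fix multitype coordinates at $\xi$, so that $\xi=0$, $n_\xi=e_0$, $v_j=e_j$, and $\partial D$ has the normal form \eqref{normalform} near the origin; in particular $\delta_D(z)$ is comparable there to $|r(z)|=|\Re z_0+H(z')+R(z)|$, where $z'=(z_1,\dots,z_{d-1})$. Given $(z_n)\to\xi$ in $D$, put $\lambda_n:=\delta_D(z_n)^{-1}\to+\infty$, let $A_n$ be the associated anisotropic dilation and $D_H$ the scaling model. Since $m_0=1$ and $v_j=e_j$, one has $(A_nz_n)_j=\langle z_n-\xi,v_j\rangle\,\delta_D(z_n)^{-1/m_j}$ for all $j$, so \eqref{extrinsicspecial1} says exactly that $(A_nz_n)$ is bounded in $\C^d$, and \eqref{extrinsicspecial2} that its last $d-1$ components tend to $0$. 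Hence it suffices to prove: (a) $(z_n)$ $K$-converges to $\xi$ if and only if $(A_nz_n)$ is relatively compact in $D_H$; and (b) if $(z_n)$ $K$-converges to $\xi$, then it $K'$-converges to $\xi$ if and only if every subsequential limit of $(A_nz_n)$ lies on the axis $\{z_1=\dots=z_{d-1}=0\}$ of $D_H$. A routine but important preliminary is that, since $z_n\in D$ and $\lambda_n=\delta_D(z_n)^{-1}$, boundedness of $(A_nz_n)$ already forces it into a fixed compact subset of $D_H$: indeed $-\Re(A_nz_n)_0-H\big((A_nz_n)'\big)=\lambda_n(-r(z_n))+\lambda_n R(z_n)$, the first summand lying in a fixed interval $[c,C]\subset(0,+\infty)$ because $\delta_D\asymp|r|$, and the second being $o(1)$ by the estimate $R(z)=o(|z_0|+\sum_j|z_j|^{m_j})$ once $(A_nz_n)$ is bounded.

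For (a), I would first reformulate $K$-convergence metrically: by Proposition \ref{abatek} a sequence $K$-converges to $\xi$ exactly when it lies in a geodesic region with vertex $\xi$, and since by Lemma \ref{normalrestricted} (together with Corollary \ref{approaching}) every complex geodesic with endpoint $\xi$ is strongly asymptotic to the inner normal segment $\sigma(t)=(t-1,0)$, this holds if and only if $\sup_n k_D(z_n,\sigma)<+\infty$. Now $A_n\sigma(1-\delta_D(z_n))=(-1,0)=:q_H$ is a \emph{fixed} point of $D_H$. If $\sup_n k_D(z_n,\sigma)<+\infty$, the point of $\sigma$ nearest $z_n$ has Euclidean height comparable to $\delta_D(z_n)$ (by Lemma \ref{stimekob}), so after $A_n$ it lands in a fixed compact set $K_0\subset\subset D_H$ and stays at bounded $k_{A_nD}$-distance from $A_nz_n$; the uniform (over base-points in $K_0$) form of Corollary \ref{zimmerconvergence}(ii)--(iii) then forces $(A_nz_n)$ to be relatively compact in $D_H$. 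Conversely, if $(A_nz_n)$ is relatively compact, then $k_D(z_n,\sigma(1-\delta_D(z_n)))=k_{A_nD}(A_nz_n,q_H)$ is bounded by Corollary \ref{zimmerconvergence}(ii), hence so is $k_D(z_n,\sigma)$. This proves (a), and with the preliminary above, part (i).

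For (b), set $\psi:=\varphi\circ\mathscr{C}^{-1}\colon\H\to D$, which has a regular contact point at $0$ and endpoint the origin; by Proposition \ref{scaling1} the rescalings $A_n\psi(\lambda_n^{-1}\,\cdot\,)$ converge to the slice complex geodesic $\hat\psi(\zeta)=(\psi'_N(0)\zeta,0)$, whose image is precisely the part $\{z'=0\}$ of $D_H$. Suppose $(z_n)$ is restricted and pick $\zeta_n\to1$ (non-tangentially, by Remark \ref{rmkspecialnontg}) with $k_D(z_n,\varphi(\zeta_n))\to0$; set $w_n:=\mathscr{C}(\zeta_n)\to0$ in $\H$, so $\varphi(\zeta_n)=\psi(w_n)$. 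Using that $\tilde\psi$ is an almost-geodesic and Lemma \ref{stimekob}, one checks $\delta_D(z_n)\asymp\delta_D(\psi(w_n))\asymp|\Re w_n|$, so the scaling $\mu_n:=|\Re w_n|^{-1}$ used in Proposition \ref{scaling1} differs from $\lambda_n$ by a bounded factor $\rho_n\in[c',C']$. Applying Proposition \ref{scaling1} with $\mu_n$ and evaluating at $\mu_n w_n$ (which lies in a fixed compact of $\H$), one gets that $A_n^{(\mu)}\psi(w_n)$ converges along subsequences to a point of $\hat\psi(\H)$, and since $k_{A_n^{(\mu)}D}(A_n^{(\mu)}z_n,A_n^{(\mu)}\psi(w_n))=k_D(z_n,\varphi(\zeta_n))\to0$, so does $A_n^{(\mu)}z_n$; as $A_nz_n$ is the anisotropic dilation of $A_n^{(\mu)}z_n$ by $\rho_n$, and both $D_H$ and its axis are invariant under positive anisotropic dilations, every subsequential limit of $(A_nz_n)$ lies on the axis. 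Conversely, if $(z_n)$ $K$-converges and every subsequential limit of $(A_nz_n)$ lies on the axis, say $A_{n_k}z_{n_k}\to\hat\psi(\zeta_\infty)$ with $\zeta_\infty\in\H$, then $A_{n_k}\psi(\lambda_{n_k}^{-1}\zeta_\infty)\to\hat\psi(\zeta_\infty)$ by Proposition \ref{scaling1}, so by Corollary \ref{zimmerconvergence}(ii)
$$k_D\big(z_{n_k},\psi(\lambda_{n_k}^{-1}\zeta_\infty)\big)=k_{A_{n_k}D}\big(A_{n_k}z_{n_k},A_{n_k}\psi(\lambda_{n_k}^{-1}\zeta_\infty)\big)\longrightarrow k_{D_H}\big(\hat\psi(\zeta_\infty),\hat\psi(\zeta_\infty)\big)=0,$$
and a routine subsequence argument gives $k_D(z_n,\varphi(\D))\to0$, i.e.\ $(z_n)$ is restricted. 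That such axis-limits actually exist when \eqref{extrinsicspecial2} holds follows from the normal-form identity above: $(A_nz_n)'\to0$ gives $H((A_nz_n)')\to0$, hence $-\Re(A_nz_n)_0$ converges along subsequences to some $\beta\in[c,C]$ while $\Im(A_nz_n)_0$ stays bounded. This proves (b), hence part (ii).

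The main obstacle is the uniform-in-$n$ bookkeeping in (a): converting ``contained in a $K$-region'' into ``relatively compact after scaling'' requires the uniform version of Corollary \ref{zimmerconvergence} as the base-points $A_n\sigma(1-\delta_D(z_n))$ vary in a compact set, together with tight control of the remainder $R$ and of the comparison $\delta_D\asymp|r|$ near $\xi$. The second delicate point, in (b), is reconciling the scaling sequence $\lambda_n=\delta_D(z_n)^{-1}$ intrinsic to the statement with the sequence $\mu_n=|\Re\mathscr{C}(\zeta_n)|^{-1}$ intrinsic to the complex geodesic, which is precisely where the dilation-invariance of $D_H$ is used.
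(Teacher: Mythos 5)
Your proposal is correct and follows essentially the same route as the paper: both reduce the statement to relative compactness of the rescaled sequence $(A_nz_n)$ in the model $D_H$ (via the normal form and the comparison $\delta_D\asymp|r|$), use Proposition \ref{scaling1} to identify the rescaled limit of a geodesic with endpoint $\xi$ as the axis slice, and invoke Corollary \ref{zimmerconvergence} to transfer Kobayashi-distance bounds between $A_nD$ and $D_H$. Your two cosmetic deviations --- anchoring part (a) on the inner normal segment (which the paper itself notes in Remark \ref{scalingrelcomp}) rather than on the complex geodesic, and reconciling the two scaling sequences in part (b) via dilation-invariance of $D_H$ instead of evaluating the rescaled geodesic at the rescaled parameter $\lambda_n\zeta_n$ as the paper does --- do not change the substance of the argument.
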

		\proof
		For the sake of clarity, in this proof we denote the sequence  in $D$ converging to the origin by $(z^{(n)})$ instead of $(z_n)$.
		Consider  multitype coordinates at $\xi$. 
		Let  $\varphi\colon \H\to D$ be a complex geodesic with endpoint the origin. Up to a change of parametrization, we may assume $\varphi'_N(0)=1$. Set $\lambda_n:=\delta_D(z^{(n)})^{-1}$ and consider the scaling
		$$A_n(z)=(\lambda_n z_0,\lambda_n^{1/m_1}z_1,\cdots,\lambda_n^{1/m_{d-1}}z_{d-1}).$$
		Notice that in multitype coordinates the sequence $(A_nz^{(n)})$ is bounded in $\C^d$ if and only if (\refeq{extrinsicspecial1}) holds for all $j=0,\dots,d-1$.
		
		We   prove (i).
		Assume that the sequence $(A_nz^{(n)})$ is bounded  in $\C^d$. Let $r$ be the local defining function of $D$ near the origin as in (\refeq{normalform}). We have that
		$$\lambda_nr(A_n^{-1}z)\to r_{D_H}(z):=\Re z_0+H(z_1,\cdots,z_{d-1})$$
		uniformly on compact sets of $\C^d$. It is well known (see, for example, \cite[Chapter 2, Lemma 2.5]{Range}) that 
		for $z\in D$ close to the origin the defining function $r$ is bi-Lipschitz to the distance function $\delta_D$, that is there exist $0<c<C$ such that 
		$$-C\delta_D(z)\leq r(z)\leq -c\delta_D(z).$$
		It follows that 
		$$\limsup_{n\to+\infty}r_{D_H}(A_nz^{(n)})=\limsup_{n\to+\infty}\lambda_nr(A_n^{-1}A_nz^{(n)})=\limsup_{n\to+\infty}\frac{r(z^{(n)})}{\delta_D(z^{(n)})}\leq-c<0,$$ which implies that the limit set of the sequence $(A_nz^{(n)})$ is contained in a compact subset $K$ of $D_H$. By Proposition \ref{scaling1} the sequence $(A_n\varphi(\lambda_n^{-1}\cdot))$ converges uniformly on compact subsets to  $\hat\varphi(\zeta)=(\zeta,0)$, so by Corollary \ref{zimmerconvergence} we have
		$$\limsup_{n\to+\infty}k_D(z^{(n)},\varphi(-\lambda_n^{-1}))=\limsup_{n\to+\infty}k_{A_nD}(A_nz^{(n)},A_n\varphi(-\lambda_n^{-1}))\leq \max_{z\in K}k_{D_H}(z,(-1,0))<+\infty,$$
		which means that the sequence $(z^{(n)})$ is contained in a geodesic region. 
		
		Conversely,  assume that the sequence $(z^{(n)})$ $K$-converges to $\xi$.  Let $(t_n)$  be a sequence in  $(0,1]$ converging to 0 such that
		$k_D(z^{(n)},\varphi(-t_n))\leq M$ for all $n\geq0$. We claim that there are two constants $0<c_1<C_1$ such that \begin{equation}\label{Mercereq}
			c_1\leq \lambda_nt_n\leq C_1. 
		\end{equation}
		Indeed, by Lemma \ref{stimekob} we can find $c_2\geq 0$ such that
		$$c_2\geq|k_D(\varphi(-t_n),\varphi(-1))+\log\delta_D(\varphi(-t_n))|=|k_\H(-t_n,-1)+\log\delta_D(\varphi(-t_n))|=\left|\log\frac{\delta_D(\varphi(-t_n))}{t_n}\right|.$$
		By Proposition 2.4 in \cite{Mercer} we have 
		$k_D(p,q)\geq \left|\log\frac{\delta_D(p)}{\delta_D(q)}\right|$   for all $p,q\in D$, so
		$$\left|\log\frac{t_n}{\delta_D(z^{(n)})}\right|\leq\left|\log\frac{t_n}{\delta_D(\varphi(-t_n))}\right|+\left|\log\frac{\delta_D(\varphi(-t_n))}{\delta_D(z^{(n)})}\right|\leq c_2+  k_D(\varphi(-t_n),z^{(n)})\leq c_2+ M,$$
		which proves the claim.
		Thus the sequence $(-\lambda_nt_n)$ is contained in $[-C_1,-c_1]\subset\subset \H$. Hence the limit set of the sequence $$A_n\varphi(-t_n)=A_n\varphi(\lambda_n^{-1}(-\lambda_nt_n))$$ 
		is contained in $[-C_1,-c_1]\times \{0\}\subset\subset  D_H$. Recalling that $k_D(z^{(n)},\varphi(-t_n))\leq M$, by (iii) in Corollary \ref{zimmerconvergence} 
		it follows that the limit set of the sequence  $(A_nz^{(n)})$ is relatively compact in $D_H$. Hence $(A_nz^{(n)})$ is bounded in $\C^d$.
	
	We now prove (ii).
	Assume that the sequence $(z^{(n)})$ $K'$-converges to $\xi$. By definition there exists $(\zeta_n)$ in $\H$ such that $k_D(z^{(n)},\varphi( \zeta_n))\to0$. 
	By Remark \ref{rmkspecialnontg} $\zeta_n\to 0$ non-tangentially, so there exists $A>0$ such that $|\Im\zeta_n|\leq A|\Re\zeta_n|$. It follows that
	$$k_D(z^{(n)},\varphi(\Re\zeta_n))\leq k_D(z^{(n)},\varphi(\zeta_n))+k_{\H}(\zeta_n,\Re \zeta_n)\leq k_D(z^{(n)},\varphi(\zeta_n))+2\arcsinh\frac{A}{2},$$ which is bounded. Arguing as above (setting $t_n:=-\Re\zeta_n$) we get that there exists $C_1>0$ such that 
	$$c_1\leq-\lambda_n\Re\zeta_n\leq C_1,$$ which implies that  the sequence $(\lambda_n\zeta_n)$ is contained in a compact subset of  $\H$.
	Let $(n_k)$ be a subsequence such that $(A_{n_k}z^{(n_k)})$ converges to a point $\hat z\in D_H$ and   $(\lambda_{n_k}\zeta_{n_k})$ converges to a point $\hat\zeta\in \H.$
	We have
	$$0=\lim_{k\to+\infty}k_D(z^{(n_k)},\varphi(\zeta_{n_k}))=\lim_{k\to+\infty}k_{A_{n_k}D}(A_{n_k}z^{(n_k)},A_{n_k}\varphi(\zeta_{n_k}))=k_{D_H}(\hat z,(\hat\zeta,0)),$$
	thus $A_{n_k}z^{(n_k)}\to (\hat\zeta,0)$, which means that 
	$\lambda_{n_k}^{1/m_j}z^{(n_k)}_j\to0$ for all $j=1,\cdots,d-1$,
	which implies (\refeq{extrinsicspecial2}).
	
	Conversely, assume (\refeq{extrinsicspecial2}), and we show that  $k_D(z^{(n)},\varphi(z^{(n)}_0))\to 0$.
	{By (\refeq{extrinsicspecial1}) the sequence $(\lambda_n z_0^{(n)})$ is bounded in $\C$. Moreover,  from   the convexity of $D$, we have $\lambda_n:=\delta_D(z^{(n)})\leq -\Re z_0^{(n)}$, so $(\lambda_nz^{(n)}_0)$ is contained in a compact subset of $\H$. Let $(n_k)$ be a subsequence such that $(\lambda_{n_k}z^{(n_k)}_0)$ converges to a point  $\hat\zeta\in \H.$
		By (\refeq{extrinsicspecial2}) we have  $A_{n_k}z^{(n_k)}\to (\hat\zeta,0)$, and
		by Proposition \ref{scaling1} we have
		$$A_{n_k}\varphi(\lambda_{n_k}^{-1}\lambda_{n_k}z_0^{(n_k)})\to (\hat\zeta,0).$$
		Thus
		$$k_D(z^{(n_k)},\varphi(z^{(n_k)}_0))=k_{A_{n_k}D}(A_{n_k}z^{(n_k)},A_{n_k}\varphi(z^{(n_k)}_0))\to 0.$$

		\endproof
		
		\begin{remark}\label{reformulation}
			Notice that the (i) and (ii) in previous theorem immediately imply the following:
			\begin{itemize}
				\item[(i)]  $\langle z-\xi,v_j\rangle=O_K\left(\delta_D(z)^{1/m_j}\right)$ for all  $0\leq j\leq d-1$;
				\item[(ii)] $\langle z-\xi,v_j\rangle=o_{K'}\left(\delta_D(z)^{1/m_j}\right)$ for all $1\leq j\leq d-1$.
			\end{itemize}
		\end{remark}

		\begin{remark}\label{typevscotype} It is natural to wonder whether, if a sequence $(z^{(n)})$ $K$-converges to $\xi$, one has for all $v\in \C^d\setminus\{0\}$,
			$$\langle z^{(n)}-\xi,v\rangle=O\left(\delta_D(z^{(n)})^{1/m_\xi(v)}\right).$$
			This turns out to be false in general, as the following example shows: 
			let $m\geq2$ be an even integer and  let $\mathbb{E}_m\subset \C^2$ be the egg domain defined in Example \ref{egggeodesics}. Let $\xi:=(1,0)\in\partial\mathbb{E}_m$. Fix $\lambda\in\D^{*}$ and consider the curve $\gamma\colon [0,1)\to\mathbb{E}_m $
			with endpoint $\xi$ defined as  $\gamma(t)=(t,\lambda\sqrt[m]{1-t^2}),$ which satisfies
			$$\delta_{\mathbb{E}_m}(\gamma(t))\approx1-t.$$
			Let $t_n\nearrow 1$ and set $z^{(n)}:=\gamma(t_n).$ By the previous theorem   $z^{(n)}$ converges to $\xi$ inside a $K$-region.
			Notice that the outer normal versor $n_\xi$ at $\xi$ is $(1,0)$ and that the complex tangent at $\xi$ is $T_\xi^\C\partial \mathbb{E}_m=\{z_0=0\}$.
			Now let $v\in\C^2\setminus\{z_0z_1=0\}$. The type of $v$ is  $m_\xi(v)=1$ while its cotype  is $M_\xi(v)=m$.
			A simple computation shows that
			$\langle z^{(n)}-\xi,v \rangle$ is  $O((1-t_n)^{1/m})$ but is not $O(1-t_n)$.
			In general it follows by simple linear algebra that for all $v\in \C^d\setminus\{0\}$ one has
			$$\langle z^{(n)}-\xi,v\rangle=O\left(\delta_D(z^{(n)})^{1/M_\xi(v)}\right),$$
			where $M_\xi(v)$ is the cotype of $v$.
			Similar considerations hold for the restricted convergence.
		\end{remark}

			\begin{remark}\label{restrictedball}In the unit ball $\B^d$ the conditions (\refeq{extrinsicspecial1}) for $j=1,\dots,d-1$ are redundant, that is, 
			a sequence $(z_n)$ in $\B^d$ $K$-converges to $\xi\in \partial \B^d$ if and only if 
		$\langle z_n-\xi,n_\xi\rangle=O\left(\delta_{\B^d}(z_n)\right).$ Indeed one has, for all $M>1$,
			\begin{equation}\label{koranyiball}
				K_0(\xi,M)=\left\{z\in \B^d:\frac{|1-\langle z,\xi\rangle|}{1-\|z\|}<M\right\}.
			\end{equation}
			\end{remark}
			The next result shows that the same is true  in $\C$-proper convex domains at strongly linearly convex boundary points. We recall that a $C^2$  boundary point of a domain is called \textit{strongly linearly convex} if the Hessian of some (equivalently, any) defining function is definite positive on the complex tangent space.
			Notice that every  strongly linearly convex point of a convex domain has line type 2.
\begin{corollary}
		Let $D\subset\C^d$ be a $\C$-proper convex domain and let $\xi\in\partial D$ be a strongly linearly convex boundary point. Let $(z_n)$ be a sequence in $D$, then
		$(z_n)$ $K$-converges to $\xi$ if and only if 
		$$\langle z_n-\xi,n_\xi\rangle=O\left(\delta_D(z_n)\right).$$
\end{corollary}
\proof
By (i) in Theorem \ref{extrinsicspecial} is enough to prove that, if $$\langle z_n-\xi,n_\xi\rangle=O\left(\delta_D(z_n)\right),$$ then
$$\langle z_n-\xi,\tau\rangle=O\left(\delta_D(z_n)^{1/2}\right)$$
for all $\tau\in T_\xi^\C\partial D$.
Since $\xi$ is a strongly linearly convex boundary point, by \cite[Proposition 22]{KNO} there exists $M\geq 0$ such that 
$$\|z_n-\xi\|\leq M |\langle z_n-\xi,n_\xi\rangle|^{1/2},$$
hence for all $\tau\in T_\xi^\C\partial D$ we have
$$|\langle z_n-\xi,\tau\rangle|\leq \|z_n-\xi\|\leq M|\langle z_n-\xi,n_\xi\rangle|^{1/2}=O\left(\delta_D(z_n)^{1/2}\right).$$
\endproof

			\begin{remark}
		The  conditions (\refeq{extrinsicspecial1}) for $j=1,\dots,d-1$ are however not redundant in general.
Consider as an  example the  {\sl tube} domain
			$$D:=\{(z_0,z_1)\in\C^2: \Re z_0+2(\Re z_1)^2<0\},$$
a $\C$-proper convex domain which is  biholomorphic to the Siegel Half-space 
			 $$\H_2:=\{(z_0,z_1)\in \C^2: {\rm Re}\,z_0+|z_1|^2<0\}$$ 
			via the biholomorphism $\Psi\colon \H_2\to D$ given by
				$$\Psi(z_0,z_1)=\left(z_0-z_1^2,z_1\right),\quad \Psi^{-1}(z_0,z_1)=\left(z_0+z_1^2,z_1\right).$$ In particular $D$ is biholomorphic to the ball $\B^2$.
			Every point of $\partial D$ is strongly pseudoconvex (and thus has line type 2)
			and no point of $\partial D$ is strongly linearly convex.  
			 Let $\alpha>0$ and $t_n\searrow0$, and consider the sequence $z^{(n)}:=(-t_n,it_n^\alpha)$ converging to the origin. The outer normal versor at the origin is $n_0=(1,0)$. Notice that   $\tau=(0,i)\in T_0^\C\partial D$ and  that $\delta_D(z_n)= t_n$ and
			$$\langle z^{(n)},n_0\rangle=-t_n,\quad \langle z^{(n)},\tau\rangle=t_n^{\alpha}.$$
			Hence we have $\langle z^{(n)},n_0\rangle=O(\delta_D(z^{(n)}))$ for all $\alpha>0$, but  $\langle z^{(n)},\tau\rangle=O(\delta_D(z^{(n)})^{1/2})$ if and only if $\alpha\geq\frac{1}{2}$.
				\end{remark}
							
We end this section with another corollary of Theorem \ref{extrinsicspecial}.
		\begin{corollary}\label{rk->nt}
			Let $D\subset\C^d$ be a $\C$-proper convex domain, let $\xi\in\partial D$ be a point of locally finite type, and let $f\colon D\to\C$ be a function. Then
			\begin{itemize}
				\item[(i)] if $f$ has $K$-limit $L$ at $\xi$, then  $f$ has  $K'$-limit $L$ at $\xi$;
				\item[(ii)] if $f$ has  $K'$-limit $L$ at $\xi$, then  $f$ has non-tangential limit $L$ at $\xi$ (in the sense of cones).
			\end{itemize}
		\end{corollary}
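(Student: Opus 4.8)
The plan is to deduce both implications from the extrinsic characterization of $K$- and $K'$-convergence given in Theorem \ref{extrinsicspecial}, which arranges the relevant notions of convergence to $\xi$ into a nested family.

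Statement (i) is immediate from Definition \ref{restricted}: by definition a sequence $K'$-converging to $\xi$ is in particular $K$-converging to $\xi$, so if $f(z_n)\to L$ along every sequence $K$-converging to $\xi$, the same holds along every sequence $K'$-converging to $\xi$. No further work is needed here.

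For (ii) it suffices to show that every sequence $(z_n)\subset D$ converging to $\xi$ inside a cone with vertex $\xi$ and aperture $<\pi$ is $K'$-converging to $\xi$; the conclusion then follows from the definition of the $K'$-limit. First I would record the standard fact that such a cone condition forces $\|z_n-\xi\|=O(\delta_D(z_n))$. One inequality comes for free from convexity: since $D$ is contained in the half-space $\{\Re\langle w-\xi,n_\xi\rangle<0\}$, one has $\delta_D(z)\le-\Re\langle z-\xi,n_\xi\rangle$ for every $z\in D$. For the reverse comparison near $\xi$ I would pass to multitype coordinates and invoke the normal form \eqref{normalform}: inside a cone the tangential terms $H$ and $R$ are of order $\ge 2$ in $\|z-\xi\|$ (every monomial of $H$ has total degree $\ge 2$, since $m_j\ge 2$ forces each weighted-homogeneous monomial to have at least two factors, and $R$ is controlled similarly), hence negligible against the normal term $\Re z_0=\Re\langle z-\xi,n_\xi\rangle$; together with the cone inequality $\|z-\xi\|\le C(-\Re\langle z-\xi,n_\xi\rangle)$ this gives $\|z_n-\xi\|\le C'\delta_D(z_n)$.

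Granted this, the extrinsic characterization \eqref{extrinsicspecial1}--\eqref{extrinsicspecial2} is verified in one line. For $j=0$ we have $|\langle z_n-\xi,n_\xi\rangle|\le\|z_n-\xi\|=O(\delta_D(z_n))$, which is \eqref{extrinsicspecial1} for $j=0$. For $1\le j\le d-1$ one has $m_j\ge 2$, hence $1/m_j<1$, and $|\langle z_n-\xi,v_j\rangle|\le\|z_n-\xi\|=O(\delta_D(z_n))=o(\delta_D(z_n)^{1/m_j})$, since $\delta_D(z_n)^{1-1/m_j}\to0$; this is \eqref{extrinsicspecial2}. By Theorem \ref{extrinsicspecial}(ii) the sequence $(z_n)$ $K'$-converges to $\xi$, so $f(z_n)\to L$, which proves (ii). The only step that is not purely formal is the comparison $\|z_n-\xi\|\asymp\delta_D(z_n)$ inside a cone, where convexity and the finite-type normal form enter; everything else is a direct consequence of Theorem \ref{extrinsicspecial} together with the elementary bound $m_j\ge 2$.
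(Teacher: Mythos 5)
Your proof is correct and follows essentially the same route as the paper: (i) is definitional, and (ii) reduces to showing that convergence in a cone forces $\|z_n-\xi\|=O(\delta_D(z_n))$, after which Theorem \ref{extrinsicspecial} applies exactly as you verify (using $m_j\ge 2$ for the $o$-conditions). The only difference is cosmetic: the paper obtains the estimate $\|z_n-\xi\|\le A\,\delta_D(z_n)$ from a Euclidean ball internally tangent at $\xi$, whereas you derive it from the normal form \eqref{normalform} of the defining function; both arguments are valid.
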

		\proof
		(i) follows from the definition of restricted sequence. We prove (ii).
		Let $(z_n)$ be a sequence in $D$ converging to $\xi$ inside a cone of aperture $<\pi$.
		Let $B\subseteq D$ be an Euclidean ball internally tangent at $\xi$, then a simple geometric argument shows that there exists $A>1$ such that $\|z_n-\xi\|\leq A\delta_B(z_n)$ and obviously $\delta_B(z_n)\leq\delta_D(z_n)$,
		thus by Theorem \ref{extrinsicspecial} the sequence $(z_n)$ is restricted in $D$.
		\endproof
		\section{Kobayashi type}

		The following definition was introduced by Abate in \cite{Absurv}, see also Abate--Tauraso \cite{AbTau}.
		
		\begin{definition}[Kobayashi type]\label{KRtype}
			Let $D\subset\C^d$ be a $\C$-proper convex domain and $\xi\in\partial D$. Let $v\in\C^d$, $v\neq 0$, then the {\sl Kobayashi type} $s_\xi(v)$ of $v$ at $\xi$ is the number
			$$s_\xi(v):=\inf\{s>0:\kappa_D(z,v)=O_K(\delta_D(z)^{-s})\}.$$
		\end{definition}
		
		Abate  conjectures the following  in \cite{Absurv}: ``the Kobayashi type might be the inverse of the D'Angelo type of $\partial D$ at $\xi$ along the direction $v$ (that is, the highest order of contact of $\partial D$ with a complex curve tangent to $v$ at $\xi$)''. He also leaves as an open question whether the infimum  in the definition of $s_\xi(v)$ is always attained.
		In this section we prove the conjecture (the equality between D'Angelo type at $\xi$ along the direction $v$ and $m_\xi(v)$ can be proved as in \cite{BS}), and we answer the question in the positive.

			{\begin{theorem}\label{KtypeLtype}
			Let $D\subset\C^d$ be a $\C$-proper convex domain and let $\xi\in\partial D$ be point of locally finite type.  Let $v\in\C^d\setminus\{0\}.$ Then the function $$\delta_D(z)^{1/m_\xi(v)}\kappa_D(z,v)$$
			and its reciprocal are $K$-bounded at $\xi$.
In particular, the infimum in the definition of $s_\xi(v)$ is attained and
$$s_\xi(v)=\frac{1}{m_\xi(v)}.$$
	\end{theorem}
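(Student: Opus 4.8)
The plan is to prove both the identity $s_\xi(v)=1/m_\xi(v)$ and the attainment of the infimum by means of Gaussier's anisotropic scaling: one transports the Kobayashi--Royden metric through the dilations $A_n$ and reads off its boundary asymptotics from the metric of the scaling model $D_H$. First I would pass to multitype coordinates at $\xi$, so that $\xi=0$ and $\partial D$ is given near $0$ by a defining function $r$ as in \eqref{normalform}, with scaling model $D_H$. Write $v=\sum_{j=0}^{d-1}a_je_j$ and $m:=m_\xi(v)=\min\{m_j:a_j\neq 0\}$. The elementary but crucial point is that for the scaling $A_n(z)=(\lambda_nz_0,\lambda_n^{1/m_1}z_1,\dots,\lambda_n^{1/m_{d-1}}z_{d-1})$ attached to any sequence $\lambda_n\to+\infty$ one has
$$\lambda_n^{-1/m}A_nv\ \longrightarrow\ \hat v:=\!\!\sum_{\substack{0\le j\le d-1\\ a_j\neq 0,\ m_j=m}}\!\! a_je_j\ \neq\ 0,$$
where it is \emph{essential} that $m$ is the \emph{minimum} of the weights $m_j$ over the support of $v$, so that no coordinate blows up and $\hat v\neq 0$. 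Since $A_n$ restricts to a linear biholomorphism $D\to A_nD$, the invariance and $1$-homogeneity of $\kappa$ give
\begin{equation}\label{keyscaling}
\kappa_D(z,v)=\kappa_{A_nD}(A_nz,A_nv)=\lambda_n^{1/m}\,\kappa_{A_nD}\bigl(A_nz,\lambda_n^{-1/m}A_nv\bigr),
\end{equation}
and this identity, together with the convergence $\kappa_{A_nD}\to\kappa_{D_H}$ uniformly on compacta of $D_H\times\C^d$ provided by Corollary \ref{zimmerconvergence}(ii), will drive the whole argument.

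For the upper bound --- which also yields the attainment of the infimum --- I would take an arbitrary sequence $(z^{(n)})$ that $K$-converges to $\xi$; we may assume $z^{(n)}\to\xi$, since otherwise $\kappa_D(\cdot,v)$ and $\delta_D$ are bounded on the relevant compact set. Put $\lambda_n:=\delta_D(z^{(n)})^{-1}$. By Theorem \ref{extrinsicspecial}(i), $(A_nz^{(n)})$ is bounded in $\C^d$, and --- arguing as in the proof of that theorem, using the bilipschitz comparison $\delta_D\approx|r|$ near $0$ together with the uniform smallness of $z\mapsto\lambda_nR(A_n^{-1}z)$ on compacta --- the points $A_nz^{(n)}$ lie eventually in one fixed compact set $K\subset D_H$. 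Combining \eqref{keyscaling} with $\lambda_n^{-1/m}A_nv\to\hat v$ and Corollary \ref{zimmerconvergence}(ii), we get, for $n$ large,
$$\delta_D(z^{(n)})^{1/m}\,\kappa_D(z^{(n)},v)=\kappa_{A_nD}\bigl(A_nz^{(n)},\lambda_n^{-1/m}A_nv\bigr)\ \le\ \sup_{w\in K}\kappa_{D_H}(w,\hat v)+1,$$
so $\kappa_D(z,v)=O_K\bigl(\delta_D(z)^{-1/m}\bigr)$. Hence $1/m$ belongs to the set defining $s_\xi(v)$; in particular the infimum is attained and $s_\xi(v)\le 1/m$. (The same computation along a subsequence with $A_nz^{(n)}\to w\in D_H$ even gives the exact asymptotics $\delta_D(z^{(n)})^{1/m}\kappa_D(z^{(n)},v)\to\kappa_{D_H}(w,\hat v)$.)

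For the lower bound I would exhibit an explicit $K$-converging test sequence on which $\kappa_D(\cdot,v)$ has exact order $\delta_D^{-1/m}$. Fix $w^{\ast}\in D_H$, take $\lambda_n=n$, and set $z^{(n)}:=A_n^{-1}(w^{\ast})$; by Corollary \ref{zimmerconvergence}(i) this lies in $D$ for $n$ large, and $z^{(n)}\to\xi$. A short computation with the defining function (using $\lambda_n r(A_n^{-1}w^{\ast})\to r_{D_H}(w^{\ast})<0$, where $r_{D_H}(z)=\Re z_0+H(z_1,\dots,z_{d-1})$, and $\delta_D\approx|r|$) gives $\delta_D(z^{(n)})\approx\lambda_n^{-1}$; since the $j$-th coordinate of $z^{(n)}$ equals $\lambda_n^{-1/m_j}w^{\ast}_j\approx\delta_D(z^{(n)})^{1/m_j}$, Theorem \ref{extrinsicspecial}(i) shows that $(z^{(n)})$ $K$-converges to $\xi$. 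Now \eqref{keyscaling} with $A_nz^{(n)}=w^{\ast}$ fixed, together with Corollary \ref{zimmerconvergence}(ii), yields
$$\kappa_D(z^{(n)},v)=\lambda_n^{1/m}\bigl(\kappa_{D_H}(w^{\ast},\hat v)+o(1)\bigr),$$
and $\kappa_{D_H}(w^{\ast},\hat v)>0$ because $D_H$ is a $\C$-proper convex domain, hence Kobayashi hyperbolic, and $\hat v\neq 0$. Thus $\kappa_D(z^{(n)},v)\ge c\,\delta_D(z^{(n)})^{-1/m}$ for some $c>0$ and all large $n$, so for every $s<1/m$,
$$\kappa_D(z^{(n)},v)\,\delta_D(z^{(n)})^{s}\ \ge\ c\,\delta_D(z^{(n)})^{\,s-1/m}\ \longrightarrow\ +\infty;$$
hence $\kappa_D(z,v)\ne O_K(\delta_D(z)^{-s})$ on the $K$-region containing $(z^{(n)})$, giving $s_\xi(v)\ge 1/m$. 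Together with the previous bound, $s_\xi(v)=1/m=1/m_\xi(v)$.

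The step I expect to be the main obstacle is technical but genuinely delicate: making the scaling limits rigorous near the boundary when the dilation parameter is pinned to $\lambda_n=\delta_D(z^{(n)})^{-1}$ along a moving sequence. One must check that the rescaled points $A_nz^{(n)}$ neither leave $D_H$ nor drift off to $\partial D_H$ --- this is exactly where the estimate $\lambda_nR(A_n^{-1}z)\to 0$ on compacta and the equivalence $\delta_D\approx|r|$ are used --- and one must be able to pass to the limit $n\to\infty$ simultaneously in $\kappa_{A_nD}$ and in its moving base point, which is precisely what the uniform convergence in Corollary \ref{zimmerconvergence}(ii) supplies. The other points to watch are that only the leading part $\hat v$ of $v$ survives the rescaling and that $\hat v\neq 0$ --- both depending on $m=m_\xi(v)$ being the \emph{minimum} of the weights over the support of $v$ --- so that the limiting value $\kappa_{D_H}(w^{\ast},\hat v)$ is strictly positive and the lower bound is not vacuous.
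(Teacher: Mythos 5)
Your proposal is correct and follows essentially the same route as the paper: pass to multitype coordinates, use Gaussier's weighted scaling $A_n$ with $\lambda_n=\delta_D(z^{(n)})^{-1}$, note that $\lambda_n^{-1/m_\xi(v)}A_nv$ converges to the nonzero "leading part" of $v$, and invoke the convergence $\kappa_{A_nD}\to\kappa_{D_H}$ together with the relative compactness of the rescaled points to get the upper bound, and the positivity of $\kappa_{D_H}$ to get the lower bound. The only (cosmetic) differences are that the paper argues the upper bound by contradiction rather than directly, and for the lower bound it evaluates along a complex geodesic with endpoint $\xi$ and renormalizes the exploding vector $\lambda_n^{-s}A_nv$, whereas you use the explicit test sequence $A_n^{-1}(w^\ast)$ and first pin down the exact order $\delta_D^{-1/m_\xi(v)}$.
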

		
		\proof 
			Let $(m_j)_{j=0}^{d-1}$  be the multitype at $\xi$ and consider  multitype coordinates at $\xi$. 
			Write $v=\sum_{j=0}^{d-1} a_je_j$.
			Denote $$j_0:=\min\{j:a_j\neq 0\},$$ and notice that 
			$m_\xi(v)=m_{j_0}$.
			We  first show that  $\delta_D(z)^{1/m_\xi(v)}\kappa_D(z,v)$ is $K$-bounded.
			Assume by contradiction that there exists a sequence $z^{(n)}\to 0$ inside a geodesic region such that $$\delta_D^{1/m_\xi(v)}(z^{(n)})\kappa_D(z^{(n)},v)\to+\infty.$$
			Set $\lambda_n:=\delta_D(z^{(n)})^{-1}$ and consider the scaling maps
			$$A_n(z)=(\lambda_n z_0,\lambda_n^{1/m_1}z_1,\cdots,\lambda_n^{1/m_{d-1}}z_{d-1}).$$
			Arguing as in (i) in Theorem \ref{extrinsicspecial}, we obtain that up to a subsequence
			$$A_nz^{(n)}\to z^{(\infty)}\in D_H.$$
			Now
			$$+\infty=\lim_{n\to+\infty}\lambda_n^{-1/m_\xi(v)}\kappa_D(z^{(n)},v)=\lim_{n\to+\infty}\kappa_{A_nD}(A_nz^{(n)},\lambda_n^{-1/m_\xi(v)}A_nv).$$
			Notice that $$\lambda_n^{-1/m_\xi(v)}A_nv=\sum_{j=j_0}^{d-1}\lambda_n^{1/m_j-1/m_\xi(v)}a_je_j.$$
			Since $m_j\geq m_\xi(v)$ for $j=j_0,\cdots,d-1$, it follows that $$\lambda_n^{-1/m_\xi(v)}A_nv\to v_\infty:=\sum_{j\colon m_j=m_\xi(v)}a_je_j.$$ Finally
			$$+\infty=\lim_{n\to+\infty}\kappa_{A_nD}(A_nz^{(n)},\lambda_n^{-1/m_\xi(v)}A_nv)=\kappa_{D_H}(z^{(\infty)},v_\infty)<\infty,$$
			obtaining a contradiction.
			Now we show that the reciprocal is $K$-bounded. Assume by contradiction that there exists a sequence $z^{(n)}\to 0$ inside a geodesic region such that $$\delta_D^{1/m_\xi(v)}(z^{(n)})\kappa_D(z^{(n)},v)\to0.$$
			Reasoning as before and noting that $v_\infty\neq 0$, we obtain
			$$0=\lim_{n\to+\infty}\lambda_n^{-1/m_\xi(v)}\kappa_D(z^{(n)},v)=\lim_{n\to+\infty}\kappa_{A_nD}(A_nz^{(n)},\lambda_n^{-1/m_\xi(v)}A_nv)=\kappa_{D_H}(z^{(\infty)},v_\infty)>0,$$
			obtaining a contradiction.
			\endproof
		
	\begin{remark} The previous result can be rephrased as the following estimate for the Kobayashi metric: if $D\subset\C^d$ is a $\C$-proper convex domain and  $\xi\in\partial D$ is a point of locally finite type, then for all  $v\in\C^d\setminus\{0\}$, $p\in D$, $M>1$ there exist $c,C>0$ such that for all $z\in K_p(\xi,M)$ we have
$$\label{stimaLee}\frac{c}{\delta_D(z)^{1/m_\xi(v)}}\leq \kappa_D(z,v)\leq\frac{C}{\delta_D(z)^{1/m_\xi(v)}}.
$$
This estimate was obtained along the normal segment by Lee in \cite{Lee} (see also \cite{NPZ} for the $\C$-convex case).
		\end{remark}
		
		\section{Lindel\"of principle}
		
		The purpose of this section is to prove the following version of the Lindel\"of principle. 	
		
		\begin{definition}
			A {\sl restricted curve} $\gamma:[0,1)\to D$ with endpoint $\xi$ is a curve such that for all sequences $(t_n)$ in $[0,1)$ converging to 1 we have that the  sequence $(\gamma(t_n))$ is restricted in the sense of Definition \ref{restricted}.
			Clearly this is equivalent to saying that $\gamma([0,1))$ is contained in a $K$-region with vertex $\xi$ and that $k_D(\gamma(t),\varphi(\D))\stackrel{t\to 1^-}\longrightarrow 0,$ where $\varphi$ is a complex geodesic with endpoint $\xi$.
			We say that  curve  $\gamma:[0,1)\to D$ with endpoint $\xi$ is {\sl strongly restricted} if $\gamma([0,1))$  is contained in a $K$-region with vertex $\xi$ and if there exists a (continuous) curve $\tilde \gamma\colon [0,1)\to \D$ such that
			\begin{equation}\label{stronglyrestricted}\lim_{t\to1^-}k_D(\gamma(t),\varphi(\tilde \gamma(t)))=0,
			\end{equation}
			where $\varphi$ is a complex geodesic with endpoint $\xi$.
		\end{definition}
		
		\begin{remark}
			If $\gamma$ is strongly restricted, the curve $\varphi\circ \tilde \gamma$ is contained in a geodesic region with vertex $\xi$, and thus by \eqref{geodesicintersection} it follows that the curve $\tilde \gamma(t)$ has endpoint 1 and is  non-tangential.  Notice also that the definition of strongly restricted curve does not depend on  the choice of the complex geodesic $\varphi$.
		\end{remark}
		The definition of strongly restricted curve is only slightly more stringent than the definition of restricted curve, as the following lemma shows.
		\begin{lemma}\label{exadmissible}
			Let $D\subset\C^d$ be a $\C$-proper convex domain and let $\xi\in\partial D$ be a point of locally finite type.
			Then any  rectifiable restricted  curve with endpoint $\xi$ is strongly restricted.
		\end{lemma}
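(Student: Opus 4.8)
The plan is to produce the continuous lifting $\tilde\gamma$ directly from a holomorphic retraction rather than by an ad hoc interpolation. By (iii) of Proposition~\ref{extrcgeo} the complex geodesic $\varphi$ with endpoint $\xi$ appearing in the definition of restricted curve admits a left inverse $\tilde\rho\colon D\to\D$, i.e.\ a holomorphic map with $\tilde\rho\circ\varphi=\id_\D$. Set $\tilde\gamma:=\tilde\rho\circ\gamma\colon[0,1)\to\D$. This is a continuous curve with values in $\D$ (not merely $\overline\D$), since $\tilde\rho$ is holomorphic and $\gamma$ is continuous. Because $\gamma$ is restricted, by hypothesis $\gamma([0,1))$ lies in a $K$-region with vertex $\xi$ and $k_D(\gamma(t),\varphi(\D))\to 0$ as $t\to1^-$, so it remains only to check \eqref{stronglyrestricted} for this $\tilde\gamma$.

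The one computational point is that $\varphi\circ\tilde\rho$ is a coarse closest-point projection onto $\varphi(\D)$: for every $z\in D$ one has
$$k_D\bigl(z,\varphi(\tilde\rho(z))\bigr)\le 2\,k_D(z,\varphi(\D)).$$
Indeed, fix $w=\varphi(\eta)\in\varphi(\D)$. Since $\tilde\rho$ does not increase the Kobayashi distance and $\tilde\rho(w)=\tilde\rho(\varphi(\eta))=\eta$, we get $k_\D(\tilde\rho(z),\eta)\le k_D(z,w)$; because $\varphi$ is a complex geodesic this means $k_D\bigl(\varphi(\tilde\rho(z)),w\bigr)=k_\D(\tilde\rho(z),\eta)\le k_D(z,w)$, and the triangle inequality gives $k_D\bigl(z,\varphi(\tilde\rho(z))\bigr)\le 2k_D(z,w)$. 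Taking the infimum over $w\in\varphi(\D)$ proves the estimate. Evaluating it along the curve and using $k_D(\gamma(t),\varphi(\D))\to0$ yields $k_D\bigl(\gamma(t),\varphi(\tilde\gamma(t))\bigr)\to0$, which is exactly \eqref{stronglyrestricted}; combined with $\gamma([0,1))$ being contained in a $K$-region this shows that $\gamma$ is strongly restricted.

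There is no serious obstacle here: the argument is essentially immediate once the left inverse is invoked, the only things to be careful about being that $\tilde\rho$ is globally defined on $D$ (so $\tilde\gamma(t)$ is defined for every $t$) and that the complex geodesic used to test restrictedness of $\gamma$ and the one appearing in \eqref{stronglyrestricted} may be taken to coincide, which is legitimate because restrictedness of a sequence is independent of the chosen testing geodesic. In fact this construction works for an arbitrary restricted curve, so rectifiability plays no essential role and is kept in the statement only because it is the form under which the lemma is applied (e.g.\ in the Lindel\"of principle). Finally, as already recorded in the remark following the definition of strongly restricted curve, \eqref{stronglyrestricted} together with $\gamma([0,1))$ being in a $K$-region forces $\varphi\circ\tilde\gamma$ into a geodesic region with vertex $\xi$, so that $\tilde\gamma(t)\to1$ non-tangentially by \eqref{geodesicintersection} and Corollary~\ref{convergestesso}; hence no separate statement about an endpoint of $\tilde\gamma$ is needed.
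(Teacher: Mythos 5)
Your argument is correct, and it takes a genuinely different route from the paper's. The paper exploits rectifiability to discretize the curve: it picks times $t_n\to 1$ whose images are at mutual Kobayashi distance roughly $1/n$, chooses $\zeta_n\in\D$ with $k_D(\gamma(t_n),\varphi(\zeta_n))\to 0$, and then interpolates with geodesic segments of $\D$ to manufacture a continuous $\tilde\gamma$. You instead set $\tilde\gamma=\tilde\rho\circ\gamma$ for a left inverse $\tilde\rho$ of $\varphi$ and use the fact that the holomorphic retraction $\varphi\circ\tilde\rho$ is a nonexpansive retraction onto $\varphi(\D)$, hence satisfies $k_D\bigl(z,\varphi(\tilde\rho(z))\bigr)\le 2\,k_D(z,\varphi(\D))$ for every $z\in D$; your derivation of this inequality (contractivity of $\tilde\rho$, isometry of $\varphi$, triangle inequality, infimum over $w\in\varphi(\D)$) is complete, and continuity of $\tilde\gamma$ is automatic. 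What this buys is twofold. First, rectifiability is never used, so you prove the stronger statement that \emph{every} restricted curve with endpoint $\xi$ is strongly restricted, whereas the paper's interpolation genuinely needs rectifiability to guarantee $t_n\to 1$. Second, the same displacement estimate applied to a restricted sequence $(z_n)$ shows that \eqref{eqrestricted} already forces $k_D(z_n,\varphi(\tilde\rho(z_n)))\to 0$ for any left inverse $\tilde\rho$; this is precisely the converse implication that Remark \ref{rmkAbrestricted} declares unclear, so your observation also closes the gap between Definition \ref{restricted} and the Abate--Tauraso projection condition. The only hypotheses you rely on are that $\tilde\rho$ exists globally (Proposition \ref{extrcgeo}(iii)) and that restrictedness may be tested against the same geodesic $\varphi$ used in the definition of strongly restricted, both of which are available in the paper.
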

		\begin{proof}
			Define a sequence $(t_n)$ as follows. Set $t_0=0$. Then define inductively $t_{n+1}$ as the minimum $t\geq t_{n}$ such that $k_D(\gamma(t),\gamma(t_n))\geq 1/n.$
			It easily follows from the fact that $\gamma$ is rectifiable that $t_n\to 1$.
			Set $z_n:=\gamma(t_n)$. Since the sequence $(z_n)$ is restricted, there exists a sequence $(\zeta_n)$ in $\D$ such that $k_D(z_n,\varphi(\zeta_n))\to 0$. Define the curve $\tilde \gamma\colon[0,1)\to \D$ as follows: for all $n\geq 0$ set
			$\tilde \gamma(t_n):=\zeta_n,$ and then interpolate for all $t\in  [0,1)$ using (suitably reparametrized) geodesic segments.
			It is easy to see that $\lim_{t\to1^-}k_D(\gamma(t),\varphi(\tilde \gamma(t)))=0.$
		\end{proof}

		\begin{theorem}[Lindel\"{o}f Principle]\label{localLindelof}
			Let $D\subset\C^d$ be a $\C$-proper  convex domain and let $\xi\in\partial D$ be a point of locally finite type. Let $f\colon D\to \C$  be a holomorphic function which is $K$-bounded at $\xi$, and assume that there exists a strongly restricted curve $\gamma\colon[0,1)\to D$  such that $f(\gamma(t))\to L$ as $t\to 1$. Then $f$ has restricted $K$-limit $L$ at $\xi$.
		\end{theorem}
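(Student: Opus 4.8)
The plan is to reduce the statement to the classical one-variable Lindel\"of theorem by slicing $D$ with a complex geodesic. First I would fix a complex geodesic $\varphi\colon\D\to D$ with endpoint $\xi$ (Proposition \ref{convCgeo}); since restrictedness of a sequence does not depend on the chosen complex geodesic (the proposition after Remark \ref{rmkAbrestricted}) and likewise the notion of strongly restricted curve is geodesic-independent, I may assume that $\varphi$ is the complex geodesic appearing in the definition of the strongly restricted curve $\gamma$, so that there is a non-tangential curve $\tilde\gamma\colon[0,1)\to\D$ with endpoint $1$ satisfying $k_D(\gamma(t),\varphi(\tilde\gamma(t)))\to 0$ as $t\to1^-$. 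The goal will then be to show that the holomorphic function $h:=f\circ\varphi\colon\D\to\C$ has non-tangential limit $L$ at $1$, and to transfer this back to $D$.

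The main auxiliary ingredient is a \emph{transfer lemma}: a $K$-bounded holomorphic function cannot distinguish points that are close in the Kobayashi distance, provided one of them stays in a $K$-region. Precisely, if $(a_n)$ is a sequence $K$-converging to $\xi$ and $(b_n)$ is a sequence in $D$ with $k_D(a_n,b_n)\to 0$, then $f(a_n)-f(b_n)\to 0$. To prove this I would pick $M>1$ with $(a_n)\subset K_p(\xi,M)$, note that $z\mapsto h_{\xi,p}(z)+k_D(z,p)$ is $2$-Lipschitz for $k_D$, so that $B_D(a_n,1)\subseteq K_p(\xi,eM)$ and hence $|f|$ is bounded there by a constant $C$ independent of $n$; then, taking a complex geodesic $\varphi_n$ with $\varphi_n(0)=a_n$ and $\varphi_n(t_n)=b_n$ where $t_n=\tanh(k_D(a_n,b_n)/2)\to 0$ (Proposition \ref{extrcgeo}), the map $f\circ\varphi_n$ is holomorphic and bounded by $C$ on a fixed disc $\rho\overline\D$ (with $\rho$ chosen so that $k_\D(0,\rho)<1$), and a Cauchy estimate gives $|f(a_n)-f(b_n)|\to 0$. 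Applying the same estimate pointwise to $\gamma(t)$ and $\varphi(\tilde\gamma(t))$ (using that $\gamma$ lies in a $K$-region) shows that $h(\tilde\gamma(t))=f(\varphi(\tilde\gamma(t)))\to L$.

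Next I would observe that $h$ is bounded on every Stolz angle at $1$. By \eqref{geodesicintersection} we have $\varphi(A^\D(\id_\D,R))\subseteq A^D(\varphi,R)$, which by Proposition \ref{abatek} is contained in a $K$-region with vertex $\xi$; since $f$ is $K$-bounded and every Stolz angle at $1$ is contained in some $A^\D(\id_\D,R)$, the claim follows. The proof is then completed by the following one-variable assertion: \emph{a holomorphic function $h\colon\D\to\C$ bounded on every Stolz angle at $1$ and having limit $L$ along some non-tangential curve ending at $1$ has non-tangential limit $L$ at $1$}. Granting this, for any restricted sequence $(z_n)$ converging to $\xi$ I would choose $\zeta_n\in\D$ with $k_D(z_n,\varphi(\zeta_n))\to 0$, whence $\zeta_n\to1$ non-tangentially by Remark \ref{rmkspecialnontg}; then $h(\zeta_n)\to L$, the sequence $(\varphi(\zeta_n))$ $K$-converges to $\xi$ again by \eqref{geodesicintersection}, and the transfer lemma yields $f(z_n)-h(\zeta_n)\to 0$, so $f(z_n)\to L$. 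This is exactly the statement that $f$ has restricted $K$-limit $L$ at $\xi$.

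The remaining one-variable assertion is the place I expect the real work to be, since the classical Lindel\"of theorem requires global boundedness on $\D$ whereas $h$ is only bounded on Stolz angles. I would fix a Stolz angle $S'$ at $1$, a Stolz angle $S_0$ eventually containing $\tilde\gamma$, and a circular sector $S\subset\D$ with vertex $1$ and aperture $<\pi$ large enough that (after harmless shrinking) $S',S_0\subseteq S$ with $\overline{S'\cup S_0}$ meeting $\partial S$ only at $1$, and on which $h$ is bounded. Letting $\Psi\colon S\to\D$ be a Riemann map, which extends continuously to $\overline S$ and sends $1$ to a point $\sigma\in\partial\D$, the standard asymptotics of conformal maps at a corner show that $\Psi$ behaves near $1$ like a branch of $\zeta\mapsto\sigma+c(1-\zeta)^{\pi/(2\beta)}$ (with $2\beta$ the aperture of $S$), so $\Psi$ and $\Psi^{-1}$ carry Stolz-type approach regions at the two vertices to Stolz-type approach regions. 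Then $h\circ\Psi^{-1}$ is bounded holomorphic on $\D$ and tends to $L$ along the curve $\Psi\circ\tilde\gamma$, which ends at $\sigma$; the classical Lindel\"of theorem gives that $h\circ\Psi^{-1}$ has non-tangential limit $L$ at $\sigma$, and pulling back along $\Psi(S')$ shows $h\to L$ as $\zeta\to1$ inside $S'$. Since $S'$ is arbitrary this gives the non-tangential limit at $1$. The role of the \emph{strong} restrictedness hypothesis is precisely to ensure that $\tilde\gamma$ is non-tangential, hence contained in a sector on which $h$ is bounded; alternatively one could run the classical normal-families proof of Lindel\"of's theorem directly inside such a sector rather than passing through the Riemann map.
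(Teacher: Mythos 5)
Your proposal is correct, and its overall architecture coincides with the paper's: transfer the limit from $\gamma(t)$ to $\varphi(\tilde\gamma(t))$, apply a one-variable Lindel\"of theorem to $f\circ\varphi$, and transfer back from $\varphi(\zeta_n)$ to an arbitrary restricted sequence $(z_n)$. The one genuine difference lies in how the transfer step is justified. The paper proves it via Lemma \ref{goodprodev}: the Royden localization lemma shows that $k_D(z_n,\varphi(\zeta_n))\to 0$ forces $k_{A(\varphi,R')}(z_n,\varphi(\zeta_n))\to 0$, and since $f$ maps $A(\varphi,R')$ into a bounded disc, the distance-decreasing property of $f$ gives $|f(z_n)-f(\varphi(\zeta_n))|\to 0$. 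Your transfer lemma instead uses that $K$-regions absorb Kobayashi balls of bounded radius (via the $2$-Lipschitz property of $h_{\xi,p}+k_D(\cdot,p)$) together with a Cauchy estimate along a connecting complex geodesic; this is equally valid and arguably more elementary, avoiding the localization lemma at the cost of a normalization of $f$ on a slightly larger $K$-region. A second, smaller difference: the paper simply invokes ``the classical Lindel\"of theorem'' for $f\circ\varphi$, which is only bounded on Stolz angles rather than on all of $\D$; your explicit reduction to the globally bounded case via the Riemann map of a sector (or a normal-families argument in the sector) supplies the detail the paper leaves implicit, and correctly identifies that the strong restrictedness hypothesis is what guarantees $\tilde\gamma$ is non-tangential and hence lives in such a sector.
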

		\begin{remark}
			This statement is only apparently similar to \cite[Theorem 3.2]{AbTau}, given our more general notion of restricted convergence, see Remark \ref{rmkAbrestricted} (moreover, we assume $K$-boundedness instead of $T$-boundedness).
			The proof is also different, as it is only based  on the one variable Lindel\"{o}f Principle, the Royden localization lemma  and metric  arguments. 
		\end{remark}

		\begin{lemma}[Royden Localization Lemma]\cite{RoyLocL}
			Let $D\subset\C^d$ be a Kobayashi hyperbolic domain and let $U\subset D$ be a domain. Then
			$$\kappa_{U}(z,v)\leq \coth(k_D(z,D\backslash U)/2) \kappa_D(z,v)$$
			for any $z\in U$ and $v\in\C^d$, where $k_D(z,D\backslash U):=\inf_{w\in D\backslash U}k_D(z,w)$.
		\end{lemma}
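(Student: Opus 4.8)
The plan is to derive the estimate directly from the defining formula \eqref{kr} for the Kobayashi--Royden metric together with the distance-decreasing property of holomorphic maps. We may assume $v\neq 0$, since otherwise both sides vanish, and we may assume $s:=k_D(z,D\setminus U)>0$, since otherwise $\coth(s/2)=+\infty$ and there is nothing to prove. (Note that $s>0$ holds whenever $D\setminus U\neq\emptyset$, because $z\in U$, $U$ is open and $k_D$ induces the Euclidean topology; and if $D\setminus U=\emptyset$ then $U=D$, $\kappa_U=\kappa_D$ and $\coth(s/2)=1$, so the inequality is trivial.)

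The key geometric fact I would record first is: if $\varphi\colon\D\to D$ is any holomorphic map with $\varphi(0)=z$, then $\varphi(r\D)\subseteq U$ for $r:=\tanh(s/2)$. Indeed, since $\varphi$ does not increase the Kobayashi distance, for every $\zeta$ with $|\zeta|<r$ we have, using the normalization $k_\D(0,\zeta)=\log\frac{1+|\zeta|}{1-|\zeta|}$,
$$k_D(z,\varphi(\zeta))\leq k_\D(0,\zeta)=\log\frac{1+|\zeta|}{1-|\zeta|}<\log\frac{1+r}{1-r}=s=k_D(z,D\setminus U),$$
so $\varphi(\zeta)$ cannot lie in $D\setminus U$, i.e.\ $\varphi(\zeta)\in U$.

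To conclude, I would fix $\varepsilon>0$ and, using \eqref{kr}, choose $R>0$ and $\varphi\in\mathrm{Hol}(\D,D)$ with $\varphi(0)=z$, $\varphi'(0)=Rv$ and $\tfrac{2}{R}\leq\kappa_D(z,v)+\varepsilon$. By the previous paragraph the rescaled map $\psi(\zeta):=\varphi(r\zeta)$ is a holomorphic map $\D\to U$ with $\psi(0)=z$ and $\psi'(0)=rR\,v$, so applying the definition \eqref{kr} of $\kappa_U$ gives $\kappa_U(z,v)\leq\tfrac{2}{rR}\leq r^{-1}\bigl(\kappa_D(z,v)+\varepsilon\bigr)$. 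Letting $\varepsilon\to0$ and recalling $r^{-1}=\coth(s/2)$ yields the claim. There is essentially no obstacle in this argument; the only substantive ingredient is the inclusion $\varphi(r\D)\subseteq U$, which is an immediate consequence of the contraction property of holomorphic maps for the Kobayashi distance, and everything else is a one-line reparametrization.
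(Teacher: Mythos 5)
Your proof is correct and is exactly the standard argument (the one in Royden's paper, which this paper only cites without reproving): rescale a near-extremal disc for $\kappa_D$ by $r=\tanh(s/2)$ so that it lands in $U$, using the contraction property of the Kobayashi distance. No gaps; the normalization $k_\D(0,\zeta)=2\,\mathrm{arctanh}|\zeta|$ is handled consistently with the paper's conventions.
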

		
		\begin{lemma}\label{goodprodev}
			Let $D\subset\C^d$ be a $\C$-proper convex domain and let $\xi\in\partial D$ be a point of locally finite type.
			Let $(z_n)$ be a restricted sequence converging to $\xi$, and let $\varphi\colon \D\to D$ be a complex geodesic with endpoint $\xi$. Let $R>0$ be such that  $z_n\in A(\varphi,R)$ for all $n\geq 0$.
			Let  $(\zeta_n)$ be a sequence in $\D$ such that 
			$k_D(z_n,\varphi(\zeta_n))\to 0.$ Then for all $R'>R$ the sequence $(\varphi(\zeta_n))$ is eventually contained in $A(\varphi,R'$) and we have
			$$\lim_{n\to+\infty}k_{A(\varphi,R')}(z_n,\varphi(\zeta_n))=0.$$
		\end{lemma}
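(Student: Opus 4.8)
The plan is to derive the statement from the Royden Localization Lemma stated just above, which is tailored for exactly this purpose. Write as usual $\tilde\varphi(t)=\varphi(\tanh(t/2))$, so that $A(\varphi,S)=\{z\in D:\inf_{t\geq 0}k_D(z,\tilde\varphi(t))<S\}$, and set $c_n:=k_D(z_n,\varphi(\zeta_n))$, which tends to $0$ by hypothesis. First I would fix an intermediate radius $R''$ with $R<R''<R'$ and, for each $n$ with $z_n\neq\varphi(\zeta_n)$, join $z_n$ to $\varphi(\zeta_n)$ by a geodesic segment $\sigma_n$ (the image of a complex geodesic through the two points, Proposition \ref{extrcgeo}); its $\kappa_D$-length equals $c_n$ because a complex geodesic is a $\kappa_D$-isometry onto its image. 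Every point $w$ on $\sigma_n$ satisfies $k_D(w,z_n)\leq c_n$, so since $z_n\in A(\varphi,R)$ the triangle inequality gives $\inf_{t\geq0}k_D(w,\tilde\varphi(t))< c_n+R$, which is $<R''$ once $n$ is large. Hence $\sigma_n\subset A(\varphi,R'')$; in particular the endpoint $\varphi(\zeta_n)$ lies in $A(\varphi,R'')\subset A(\varphi,R')$ for $n$ large, which is the first assertion.

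The second ingredient is a uniform separation estimate: for $w\in A(\varphi,R'')$ and $w'\in D\setminus A(\varphi,R')$, picking $t^\ast$ with $k_D(w,\tilde\varphi(t^\ast))<R''$ and using $\inf_{t\geq0}k_D(w',\tilde\varphi(t))\geq R'$ yields $R'\leq k_D(w',w)+R''$, so
\[
k_D(w,D\setminus A(\varphi,R'))\geq R'-R''\qquad\text{for all }w\in A(\varphi,R'').
\]
(If $A(\varphi,R')=D$ there is nothing to prove, as then $k_{A(\varphi,R')}=k_D$ and the limit is $0$ along our sequence.)

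Finally I would put the two together. Applying the Royden Localization Lemma with $U=A(\varphi,R')$, and using that $\coth$ is decreasing on $(0,+\infty)$ together with the previous bound, we get $\kappa_{A(\varphi,R')}(w,v)\leq\coth\big(\tfrac{R'-R''}{2}\big)\,\kappa_D(w,v)$ for every $w$ on $\sigma_n$ and every $v\in\C^d$. Integrating this inequality along $\sigma_n$, which is contained in $A(\varphi,R')$, and invoking the fact that the Kobayashi distance on a domain is the inner distance associated with the Kobayashi--Royden metric, we obtain
\[
k_{A(\varphi,R')}(z_n,\varphi(\zeta_n))\leq\coth\Big(\tfrac{R'-R''}{2}\Big)\,\ell_D(\sigma_n)=\coth\Big(\tfrac{R'-R''}{2}\Big)\,c_n\longrightarrow 0,
\]
and for the (possibly occurring) $n$ with $z_n=\varphi(\zeta_n)$ the left-hand side vanishes, so the limit is $0$ as claimed.

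I do not expect a genuinely hard step here; the argument is just a quantitative localization. The only points that need a little care are (i) checking that the short geodesic $\sigma_n$ remains inside the enlarged region $A(\varphi,R'')$ — this is exactly where the hypothesis that $z_n$ lies in the strictly smaller region $A(\varphi,R)$ is used, via $c_n\to 0$ — and (ii) transferring the pointwise metric inequality coming from Royden's lemma into a distance inequality, which relies on the standard (but non-trivial) fact that for domains in $\C^d$ the Kobayashi distance equals the integral of the Kobayashi--Royden pseudometric along curves.
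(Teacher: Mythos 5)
Your proof is correct and follows essentially the same route as the paper's: join $z_n$ to $\varphi(\zeta_n)$ by a short Kobayashi geodesic segment, observe that it eventually lies in an intermediate region $A(\varphi,R'')$ with $R<R''<R'$, apply the Royden Localization Lemma with $U=A(\varphi,R')$ to get a uniform comparison of the metrics there, and integrate along the segment. The only differences are that you spell out the separation estimate and the degenerate cases explicitly, which the paper leaves implicit.
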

		\proof
		For all $n\geq 0$, let $\gamma_n\colon[0,T_n]\to D$ be a geodesic segment between $z_n$ and $\varphi(\zeta_n)$ with respect to $k_D$. 
		Fix $R<S<R'$. Clearly $\gamma_n([0,T_n])$ is eventually contained in  $A(\gamma,S)$.
		By the Royden localization lemma, we have
		$$\kappa_{A(\varphi,R')}(z,v)\leq B\kappa_{D}(z,v), \ \ \  z\in A(\varphi,S), v\in\C^d,$$
		where $B:=\coth(R'-S)>1$, which implies
		\begin{align*}k_{A(\varphi,R')}(z_n,\varphi(\zeta_n))&\leq \int_0^{T_n} \kappa_{A(\varphi,R')}(\gamma_n(t),\gamma_n'(t))dt\\&\leq B\int_0^{T_n} \kappa_{D}(\gamma_n(t),\gamma_n'(t))dt\\&= Bk_D(z_n,\varphi(\zeta_n))\to0.
		\end{align*}
		\endproof
		
		\begin{remark}It follows from the previous lemma that, if $\gamma\colon[0,1)\to D$ is a strongly restricted curve such that $\gamma(t)\in A(\varphi,R)$ for all $t\in[0,1)$ and $\tilde{\gamma}\colon[0,1)\to\D$  is a curve satisfying \eqref{stronglyrestricted}, then for all $R'>R$ we have that 
			$\varphi(\tilde\gamma(t))$ is eventually contained in $A(\varphi,R')$ and
			$$\lim_{t\to 1^-}k_{A(\varphi,R')}(\gamma(t),\varphi(\tilde\gamma(t)))=0.$$
		\end{remark}
		
		\begin{proof}[Proof of Theorem \ref{localLindelof}]
			Let  $\varphi\colon \D\to D$ be a complex geodesic with endpoint $\xi$ and 
			let $\tilde \gamma\colon[0,1)\to \D$ be a curve satisfying \eqref{stronglyrestricted}.
			Let $R>0$ such that $\gamma(t)\in A(\varphi,R)$ for all $t\in[0,1)$, and 	let $R'>R$. 
			Since $f$ is $K$-bounded at $\xi$ there exists $S>0$ such that $f(A(\varphi,R'))\subset\subset S\D$, and thus by Lemma \ref{goodprodev}
			$$k_{S\D}(f(\gamma(t)),f(\varphi(\tilde\gamma(t))))\leq k_{A(\varphi,R')}(\gamma(t),\varphi(\tilde \gamma(t)))\stackrel{t\to 1^-}\longrightarrow 0,$$ 
			thus
			$$
			\lim_{t\to1^-}|f(\gamma(t))-f(\varphi(\tilde \gamma(t)))|=0.
			$$
			Since $f(\gamma(t))\to L$ as $t\to 1^-$ it follows that $f(\varphi(\tilde \gamma(t)))\to L$. Applying the classical Lindel\"{o}f Theorem to the holomorphic function $f\circ\varphi\colon \D\to \C$ we have that $f\circ\varphi$ has non-tangential limit $L$ at $1$.

			Let now $(z_n)$ be a  restricted sequence in $D$ converging to $\xi$, and let  $(\zeta_n)$ be a sequence in $\D$ such that 
			$k_D(z_n,\varphi(\zeta_n))\to 0.$
			Arguing as above we obtain that 
			\begin{equation}\label{LindEQ}
				\lim_{n\to+\infty}|f(z_n)-f(\varphi(\zeta_n))|=0.
			\end{equation}
			Since  $(\zeta_n)$ converges to $1$ non-tangentially, we have  $f(\varphi(\zeta_n))\to L$. By \eqref{LindEQ} we obtain $f(z_n)\to L$ as $n\to+\infty$, and we are done.
		\end{proof}

		\section{Julia lemma}\label{Julia}
		
		The main result in this section is the generalization to our setting of the intrinsic Julia lemma proved by Abate for bounded strongly convex domains with $C^3$ boundary \cite[Theorem 2.4.16, Proposition 2.7.15]{Abatebook}. For bounded convex domains of finite type  this is already done in \cite[Theorem 6.28]{AFGG}. In our local finite type assumptions the statement and the proof have to be adapted since the classical argument which gives the existence of the $K$-limit at $\xi$  does not work. 	 Lemma \ref{takescare} below is the main tool to deal with this issue.

		\begin{definition}[Dilation]
			Let $D\subset\C^d$ and $D'\subset\C^{q}$ be  $\C$-proper convex domains.  Let $\xi\in\partial D$ be a
			point of locally finite type.
			Let $f\colon D\to D'$ be a holomorphic map, and fix base-points $p\in D,p'\in D'$. The {\sl dilation} $\lambda_{\xi,p,p'}\in(0,+\infty]$ of $f$ at $\xi$ is the number defined by
			$$\log\lambda_{\xi,p,p'}=\liminf_{z\to\xi}k_D(z,p)-k_{D'}(f(z),p').$$
			If $\lambda_{\xi,p,p'}<+\infty,$ we call a sequence $(z_n)$ in $D$ converging to $\xi$  {\sl dilation minimizing}  if 
			$$\lim_{n\to+\infty}k_D(z_n,p)-k_{D'}(f(z_n),p')=\log\lambda_{\xi,p,p'},$$ while we say that  $(z_n)$ has {\sl bounded dilation} if the sequence $(k_D(z_n,p)-k_{D'}(f(z_n),p'))$ is bounded from above. Notice that the notion of bounded dilation is base-point independent, that is, if a sequence has bounded dilation w.r.t base-points $p,p'$, then by the triangle inequality it has bounded dilation also w.r.t. any $q,q'$.
			In  case $D'=\D$ we simply denote $\lambda_{\xi,p}:=\lambda_{\xi,p,0}$.
		\end{definition}
		\begin{remark}
			Notice that $\log\lambda_{\xi,p,p'}$ cannot be $-\infty$. Indeed,
			$$k_D(z,p)-k_{D'}(f(z),p')\geq k_{D'}(f(z),f(p))-k_{D'}(f(z),p')\geq- k_{D'}(f(p),p').$$
		\end{remark}

		\begin{remark}
			A sequence $(z_n)$ in $D$ satisfies  $h_{\xi,p}(z_n)\to -\infty$ if and only if 
			it is eventually contained in every horosphere centered at $\xi$.
			This is the case for example if $(z_n)$ $K$-converges to $\xi$ (by (iii) in Lemma \ref{kvshoro}).
		\end{remark}
		\begin{lemma}\label{takescare}
			Let $D\subset\C^d$ and $D'\subset\C^{q}$ be  $\C$-proper convex domains.  Let $\xi\in\partial D,\eta\in \partial D'$ be 
			points of locally finite type, and let $p\in D,p'\in D'$.
			Let $f\colon D\to D'$ be a holomorphic map such that  $\lambda_{\xi,p,p'}<+\infty$. Let $(z_n)$ be a sequence in $D$ converging to $\xi$ inside a horosphere centered at $\xi$ such that  $f(z_n)\to \eta$.
			If  $(x_n)$  is a  sequence  in $D$ converging to $\xi$  with bounded dilation, then we have $f(x_n)\to \eta$.
		\end{lemma}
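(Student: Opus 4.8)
The plan is to use the Julia-type estimate for horospheres together with the control coming from bounded dilation to trap the image sequence $(f(x_n))$ inside all horospheres of $D'$ centered at $\eta$, and then to invoke Remark \ref{intersezchiusura} to conclude. The key algebraic identity to exploit is the relation between the dilation, the horofunction $h_{\xi,p}$ at $\xi$, and the horofunction $h_{\eta,p'}$ at $\eta$ that comes from the fact that $f$ does not increase the Kobayashi distance. Concretely, for any $w\in D$,
\begin{align*}
h_{\eta,p'}(f(w))&=\lim_{n\to+\infty}\Bigl(k_{D'}(f(w),f(z_n))-k_{D'}(f(z_n),p')\Bigr)\\
&\leq \liminf_{n\to+\infty}\Bigl(k_D(w,z_n)-k_{D'}(f(z_n),p')\Bigr)\\
&\leq \liminf_{n\to+\infty}\Bigl(k_D(w,z_n)-k_D(z_n,p)\Bigr)+\limsup_{n\to+\infty}\Bigl(k_D(z_n,p)-k_{D'}(f(z_n),p')\Bigr).
\end{align*}
Here the first inequality uses that $f$ is $1$-Lipschitz for the Kobayashi distances; the second term in the last line is at most $\log\lambda_{\xi,p,p'}<+\infty$ because, as $(z_n)$ is eventually in every horosphere centered at $\xi$, one has $h_{\xi,p}(z_n)\to-\infty$, and $h_{\xi,p}(z_n)=\lim_m(k_D(z_n,w_m)-k_D(w_m,p))$ forces $k_D(z_n,p)-k_{D'}(f(z_n),p')$ to stay bounded above by $\log\lambda_{\xi,p,p'}$ (this is exactly where the finiteness of the dilation is used, together with $h_{\xi,p}(z_n)\to-\infty$ — I may need to be slightly careful and instead first pass to a dilation-minimizing subsequence, or argue that $\limsup$ over any such sequence equals $\log\lambda_{\xi,p,p'}$ by the very definition of $\liminf$ in the dilation). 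The remaining term $\liminf_n(k_D(w,z_n)-k_D(z_n,p))$ is precisely $h_{\xi,p}(w)$ by the existence of horospheres (Theorem \ref{existencehorospheres}). Thus we obtain the clean estimate $h_{\eta,p'}(f(w))\leq h_{\xi,p}(w)+\log\lambda_{\xi,p,p'}$ for all $w\in D$, which is the intrinsic Julia lemma in this setting.

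Now I would apply this to $w=x_n$. Since $(x_n)$ has bounded dilation, there is $M\in\R$ with $k_D(x_n,p)-k_{D'}(f(x_n),p')\leq M$ for all $n$. On the other hand, $x_n\to\xi$, so by Lemma \ref{stimekob} and the estimate $h_{\xi,p}(x_n)\leq 2k_D(x_n,\gamma(t^\ast))-k_D(x_n,p)$-type control — more directly, since $x_n\to\xi$ and $h_{\xi,p}$ is a horofunction, one has $k_D(x_n,p)\to+\infty$ — it follows that $k_{D'}(f(x_n),p')\geq k_D(x_n,p)-M\to+\infty$, hence $(f(x_n))$ leaves every compact subset of $D'$. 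Let $\eta'\in\partial^\ast D'$ be any limit point of $(f(x_n))$; I want to show $\eta'=\eta$. The Julia estimate gives $h_{\eta,p'}(f(x_n))\leq h_{\xi,p}(x_n)+\log\lambda_{\xi,p,p'}$, and since $x_n\to\xi$ inside $D$ we need that $h_{\xi,p}(x_n)$ stays bounded above; here I would use the bounded dilation hypothesis once more, writing $h_{\xi,p}(x_n)\le k_D(x_n,p)-$ (something), or more simply note that $h_{\eta,p'}(f(x_n))$ is bounded above: indeed from $h_{\eta,p'}(f(x_n))=\lim_m(k_{D'}(f(x_n),w_m')-k_{D'}(w_m',p'))$ one sees $h_{\eta,p'}(f(x_n))\ge -k_{D'}(f(x_n),p')$, which is not what I want; instead the correct route is to bound $h_{\eta,p'}(f(x_n))\le k_{D'}(f(x_n),f(p))-k_{D'}(f(x_n),p')\le k_{D'}(f(p),p')$, no — let me use directly: from the Julia estimate and the fact that along a dilation-minimizing sequence $h_{\xi,p}(x_n)\to-\infty$ would give the result, but $(x_n)$ need not be dilation-minimizing. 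The cleanest fix: combine the Julia estimate $h_{\eta,p'}(f(x_n))\le h_{\xi,p}(x_n)+\log\lambda_{\xi,p,p'}$ with the general lower bound $h_{\xi,p}(x_n)\ge -k_D(x_n,p)$ is useless; rather, the bounded-dilation hypothesis together with the triangle inequality $h_{\eta,p'}(f(x_n))\le -k_{D'}(f(x_n),p')+C$ for suitable $C$, and then since $k_{D'}(f(x_n),p')\to+\infty$ we would get $h_{\eta,p'}(f(x_n))\to-\infty$; but that sign is again wrong. I expect this sign-bookkeeping to be the \emph{main obstacle}: the delicate point is that bounded dilation does not by itself force $h_{\xi,p}(x_n)\to-\infty$, so I must instead run the argument in the reverse direction, bounding $h_{\eta,p'}(f(x_n))$ from \emph{below} by $-C$ (using that $(f(x_n))$ converges to $\eta'$ in $\partial^\ast D'$ combined with Remark \ref{intersezchiusura}: if $\eta'\ne\eta$ then $f(x_n)$ eventually exits every horosphere centered at $\eta$ whose closure avoids $\eta'$, i.e. $h_{\eta,p'}(f(x_n))\to+\infty$), and then deriving a contradiction with the upper bound $h_{\eta,p'}(f(x_n))\le h_{\xi,p}(x_n)+\log\lambda_{\xi,p,p'}$ once we know $h_{\xi,p}(x_n)$ is bounded above along $(x_n)$.

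So the final shape of the argument is: (1) establish the Julia inequality $h_{\eta,p'}(f(w))\le h_{\xi,p}(w)+\log\lambda_{\xi,p,p'}$ for all $w\in D$, using the hypothesis on $(z_n)$ and $f(z_n)\to\eta$ to identify the horofunctions on both sides; (2) show $h_{\xi,p}(x_n)$ is bounded above — this follows from bounded dilation via $h_{\xi,p}(x_n)=\lim_m\bigl(k_D(x_n,w_m)-k_D(w_m,p)\bigr)\le k_D(x_n,p)-k_D(p,p)$... no, rather directly: $h_{\xi,p}(x_n)\le k_D(x_n,p)$ is trivially true but unbounded; instead use that for a sequence $x_n\to\xi$ with bounded dilation, $k_{D'}(f(x_n),p')\ge k_D(x_n,p)-M$, and separately $h_{\xi,p}(x_n)+k_D(x_n,p)=\lim_m(k_D(x_n,w_m)-k_D(w_m,p)+k_D(x_n,p))\le 2k_D(x_n,p)$ — this is circular, so the honest statement is that $h_{\xi,p}(x_n)$ \emph{is} in general unbounded above (e.g. if $x_n$ approaches $\xi$ tangentially) but in that case the bound in (1) is vacuous, which would be a problem. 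I therefore expect that the actual proof additionally uses the hypothesis "bounded dilation" in the stronger form that it controls $h_{\xi,p}$ too — concretely, bounded dilation plus $x_n\to\xi$ should, via the construction of horospheres from complex geodesics and strong asymptoticity (Corollary \ref{approaching}, Lemma \ref{normalrestricted}), yield $\limsup_n h_{\xi,p}(x_n)<+\infty$. Granting that, step (1)+(2) give $h_{\eta,p'}(f(x_n))$ bounded above, which by Remark \ref{intersezchiusura} forces every limit point of $(f(x_n))$ in $\partial^\ast D'$ to equal $\eta$ (any other limit point would make $h_{\eta,p'}(f(x_n))\to+\infty$). Since this holds for every convergent subsequence and $(f(x_n))$ has no limit point inside $D'$ (as $k_{D'}(f(x_n),p')\to+\infty$), we conclude $f(x_n)\to\eta$, completing the proof.
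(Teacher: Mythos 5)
Your proposal has a genuine gap, one that you flag yourself but never close. The plan of first proving the Julia inequality $h_{\eta,p'}(f(w))\le h_{\xi,p}(w)+\log\lambda_{\xi,p,p'}$ and then evaluating it at $w=x_n$ breaks down at your step (2): bounded dilation of $(x_n)$ does \emph{not} imply $\limsup_n h_{\xi,p}(x_n)<+\infty$. Take $D=D'=\D$, $f=\id$, $\xi=\eta=1$, and $x_n\to 1$ tangentially: the quantity $k_\D(x_n,0)-k_\D(f(x_n),0)$ is identically $0$, so every sequence has bounded dilation, yet $h_{1,0}(x_n)=-\log\frac{1-|x_n|^2}{|1-x_n|^2}\to+\infty$. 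Thus the Julia bound is vacuous precisely in the tangential cases the lemma must cover, and no appeal to strong asymptoticity of geodesics can repair this. There is a second, structural problem in your step (1): to bound $\limsup_n\bigl(k_D(z_n,p)-k_{D'}(f(z_n),p')\bigr)$ by $\log\lambda_{\xi,p,p'}$ you would have to replace $(z_n)$ by a dilation-minimizing sequence $(w_m)$, but then you need $f(w_m)\to\eta$ in order to identify $\lim_m\bigl(k_{D'}(\cdot,f(w_m))-k_{D'}(f(w_m),p')\bigr)$ with $h_{\eta,p'}$ --- and that is exactly the statement of the present lemma applied to $(w_m)$. This is why the paper proves this lemma \emph{before}, and independently of, the Julia Lemma (Proposition \ref{JuliaLemma}), whose implication $(1)\Rightarrow(2)$ in fact cites it.

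The missing idea is to avoid horofunctions on the target side altogether and argue with Gromov products. From $k_D(x_n,p)-k_{D'}(f(x_n),p')\le A$ one gets $k_{D'}(f(x_n),p')\to+\infty$, so $(f(x_n))$ subconverges to some $\nu\in\partial^*D'$. The chain $2(f(x_n)|f(z_m))_{p'}\ge k_{D'}(f(x_n),p')-k_{D'}(f(x_n),f(z_m))\ge k_D(x_n,p)-k_D(x_n,z_m)-A$, combined with $h_{\xi,p}(z_m)\to-\infty$ (which allows one, for each $R$, to choose $m$ and then $n_m$ with $k_D(x_{n_m},z_m)-k_D(x_{n_m},p)\le -R$), yields a diagonal subsequence along which $(f(x_{n_m})|f(z_m))_{p'}\to+\infty$; since $f(z_m)\to\eta$, Proposition \ref{shadowing}(i) then forces $\nu=\eta$. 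Note that this argument uses the hypotheses exactly where you could not: bounded dilation enters only through the one-sided bound $k_{D'}(f(x_n),p')\ge k_D(x_n,p)-A$, and the horosphere hypothesis on $(z_n)$ enters only through $h_{\xi,p}(z_m)\to-\infty$, with no control on $h_{\xi,p}(x_n)$ ever required.
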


		\proof
		Let $A\geq 0$ be such that for all $n\geq0$ we have $$k_D(x_n,p)-k_{D'}(f(x_n),p')\leq A,$$ so $k_{D'}(f(x_n),p')\geq k_D(x_n,p)-A\to+\infty$. Hence, up to subsequence, $(f(x_n))$ converges to a point $\nu\in \partial^*D'.$
		Now  \begin{align*}
			2(f(x_n)|f(z_m))_{p'}&=k_{D'}(f(x_n),p')+k_{D'}(f(z_m),p')-k_{D'}(f(x_n),f(z_m))
			\\&\geq k_{D}(x_n,p)+k_{D'}(f(z_m),p')-k_{D'}(f(x_n),f(z_m))-A
			\\&\geq k_{D}(x_n,p)+k_{D'}(f(z_m),p')-k_{D}(x_n,z_m)-A.
		\end{align*}
Since $h_{\xi,p}(z_m):=\lim_{n\to+\infty}k_D(z_m,x_n)-k_D(x_n,p)$ is bounded from above there exists a constant $M\geq 0$ and a subsequence $(n_m)$ such that for all $m\geq 0$ we have
$$k_D(z_m,x_{n_m})-k_D(x_{n_m},p)\leq M.$$
Hence $$\lim_{m\to+\infty}(f(x_{n_m})|f(z_m))=+\infty, $$
which implies by (i) in Proposition \ref{shadowing} that $\nu=\eta$.
		\endproof
		
		\begin{lemma}\label{basepointindependent}
			Let $D\subset\C^d$ and $D'\subset\C^{q}$ be  $\C$-proper convex domains. Let $\xi\in\partial D,\eta\in \partial D'$ be points of locally finite type, let $p,q\in D$ and $p',q'\in D'$. Let $f\colon D\to D'$ be a holomorphic map such that $\lambda_{\xi,p,p'}<+\infty$ and assume that there exists a sequence $(z_n)$ in $D$ converging to $\xi$ inside a  horosphere centered at  $\xi$ such that
			$f(z_n)\to \eta$.
			Then
			\begin{itemize}
				\item[(i)] $\log\lambda_{\xi,q,q'}=\log\lambda_{\xi,p,p'}+h^D_{\xi,p}(q)+h^{D'}_{\eta,q'}(p');$
				\item[(ii)] the notion of dilation minimizing is base-point independent, that is any dilation minimizing sequence w.r.t. base-points $p,p'$ is dilation minimizing also w.r.t. any $q,q'$.
			\end{itemize}
		\end{lemma}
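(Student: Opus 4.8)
The plan is to isolate a single limit computation and then feed it dilation--minimizing sequences. Fix base-points $p,q\in D$, $p',q'\in D'$, and recall that by hypothesis $\lambda_{\xi,p,p'}<+\infty$ and that there is a sequence eventually contained in every horosphere centered at $\xi$ whose $f$-image converges to $\eta$; in particular, by Lemma \ref{takescare}, every sequence $(x_n)$ in $D$ converging to $\xi$ with bounded dilation satisfies $f(x_n)\to\eta$. For such a sequence the uniform-on-compacta convergence in Theorem \ref{existencehorospheres} gives
\[
k_D(x_n,q)-k_D(x_n,p)\longrightarrow h^D_{\xi,p}(q),
\qquad
k_{D'}(f(x_n),p')-k_{D'}(f(x_n),q')\longrightarrow h^{D'}_{\eta,q'}(p'),
\]
where the second limit is legitimate precisely because $f(x_n)\to\eta$. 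Adding these and regrouping yields the \emph{Key Claim}: for every sequence $(x_n)\to\xi$ in $D$ with bounded dilation,
\[
\bigl[k_D(x_n,q)-k_{D'}(f(x_n),q')\bigr]-\bigl[k_D(x_n,p)-k_{D'}(f(x_n),p')\bigr]\longrightarrow h^D_{\xi,p}(q)+h^{D'}_{\eta,q'}(p').
\]

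To prove (i), set $C:=h^D_{\xi,p}(q)+h^{D'}_{\eta,q'}(p')$. Applying the Key Claim to a dilation--minimizing sequence $(z_n)$ for $(p,p')$ (which converges to $\xi$ and has bounded dilation), we get $k_D(z_n,q)-k_{D'}(f(z_n),q')\to\log\lambda_{\xi,p,p'}+C$, whence $\log\lambda_{\xi,q,q'}\le\log\lambda_{\xi,p,p'}+C$; in particular $\lambda_{\xi,q,q'}<+\infty$, so the symmetric argument applies with the two pairs of base-points interchanged: a dilation--minimizing sequence for $(q,q')$ gives $\log\lambda_{\xi,p,p'}\le\log\lambda_{\xi,q,q'}+h^D_{\xi,q}(p)+h^{D'}_{\eta,p'}(q')$. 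The proof closes via the elementary identity
\[
h^D_{\xi,p}(q)+h^D_{\xi,q}(p)=\lim_{w\to\xi}\bigl[k_D(q,w)-k_D(w,p)+k_D(p,w)-k_D(w,q)\bigr]=0
\]
(and likewise $h^{D'}_{\eta,p'}(q')=-h^{D'}_{\eta,q'}(p')$), which turns the second inequality into $\log\lambda_{\xi,q,q'}\ge\log\lambda_{\xi,p,p'}+C$. Combining the two inequalities proves (i).

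For (ii), let $(z_n)$ be dilation--minimizing with respect to $(p,p')$; it converges to $\xi$ and has bounded dilation, so the Key Claim applies and, using (i),
\[
k_D(z_n,q)-k_{D'}(f(z_n),q')\longrightarrow \log\lambda_{\xi,p,p'}+C=\log\lambda_{\xi,q,q'},
\]
i.e.\ $(z_n)$ is dilation--minimizing with respect to $(q,q')$ as well, proving the asserted base-point independence. The only point demanding care --- not a genuine obstacle --- is bookkeeping the role of base-point versus evaluation point in the horosphere functions, so that the signs in the formula of (i) come out correctly; the substantive input, namely that bounded dilation forces $f(x_n)\to\eta$, is already supplied by Lemma \ref{takescare}.
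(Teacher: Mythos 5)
Your proof is correct and follows essentially the same route as the paper's: both use Lemma \ref{takescare} to guarantee that a bounded-dilation sequence has image converging to $\eta$, then telescope $k_D(x_n,q)-k_{D'}(f(x_n),q')$ against $k_D(x_n,p)-k_{D'}(f(x_n),p')$ and invoke Theorem \ref{existencehorospheres}, and finally symmetrize over the two pairs of base-points. The only difference is presentational: you make explicit the antisymmetry $h^D_{\xi,p}(q)=-h^D_{\xi,q}(p)$ that the paper leaves implicit when combining the two inequalities.
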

		\begin{proof}
			Let $w_n\to \xi$ be a dilation minimizing sequence w.r.t. $p,p'$. We have 
			\begin{align*}k_D(q,w_n)-k_{D'}(q',f(w_n))&=k_D(p,w_n)-k_{D'}(p',f(w_n))+k_D(q,w_n)-k_D(p,w_n)\\
				&+k_{D'}(p',f(w_n))-k_{D'}(q',f(w_n)).
			\end{align*}
			By Lemma \ref{takescare} $f(w_n)\to \eta$, and by Theorem \ref{existencehorospheres}
			it follows that 
			\begin{equation}\label{mindil}
				k_D(q,w_n)-k_{D'}(q',f(w_n))\stackrel{n\to+\infty}\longrightarrow \log\lambda_{\xi,p,p'}+h^{D}_{\xi,p}(q)+h^{D'}_{\eta,q'}(p').
			\end{equation}
			Hence
			$$\log\lambda_{\xi,q,q'}\leq \log\lambda_{\xi,p,p'}+h^{D}_{\xi,p}(q)+h^{D'}_{\eta,q'}(p'),$$
			and considering a minimizing sequence w.r.t. $q,q'$ we obtain (i). Notice that  \eqref{mindil} together with (i) implies  (ii). 
		\end{proof}

		\begin{proposition}[Julia Lemma]\label{JuliaLemma}
			Let $D\subset\C^d$ and $D'\subset\C^{q}$ be  $\C$-proper convex domains. Let $\xi\in\partial D,\eta\in \partial D'$ be points of locally finite type. Let $f\colon D\to D'$ be a holomorphic map, and let $p\in D$ and $p'\in D'$. The following are equivalent:
			\begin{enumerate}
				\item  $\lambda_{\xi,p,p'}<+\infty$ and  there exists a sequence  $(z_n)$ in $D$ converging to $\xi$ inside  a horosphere centered at $\xi$ such that $f(z_n)\to \eta$;
				\item  for all $z\in D$ $$h^{D'}_{\eta,p'}(f(z))-h^D_{\xi,p}(z)\leq \log \lambda_{\xi,p,p'},$$ or equivalently, 
				$$f(E^D_{p}(\xi,R))\subseteq E^{D'}_{p'}(\eta,\lambda_{\xi,p,p'}R),\quad \forall R>0;$$
				\item 
				the function $h^{D'}_{\eta,p'}(f(z))-h^D_{\xi,p}(z)$ is bounded from above;
				\item $\lambda_{\xi,p,p'}<+\infty$  and $K\textrm{-}\lim_{z\to\xi}f(z)=\eta$.
			\end{enumerate}
		\end{proposition}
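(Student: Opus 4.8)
The plan is to establish $(1)\Rightarrow(2)\Rightarrow(3)\Rightarrow(1)$ together with $(2)\Rightarrow(4)\Rightarrow(1)$, so that all four statements become equivalent; every step rests on the existence of horospheres (Theorem~\ref{existencehorospheres}), on the fact that $f$ does not expand the Kobayashi distance, and on Lemma~\ref{takescare}. The heart of the argument is $(1)\Rightarrow(2)$. The $K$-converging sequence $(z_n)$ furnished by (1) is eventually contained in every horosphere centered at $\xi$ by Lemma~\ref{kvshoro}(iii), and $f(z_n)\to\eta$. Since $\log\lambda_{\xi,p,p'}$ is a $\liminf$ and is finite, one can pick a dilation minimizing sequence $(x_n)$ converging to $\xi$; it has bounded dilation, so Lemma~\ref{takescare} (with $(z_n)$ in the role of the sequence exhausting the horospheres at $\xi$) forces $f(x_n)\to\eta$ as well. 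Now fix $z\in D$. By Theorem~\ref{existencehorospheres} the horosphere function at $\eta$ may be evaluated along $(f(x_n))$, so
$$h^{D'}_{\eta,p'}(f(z))=\lim_{n\to+\infty}\bigl[k_{D'}(f(z),f(x_n))-k_{D'}(f(x_n),p')\bigr].$$
Using $k_{D'}(f(z),f(x_n))\le k_D(z,x_n)$ and the splitting
$$k_D(z,x_n)-k_{D'}(f(x_n),p')=\bigl[k_D(z,x_n)-k_D(x_n,p)\bigr]+\bigl[k_D(x_n,p)-k_{D'}(f(x_n),p')\bigr],$$
the first bracket converges to $h^D_{\xi,p}(z)$ (again Theorem~\ref{existencehorospheres}, since $x_n\to\xi$) and the second to $\log\lambda_{\xi,p,p'}$ by the choice of $(x_n)$. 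Hence $h^{D'}_{\eta,p'}(f(z))\le h^D_{\xi,p}(z)+\log\lambda_{\xi,p,p'}$, which is the inequality in (2); the horosphere inclusion is just its reformulation.

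The implication $(2)\Rightarrow(3)$ is immediate, the function in question being bounded above by $\log\lambda_{\xi,p,p'}<+\infty$. For $(3)\Rightarrow(1)$, fix a geodesic ray $\gamma\colon\R_{\geq 0}\to D$ with endpoint $\xi$ and $\gamma(0)=p$ (obtained from Proposition~\ref{convCgeo}); it is contained in the geodesic region $A(\gamma,1)$, hence $K$-converges to $\xi$ by Proposition~\ref{abatek}, and $h^D_{\xi,p}(\gamma(t))=\lim_{s\to+\infty}[k_D(\gamma(t),\gamma(s))-k_D(\gamma(s),p)]=-t$. If the function in (3) is bounded above by $C$, then $h^{D'}_{\eta,p'}(f(\gamma(t)))\le C-t\to-\infty$, so $f(\gamma(t))$ is eventually contained in every horosphere centered at $\eta$; since $\bigcap_{R>0}\overline{E^{D'}_{p'}(\eta,R)}^{*}=\{\eta\}$ (Remark~\ref{intersezchiusura}), we get $f(\gamma(t))\to\eta$. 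Moreover the definition of the horosphere function yields $k_{D'}(w,p')\ge-h^{D'}_{\eta,p'}(w)$ for all $w\in D'$, whence $k_D(\gamma(t),p)-k_{D'}(f(\gamma(t)),p')\le t+h^{D'}_{\eta,p'}(f(\gamma(t)))\le C$; therefore $\log\lambda_{\xi,p,p'}\le C<+\infty$, and together with the $K$-converging sequence $(\gamma(n))_n$ this gives (1).

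It remains to bring in (4). For $(2)\Rightarrow(4)$: if $(w_n)$ is any sequence $K$-converging to $\xi$, it is eventually in every horosphere at $\xi$ (Lemma~\ref{kvshoro}(iii)), so by the horosphere inclusion of (2) the sequence $f(w_n)$ is eventually in every horosphere at $\eta$, hence $f(w_n)\to\eta$ by Remark~\ref{intersezchiusura}; this says $K\textrm{-}\lim_{z\to\xi}f(z)=\eta$, and $\lambda_{\xi,p,p'}<+\infty$ is part of (2). Finally $(4)\Rightarrow(1)$ is trivial, taking again $(\gamma(n))_n$. I expect the only genuine obstacle to be $(1)\Rightarrow(2)$: hypothesis (1) supplies merely \emph{some} $K$-converging sequence whose image tends to $\eta$, whereas the horosphere estimate must be carried out along a truly dilation minimizing sequence, and Lemma~\ref{takescare} is precisely the device that bridges this gap (this is where the local, rather than global, finite-type hypothesis is felt). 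Everything else is a routine manipulation of horosphere functions and of the distance-decreasing property of $f$.
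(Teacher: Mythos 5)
Your proof is correct and follows essentially the same route as the paper: Lemma~\ref{takescare} to upgrade the $K$-converging sequence of (1) to a dilation-minimizing one, the horosphere estimate via the distance-decreasing property for (2) (the paper cites the metric Julia lemma of \cite[Lemma 6.14]{AFGG}, whose proof you write out explicitly), and Lemma~\ref{kvshoro}(iii) together with Remark~\ref{intersezchiusura} and the inequality along a geodesic ray to close the cycle. The only difference is the arrangement of the implication graph (you prove $(3)\Rightarrow(1)$ and $(2)\Rightarrow(4)$ where the paper proves $(3)\Rightarrow(4)$), which uses the same estimates.
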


		\begin{proof}
			We prove $(1)\Rightarrow(2)$.
			Let $(w_n)$ be a sequence in $D$ converging to $\xi$ such that $$k_D(w_n,p)-k_{D'}(f(w_n),p')\to \log\lambda_{\xi,p,p'}. $$
			By Lemma \ref{takescare} we have $f(w_n)\to\eta$. Then (2) follows from the metric Julia  Lemma \cite[Lemma 6.14]{AFGG}.
			$(2)\Rightarrow (3)$ is trivial.
			
			We prove $(3)\Rightarrow (4)$.
			If $(w_n)$ is a sequence $K$-converging to $\xi$,  then by (iii) in Lemma \ref{kvshoro}  $h^D_{\xi,p}(w_n)\to-\infty$, and thus
			$h^{D'}_{\eta,p'}(f(w_n))\to-\infty.$ By Remark \ref{intersezchiusura} it follows that $f(w_n)\to\eta$, and thus  $\eta$ is the $K$-limit of $f$ at $\xi$. 
			Moreover, if  $\gamma$ is a geodesic ray with $\gamma(0)=p$ and endpoint $\xi$, then for all $t\geq 0$ we have
			\begin{equation}\label{disuginteress}k_D(p,\gamma(t))-k_{D'}(p',f(\gamma(t)))\leq h^{D'}_{\eta,p'}(f(\gamma(t)))-h^D_{\xi,p}(\gamma(t)), 
			\end{equation}
			and thus $\lambda_{\xi,p,p'}<+\infty.$ $(4)\Rightarrow(1)$ is trivial.

		\end{proof}

		\begin{remark}\label{esisteKlim}
			If the codomain $D'\subset \C^{q}$ is a bounded convex domain of finite type, then it is easy to see that if $\lambda_{\xi,p,p'}<+\infty,$ then there exists $\eta\in \partial D'$ such that    $K\textrm{-}\lim_{z\to\xi}f(z)=\eta$.
			
		\end{remark}

		\begin{definition}\label{defregcont}
			Let $D\subset\C^d$ and $D'\subset\C^{q}$ be  $\C$-proper convex domains.  Let $\xi\in\partial D$ be a point of locally finite type.  Let $f\colon D\to D'$ holomorphic, and fix base-points $p\in D,p'\in D'$. We say that $\xi$ is a {\sl regular contact point} if $\lambda_{\xi,p,p'}<+\infty$ and if  $K\textrm{-}\lim_{z\to\xi}f(z)=\eta$, where $\eta$ is a point of locally finite type in $\partial D'$.
		\end{definition}
		\begin{remark}\label{dilationcambiopolo}
			By Lemma \ref{basepointindependent} the previous definition does not depend on the chosen base-points.
		\end{remark}

		The next   result  is an important consequence of the Julia Lemma.
		\begin{proposition}\label{prop:JFCGeneral}
			Let $D\subset\C^d$ and $D'\subset\C^{q}$ be  $\C$-proper convex domains.  
			Let $f\colon D\to D'$ be a holomorphic map, and let $p\in D$ and $p'\in D'$.  Let $\xi\in\partial D$, $\eta\in \partial D'$ be   points of locally finite type and assume that $\xi$ is a regular contact point with $K\textrm{-}\lim_{z\to\xi}f(z)=\eta$.
			Then,  if $\gamma\colon\R_{\geq 0}\rightarrow D$ is a geodesic ray with  endpoint $\xi$, we have
			\begin{equation}
				\label{limitgeo}
				\lim_{t\to+\infty} k_D(\gamma(t),p) - k_{D'}(f(\gamma(t)),p') = \log \lambda_{\xi,p,p'}.
			\end{equation} 
			As a consequence
			\begin{equation}\label{conilsup}
				\sup_{z\in D}\left(h^{D'}_{\eta,p'}(f(z))-h^D_{\xi,p}(z)\right)=\log\lambda_{\xi,p,p'}.
			\end{equation}
		\end{proposition}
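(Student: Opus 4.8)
The plan is to establish \eqref{limitgeo} first, and then deduce \eqref{conilsup} as an immediate corollary. For \eqref{limitgeo}, let $\gamma\colon\R_{\geq 0}\to D$ be a geodesic ray with $\gamma(0)=p$ and endpoint $\xi$; replacing $\gamma$ by a reparametrization does not change the limit, so we may also assume this. The key point is that $\gamma$ (as a geodesic ray, hence an almost-geodesic ray) is a strongly restricted curve, or more directly: since $\gamma$ is a geodesic ray with endpoint $\xi$, it $K$-converges to $\xi$ (its image lies in the geodesic region $A(\gamma,\varepsilon)$ for any $\varepsilon>0$, hence in a $K$-region by Proposition \ref{abatek}), and in particular $h^D_{\xi,p}(\gamma(t))\to-\infty$ as $t\to+\infty$ by (iii) of Lemma \ref{kvshoro}. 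Because $\xi$ is a regular contact point with $K$-$\lim f=\eta$, the Julia Lemma (Proposition \ref{JuliaLemma}, implication $(4)\Rightarrow(2)$) gives
$$
h^{D'}_{\eta,p'}(f(z))-h^D_{\xi,p}(z)\leq \log\lambda_{\xi,p,p'},\qquad \forall z\in D.
$$
Applying this along $\gamma$ and combining with the elementary inequality \eqref{disuginteress}, namely $k_D(p,\gamma(t))-k_{D'}(p',f(\gamma(t)))\leq h^{D'}_{\eta,p'}(f(\gamma(t)))-h^D_{\xi,p}(\gamma(t))$, yields
$$
\limsup_{t\to+\infty}\bigl(k_D(\gamma(t),p)-k_{D'}(f(\gamma(t)),p')\bigr)\leq \log\lambda_{\xi,p,p'}.
$$

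For the reverse inequality I would argue as follows. Since $K$-$\lim_{z\to\xi}f(z)=\eta$ and $\gamma(t)\to\xi$ inside a $K$-region, we have $f(\gamma(t))\to\eta$; in particular $\gamma$ is eventually contained in every horosphere centered at $\xi$ and $f(\gamma(t))\to\eta$. Now pick a dilation-minimizing sequence $(w_n)$ with respect to $p,p'$, i.e. $k_D(w_n,p)-k_{D'}(f(w_n),p')\to\log\lambda_{\xi,p,p'}$; by Lemma \ref{takescare} applied with the role of $(z_n)$ played by $\gamma(t_m)$ (for a suitable sequence $t_m\to+\infty$) we get $f(w_n)\to\eta$. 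The curve $t\mapsto\gamma(t)$ is a $(1,B)$ quasi-geodesic ray with endpoint $\xi$ starting at $p$, and $k_D(p,\gamma(t))=t$. Using the convergence $h^D_{\xi,p}(\gamma(t))=\lim_{s\to+\infty}\bigl(k_D(\gamma(t),\gamma(s))-k_D(\gamma(s),p)\bigr)=\lim_{s\to+\infty}(|s-t|-s)=-t$, we see $h^D_{\xi,p}(\gamma(t))=-t=-k_D(p,\gamma(t))$. Therefore the quantity $k_D(p,\gamma(t))-k_{D'}(p',f(\gamma(t)))$ equals $-h^D_{\xi,p}(\gamma(t))-k_{D'}(p',f(\gamma(t)))$, and since $f(\gamma(t))\to\eta$ with $h^{D'}_{\eta,p'}$ behaving like $k_{D'}(\cdot,p')$ up to the Busemann correction — more precisely, by Theorem \ref{existencehorospheres}, $k_{D'}(f(\gamma(t)),p')=-h^{D'}_{\eta,p'}(f(\gamma(t)))+o(1)$ along the curve provided $h^{D'}_{\eta,p'}(f(\gamma(t)))\to-\infty$, which holds by the $K$-limit statement and Remark \ref{intersezchiusura} — we conclude
$$
\liminf_{t\to+\infty}\bigl(k_D(\gamma(t),p)-k_{D'}(f(\gamma(t)),p')\bigr)\geq \liminf_{t\to+\infty}\bigl(h^{D'}_{\eta,p'}(f(\gamma(t)))-h^D_{\xi,p}(\gamma(t))\bigr).
$$
To close the loop I would show this last $\liminf$ is $\geq\log\lambda_{\xi,p,p'}$: by definition of the dilation, $\log\lambda_{\xi,p,p'}=\liminf_{z\to\xi}\bigl(k_D(z,p)-k_{D'}(f(z),p')\bigr)$, and along any sequence realizing it, Lemma \ref{takescare} forces the image to go to $\eta$, so that sequence is eventually in every horosphere at $\eta$ too; comparing with the value along $\gamma$ via the sup characterization and the fact that the function $h^{D'}_{\eta,p'}(f(z))-h^D_{\xi,p}(z)$ is bounded above by $\log\lambda_{\xi,p,p'}$ but attains arbitrarily close values along dilation-minimizing sequences, one gets equality in the limit along $\gamma$ because both $\gamma$ and such sequences converge to $\xi$ inside horospheres that exhaust $D$.

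Finally, \eqref{conilsup} follows at once: the inequality "$\leq$" is exactly $(4)\Rightarrow(2)$ of the Julia Lemma, and the inequality "$\geq$" follows by restricting the supremum to the curve $\gamma$, along which $h^{D'}_{\eta,p'}(f(\gamma(t)))-h^D_{\xi,p}(\gamma(t))\geq k_D(p,\gamma(t))-k_{D'}(p',f(\gamma(t)))$ by \eqref{disuginteress}, whose limit is $\log\lambda_{\xi,p,p'}$ by \eqref{limitgeo}. I expect the main obstacle to be the careful bookkeeping in the reverse inequality of \eqref{limitgeo}: one must ensure that the Busemann functions $h^D_{\xi,p}$ and $h^{D'}_{\eta,p'}$ really do capture the asymptotics of $k_D(\cdot,p)$ and $k_{D'}(\cdot,p')$ along the relevant curves and sequences (so that the $o(1)$ errors genuinely vanish), and that Lemma \ref{takescare} applies — which requires verifying that $\gamma$, or the dilation-minimizing sequence, lies eventually in every horosphere at $\xi$, a fact guaranteed by $K$-convergence via (iii) of Lemma \ref{kvshoro}.
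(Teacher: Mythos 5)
Your upper bound is correct and is exactly the paper's mechanism: the Julia Lemma (Proposition \ref{JuliaLemma}) gives $h^{D'}_{\eta,p'}(f(z))-h^D_{\xi,p}(z)\leq\log\lambda_{\xi,p,p'}$ for all $z$, and \eqref{disuginteress} transfers this to $k_D(\gamma(t),p)-k_{D'}(f(\gamma(t)),p')$ when $\gamma(0)=p$. But the reduction to $\gamma(0)=p$ is not a ``reparametrization'': a geodesic ray with endpoint $\xi$ need not pass through $p$ at all, and \eqref{disuginteress} genuinely uses $h^D_{\xi,p}(\gamma(t))=-k_D(\gamma(t),p)$, which for a general ray only holds up to the additive constant $k_D(\gamma(0),p)$. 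The general case requires proving the limit with base-point $q:=\gamma(0)$ and then invoking Lemma \ref{basepointindependent} together with the cocycle relation $h_{\xi,p}(q)+h_{\xi,q}(p)=0$; this is precisely how the paper concludes.

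The second, more serious issue is in your reverse inequality. The step $k_{D'}(f(\gamma(t)),p')=-h^{D'}_{\eta,p'}(f(\gamma(t)))+o(1)$ does not follow from $h^{D'}_{\eta,p'}(f(\gamma(t)))\to-\infty$: the defect $h^{D'}_{\eta,p'}(w)+k_{D'}(w,p')$ is nonnegative but is exactly the gauge defining $K$-regions (it equals $2\log M$ on the boundary of $K_{p'}(\eta,M)$) and does not tend to $0$ along an arbitrary curve tending to $\eta$. Asserting that it vanishes along $f\circ\gamma$ amounts to knowing that $f\circ\gamma$ is an almost-geodesic ray toward $\eta$, i.e.\ Proposition \ref{eqdefcontact}, which is established after (and partly by means of) the present proposition; and the concluding sentence (``one gets equality in the limit along $\gamma$ because both \dots converge to $\xi$ inside horospheres that exhaust $D$'') is not an argument. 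Fortunately none of this machinery is needed: since $\gamma(t)\to\xi$, one has $\liminf_{t\to+\infty}\bigl(k_D(\gamma(t),p)-k_{D'}(f(\gamma(t)),p')\bigr)\geq\liminf_{z\to\xi}\bigl(k_D(z,p)-k_{D'}(f(z),p')\bigr)=\log\lambda_{\xi,p,p'}$, immediately from the definition of the dilation as a liminf over all of $D$. With that trivial lower bound in place of your second half, and the base-point issue repaired, your derivation of \eqref{conilsup} from \eqref{limitgeo}, \eqref{disuginteress} and the Julia Lemma is correct.
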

		\begin{proof}
			Set $q:=\gamma(0)$. Arguing as in 
			\cite[Lemma 3.12]{AFGK} using  (2) in Proposition \ref{JuliaLemma}
			we have that $$\lim_{t\to+\infty} k_D(\gamma(t),q) - k_{D'}(f(\gamma(t)),p') = \log \lambda_{\xi,q,p'}.$$
			Then \eqref{limitgeo} follows from (ii) in Lemma \ref{basepointindependent}.
			Equation \eqref{conilsup} now follows using \eqref{disuginteress}.
		\end{proof}

		The definition of regular contact point given above is different from the one given in Definition \ref{contacttemp} for holomorphic maps with domain of definition $\D$. The next result shows that the two definitions are in fact equivalent. The proof is similar to \cite[Proposition 3.8, Lemma 3.13]{AFGK}  so we omit it.
		\begin{proposition}\label{eqdefcontact}
			Let $D\subset\C^d$ and $D'\subset\C^{q}$ be  $\C$-proper convex domains. Let $\xi\in\partial D,\eta\in \partial D'$ be points locally finite type, and let $p\in D,p'\in D'$.
			Let $f\colon D\to D'$ be a holomorphic map. The following are equivalent.
			\begin{enumerate}
				\item $\lambda_{\xi,p,p'}<+\infty$ and $K\textrm{-}\lim_{z\to\xi}f(z)=\eta$;
				\item there exists a geodesic ray $\gamma$ with endpoint $\xi$ such that $f\circ \gamma$ is a $(1,B)$ quasi-geodesic with endpoint $\eta$;
				\item for all geodesic rays $\gamma$ with endpoint $\xi$ the curve $f\circ \gamma$ is an almost-geodesic with endpoint $\eta$.
			\end{enumerate}
			
		\end{proposition}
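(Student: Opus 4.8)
The plan is to prove the cycle $(1)\Rightarrow(3)\Rightarrow(2)\Rightarrow(1)$. The implication $(3)\Rightarrow(2)$ is essentially free: by Proposition \ref{convCgeo} there is a complex geodesic $\varphi\colon\D\to D$ with $\varphi(0)=p$ and endpoint $\xi$, so $\tilde\varphi$ is a geodesic ray with endpoint $\xi$; applying $(3)$ to $\gamma=\tilde\varphi$ gives that $f\circ\tilde\varphi$ is an almost-geodesic ray with endpoint $\eta$, and every almost-geodesic ray is a $(1,B)$ quasi-geodesic ray for some $B\ge 0$. For $(2)\Rightarrow(1)$ I would take a geodesic ray $\gamma$ with endpoint $\xi$ for which $f\circ\gamma$ is a $(1,B)$ quasi-geodesic with endpoint $\eta$, and set $q:=\gamma(0)$, $q':=f(\gamma(0))$. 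From $k_{D'}(f(\gamma(t)),q')\ge t-B$ and $k_D(\gamma(t),q)=t$ one gets $k_D(\gamma(t),q)-k_{D'}(f(\gamma(t)),q')\le B$ for all $t$, hence $\log\lambda_{\xi,q,q'}\le B<+\infty$; since finiteness of the dilation is base-point independent (by the triangle inequality), $\lambda_{\xi,p,p'}<+\infty$. Moreover a geodesic ray lies in every geodesic region with its own vertex, so $\gamma(t_n)$ $K$-converges to $\xi$ for $t_n\to+\infty$ and $f(\gamma(t_n))\to\eta$; thus condition $(1)$ of the Julia Lemma (Proposition \ref{JuliaLemma}) holds, whence its condition $(4)$ gives $K\textrm{-}\lim_{z\to\xi}f(z)=\eta$, and this is $(1)$.

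The main step is $(1)\Rightarrow(3)$, which I would handle as follows. Fix any geodesic ray $\gamma\colon\R_{\ge 0}\to D$ with endpoint $\xi$ and put $q:=\gamma(0)$, so $k_D(\gamma(t),q)=t$. As above $\gamma(t_n)$ $K$-converges to $\xi$ for $t_n\to+\infty$, so the hypothesis $K\textrm{-}\lim_{z\to\xi}f(z)=\eta$ forces $f(\gamma(t))\to\eta$; in particular $\eta$ is the endpoint of $f\circ\gamma$, and $f\circ\gamma$ is $1$-Lipschitz since $f$ does not expand $k_{D'}$. Because $\xi$ is then a regular contact point and $\eta$ has locally finite type, Proposition \ref{prop:JFCGeneral} applies with base-point $q$ in $D$ and any $p'\in D'$ and yields
$$\lim_{t\to+\infty}\bigl(t-k_{D'}(f(\gamma(t)),p')\bigr)=\log\lambda_{\xi,q,p'}=:\ell .$$
Writing $k_{D'}(f(\gamma(t)),p')=t-\ell+\epsilon(t)$ with $\epsilon(t)\to 0$, for $0\le s<t$ the reverse triangle inequality gives
$$k_{D'}(f(\gamma(s)),f(\gamma(t)))\ge\bigl|\,k_{D'}(f(\gamma(t)),p')-k_{D'}(f(\gamma(s)),p')\,\bigr|=(t-s)+\epsilon(t)-\epsilon(s).$$
Given $\varepsilon>0$, choosing $T_\varepsilon$ with $|\epsilon(r)|<\varepsilon/2$ for $r\ge T_\varepsilon$ gives $k_{D'}(f(\gamma(s)),f(\gamma(t)))\ge |t-s|-\varepsilon$ for all $s,t\ge T_\varepsilon$ (the case $t<s$ is symmetric, and the bound is vacuous when $|t-s|<\varepsilon$ since the left-hand side is non-negative); together with the upper bound $k_{D'}(f(\gamma(s)),f(\gamma(t)))\le|t-s|$ this is precisely the definition of an almost-geodesic ray with endpoint $\eta$.

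The only implication carrying genuine content is $(1)\Rightarrow(3)$, and there the substantive input is Proposition \ref{prop:JFCGeneral}; the remaining work is the routine bookkeeping needed to promote the single limit along one geodesic ray to the uniform two-sided almost-geodesic estimate and to keep track of the various base-points. I do not expect a serious obstacle beyond this, which is why the argument can be modeled closely on \cite[Proposition 3.8, Lemma 3.13]{AFGK}.
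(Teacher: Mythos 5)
Your proof is correct and follows exactly the route the paper intends (the paper omits the argument, referring to \cite[Proposition 3.8, Lemma 3.13]{AFGK}): the only substantive implication, $(1)\Rightarrow(3)$, is obtained from the limit in Proposition \ref{prop:JFCGeneral} plus the reverse triangle inequality, while $(3)\Rightarrow(2)$ and $(2)\Rightarrow(1)$ are the routine reductions via Proposition \ref{convCgeo} and the Julia Lemma. The bookkeeping with base-points and with the threshold $T_\varepsilon$ in the almost-geodesic estimate is handled correctly.
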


		We now prove other  consequences of the Julia lemma. The first  is that $K$-regions are mapped into $K$-regions.
		\begin{corollary}\label{georintogeor}
			Let $D\subset\C^d$ and $D'\subset\C^{q}$ be  $\C$-proper convex domains.  Let $\xi\in\partial D,\eta\in\partial D'$ be
			points of locally finite type, and let $p\in D$.
			Let $f\colon D\to D'$ be holomorphic, and assume that $\xi$ is a regular contact point with $K\textrm{-}\lim_{z\to\xi}f(z)=\eta$.
			Then for all $M>1$,
			$$f(K_{p}(\xi,M))\subseteq K_{f(p)}(\eta,\lambda_{\xi,p,f(p)}^{1/2}M).$$
		\end{corollary}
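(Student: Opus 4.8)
The plan is to combine the two basic facts about $f$: that it is distance--non--increasing for the Kobayashi distance, and the Julia--type inequality for the horosphere functions supplied by Proposition \ref{JuliaLemma}. Write $p':=f(p)$ and $\lambda:=\lambda_{\xi,p,p'}$. First I would check that $\lambda<+\infty$: by hypothesis $\xi$ is a regular contact point, so $\lambda_{\xi,p,p_0'}<+\infty$ for the base-points fixed in Definition \ref{defregcont}, and the base-point change formula of Lemma \ref{basepointindependent}(i) (or simply Remark \ref{dilationcambiopolo}) expresses $\log\lambda_{\xi,p,f(p)}$ as a finite quantity. Thus $\lambda\in(0,+\infty)$.

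Since $\xi$ is a regular contact point with $K\textrm{-}\lim_{z\to\xi}f(z)=\eta$, condition $(4)$ of Proposition \ref{JuliaLemma} holds with base-points $p$ and $p'=f(p)$, hence so does condition $(2)$: for every $z\in D$,
$$h^{D'}_{\eta,p'}(f(z))-h^D_{\xi,p}(z)\le \log\lambda.$$
Adding to this the Kobayashi contraction $k_{D'}(f(z),p')=k_{D'}(f(z),f(p))\le k_D(z,p)$ gives, for every $z\in D$,
$$h^{D'}_{\eta,p'}(f(z))+k_{D'}(f(z),p')\le \log\lambda+\bigl(h^D_{\xi,p}(z)+k_D(z,p)\bigr).$$

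Finally, if $z\in K_p(\xi,M)$, i.e. $h^D_{\xi,p}(z)+k_D(z,p)<2\log M$, then the left-hand side above is strictly less than $\log\lambda+2\log M=2\log\bigl(\lambda^{1/2}M\bigr)$, which is precisely the membership condition $f(z)\in K_{f(p)}(\eta,\lambda^{1/2}M)$ (note that $h^{D'}_{\eta,p'}(f(z))+k_{D'}(f(z),p')\ge 0$ always, so this $K$-region is genuinely cut out by a strict inequality). I do not expect any real obstacle once the Julia Lemma is in hand; the only point deserving care is the base-point bookkeeping --- making sure the dilation appearing in the target $K$-region is the one relative to $p'=f(p)$ --- which is exactly what Lemma \ref{basepointindependent} (and the base-point independence of $K$-regions recorded in \eqref{Kregpolo}) takes care of.
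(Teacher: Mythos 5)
Your proof is correct and follows the same route as the paper: apply the Julia Lemma (condition (2) of Proposition \ref{JuliaLemma}) with base-points $p$ and $f(p)$ to bound $h^{D'}_{\eta,f(p)}(f(z))$, add the Kobayashi contraction $k_{D'}(f(z),f(p))\le k_D(z,p)$, and conclude from the definition of the $K$-region. The extra remarks on finiteness of $\lambda_{\xi,p,f(p)}$ and base-point bookkeeping are harmless and consistent with the paper's setup.
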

		\proof
		Let $z\in K_p(\xi,M)$, then by the Julia Lemma
		\begin{align*}&h_{\eta,f(p)}(f(z))+k_{D'}(f(z),f(p))\leq h_{\xi,p}(z)+\log\lambda_{\xi,p,f(p)}+k_D(z,p)\\ &< 2\log M+\log\lambda_{\xi,p,f(p)}=2\log (\lambda_{\xi,p,f(p)}^{1/2}M).
		\end{align*}
		\endproof

		Another corollary of the Julia Lemma is a chain rule for the dilation.
		\begin{corollary}\label{chainrule}
			Let $D\subset \C^d$,  $D'\subset \C^{d'}$, and $D''\subset \C^{d''}$ be  $\C$-proper convex domains.
			Let $f\colon D\to D'$, $g\colon D'\to D''$ be holomorphic maps and assume that 
			$\xi\in \partial D$ is a regular contact point of $f$ with $K\textrm{-}\lim_{z\to \xi}f(z)=\eta\in \partial D'$, and that
			$\eta$ is a regular contact  point of $g$ with $K\textrm{-}\lim_{w\to \eta}g(w)=\zeta\in \partial D''$. Then $\xi$ is a regular contact point of $g\circ f$ with  $K\textrm{-}\lim_{z\to \xi}g(f(z))=\zeta$ and
			$$\lambda_{\xi,p,p''}(g\circ f)=\lambda_{\xi,p,p'}(f)\cdot\lambda_{\eta,p',p''}(g),$$
			where  $p\in D, p'\in D',p''\in D''$.
		\end{corollary}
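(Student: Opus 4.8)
The plan is to prove the identity by establishing the two inequalities separately. For ``$\leq$'' I would compose the horosphere inclusions supplied by the Julia Lemma (Proposition~\ref{JuliaLemma}) and feed the resulting bound into the supremum formula \eqref{conilsup} of Proposition~\ref{prop:JFCGeneral}; for ``$\geq$'' I would read off both log-dilations as limits along a single geodesic ray with endpoint $\xi$ via \eqref{limitgeo}, and compare the intermediate quantity with the \emph{liminf} that defines $\lambda_{\eta,p',p''}(g)$.

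First I would check that $\xi$ is a regular contact point of $g\circ f$ with $K\textrm{-}\lim_{z\to\xi}g(f(z))=\zeta$. Since $f$ has finite dilation at $\xi$ and $K$-limit $\eta$, Proposition~\ref{JuliaLemma} ($(1)\Rightarrow(2)$) gives
\[
h^{D'}_{\eta,p'}(f(z))-h^{D}_{\xi,p}(z)\leq\log\lambda_{\xi,p,p'}(f)\qquad\text{for all }z\in D,
\]
and, applied to $g$ at $\eta$,
\[
h^{D''}_{\zeta,p''}(g(w))-h^{D'}_{\eta,p'}(w)\leq\log\lambda_{\eta,p',p''}(g)\qquad\text{for all }w\in D'.
\]
Substituting $w=f(z)$ in the second inequality and adding the first yields
\[
h^{D''}_{\zeta,p''}(g(f(z)))-h^{D}_{\xi,p}(z)\leq\log\lambda_{\xi,p,p'}(f)+\log\lambda_{\eta,p',p''}(g)\qquad\text{for all }z\in D.
\]
In particular this function is bounded from above, so by Proposition~\ref{JuliaLemma} ($(3)\Rightarrow(4)$) we obtain $\lambda_{\xi,p,p''}(g\circ f)<+\infty$ and $K\textrm{-}\lim_{z\to\xi}g(f(z))=\zeta$; since $\zeta$ has locally finite type, $\xi$ is a regular contact point of $g\circ f$ by Definition~\ref{defregcont}. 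Taking the supremum over $z\in D$ in the last display and using \eqref{conilsup} of Proposition~\ref{prop:JFCGeneral} applied to $g\circ f$, we conclude
\[
\log\lambda_{\xi,p,p''}(g\circ f)\leq\log\lambda_{\xi,p,p'}(f)+\log\lambda_{\eta,p',p''}(g).
\]

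For the reverse inequality I would fix a geodesic ray $\gamma\colon\R_{\geq0}\to D$ with endpoint $\xi$. Applying \eqref{limitgeo} of Proposition~\ref{prop:JFCGeneral} to $f$ and to $g\circ f$ (both being regular contact points at $\xi$),
\[
\lim_{t\to+\infty}\bigl(k_D(\gamma(t),p)-k_{D'}(f(\gamma(t)),p')\bigr)=\log\lambda_{\xi,p,p'}(f),
\]
\[
\lim_{t\to+\infty}\bigl(k_D(\gamma(t),p)-k_{D''}(g(f(\gamma(t))),p'')\bigr)=\log\lambda_{\xi,p,p''}(g\circ f),
\]
so, subtracting, the limit
\[
\lim_{t\to+\infty}\bigl(k_{D'}(f(\gamma(t)),p')-k_{D''}(g(f(\gamma(t))),p'')\bigr)=\log\lambda_{\xi,p,p''}(g\circ f)-\log\lambda_{\xi,p,p'}(f)
\]
exists. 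On the other hand, by Proposition~\ref{eqdefcontact} the curve $f\circ\gamma$ is an almost-geodesic ray of $D'$ with endpoint $\eta$, in particular $f(\gamma(t))\to\eta$, so by the very definition of the dilation of $g$,
\[
\liminf_{t\to+\infty}\bigl(k_{D'}(f(\gamma(t)),p')-k_{D''}(g(f(\gamma(t))),p'')\bigr)\geq\liminf_{w\to\eta}\bigl(k_{D'}(w,p')-k_{D''}(g(w),p'')\bigr)=\log\lambda_{\eta,p',p''}(g).
\]
Comparing the last two displays gives $\log\lambda_{\xi,p,p''}(g\circ f)\geq\log\lambda_{\xi,p,p'}(f)+\log\lambda_{\eta,p',p''}(g)$, which together with the previous paragraph proves $\lambda_{\xi,p,p''}(g\circ f)=\lambda_{\xi,p,p'}(f)\cdot\lambda_{\eta,p',p''}(g)$.

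I expect the only genuine obstacle to lie in the first paragraph: one must invoke exactly the right implications of the Julia Lemma to upgrade the two-sided horofunction bound into the statement that $g\circ f$ is itself a regular contact point with finite dilation --- this is precisely what makes Proposition~\ref{prop:JFCGeneral} applicable to $g\circ f$ in the second paragraph --- and to keep the base-points straight, which is legitimate by the base-point independence in Lemma~\ref{basepointindependent} and Remark~\ref{dilationcambiopolo}. Everything else reduces to the triangle inequality and the fact that $k_D$ is non-increasing under holomorphic maps.
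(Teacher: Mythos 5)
Your proposal is correct and follows essentially the same route as the paper: the upper bound by composing the two horosphere inclusions from the Julia Lemma and invoking \eqref{conilsup}, and the lower bound by evaluating \eqref{limitgeo} for $f$ and for $g\circ f$ along one geodesic ray with endpoint $\xi$ and comparing the intermediate quantity with the $\liminf$ defining $\lambda_{\eta,p',p''}(g)$. The only cosmetic difference is that you subtract two existing limits where the paper phrases the same step as a superadditivity inequality for limits and liminfs.
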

		\begin{proof}
			By the Julia Lemma we obtain 
			$$h_{\zeta,p''}(g(f(z)))\leq h_{\xi,p}(z)+\log\lambda_{\xi,p,p'}(f)+\log\lambda_{\eta,p',p''}(g), \quad \forall z\in D.$$
			Again by  the Julia Lemma this implies that $\xi$ is a  regular contact point of the map $g\circ f$,   
			and that the boundary point $\zeta$ is the $K$-limit of $g\circ f$ at $\xi$. By \eqref{conilsup} we obtain
			$$\log\lambda_{\xi,p,p''}(g\circ f)\leq \log\lambda_{\xi,p,p'}(f)+\log\lambda_{\eta,p',p''}(g).$$
			
			On the other hand, if $\gamma$ is a geodesic ray in $D$ with  endpoint $\xi$, by  \eqref{limitgeo} we have
			\begin{align*}
				&\log\lambda_{\xi,p,p''}(g\circ f)=\lim_{t\to+\infty}k_D(\gamma(t),p)-k_{D''}(g(f(\gamma(t))),p'')\\
				&\geq \lim_{t\to+\infty}k_D(\gamma(t),p)-k_{D'}(f(\gamma(t)),p')+\liminf_{t\to +\infty} k_{D' }(f(\gamma(t)),p')-k_{D''}(g(f(\gamma(t))),p'')\\
				&\geq \log\lambda_{\xi,p,p'}(f)+\log\lambda_{\eta,p',p''}(g).
			\end{align*}
		\end{proof}

		We end this section with an extrinsic characterization of regular contact points.

		\begin{proposition}\label{ratiodeltas}Let $D\subset\C^d$ and $D'\subset\C^{q}$ be  $\C$-proper convex domains. Let $\xi\in\partial D,\eta\in \partial D'$ be points of locally finite type. Let $f\colon D\to D'$ be a holomorphic map and  assume  $K\textrm{-}\lim_{z\to\xi}f(z)=\eta$.
			The following are equivalent.
			\begin{enumerate}
				\item $\xi$ is a regular contact point;
				\item  $\delta_{D'}(f(z))/\delta_D(z)$ is $K$-bounded at $\xi$;
				\item there exists a sequence $(z_n)$ in $D$, $K$-converging to $\xi$, such that 
				the sequence $(\delta_{D'}(f(z_n))/\delta_D(z_n))$ is bounded.
			\end{enumerate}
		\end{proposition}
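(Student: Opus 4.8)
The plan is to establish the cycle $(1)\Rightarrow(2)\Rightarrow(3)\Rightarrow(1)$. The implication $(2)\Rightarrow(3)$ is immediate: a sequence $K$-converging to $\xi$ exists (e.g.\ $\gamma(n)$ for the geodesic ray $\gamma$ associated with a complex geodesic with endpoint $\xi$, which exists by Proposition~\ref{convCgeo}) and lies in some $K$-region, on which $\delta_{D'}(f(z))/\delta_D(z)$ is bounded by $(2)$.

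The common device for the two remaining implications is the following. Let $V$ be a neighbourhood of $\xi$ and $V'$ a neighbourhood of $\eta$ on which Lemma~\ref{stimekob} applies, with constants $c=c(p)\ge0$, $c'=c(p')\ge0$, so that $|k_D(z,p)+\log\delta_D(z)|\le c$ for $z\in D\cap V$ and $|k_{D'}(w,p')+\log\delta_{D'}(w)|\le c'$ for $w\in D'\cap V'$. Whenever $z\in V$ and $f(z)\in V'$ one therefore has
$$\left|\log\frac{\delta_{D'}(f(z))}{\delta_D(z)}-\bigl(k_D(z,p)-k_{D'}(f(z),p')\bigr)\right|\le c+c'.$$
Now fix a $K$-region $K_p(\xi,M)$. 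By Proposition~\ref{abatek} it is contained in a geodesic region, and by Lemma~\ref{kvshoro}(ii) the latter clusters in $\partial^* D$ only at $\xi$; combined with $K\textrm{-}\lim_{z\to\xi}f(z)=\eta$, there is a neighbourhood $U\subseteq V$ of $\xi$ with $f(K_p(\xi,M)\cap U)\subseteq V'$ while $K_p(\xi,M)\setminus U$ is relatively compact in $D$. On the relatively compact remainder $\delta_D$ is bounded below and $f$ has relatively compact image, so $\delta_{D'}(f(z))/\delta_D(z)$ is bounded there; on $K_p(\xi,M)\cap U$ the displayed comparison reduces $K$-boundedness of the ratio to the estimate $k_D(z,p)-k_{D'}(f(z),p')=O_K(1)$. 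Likewise, for a sequence $z_n\stackrel{K}\to\xi$ (so $f(z_n)\to\eta$), once $z_n\in V$ and $f(z_n)\in V'$ the comparison identifies boundedness of $\delta_{D'}(f(z_n))/\delta_D(z_n)$ with boundedness from above of $k_D(z_n,p)-k_{D'}(f(z_n),p')$.

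For $(1)\Rightarrow(2)$: since $\xi$ is a regular contact point, $\lambda_{\xi,p,p'}<+\infty$ and, as $K\textrm{-}\lim f=\eta$, the Julia Lemma (Proposition~\ref{JuliaLemma}, $(1)\Rightarrow(2)$) gives $h^{D'}_{\eta,p'}(f(z))-h^D_{\xi,p}(z)\le\log\lambda_{\xi,p,p'}$ for all $z\in D$. On $K_p(\xi,M)$ one has $h^D_{\xi,p}(z)+k_D(z,p)<2\log M$, while $h^{D'}_{\eta,p'}(f(z))+k_{D'}(f(z),p')\ge0$ always; adding these two inequalities and invoking the Julia inequality yields
$$k_D(z,p)-k_{D'}(f(z),p')<2\log M+\log\lambda_{\xi,p,p'}\qquad\text{for all }z\in K_p(\xi,M),$$
so by the device the ratio is $K$-bounded. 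For $(3)\Rightarrow(1)$: if $z_n\stackrel{K}\to\xi$ with $\delta_{D'}(f(z_n))/\delta_D(z_n)\le C$, the device gives $k_D(z_n,p)-k_{D'}(f(z_n),p')\le\log C+c+c'$ for $n$ large, hence $\log\lambda_{\xi,p,p'}=\liminf_{z\to\xi}\bigl(k_D(z,p)-k_{D'}(f(z),p')\bigr)\le\log C+c+c'<+\infty$; together with the standing hypotheses $K\textrm{-}\lim f=\eta$ and $\eta$ of locally finite type, Definition~\ref{defregcont} says that $\xi$ is a regular contact point.

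The only delicate point is the relative-compactness bookkeeping in the device: one must ensure that on a $K$-region the set where both $z\in V$ and $f(z)\in V'$ is everything except a relatively compact set — which follows from $K\textrm{-}\lim f=\eta$ and from a $K$-region clustering only at $\xi$. Granting this, the statement is a direct combination of the Julia Lemma with the estimate $k_D(\cdot,p)=-\log\delta_D+O(1)$ near $\xi$.
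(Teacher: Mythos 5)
Your proof is correct and follows essentially the same route as the paper: both reduce the $\delta$-ratio to the quantity $k_D(z,p)-k_{D'}(f(z),p')$ via Lemma \ref{stimekob} and then control that quantity on $K$-regions using the Julia lemma circle of ideas, with $(3)\Rightarrow(1)$ identical. The only (harmless) variation is in $(1)\Rightarrow(2)$: you bound $k_D(z,p)-k_{D'}(f(z),p')$ on a $K$-region directly from the horosphere inequality of Proposition \ref{JuliaLemma} together with $h_{\eta,p'}+k_{D'}(\cdot,p')\geq 0$, whereas the paper uses the limit \eqref{limitgeo} along a geodesic ray and the triangle inequality on a geodesic region $A(\gamma,R)$.
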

		\proof We prove $(1)\Rightarrow (2)$. Let $\gamma\colon\R_{\geq 0}\to D$ be a geodesic ray with endpoint $\xi$. Since $\xi$ is a regular contact point, by \eqref{limitgeo} there exists $A\geq 0$ such that for all $t\geq0$
		$$k_D(\gamma(t),p)-k_{D'}(f(\gamma(t)),p')\leq A.$$ Fix $R>0$ and let $z\in A(\gamma,R)$ be close to $\xi$, then by definition there exists $t^*\geq 0$ such that $k_D(z,\gamma(t^*))<R$. Now using Lemma \ref{stimekob} for $D$ near $\xi$ and $D'$ near $\eta$ we can find $c\geq 0$ such that
		\begin{align*}
			\log\frac{\delta_{D'}(f(z))}{\delta_D(z)}&\leq k_D(z,p)-k_{D'}(f(z),p')+c
			\\&\leq k_D(z,\gamma(t^*))+k_D(\gamma(t^*),p)+k_{D'}(f(z),f(\gamma(t^*)))-k_{D'}(f(\gamma(t^*)),p')+c
			\\&\leq 2R+A+c.
		\end{align*}
		The implication $(2)\Rightarrow(3)$ is trivial, and 
		$(3)\Rightarrow(1)$ follows from Lemma \ref{stimekob}.
		\endproof
		
		The previous result can be strengthened if the codomain $D'\subset\C^{q}$ is a bounded convex domain of finite type.
		\begin{proposition}\label{ratiodeltas+}
			Let $D\subset\C^d$  be a  $\C$-proper convex domain, and let $D'\subset\C^{q}$ be a bounded convex domain of finite type. Let $\xi\in\partial D$ be a
			point of locally finite type. Let $f\colon D\to D'$ be a holomorphic map.
			Then the following are equivalent:
			\begin{enumerate}
				\item $\xi$ is a regular contact point; 
				\item $\delta_{D'}(f(z))/\delta_D(z)$ is $K$-bounded at $\xi$;
				\item $\liminf_{z\to\xi}\delta_{D'}(f(z))/\delta_D(z)<+\infty.$
			\end{enumerate}
		\end{proposition}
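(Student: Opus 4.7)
The plan is to derive the new implication (3)$\Rightarrow$(1), since (1)$\Rightarrow$(2) follows directly from Proposition \ref{ratiodeltas} (its hypothesis, that the $K$-limit exists and is of locally finite type, is built into the definition of regular contact point via Definition \ref{defregcont}), and (2)$\Rightarrow$(3) is immediate because $K$-boundedness of $\delta_{D'}(f(z))/\delta_D(z)$ forces the ratio to stay bounded along any sequence $K$-converging to $\xi$, hence $\liminf_{z\to\xi}\delta_{D'}(f(z))/\delta_D(z)<+\infty$. So the essential work is (3)$\Rightarrow$(1), where the global finite type hypothesis on $D'$ is used to bypass the a priori requirement of knowing the existence of the $K$-limit.

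For (3)$\Rightarrow$(1), suppose $\liminf_{z\to\xi}\delta_{D'}(f(z))/\delta_D(z)=C<+\infty$, and pick a sequence $(z_n)\subset D$ with $z_n\to\xi$ and $\delta_{D'}(f(z_n))/\delta_D(z_n)\leq C+1$. Since $\delta_D(z_n)\to 0$, we also have $\delta_{D'}(f(z_n))\to 0$, so by compactness of $\overline{D'}$ we may pass to a subsequence along which $f(z_n)\to\eta\in\partial D'$. Crucially, the global finite type hypothesis on $D'$ ensures that $\eta$ is automatically a point of locally finite type, so Lemma \ref{stimekob} applies in neighborhoods of both $\xi$ and $\eta$, giving constants $c,c'\geq 0$ with
$$k_D(z_n,p) - k_{D'}(f(z_n),p') \;\leq\; \log\frac{\delta_{D'}(f(z_n))}{\delta_D(z_n)} + c + c' \;\leq\; \log(C+1)+c+c'$$
for all $n$ large. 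Taking $\liminf$ yields $\log\lambda_{\xi,p,p'}\leq \log(C+1)+c+c' <+\infty$.

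With the dilation finite and $D'$ a bounded convex domain of finite type, Remark \ref{esisteKlim} now produces $\eta'\in\partial D'$ such that $K\textrm{-}\lim_{z\to\xi}f(z)=\eta'$; this $\eta'$ is of locally finite type by the global finite type assumption on $D'$, so $\xi$ is a regular contact point by Definition \ref{defregcont}. I do not anticipate a serious obstacle here: the argument is essentially the local Proposition \ref{ratiodeltas} combined with the observation that, under global finite type of $D'$, existence of the $K$-limit is automatic from finiteness of the dilation. The only mild subtlety is that Lemma \ref{stimekob} is a local statement, which forces the initial extraction of a subsequence with $f(z_n)\to\eta\in\partial D'$ before the Kobayashi-to-Euclidean boundary estimate can be applied on the target side.
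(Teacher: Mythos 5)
Your proposal is correct and follows essentially the same route as the paper: (1)$\Rightarrow$(2) via Proposition \ref{ratiodeltas}, (2)$\Rightarrow$(3) trivially, and for (3)$\Rightarrow$(1) extracting a subsequence with $f(z_n)\to\eta\in\partial D'$, invoking Lemma \ref{stimekob} near $\xi$ and $\eta$ to bound the dilation, and concluding with Remark \ref{esisteKlim}. No gaps.
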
 
		\begin{proof}
			If $\xi$ is a  regular contact point then (2) holds by the previous proposition. (2) $\Rightarrow$ (3) is trivial.
			If (3)  holds, let $(w_n)$ be a sequence in $D$ converging to $\xi$ such that 
			$\delta_{D'}(f(w_n))/\delta_D(w_n)$ is bounded. Up to a subsequence we may assume that the sequence $(f(w_n))$ converges to a point  in $\eta\in\overline {D'}$. Since $\delta_D(w_n)\to 0$ we have $\delta_{D'}(f(w_n))\to 0$, and thus $\eta\in \partial D'$. By  Lemma \ref{stimekob} and Remark \ref{esisteKlim} $\xi$ is a regular contact point.
		\end{proof}

		\section{Pluricomplex Poisson kernel and normalized dilation}
		
		In our generalization of the Julia-Wolff-Carath\'eodory theorem we want to establish a relationship between  the dilation $ \lambda_{\xi,p,p'}(f)$ of a holomorphic map at a regular contact point $\xi\in \partial D$ (with $K$-limit $\eta\in \partial D'$) and the restricted $K$-limit of the normal component of the normal directional derivative
		$\langle df_z(n_\xi),n_\eta\rangle$. The fact that the dilation $\lambda_{\xi,p,p'}(f)$ depends on the  base-points $p\in D,p'\in D'$ is, from this point of view, a disturbance. We now introduce, in the setting of  $\C$-proper convex domains of finite type,  a function which generalizes  {the pluricomplex Poisson kernel} originally defined  (with completely different methods) by  Bracci--Patrizio--Trapani \cite{BP,BPT} on strongly convex domains with $C^\infty$ boundary,  and we use it  to normalize the dilation and eliminate the dependence on the base-points.
		
		Recall that by Corollary \ref{approaching} two  complex geodesics  with the same endpoint $\xi\in \partial D$ are strongly asymptotic.
		\begin{lemma}\label{Thoro}
			Let $D\subset\C^d$ be a $\C$-proper convex domain and let $\xi\in\partial D$ be of locally finite type.
			Let $\varphi,\psi\colon\D\to D$ be two complex geodesics  with the same endpoint $\xi$.
			Let  $T\in \R$ be such that $k_D(\tilde\varphi(t),\tilde\psi(t+T))\to 0.$ Then  
			$$T=h_{\xi,\varphi(0)}(\psi(0)).$$
			In particular, if $\varphi(0)=\psi(0)$, then $T=0.$
		\end{lemma}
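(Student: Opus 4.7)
The plan is to identify $h_{\xi,\varphi(0)}$ with the Busemann function of the complex geodesic $\varphi$ constructed in the proof of Theorem \ref{existencehorospheres}, evaluate it at $\psi(0)$, and then use the strong asymptoticity hypothesis to reduce the computation to an obvious geodesic computation on $\psi$ itself.

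First, I would recall that in the proof of Theorem \ref{existencehorospheres} it is shown that $h_{\xi,\varphi(0)}$ equals the pointwise limit of $k_D(\,\cdot\,,\tilde\varphi(t)) - k_D(\tilde\varphi(t),\varphi(0))$ as $t\to +\infty$, and since $\tilde\varphi$ is a geodesic ray emanating from $\varphi(0)$ one has $k_D(\tilde\varphi(t),\varphi(0)) = t$. Thus
\[
h_{\xi,\varphi(0)}(\psi(0)) = \lim_{t\to+\infty}\bigl(k_D(\psi(0),\tilde\varphi(t)) - t\bigr).
\]

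The key step is then to replace $\tilde\varphi(t)$ by $\tilde\psi(t+T)$ inside the Kobayashi distance, using the assumption $k_D(\tilde\varphi(t),\tilde\psi(t+T))\to 0$. By the triangle inequality, for every $t$ large enough that $t+T\geq 0$,
\[
\bigl|k_D(\psi(0),\tilde\varphi(t)) - k_D(\psi(0),\tilde\psi(t+T))\bigr|\leq k_D(\tilde\varphi(t),\tilde\psi(t+T)) \xrightarrow[t\to+\infty]{} 0.
\]
But $\tilde\psi$ is a geodesic ray starting at $\psi(0)$, so $k_D(\psi(0),\tilde\psi(t+T)) = t+T$. Substituting gives $h_{\xi,\varphi(0)}(\psi(0)) = T$, which is what we wanted.

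The ``in particular'' statement is immediate: if $\varphi(0)=\psi(0)=:p$, then from the definition of $h_{\xi,p}$ one trivially gets $h_{\xi,p}(p)=0$, hence $T=0$. I do not foresee any serious obstacle here: the only substantive ingredient beyond the triangle inequality and the definition of a geodesic is the identification of $h_{\xi,\varphi(0)}$ with the Busemann function of $\varphi$, which has already been established in Theorem \ref{existencehorospheres} (and this is where the local finite type hypothesis is actually used, via the existence of horospheres and the coincidence of all Busemann limits).
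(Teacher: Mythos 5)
Your proposal is correct and follows essentially the same route as the paper's proof: evaluate the horofunction along the sequence $\tilde\varphi(t)$, use $k_D(\tilde\varphi(t),\varphi(0))=t$, and swap $\tilde\varphi(t)$ for $\tilde\psi(t+T)$ via the triangle inequality and the asymptoticity hypothesis to get $t+T-t=T$. No issues.
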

		\begin{proof}
			We have
			$$h_{\xi,\varphi(0)}(\psi(0))=\lim_{t\to+\infty}k_D(\tilde\psi(0),\tilde \varphi(t))-k_D(\tilde\varphi(t),\tilde \varphi (0))=\lim_{t\to+\infty}k_D(\tilde \psi(0),\tilde\psi(t+T))-k_D(\tilde\varphi(t),\tilde \varphi(0))=T.$$
		\end{proof}
		
		\begin{proposition}\label{Tderiv}
			Let $D\subset\C^d$ be a $\C$-proper convex domain and let $\xi\in\partial D$ be a point of locally finite type.  Let $\varphi,\psi\colon\D\to D$ be two complex geodesics  with the same endpoint $\xi$.
			Let  $T\in \R$ be such that $k_D(\tilde\varphi(t),\tilde\psi(t+T))\to 0.$
			Then 
			\begin{equation}\label{formulaT}
				T=\log\frac{\psi'_N(1)}{\varphi_N'(1)},
			\end{equation}
			and thus
			\begin{equation}\label{formulaT2}
				h_{\xi,\varphi(0)}(\psi(0))=\log\frac{\psi'_N(1)}{\varphi_N'(1)}.
			\end{equation}
			In particular if $\varphi(0)=\psi(0)$, then $\psi'_N(1)=\varphi_N'(1).$
		\end{proposition}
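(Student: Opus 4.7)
The plan is to prove the key identity \eqref{formulaT}; once this is in hand, \eqref{formulaT2} follows at once from Lemma \ref{Thoro}, and the final assertion follows from the trivial fact $h_{\xi,\varphi(0)}(\varphi(0))=0$. My approach is to pass to the half-plane via the Cayley transform and then apply the anisotropic scaling of Proposition \ref{scaling1} to collapse both complex geodesics onto the slice geodesic of the scaling model $D_H$.

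First I would move to multitype coordinates centered at $\xi=0$ and replace $\varphi,\psi$ with $\Phi:=\varphi\circ\mathscr{C}^{-1}$, $\Psi:=\psi\circ\mathscr{C}^{-1}$, noting that by \eqref{changenormal} the ratio $\psi'_N(1)/\varphi'_N(1)$ equals $\Psi'_N(0)/\Phi'_N(0)$. Using $\gamma_\D=\mathscr{C}^{-1}\circ\gamma_\H$, the strong-asymptoticity hypothesis rewrites as
\[
k_D\bigl(\Phi(-e^{-t}),\,\Psi(-e^{-T}e^{-t})\bigr)\xrightarrow[t\to+\infty]{}0.
\]
The crucial observation is that the additive shift $T$ in the geodesic parameter becomes a \emph{multiplicative} factor $e^{-T}$ on the approach to the boundary.

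Next I would apply the scaling $A_n$ associated to any sequence $\lambda_n\to+\infty$, evaluating at the points $-\lambda_n^{-1}$ and $-e^{-T}\lambda_n^{-1}$. By Proposition \ref{scaling1} the rescaled maps $A_n\Phi(\lambda_n^{-1}\,\cdot\,)$ and $A_n\Psi(\lambda_n^{-1}\,\cdot\,)$ converge locally uniformly to the slice geodesics $\zeta\mapsto(\Phi'_N(0)\zeta,0)$ and $\zeta\mapsto(\Psi'_N(0)\zeta,0)$ of $D_H$, so the two sequences $A_n\Phi(-\lambda_n^{-1})$ and $A_n\Psi(-e^{-T}\lambda_n^{-1})$ converge to $(-\Phi'_N(0),0)$ and $(-e^{-T}\Psi'_N(0),0)$ respectively. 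Since $A_n$ is a biholomorphism and $k_{A_nD}\to k_{D_H}$ locally uniformly by Corollary \ref{zimmerconvergence}(ii), the asymptoticity hypothesis forces
\[
k_{D_H}\bigl((-\Phi'_N(0),0),\,(-e^{-T}\Psi'_N(0),0)\bigr)=0,
\]
and non-degeneracy of $k_{D_H}$ (which holds because $D_H$ is $\C$-proper, $H$ being non-degenerate) gives $\Phi'_N(0)=e^{-T}\Psi'_N(0)$, which is exactly \eqref{formulaT}.

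The main difficulty — essentially the only nontrivial input — is the invocation of Proposition \ref{scaling1}, which already packages the substantive convex-geometry content of the paper. Everything else is bookkeeping: the Cayley transform together with the normalization constant from \eqref{changenormal}, the local convergence $k_{A_nD}\to k_{D_H}$, and the observation that a genuine metric cannot have two distinct points at distance zero. I expect no real obstacle beyond making sure the scaling is evaluated at the correct points, so that the multiplicative factor $e^{-T}$ survives on the normal coordinate where it can be read off against $\Psi'_N(0)/\Phi'_N(0)$.
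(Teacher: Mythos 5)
Your proof is correct. The route is genuinely different from the paper's, though both ultimately rest on the scaling Proposition \ref{scaling1}. The paper argues indirectly: it picks a disc automorphism $\tau_a$ with $\tau_a'(1)=\varphi_N'(1)/\psi_N'(1)$, observes that the reparametrized geodesic $\psi\circ\tau_a$ has the same normal derivative as $\varphi$, invokes Corollary \ref{approchfintyp+} to get asymptoticity with shift $0$, and then computes that the reparametrization amounts exactly to the time shift $\log(\psi_N'(1)/\varphi_N'(1))$ (uniqueness of the shift, i.e.\ $k_D(\tilde\psi(t+T),\tilde\psi(t+T'))=|T-T'|$, closes the argument). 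You instead run the scaling directly on the two given geodesics and read $T$ off from the requirement that the two limit points $(-\Phi_N'(0),0)$ and $(-e^{-T}\Psi_N'(0),0)$ in $D_H$ coincide; in effect you use the converse direction of the comparison that Corollary \ref{approchfintyp+} packages (asymptoticity forces equality of the rescaled limits), rather than its forward direction. What your version buys is that it dispenses with the reparametrization trick and makes the origin of the formula transparent — the additive shift in the geodesic parameter becomes the multiplicative factor $e^{-T}$ on the normal coordinate of the scaling limit; what it costs is re-deriving, inside the proof, the limit-comparison step that the paper has already isolated in Corollary \ref{approchfintyp+}. All the auxiliary points you rely on check out: $-1$ and $-e^{-T}$ are fixed points of $\H$ so Proposition \ref{scaling1} applies, the rescaled points eventually lie in a fixed compact of $D_H$ so Corollary \ref{zimmerconvergence}(ii) upgrades to convergence of the distances, and $k_{D_H}$ is a genuine distance since $H$ is non-degenerate.
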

		
		\begin{proof}
			Let $a\in(-1,1)$ be such that $\frac{1-a}{1+a}=\frac{\varphi'_N(1)}{\psi_N'(1)}$ and let   $\tau_a\colon\D\to\D$ be the automorphism of the disc given by $\tau_a(\zeta)=\frac{\zeta+a}{1+a\zeta}$. Notice that $\tau'_a(1)=\frac{1-a}{1+a}.$ 
			The complex geodesic $\psi_a:=\psi\circ\tau_a$ satisfies $(\psi_a)'_N(1)=\varphi'_N(1)$, so by Theorem \ref{approchfintyp+} we have
			$$\lim_{t\to+\infty}{k_D}(\tilde \varphi(t),\tilde{\psi}_a(t))=0.$$
			A simple computation shows that $\tilde{\psi}_a(t)=\tilde \psi\left(t+\log\frac{\psi'_N(1)}{\varphi_N'(1)}\right)$.
			Equation \ref{formulaT2}  follows combining  \eqref{formulaT} with Lemma \ref{Thoro}.
		\end{proof}
		\begin{definition}
			Let $D\subset\C^d$ be a $\C$-proper convex domain and let $\xi\in\partial D$ be a point of locally finite type.
			For all $z\in D$ we define the {\sl pluricomplex Poisson kernel}  $\Omega_\xi\colon D\to (-\infty,0)$ as
			$$\Omega_\xi(z)=-\frac{1}{\varphi'_N(1)},$$ where $\varphi\colon \D\to D$ is a complex geodesic such that $\varphi(0)=z$ and with endpoint $\xi$. Notice that it is not known whether there is a \emph{unique} complex geodesic connecting the point $z$ to $\xi$, however $\Omega_\xi$ is well-defined thanks to Proposition \ref{Tderiv}.
		\end{definition}

		\begin{remark}\label{omegahorofunction}
			It follows from Proposition \ref{Tderiv} that if $z,w\in D$, then
			\begin{equation}\label{orosferepoisson}
				h_{\xi,z}(w)=\log |\Omega_\xi(z)|-\log  |\Omega_\xi(w)|,
			\end{equation}
			hence $\Omega_\xi$ is continuous and its level sets are exactly the horospheres centered in $\xi$.
		\end{remark}
		
		The function $\Omega^\D_1$ coincides with the classical (negative) Poisson kernel in the disc:
		$$\Omega^\D_1(\zeta)=-\frac{1-|\zeta|^2}{|1-\zeta|^2}.$$
		Moreover, the function $\Omega_\xi$ is a multiple of the classical Poisson kernel in the unit disc when restricted to a complex geodesic with endpoint $\xi$.

		\begin{corollary}\label{forPoigeo}
			Let $D\subset\C^d$ be a $\C$-proper convex domain and let $\xi\in\partial D$ be a point of locally finite type. Let $\varphi\colon \D\to D$ be a complex geodesic with endpoint $\xi$.
			Then, for all $\zeta\in \D$,
			$$\Omega_\xi^D(\varphi(\zeta))= |\Omega_\xi^D(\varphi(0))|\Omega_1^\D(\zeta)= \frac{\Omega_1^\D(\zeta)}{\varphi'_N(1)}.$$
		\end{corollary}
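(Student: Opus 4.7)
The plan is to exploit the fact that $\varphi$ is a $k_D$-isometry so that horofunction computations on $D$ pull back to horofunction computations on $\D$, and then translate between horofunctions and the pluricomplex Poisson kernel via equation \eqref{orosferepoisson}.

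The first step is to prove the identity
\[
h^D_{\xi,\varphi(0)}(\varphi(\zeta)) = h^\D_{1,0}(\zeta) \qquad \forall \zeta\in\D.
\]
For this I would take the sequence $w_t:=\varphi(t)\to\xi$ as $\R\ni t\to 1^-$. By Theorem \ref{existencehorospheres} we have $\lim_{t\to 1^-}[k_D(\varphi(\zeta),\varphi(t))-k_D(\varphi(t),\varphi(0))] = h^D_{\xi,\varphi(0)}(\varphi(\zeta))$. But since $\varphi$ is a complex geodesic, each term in this limit equals $k_\D(\zeta,t)-k_\D(t,0)$, whose limit as $t\to 1^-$ is precisely $h^\D_{1,0}(\zeta)$.

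The second step is to apply \eqref{orosferepoisson} to both sides of this identity. Since $\Omega_1^\D(0)=-1$, the right-hand side becomes $-\log|\Omega_1^\D(\zeta)|$, while the left-hand side becomes $\log|\Omega_\xi^D(\varphi(0))|-\log|\Omega_\xi^D(\varphi(\zeta))|$. Exponentiating and rearranging yields
\[
|\Omega_\xi^D(\varphi(\zeta))| = |\Omega_\xi^D(\varphi(0))|\cdot|\Omega_1^\D(\zeta)|,
\]
which, since both $\Omega_\xi^D$ and $\Omega_1^\D$ take negative values, is equivalent to $\Omega_\xi^D(\varphi(\zeta)) = |\Omega_\xi^D(\varphi(0))|\,\Omega_1^\D(\zeta)$. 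This is the first equality in the statement.

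The second equality is then immediate from the very definition of $\Omega_\xi^D$: since $\varphi$ itself is a complex geodesic sending $0$ to $\varphi(0)$ with endpoint $\xi$, one has $\Omega_\xi^D(\varphi(0))=-1/\varphi'_N(1)$ and hence $|\Omega_\xi^D(\varphi(0))|=1/\varphi'_N(1)$. The only delicate point in the whole argument is verifying that one may compute the horofunction at $\xi$ by approaching along the curve $\varphi(t)$, but this is automatic from Theorem \ref{existencehorospheres}, which gives uniform convergence on compact sets along any sequence in $D$ converging to $\xi$. So there is no serious obstacle; the proof is essentially a bookkeeping exercise once one notices that the complex geodesic $\varphi$ transports horofunctions from $\D$ to $D$.
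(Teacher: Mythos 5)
Your proof is correct and follows essentially the same route as the paper: apply \eqref{orosferepoisson} on $\D$ and on $D$ and link the two via the identity $h^\D_{1,0}(\zeta)=h^D_{\xi,\varphi(0)}(\varphi(\zeta))$, which the paper uses implicitly and you justify explicitly (and correctly) by computing the horofunction along the sequence $\varphi(t)\to\xi$ and using that $\varphi$ is a $k_D$-isometry.
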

		\begin{proof}
			By  \eqref{orosferepoisson}, $$h^\D_{1,0}(\zeta)=-\log|\Omega^\D_1(\zeta)|,\quad \zeta\in \D.$$
			Again by \eqref{orosferepoisson}  we obtain, for all $\zeta\in \D$,
			$$-\log|\Omega^\D_1(\zeta)|= h^\D_{1,0}(\zeta)=h^D_{\xi,\varphi(0)}(\varphi(\zeta))=\log|\Omega^D_\xi(\varphi(0))|-\log|\Omega^D_\xi(\varphi(\zeta))|.$$
		\end{proof}

		\begin{example}\label{poisegg}
			Using the previous corollary and  formula \eqref{formulageodetiche}  one easily obtains an explicit formula for the pluricomplex Poisson kernel of the egg domain $\mathbb{E}_m$ at the point $\xi=(1,0)$, that is
			$$\Omega^{\mathbb{E}_m}_\xi(z_0,z_1)=-\frac{1-|z_0|^2-|z_1|^m}{|1-z_0|^2}.$$
		\end{example}

		\begin{corollary}
			If $D\subset\subset \C^d$ is strongly convex with $C^\infty$ boundary, the function $\Omega_\xi$ coincides with the pluricomplex Poisson kernel introduced by Bracci--Patrizio--Trapani \cite{BP,BPT}. 
		\end{corollary}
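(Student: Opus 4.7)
The plan is to identify $\Omega_\xi$ with the Bracci--Patrizio--Trapani kernel, which we shall denote $P^{BPT}_\xi$, by showing that the two functions satisfy the same explicit formula when restricted to any complex geodesic with endpoint $\xi$, and then invoking Lempert's theorem to cover $D$ by such geodesics.

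First, I would recall that in a bounded strongly convex domain $D$ with $C^\infty$ boundary, Lempert's theorem guarantees that every complex geodesic $\varphi\colon \D\to D$ extends $C^\infty$-smoothly to $\overline{\D}$, that $\varphi'(1)$ is transversal to the complex tangent space $T_\xi^\C \partial D$ at $\xi=\varphi(1)$, and that for every $z\in D$ there is a unique complex geodesic with $\varphi(0)=z$ and endpoint $\xi$. Consequently the non-tangential limit $\varphi_N'(1)$ provided by Proposition \ref{normalderivative} agrees with the genuine boundary derivative $\langle \varphi'(1),n_\xi\rangle>0$.

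Second, I would invoke the explicit description of the Bracci--Patrizio--Trapani kernel along complex geodesics established in \cite{BP,BPT}: for every complex geodesic $\varphi\colon\D\to D$ with endpoint $\xi$,
\[
P^{BPT}_\xi(\varphi(\zeta))=-\frac{1-|\zeta|^2}{|1-\zeta|^2}\cdot\frac{1}{\langle \varphi'(1),n_\xi\rangle}.
\]
Comparing with Corollary \ref{forPoigeo}, which gives $\Omega_\xi(\varphi(\zeta))=\Omega_1^\D(\zeta)/\varphi_N'(1)$, and using the equality $\varphi_N'(1)=\langle \varphi'(1),n_\xi\rangle$ from Lempert's regularity, we conclude $\Omega_\xi=P^{BPT}_\xi$ on the image of every such $\varphi$. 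Since by Proposition \ref{convCgeo} (or directly by Lempert) every point of $D$ lies on a complex geodesic with endpoint $\xi$, this pointwise equality propagates to the whole of $D$.

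The only potential obstacle is matching the precise normalization of $P^{BPT}_\xi$ used in \cite{BP,BPT}. If the formula above is not quoted verbatim there, an alternative argument goes via Remark \ref{omegahorofunction}: both $-\log|\Omega_\xi|$ and $-\log|P^{BPT}_\xi|$ are horofunctions at $\xi$ (the latter by the defining property of the BPT kernel), so they differ by an additive constant, and Corollary \ref{forPoigeo} evaluated at one point on a single complex geodesic pins this constant down to zero.
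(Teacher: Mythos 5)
Your proposal is correct and follows essentially the same route as the paper, which simply cites formula (1.2) of \cite{BPT} — precisely the identity you quote for $P^{BPT}_\xi$ along complex geodesics — and compares it with Corollary \ref{forPoigeo}. Your additional remarks on Lempert regularity and the horofunction normalization are a sound (if more detailed) elaboration of the same argument.
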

		\proof
		See (1.2) in \cite{BPT}.
		\endproof
		
		If $D\subset\subset \C^d$  is strongly convex with $C^\infty$ boundary, the pluricomplex Poisson kernel $\Omega_\xi$ solves the following homogeneous Monge-Amp\`ere equation with a simple singularity at the boundary:
		\begin{equation}\label{MAprob}\begin{cases*}
				\Omega_\xi\mbox{ is plurisubharmonic in }D\\
				(\partial\overline{\partial}\Omega_\xi)^d=0\ \mbox{ in }D\\
				\Omega_\xi<0 \ \mbox{ in }D\\
				\Omega_\xi(\eta)=0 \ \mbox{ if }\eta\in\partial D\backslash\{\xi\}\\
				\Omega_\xi(z)\approx\|z-\xi\|^{-1} \mbox{ as }z\to\xi\mbox{ non-tangentially}
			\end{cases*}
		\end{equation}
		In a forthcoming paper \cite{toappear}  with F. Bracci  we prove that  in  convex domains of finite type the function $\Omega_\xi$ solves an equation analogous to \eqref{MAprob}.
		\begin{definition}[Normalized dilation]
			Let $D\subset\C^d$ and $D'\subset\C^{q}$ be  $\C$-proper convex domains and $f\colon D\to D'$ be a holomorphic map. 
			Let $\xi\in\partial D,\eta\in \partial D'$ be points of locally finite type.
			Assume that $\xi$ is a regular contact point with $K\textrm{-}\lim_{z\to\xi}f(z)=\eta$. We define the {\sl  normalized dilation} of $f$ at $\xi$ as
			$$\alpha_\xi:=\lambda_{\xi,p,p'}\frac{\Omega^D_\xi(p)}{\Omega^{D'}_\eta(p')}\in (0,+\infty),$$
			where $p\in D,p'\in D'$.
		\end{definition}
		The next result shows that the normalized dilation is well-defined, that is, it does not depend on the chosen base-points $p\in D,p'\in D'.$
		\begin{lemma}
			Let $D\subset\C^d, D'\subset \C^{q}$ be  $\C$-proper convex domains and let $\xi\in\partial D,\eta\in \partial D'$ be   points of locally finite type. 
			Let $f\colon D\to D'$ be a  holomorphic map, and assume that $\xi$ is a regular contact point  with $K\textrm{-}\lim_{z\to\xi}f(z)=\eta$.
			Then the positive real number  $$\lambda_{\xi,p,p'}\frac{\Omega^D_\xi(p)}{\Omega^{D'}_\eta(p')}\in (0,+\infty)$$ is independent on $p\in D,p'\in D'$.
		\end{lemma}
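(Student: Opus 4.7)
The plan is to combine two ingredients already established in the excerpt: the base-point change formula for the logarithmic dilation (Lemma \ref{basepointindependent}(i)) and the identification of the horofunction as a ratio of pluricomplex Poisson kernels (equation \eqref{orosferepoisson} in Remark \ref{omegahorofunction}).

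First I would fix two pairs of base-points $(p,p')\in D\times D'$ and $(q,q')\in D\times D'$. Since $\xi$ is a regular contact point with $K$-limit $\eta$, hypothesis (1) of Lemma \ref{basepointindependent} is satisfied (for instance by choosing any sequence $K$-converging to $\xi$, which by (iii) of Lemma \ref{kvshoro} is eventually contained in every horosphere, and whose image tends to $\eta$ by Proposition \ref{JuliaLemma}). Hence Lemma \ref{basepointindependent}(i) applies and gives
\begin{equation*}
\log\lambda_{\xi,q,q'}=\log\lambda_{\xi,p,p'}+h^{D}_{\xi,p}(q)+h^{D'}_{\eta,q'}(p').
\end{equation*}

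Next I would rewrite each horofunction value using \eqref{orosferepoisson}, which says $h_{\zeta,z}(w)=\log|\Omega_\zeta(z)|-\log|\Omega_\zeta(w)|$. Thus
\begin{equation*}
h^{D}_{\xi,p}(q)=\log|\Omega^D_\xi(p)|-\log|\Omega^D_\xi(q)|,\qquad h^{D'}_{\eta,q'}(p')=\log|\Omega^{D'}_\eta(q')|-\log|\Omega^{D'}_\eta(p')|.
\end{equation*}
Substituting these into the formula above and rearranging gives
\begin{equation*}
\log\lambda_{\xi,q,q'}+\log|\Omega^D_\xi(q)|-\log|\Omega^{D'}_\eta(q')|=\log\lambda_{\xi,p,p'}+\log|\Omega^D_\xi(p)|-\log|\Omega^{D'}_\eta(p')|,
\end{equation*}
that is, $\lambda_{\xi,q,q'}|\Omega^D_\xi(q)|/|\Omega^{D'}_\eta(q')|=\lambda_{\xi,p,p'}|\Omega^D_\xi(p)|/|\Omega^{D'}_\eta(p')|$. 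Since $\Omega^D_\xi$ and $\Omega^{D'}_\eta$ take values in $(-\infty,0)$, the two sign factors cancel in the quotient $\Omega^D_\xi(\,\cdot\,)/\Omega^{D'}_\eta(\,\cdot\,)$, so the identity holds without absolute values, proving that $\lambda_{\xi,p,p'}\,\Omega^D_\xi(p)/\Omega^{D'}_\eta(p')$ is independent of the choice of $p$ and $p'$.

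There is essentially no obstacle beyond checking that the hypotheses of Lemma \ref{basepointindependent} are met; the computation itself is just a manipulation of logarithms. The only point that deserves a remark is that both $\Omega^D_\xi(p)$ and $\Omega^{D'}_\eta(p')$ are strictly negative and finite (so their ratio is a well-defined positive real), which is immediate from the definition of $\Omega$ together with Proposition \ref{normalderivative}, ensuring that the quantity $\lambda_{\xi,p,p'}\,\Omega^D_\xi(p)/\Omega^{D'}_\eta(p')$ indeed lies in $(0,+\infty)$.
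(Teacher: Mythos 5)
Your proof is correct and follows essentially the same route as the paper: both combine the base-point change formula of Lemma \ref{basepointindependent}(i) with the identity \eqref{orosferepoisson} expressing horofunctions via the Poisson kernel, and the rest is the same logarithmic bookkeeping. Your additional verification that the hypotheses of Lemma \ref{basepointindependent} hold is a sound (if routine) extra check.
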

		\begin{proof}
			Let  $p,q\in D,p',q'\in D'$. By Lemma \ref{basepointindependent} and  \eqref{orosferepoisson} we have
			\begin{align*}&\log \lambda_{\xi,p,p'} - \log \lambda_{\xi,q,q'}+\log |\Omega^D_\xi(p)|-\log |\Omega^D_\xi(q)|-\log |\Omega^{D'}_\eta(p')|+\log |\Omega^{D'}_\eta(q')|\\
				=& h^D_{\xi,q}(p)+h^{D'}_{\eta,p'}(q')+ h^D_{\xi, p}(q)+h^{D'}_{\eta,q'}(p')=0.
			\end{align*}
		\end{proof}
		
		\begin{remark} 
			Notice that, if $D'=\D$, then  $\alpha_\xi=\lambda_{\xi,p,0}|\Omega^D_\xi(p)|$.
		\end{remark}
		\begin{remark}
			It immediately follows from Corollary \ref{chainrule} that the normalized dilation satisfies a chain rule:
			\begin{equation*}\label{omegachainrule}\alpha_\xi( g\circ f)=\alpha_\xi(f)\cdot\alpha_\eta(g).
			\end{equation*}
		\end{remark}
		\begin{lemma}\label{dilgeoinv}
			Let $D\subset\C^d$ be a $\C$-proper convex domain and let $\xi\in\partial D$ be a point of locally finite type.
			Let $\varphi\colon \D\to D$ be a complex geodesic with endpoint $\xi$. Then $1$ is a regular contact point of $\varphi$ and
			$$\alpha_{1}(\varphi)=\frac{1}{|\Omega^D_\xi(\varphi(0))|}=\varphi_N'(1).$$ 
			Moreover, if
			$\tilde{\rho}\colon D\to \D$ is a  left inverse of $\varphi$, 
			then $\xi$ is a regular contact point of $\tilde{\rho}$, and
			$$ \alpha_\xi(\tilde{\rho})=|\Omega^D_\xi(\varphi(0))|=\frac{1}{\varphi_N'(1)}.$$
		\end{lemma}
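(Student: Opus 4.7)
The plan is to check each of the three claims by unwinding definitions and invoking results already established earlier in the paper: the Julia Lemma (Proposition \ref{JuliaLemma}) and the almost-geodesic characterization of regular contact points (Proposition \ref{eqdefcontact}) for the ``regular contact point'' statements, the definition of $\Omega^D_\xi$ together with Proposition \ref{normalderivative} for the identity $|\Omega^D_\xi(\varphi(0))|^{-1}=\varphi'_N(1)$, and the chain rule \eqref{omegachainrule} to transfer the computation from $\varphi$ to $\tilde\rho$.

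For the first assertion, I would first observe that since $\varphi$ is a complex geodesic, $k_\D(0,\zeta)=k_D(\varphi(0),\varphi(\zeta))$ for every $\zeta\in\D$, so the dilation of $\varphi$ at $1$ with respect to the base-points $0\in\D$ and $\varphi(0)\in D$ equals $\lambda_{1,0,\varphi(0)}=1<+\infty$. The sequence $\zeta_n:=\tanh(n/2)$ lies on the radial segment ending at $1$, which is a geodesic ray in $\D$ with endpoint $1$, so it $K$-converges to $1$; moreover $\varphi(\zeta_n)=\tilde\varphi(n)\to\xi$ because $\xi$ is the endpoint of $\tilde\varphi$. Proposition \ref{JuliaLemma} then gives $K\textrm{-}\lim_{\zeta\to 1}\varphi(\zeta)=\xi$, so $1$ is a regular contact point of $\varphi$. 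Choosing the same base-points in the definition of $\alpha_1(\varphi)$ and using $\Omega^\D_1(0)=-1$, I get
$$\alpha_1(\varphi)=1\cdot\frac{\Omega^\D_1(0)}{\Omega^D_\xi(\varphi(0))}=\frac{-1}{\Omega^D_\xi(\varphi(0))}=\frac{1}{|\Omega^D_\xi(\varphi(0))|},$$
and the definition $\Omega^D_\xi(\varphi(0))=-1/\varphi'_N(1)$ together with the positivity of $\varphi'_N(1)$ provided by Proposition \ref{normalderivative} identifies this with $\varphi'_N(1)$.

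For the second part, given a left inverse $\tilde\rho\colon D\to\D$, the composition $\tilde\rho\circ\tilde\varphi(t)=\tilde\rho(\varphi(\tanh(t/2)))=\tanh(t/2)$ is the standard radial geodesic ray in $\D$ with endpoint $1$, hence in particular a $(1,0)$ quasi-geodesic. By the equivalence (1)$\Leftrightarrow$(2) of Proposition \ref{eqdefcontact} applied to $\tilde\rho$ along the geodesic ray $\tilde\varphi$ (whose endpoint $\xi$ is of locally finite type by assumption), $\xi$ is a regular contact point of $\tilde\rho$ with $K\textrm{-}\lim_{z\to\xi}\tilde\rho(z)=1$. Finally, since $\tilde\rho\circ\varphi=\id_\D$ and $\alpha_1(\id_\D)=1$ directly from the definition, the chain rule \eqref{omegachainrule} yields
$$1=\alpha_1(\id_\D)=\alpha_1(\varphi)\cdot\alpha_\xi(\tilde\rho)=\varphi'_N(1)\cdot\alpha_\xi(\tilde\rho),$$
from which $\alpha_\xi(\tilde\rho)=1/\varphi'_N(1)=|\Omega^D_\xi(\varphi(0))|$. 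No step in this argument looks delicate: the whole lemma is a bookkeeping consequence of the machinery already in place, with the chain rule doing the work of avoiding a second direct computation for $\tilde\rho$.
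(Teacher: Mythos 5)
Your proof is correct. The first half is essentially the paper's argument: observe $\lambda_{1,0,\varphi(0)}(\varphi)=1$ because $\varphi$ is an isometry for the Kobayashi distances, check the regular-contact condition via the Julia Lemma, and plug into the definition of $\alpha_1(\varphi)$ using $\Omega^\D_1(0)=-1$ and $\Omega^D_\xi(\varphi(0))=-1/\varphi'_N(1)$. For the second half the paper simply says ``similarly'', meaning a direct computation of $\lambda_{\xi,\varphi(0),0}(\tilde\rho)$: since $\tilde\rho$ is holomorphic one has $k_D(z,\varphi(0))-k_\D(\tilde\rho(z),0)\ge 0$ everywhere, with equality along $\varphi(\D)$, so the dilation is again $1$ and the formula for $\alpha_\xi(\tilde\rho)$ drops out of the definition. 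You instead verify the regular-contact condition for $\tilde\rho$ through the quasi-geodesic characterization of Proposition \ref{eqdefcontact} and then transfer the numerical identity via the chain rule \eqref{omegachainrule} applied to $\tilde\rho\circ\varphi=\id_\D$. Both routes are legitimate and use only machinery already established; the chain-rule shortcut buys you a second computation for free, at the cost of invoking Corollary \ref{chainrule} (which itself rests on the Julia Lemma), whereas the paper's ``similar'' direct computation is self-contained and symmetric with the first half. One small point worth making explicit in your version: the chain rule requires the intermediate $K$-limit of $\tilde\rho$ at $\xi$ to be a boundary point of locally finite type of the codomain, which holds trivially here since every point of $\partial\D$ has type $2$.
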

		\begin{proof}
			Notice that since $\varphi$ is a complex geodesic, we have $\lambda_{1,0,\varphi(0)}(\varphi)=1.$ Hence
			$$\alpha_1(\varphi)=\lambda_{1,0,\varphi(0)}(\varphi)\frac{\Omega_1^\D(0)}{\Omega_\xi^D(\varphi(0))}=\frac{1}{|\Omega^D_\xi(\varphi(0))|}.$$ Similarly one obtains the second statement.
			
		\end{proof}

		\section{The Julia--Wolff--Carath\'eodory theorem}
		
		\subsection{Statement of the result}
		Our main theorem is the following generalization of the Julia--Wolff-Carath\'eodory theorem.
		\begin{theorem}\label{genJWC}
			Let $D\subset\C^d$ and $D'\subset\C^{q}$ be  $\C$-proper convex domains.
			Let $\xi\in\partial D,\eta\in \partial D'$ be points of locally finite type with multitype respectively $(m_j)_{j=0}^{d-1}$ and $(n_i)_{i=0}^{q-1}$.
			Let $(v_j)_{j=0}^{d-1}$ (resp. $(u_i)_{i=0}^{q-1}$) be an orthonormal  multitype basis at $\xi$ (resp. at $\eta$). 
			Let  $f\colon D\to D'$ be a holomorphic map and assume that $\xi$ is a regular contact point with $K\textrm{-}\lim_{z\to\xi}f(z)=\eta$.
			Then for all\, $0\leq j \leq d-1$ and \,$0\leq i\leq q-1$,
			\begin{equation}\label{proofKbounded}
				\langle df_z(v_j),u_i\rangle=O_K\biggl( \delta_D(z)^{\frac{1}{n_i}-\frac{1}{m_j}}\biggr)
			\end{equation}
			and
			\begin{itemize}
				\item[(i)] $K'\textrm{-}\lim_{z\to\xi}\langle df_z(v_0),u_0\rangle=\alpha_\xi:=\lambda_{\xi,p,p'}\frac{\Omega^D_\xi(p)}{\Omega^{D'}_\eta(p')}\in (0,+\infty)$;
				\item[(ii)] 
				$\langle df_z(v_j),u_0\rangle=o_{ K'}\biggl(\delta_D(z)^{1-\frac{1}{m_j}}\biggr)$ for all\,  $1\leq j\leq  d-1$;
				\item[(iii)] 
				$ \langle df_z(v_0),u_i\rangle=o_{ K'}\biggl(\delta_D(z)^{\frac{1}{n_i}-1}\biggr)$  for all\, $1\leq i\leq  q-1$.
			\end{itemize}
		\end{theorem}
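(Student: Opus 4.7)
The plan is to first establish the $K$-bounded estimate \eqref{proofKbounded} via a dual of the Kobayashi-type theorem, then prove (ii) and (iii) by the scaling method, and finally deduce (i) from the scaling combined with the pluricomplex Poisson kernel formalism. For the $O_K$ bound I first prove a dual of Theorem~\ref{KtypeLtype}: for every $v\in\C^q\setminus\{0\}$,
$$|\langle v,u_i\rangle|\leq C\,\delta_{D'}(w)^{1/n_i}\,\kappa_{D'}(w,v)$$
holds on $K$-regions at $\eta$. This follows by contradiction via the scaling argument (Proposition~\ref{scaling1}, Corollary~\ref{zimmerconvergence}): if it failed along a $K$-sequence, rescaling would produce a bounded vector in the scaling model $D_{H'}$ on which the Kobayashi--Royden metric takes an unbounded value. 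Applying this dual bound to $v=df_z(v_j)$ at $w=f(z)$, and combining with the Kobayashi contraction $\kappa_{D'}(f(z),df_z(v_j))\leq\kappa_D(z,v_j)$, Theorem~\ref{KtypeLtype}, and Proposition~\ref{ratiodeltas} (which yields $\delta_{D'}(f(z))=O_K(\delta_D(z))$), gives \eqref{proofKbounded}.

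For (ii) and (iii) I apply the scaling method to $f$ itself. Fix a restricted sequence $(z_n)$ converging to $\xi$, set $\lambda_n=\delta_D(z_n)^{-1}$, $\mu_n=\delta_{D'}(f(z_n))^{-1}$, and fix normalized complex geodesics $\varphi\colon\D\to D$ with $\varphi(0)=p$ of endpoint $\xi$ and $\psi\colon\D\to D'$ with $\psi(0)=f(p)$ of endpoint $\eta$, both with $\varphi_N'(1)=\psi_N'(1)=1$. By the Julia Lemma (Proposition~\ref{JuliaLemma}), the horofunction formula \eqref{orosferepoisson}, and the one-dimensional Poisson formula from Corollary~\ref{forPoigeo}, one gets $\mu_n/\lambda_n\to 1/\alpha_\xi$. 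After extracting a subsequence, the rescaled maps $F_n:=B_n\circ f\circ A_n^{-1}$ converge on compacts to a holomorphic $F\colon D_H\to D_{H'}$, and applying Proposition~\ref{scaling1} to the inner normal approach to $\xi$ in $D$ identifies $F$ on the normal slice as the identity: $F(\zeta,0)=(\zeta,0)$. Plugging the boundary-adapted test point $(\zeta,t^{1/m_j}ve_j)$ with $\zeta=-t-t^\alpha$ and $\alpha\in(1/m_j,1)$ into the defining inequality $\Re F_0+H'(F_1,\ldots,F_{q-1})<0$ and using the weighted homogeneity of $H,H'$ together with $F|_{\text{normal slice}}=\mathrm{id}$, the lowest-order terms in $t$ force $\Re[(\partial F_0/\partial z_j)(\zeta,0)\,v]\leq 0$ for every $v\in\C$, whence $\partial F_0/\partial z_j\equiv 0$ on the normal slice by the identity theorem; translated back via the scaling this gives (ii). A dual argument probing the tangential directions of the codomain forces $\partial F_i/\partial z_0\equiv 0$ on the normal slice for $i\geq 1$, giving (iii).

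For (i), the same scaling identity yields $\lambda_n^{-1}\mu_n\langle df_{z_n}(v_0),u_0\rangle\to(dF)_{00}(\hat\zeta,0)=1$, so $\langle df_{z_n}(v_0),u_0\rangle\to\lambda_n/\mu_n\to\alpha_\xi$; since the limit is independent of the restricted sequence, this is the restricted $K$-limit. An alternative derivation goes via the composition $g:=\tilde\rho\circ f\circ\varphi\colon\D\to\D$ where $\tilde\rho$ is a left inverse of $\psi$ (Proposition~\ref{extrcgeo}): by the chain rule \eqref{omegachainrule} and Lemma~\ref{dilgeoinv}, $\alpha_1(g)=\alpha_\xi$, and Theorem~\ref{JWC1d} gives $\angle\lim g'=\alpha_\xi$; then (ii) and (iii) applied to both $f$ and $\tilde\rho$, combined with Proposition~\ref{normalderivative}, show that in the decomposition of $g'(\zeta)$ only the $\langle df_{\varphi(\zeta)}(v_0),u_0\rangle$ term survives, and the Lindel\"of principle (Theorem~\ref{localLindelof}) upgrades the resulting non-tangential limit along $\varphi$ to a restricted $K$-limit. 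The hardest part is step (ii)--(iii): controlling exactly which lowest-order terms in the defining inequality of $D_{H'}$ survive under the weighted-homogeneous scaling, and then converting the vanishing of entries of $dF$ on the normal slice into genuine $o_{K'}$ statements for $df_z$, which requires the extrinsic characterization of restricted convergence (Theorem~\ref{extrinsicspecial}) to guarantee that every restricted approach produces a scaling limit on the normal slice.
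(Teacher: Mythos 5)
Your architecture is genuinely different from the paper's (which proves \eqref{proofKbounded} by a Cauchy-integral argument along complex geodesics through $z$, reduces (i)--(ii) to the scalar case $f\colon D\to\D$ of Theorem \ref{JWC1} via a left inverse $\tilde\rho$ of a geodesic in $D'$, and proves (iii) by a separate Cauchy/dominated-convergence argument), and parts of it are sound: the dual estimate $|\langle v,u_i\rangle|\leq C\,\delta_{D'}(w)^{1/n_i}\kappa_{D'}(w,v)$ on $K$-regions does follow from the same scaling/compactness scheme as Theorem \ref{KtypeLtype}, and combined with the contraction of $\kappa$ and Proposition \ref{ratiodeltas} it gives \eqref{proofKbounded} more directly than the paper does. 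Likewise, once a subsequential scaling limit $F\colon D_H\to D_{H'}$ exists with $F(w,0)=(\beta w,0)$, statement (iii) falls out immediately, since $F_i\equiv 0$ on the normal slice forces $\partial F_i/\partial z_0\equiv 0$ there.

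The genuine gap is in your proof of (ii). Plugging $(\zeta,t^{1/m_j}ve_j)$ into $\Re F_0+H'(F_1,\dots,F_{q-1})<0$ does not isolate the linear term: since $F_0(\cdot,\zeta)$ maps a disc of radius $\asymp|\Re\zeta|^{1/m_j}$ into $\H$ with center $\beta\zeta$, Schwarz--Pick/Cauchy estimates show that the linear term $\partial_jF_0(\zeta,0)\,t^{1/m_j}v$ and the Taylor remainder are \emph{both} $O(t)$, i.e.\ of the same order as $\Re\zeta$. All the positivity constraint yields is the boundary decay $|\partial_jF_0(w,0)|\leq C|\Re w|^{1-1/m_j}$ along the slice. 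This vanishes only as $\Re w\to 0$; it gives no information at the interior point $\hat z=\lim A_nz_n$ where you need $\partial_jF_0(\hat z)=0$, and the identity theorem cannot be applied to decay at the boundary (a nonzero holomorphic function on $\H$ can satisfy such a bound near any boundary arc a priori; excluding it requires a Luzin--Privalov-type uniqueness theorem or an explicit construction). This is exactly the difficulty the paper addresses in part (ii) of Theorem \ref{JWC1}, via the auxiliary map $h$, the tilted curves $\zeta_a(t)=t+ia(1-t)$, the choice of the phase $\theta(t)$ maximizing $|g|$, and the limit $a\to+\infty$ — none of which has a counterpart in your argument. A secondary weakness: you assert $\mu_n/\lambda_n\to 1/\alpha_\xi$ along restricted sequences as an input, but this statement is essentially equivalent to part (i) together with the tangential estimates (the Julia lemma and \eqref{orosferepoisson} only give convergence of the \emph{horofunction} difference along geodesic rays, and $h_{\xi,p}(z_n)+\log\delta_D(z_n)$ does not converge along general restricted sequences); for (ii)--(iii) you only need $\mu_n/\lambda_n$ bounded above and below plus subsequential extraction, and for (i) your fallback route through $\tilde\rho\circ f\circ\varphi$ is the right one — but it in turn consumes (ii)--(iii) for $\tilde\rho$, so the whole proof still hinges on closing the gap above.
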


		\begin{remark}\label{JWCcorretta} (cf. Remark \ref{typevscotype}). It is  natural to wonder whether 
			\begin{equation}\label{formulafalsa}
				\langle df_z(v),u\rangle=O_K \biggl(\delta_D(z)^{\frac{1}{m_\eta(u)}-\frac{1}{m_\xi(v)}}\biggr)
			\end{equation}
			for all $v,u\in \C^d\setminus\{0\}$. This turns out to be false in general (see e.g. Example \ref{exJWC}). However, it follows from \eqref{proofKbounded} using simple linear algebra that
			$$\langle df_z(v),u\rangle=O_K \biggl(\delta_D(z)^{\frac{1}{M_\eta(u)}-\frac{1}{m_\xi(v)}}\biggr).$$

		\end{remark}
		It is worth stating Theorem \ref{genJWC} in the particular case of functions with values in the unit disc. This statement will actually play an important role in the proof of Theorem \ref{genJWC}.
		\begin{theorem}\label{JWC1}
			Let $D\subset\C^d$ be a $\C$-proper convex domain and let $\xi\in\partial D$ be a point of locally finite type.
			Let $f\colon D\to \D$ be a holomorphic function, and assume that $\xi$ is a regular contact point. 
			Then for all $v\in \C^d\setminus\{0\}$, $$ \frac{\partial f}{\partial v}(z)=O_{K}\left( \delta_D(z)^{1-\frac{1}{m_\xi(v)}}\right).$$ Moreover
			\begin{itemize}
				\item[(i)] $ K'\textrm{-}\lim_{z\to\xi} \frac{\partial f}{\partial n_\xi}(z)=\eta\alpha_\xi=\eta |\Omega^D_\xi(p)| \lambda_{\xi,p},$
				where $\eta= K\textrm{-}\lim_{z\to\xi}f(z)$,
				\item[(ii)] $\frac{\partial f}{\partial v}(z)=o_{K'}\left( \delta_D(z)^{1-\frac{1}{m_\xi(v)}}\right)$ if $v\in  T^\C_\xi\partial D\setminus\{0\}$.
			\end{itemize}
			Hence
			if $v\in \C^d$, 
			$$K'\textrm{-}\lim_{z\to\xi} \frac{\partial f}{\partial v}(z)=\langle v,n_\xi \rangle\eta\alpha_\xi=\langle v,n_\xi \rangle \eta \,|\Omega^D_\xi(p)| \lambda_{\xi,p}.$$
			
		\end{theorem}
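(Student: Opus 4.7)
The proof splits into three parts: an $O_K$-type estimate on $\partial f/\partial v$, the computation of the normal $K'$-limit in~(i), and the vanishing~(ii) for tangential derivatives; the concluding identity for a general direction then follows by linearity. For the $O_K$-estimate I will combine three ingredients: Kobayashi--Royden contraction under $f$, giving $|\partial f/\partial v(z)|\leq \tfrac{1-|f(z)|^2}{2}\kappa_D(z,v)$; Theorem~\ref{KtypeLtype}, giving $\kappa_D(z,v) = O_K(\delta_D(z)^{-1/m_\xi(v)})$; and Proposition~\ref{ratiodeltas+} combined with the regular contact point hypothesis, giving $1-|f(z)| = O_K(\delta_D(z))$. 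Multiplying the three bounds produces $\partial f/\partial v(z) = O_K(\delta_D(z)^{1-1/m_\xi(v)})$.

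For~(i), I will pick a complex geodesic $\varphi\colon\D\to D$ with $\varphi(0) = p$ and endpoint $\xi$ and consider $g := f\circ\varphi\colon\D\to\D$. By Corollary~\ref{chainrule} and $\lambda_{1,0,p}(\varphi) = 1$ one has $\lambda_1(g) = \lambda_{\xi,p,0}$, so classical one-dimensional JWC (Theorem~\ref{JWC1d}) gives $\angle\lim_{\zeta\to1} g'(\zeta) = \eta\lambda_{\xi,p,0}$. Expanding in the multitype basis,
\[
g'(\zeta) = \frac{\partial f}{\partial n_\xi}(\varphi(\zeta))\langle\varphi'(\zeta),n_\xi\rangle + \sum_{j=1}^{d-1}\frac{\partial f}{\partial v_j}(\varphi(\zeta))\langle\varphi'(\zeta),v_j\rangle.
\]
Proposition~\ref{normalderivative} yields $\langle\varphi'(\zeta),n_\xi\rangle\to\varphi'_N(1)$ non-tangentially, and differentiating the scaling convergence in Proposition~\ref{scaling1} along non-tangential sequences produces $\langle\varphi'(\zeta),v_j\rangle = o(|1-\zeta|^{1/m_j-1})$ for $j\geq 1$. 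Combined with the $O_K$-estimate on $\partial f/\partial v_j$ and $\delta_D(\varphi(\zeta))\asymp|1-\zeta|$, every term with $j\geq 1$ vanishes in the limit, leaving $\partial f/\partial n_\xi(\varphi(\zeta))\to\eta\lambda_{\xi,p,0}/\varphi'_N(1) = \eta\alpha_\xi$ non-tangentially. Since $\partial f/\partial n_\xi$ is $K$-bounded and the complex geodesic is strongly restricted (Lemma~\ref{exadmissible}), the Lindel\"of principle (Theorem~\ref{localLindelof}) upgrades this to the claimed $K'$-limit.

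For~(ii) the plan is a scaling-plus-rigidity argument. Fix $j\geq 1$ and a restricted sequence $z_n\to\xi$, set $\lambda_n := 1/\delta_D(z_n)$, and let $A_n$ be the multitype scaling in multitype coordinates: by Theorem~\ref{extrinsicspecial}(ii), $A_nz_n\to\hat z := (\hat z_0,0,\ldots,0)\in D_H$. After a rotation assume $\eta = 1$, set $F := \mathscr{C}\circ f\colon D\to\H$, and define $F_n(w) := \lambda_n F(A_n^{-1}w)$ on $A_nD$. Using the Julia inequality for $F$ together with $|\Omega^D_\xi(z)|\asymp 1/\delta_D(z)$ along restricted directions, I will show that $\{F_n\}$ takes values in a common horosphere of $\H$ at $0$, hence is uniformly bounded on compacts of $D_H$; passing to a subsequence yields a holomorphic limit $F_\infty\colon D_H\to\H$, and uniform convergence of derivatives gives
\[
\lambda_n^{1-1/m_j}\frac{\partial F}{\partial v_j}(z_n)\ \longrightarrow\ \frac{\partial F_\infty}{\partial v_j}(\hat z).
\]
The \emph{main obstacle} is the rigidity step: showing that $F_\infty(w) = \alpha w_0/\psi'_N(0)$ depends only on the normal coordinate. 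I expect to handle this by pulling the computation of~(i) through the scaling to identify $F_\infty$ on the slice complex geodesic $\zeta\mapsto(\psi'_N(0)\zeta,0)$ of $D_H$ as the linear M\"obius map $\zeta\mapsto\alpha\zeta$, and then invoking the Julia lemma on $F_\infty\colon D_H\to\H$ together with the strong asymptoticity of complex geodesics of $D_H$ with endpoint~$0$ (Corollary~\ref{approaching}) to propagate this slice description to all of $D_H$. Once rigidity is established, transferring back via $\mathscr{C}'(1) = 1/2$ yields $\partial f/\partial v_j(z_n) = o(\delta_D(z_n)^{1-1/m_j})$, proving~(ii).

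Finally, for a general $v = \sum_ja_jv_j\in\C^d$, writing $\partial f/\partial v = \sum_ja_j\partial f/\partial v_j$ and applying (i) and (ii) term by term shows that only the $j=0$ term contributes to the $K'$-limit, giving $\langle v,n_\xi\rangle\eta\alpha_\xi$ as claimed.
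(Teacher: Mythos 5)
Your treatment of the $O_K$-estimate and of part (i) is correct, though it follows a genuinely different route from the paper. For the $O_K$-bound the paper runs a Cauchy-integral argument along a complex geodesic through $z$ (needed because the general target $D'$ of Theorem \ref{genJWC} does not reduce to a metric contraction), whereas for the scalar target your chain $|df_z(v)|\leq \delta_\D(f(z))\,\kappa_D(z,v)$ combined with Theorem \ref{KtypeLtype} and Proposition \ref{ratiodeltas} is shorter and perfectly valid. For (i) the paper works along the \emph{real} normal segment, using Rudin's theorem on an inscribed ball plus the fundamental theorem of calculus; you instead work along a complex geodesic and invoke the one-variable Julia--Wolff--Carath\'eodory theorem for $f\circ\varphi$, which requires the extra estimate $\langle\varphi'(\zeta),v_j\rangle=o(|1-\zeta|^{1/m_j-1})$. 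That estimate is indeed obtainable by differentiating the locally uniform convergence in Proposition \ref{scaling1} (Cauchy estimates), so this part stands, and the Lindel\"of upgrade is as in the paper.

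Part (ii) has a genuine gap. The rigidity you rely on --- that the scaling limit $F_\infty\colon D_H\to\H$ depends only on $w_0$ --- is false: on $D_H=\{\Re w_0+|w_1|^m<0\}$ the map $F_\infty(w)=\alpha(w_0+cw_1^m)$ with $|c|\leq 1$ sends $D_H$ into $\H$, restricts to $\alpha w_0$ on the slice geodesic, and satisfies the Julia inequality at $0\in\partial D_H$ with the same dilation, yet is not independent of $w_1$. So neither the Julia lemma nor strong asymptoticity of geodesics can yield your intermediate claim. What you actually need is only that the tangential derivatives of $F_\infty$ vanish \emph{at points of the slice}, but this does not follow from your tools either: Schwarz--Pick applied to the tangential discs $\lambda\mapsto F_\infty(\hat z_0,\lambda v')$ bounds $|\partial F_\infty/\partial w_j(\hat z)|$ by a constant, which after undoing the scaling reproduces exactly the $O_K(\delta_D(z)^{1-1/m_j})$ bound you already have, not the required $o_{K'}$. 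The paper closes precisely this gap by a different mechanism: it inscribes an egg $\mathbb{E}_m\subset D$ in the $2$-plane $\xi+{\rm span}_\C\{n_\xi,v\}$, writes $\partial f/\partial z_1(z_0,0)=2(1-z_0)^{1-s}g(z_0)$, and evaluates the Schwarz--Pick-truncated map $h$ along curves $\sigma_a(t)$ whose tangential component has modulus comparable to the maximal one allowed by the domain and whose phase is aligned with $g$; the constraint $\Re h\leq 1$ gives $\limsup|g|\leq \alpha a^{s-1}$, and letting the aperture parameter $a\to+\infty$ kills $g$. The essential information comes from approach regions that become arbitrarily tangential, which is exactly what a single fixed scaling sequence plus the Julia lemma cannot see. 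Without an argument of this kind, (ii) --- and hence the final formula for general $v$ --- is not established.
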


		We divide the proof of Theorem \ref{genJWC} in three steps.
		\subsection{First step: proof of \eqref{proofKbounded}}

		\begin{remark} 
			If $\zeta\in \D$, then
			\begin{equation}\label{stimadisco}-\log(1-|\zeta|)\leq k_\D(0,\zeta)\leq \log2-\log(1-|\zeta|).
			\end{equation}
		\end{remark}

		\begin{lemma}\label{lemmadistanza}
			
			Let $D\subset\C^d$ be a $\C$-proper convex domain and let $\xi\in\partial D$ be a point of locally finite type.
			Let $\varphi\colon \D\to D$ be a complex geodesic with endpoint $\xi$, and let $\tilde \rho\colon D\to \D$ be a left inverse of $\varphi$. 
			Then the function
			$$\frac{1-\tilde{\rho}(z)}{\delta_D(z)}$$
			and its reciprocal are $K$-bounded at $\xi$.
			
		\end{lemma}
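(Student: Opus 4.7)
The plan is to invoke the structural results of the previous section to pin down the boundary behaviour of $\tilde{\rho}$ and then to deduce the two bounds separately. To begin, I would note that since $\tilde{\rho}\circ\varphi=\mathrm{id}_\D$ and the curve $\tilde{\varphi}=\varphi\circ\gamma_\D$ is a geodesic ray with endpoint $\xi$ (so, by Proposition \ref{abatek}, contained in a $K$-region with vertex $\xi$), Lemma \ref{dilgeoinv} applies: $\xi$ is a regular contact point of $\tilde{\rho}$, and its $K$-limit at $\xi$ must be $1\in\partial\D$ (since along the $K$-convergent sequence $\tilde{\varphi}(t)\to\xi$ one has $\tilde{\rho}(\tilde{\varphi}(t))=\tanh(t/2)\to 1$).

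For the $K$-boundedness of $\frac{|1-\tilde{\rho}(z)|}{\delta_D(z)}$, I would chain two ingredients. Proposition \ref{ratiodeltas+} applied to $\tilde{\rho}\colon D\to\D$ (note that $\D$ is a bounded convex domain of finite type) gives that
$$\frac{1-|\tilde{\rho}(z)|}{\delta_D(z)}=\frac{\delta_\D(\tilde{\rho}(z))}{\delta_D(z)}$$
is $K$-bounded at $\xi$. By Corollary \ref{georintogeor}, for every $M>1$ there is $M'>1$ with $\tilde{\rho}(K_p(\xi,M))\subseteq K_0(1,M')$, and by \eqref{koranyiball} (specialized to $d=1$) this latter region is exactly the Stolz angle $\{\zeta\in\D:|1-\zeta|<M'(1-|\zeta|)\}$. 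Hence $|1-\tilde{\rho}(z)|\leq M'(1-|\tilde{\rho}(z)|)$ for $z\in K_p(\xi,M)$, and multiplying by the $K$-bounded quotient above yields the desired upper bound.

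For the reciprocal I actually expect to need no $K$-region assumption. Since $\tilde{\rho}$ is Kobayashi non-expansive and $\tilde{\rho}(\varphi(0))=0$, we have
$$k_\D(0,\tilde{\rho}(z))=k_\D(\tilde{\rho}(\varphi(0)),\tilde{\rho}(z))\leq k_D(\varphi(0),z).$$
By Lemma \ref{stimekob} there exist a neighbourhood $V$ of $\xi$ and a constant $c\geq 0$ with $k_D(\varphi(0),z)\leq -\log\delta_D(z)+c$ on $V\cap D$, while \eqref{stimadisco} gives $-\log(1-|\tilde{\rho}(z)|)\leq k_\D(0,\tilde{\rho}(z))$. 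Concatenating these inequalities I obtain $1-|\tilde{\rho}(z)|\geq e^{-c}\delta_D(z)$, and the trivial estimate $|1-\zeta|\geq 1-|\zeta|$ on $\D$ then forces $\frac{\delta_D(z)}{|1-\tilde{\rho}(z)|}\leq e^c$ on $V\cap D$, which is in particular $K$-bounded at $\xi$.

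The only substantive step is the upper bound: the point is that within $K$-regions with vertex $\xi$ the image $\tilde{\rho}(z)$ lies in a Stolz angle at $1$, making $|1-\tilde{\rho}(z)|$ comparable to $1-|\tilde{\rho}(z)|$; without this comparability one would only control $1-|\tilde{\rho}(z)|$. The reciprocal bound, by contrast, is essentially free from the distance-decreasing property of $\tilde{\rho}$ combined with the standard logarithmic estimate for $k_D$ near a boundary point of locally finite type.
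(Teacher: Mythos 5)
Your proof is correct and takes essentially the same route as the paper's: both arguments rest on (a) the fact that $\tilde\rho$ sends $K$-regions with vertex $\xi$ into Kor\'anyi regions of $\D$ at $1$, so that $|1-\tilde\rho(z)|$ and $1-|\tilde\rho(z)|$ are comparable there, and (b) the estimates of Lemma \ref{stimekob} combined with the regular-contact-point property of $\tilde\rho$, with your treatment of the reciprocal being identical to the paper's. The only cosmetic difference is that you apply Proposition \ref{ratiodeltas+} and Corollary \ref{georintogeor} directly to $\tilde\rho\colon D\to\D$, whereas the paper routes the upper bound through the retraction $\rho=\varphi\circ\tilde\rho$, Proposition \ref{ratiodeltas}, and a geodesic-region containment.
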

		
		\proof Let $p:=\varphi(0)$. Denote  $\rho:=\varphi\circ\tilde \rho\colon D\to D$ and notice that it is a holomorphic retraction with image $\varphi(\D)$.  Fix $R>0$ and let $z\in A^D(\varphi,R)$. Let $t^*\in[0,1)$ such that $k_D(z,\varphi(t^*))<R$, then $k_D(\rho(z),\varphi(t^*))<R$ and thus $\tilde{\rho}(z)\in A^\D(\id_\D,R)$. By the proof of Proposition \ref{abatek} we have $\tilde{\rho}(z)\in K_0^\D(1,e^R)$. 
		Using \eqref{koranyiball}  and \eqref{stimadisco} we have
		$$|1-\tilde{\rho}(z)|\leq e^R(1-|\tilde{\rho}(z)|)\leq 2e^{R-k_\D(0,\tilde{\rho}(z))}=2e^{R-k_D(p,\rho(z))}.$$
		
		Let $V$ be the neighborhood of $\xi$ given by Lemma  \ref{stimekob}.
		Notice that $\xi$ is a regular contact point for $\rho$ with
		$ K\textrm{-}\lim_{z\to\xi} \rho(z)=\xi,$
		hence there exists a neighborhood  $U$  of $\xi$ such that $$\rho(U\cap A^D(\varphi,R))\subset V.$$
		Then for all $z\in U\cap A^D(\varphi,R)$ we have 
		$$|1-\tilde{\rho}(z)|\leq 2e^{c+R}\delta_D(\rho(z)),$$
		where $c>0$ is the constant given by  Lemma  \ref{stimekob}. Since  by Proposition \ref{ratiodeltas} the function $\delta_D(\rho(z))/\delta_D(z)$ is $K$-bounded, it follows that  $(1-\tilde{\rho}(z))\slash\delta_D(z)$ is $K$-bounded.
		
		To prove that the reciprocal is also $K$-bounded, notice that   $k_D(p,z)\geq k_D(p,\rho(z))=k_\D(0,\tilde{\rho}(z))$.  If $z\in V\cap D$,   by Lemma \ref{stimekob} and \eqref{stimadisco} we have
		$$\delta_D(z)\leq e^{c-k_D(p,z)}\leq e^{c-k_\D(0,\tilde{\rho}(z))}\leq e^c(1-|\tilde{\rho}(z)|)\leq e^c |1-\tilde{\rho}(z)|,$$
		hence the function $\delta_D(z)/(1-\tilde{\rho}(z))$ is bounded in $V\cap D$, and thus it is $K$-bounded.
		\endproof

		\begin{proposition}\label{fphisigmac}
			Let $D\subset\C^d$ and $D'\subset\C^{q}$ be  $\C$-proper convex domains and  let $\xi\in \partial D,\eta\in \partial D' $ be  points of locally finite type. Let $f\colon D\to D'$ be a holomorphic map. 
			Assume that $\xi\in\partial D$ is a regular contact point with $K\textrm{-}\lim_{z\to\xi}f(z)=\eta$.
			Then $f$ sends strongly restricted curves in $D$ with endpoint $\xi$ to strongly restricted curves in $D'$ with endpoint $\eta$.
		\end{proposition}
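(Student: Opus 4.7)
The plan is to verify the two defining properties of a strongly restricted curve for $f\circ\gamma$: that it lies in some $K$-region with vertex $\eta$, and that there exist a complex geodesic $\psi\colon\D\to D'$ with endpoint $\eta$ and a continuous curve $\tilde\delta\colon[0,1)\to\D$ with $k_{D'}(f(\gamma(t)),\psi(\tilde\delta(t)))\to 0$. Fix a complex geodesic $\varphi\colon\D\to D$ with endpoint $\xi$ and a continuous curve $\tilde\gamma\colon[0,1)\to\D$ realizing the strong restrictedness of $\gamma$, so that $k_D(\gamma(t),\varphi(\tilde\gamma(t)))\to 0$; by the remark following the definition of strongly restricted curves, $\tilde\gamma(t)\to 1$ non-tangentially.

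The first property is immediate: by hypothesis $\gamma([0,1))$ is contained in some $K$-region $K_p(\xi,M)$, and Corollary \ref{georintogeor} gives $f(\gamma([0,1)))\subset K_{f(p)}(\eta,\lambda_{\xi,p,f(p)}^{1/2}M)$. For the second property, observe that $f\circ\varphi\colon\D\to D'$ is a holomorphic map for which $1$ is a regular contact point with endpoint $\eta$: indeed, since $\tilde\varphi$ is a geodesic ray with endpoint $\xi$ and $\xi$ is a regular contact point of $f$, Proposition \ref{eqdefcontact} implies that $f\circ\tilde\varphi=\widetilde{f\circ\varphi}$ is an almost-geodesic with endpoint $\eta$, which is exactly the requirement in Definition \ref{contacttemp}.

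Now pick any complex geodesic $\psi_0\colon\D\to D'$ with endpoint $\eta$ (Proposition \ref{convCgeo}); reparametrizing by an automorphism of $\D$ fixing $1$, whose derivative at $1$ ranges over all of $(0,+\infty)$, we obtain a complex geodesic $\psi\colon\D\to D'$ with endpoint $\eta$ and $\psi'_N(1)=(f\circ\varphi)'_N(1)$. Applying Corollary \ref{approchfintyp+} to $f\circ\varphi$ and $\psi$ yields
\[
\angle\lim_{\zeta\to 1}k_{D'}(f(\varphi(\zeta)),\psi(\zeta))=0.
\]
Since $\tilde\gamma\to 1$ non-tangentially, this gives $k_{D'}(f(\varphi(\tilde\gamma(t))),\psi(\tilde\gamma(t)))\to 0$. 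Combining with the Kobayashi-decreasing property of $f$,
\[
k_{D'}(f(\gamma(t)),f(\varphi(\tilde\gamma(t))))\leq k_D(\gamma(t),\varphi(\tilde\gamma(t)))\to 0,
\]
and the triangle inequality, the choice $\tilde\delta:=\tilde\gamma$ satisfies $k_{D'}(f(\gamma(t)),\psi(\tilde\delta(t)))\to 0$, concluding the proof.

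The substantive input is already packaged into two earlier results: Proposition \ref{eqdefcontact} (which transfers the regular contact hypothesis from $f$ to $f\circ\varphi$) and Corollary \ref{approchfintyp+} (strong asymptoticity of $f\circ\varphi$ to a suitably normalized complex geodesic $\psi$). The only delicate point is choosing $\psi$ with the correct normal derivative at $1$, which is the lever needed to move from a \emph{radial} strong asymptoticity statement to one that is valid along any non-tangentially converging curve.
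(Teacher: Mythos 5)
Your proof is correct and follows essentially the same route as the paper: the $K$-region containment via Corollary \ref{georintogeor}, and the asymptotic condition by reducing to $f\circ\varphi\colon\D\to D'$, matching the normal derivative of a complex geodesic $\psi$ at $1$ via a disc automorphism, and invoking Corollary \ref{approchfintyp+} together with the triangle inequality and the contraction property of $f$. The only (minor) omission is an explicit appeal to Proposition \ref{normalderivative} to guarantee that $(f\circ\varphi)'_N(1)$ exists and is positive, which is what makes the normalization of $\psi$ possible.
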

		\proof  We first prove the result in the case $D=\D$ and $\xi=1\in\partial\D$. 
		First of all we recall that by Proposition \ref{normalderivative} there exists $f'_N(1)>0$.
		Now let $\psi\colon\D\to D'$ be a complex geodesic with endpoint $\eta$ such that   $\psi'_N(1)=f'_N(1)$. 
		Let $\gamma\colon[0,1)\to\D$ be a curve converging to $1$ non-tangentially, then by Corollary \ref{approchfintyp+}
		$$\lim_{t\to1^-}k_D(f(\gamma(t)),\psi(\gamma(t)))=0,$$
		which implies that $f\circ\gamma$ is a strongly restricted curve in $D'$ with endpoint $\eta$.
		
		In the general case, let $\gamma\colon[0,1)\to D$ be a strongly restricted curve with endpoint $\xi$, and let  $\varphi\colon \D\to D$ be a complex geodesic with endpoint $\xi$. By definition there exists  a  non-tangential curve $\tilde\gamma\colon[0,1)\to\D$ with endpoint 1 such that
		$\lim_{t\to1^-}k_D(\gamma(t),\varphi(\tilde\gamma(t)))=0.$
		By the first part of the proof applied to the map $f\circ \varphi\colon\D\to D$ there exists a complex geodesic $\psi\colon \D\to D'$ with endpoint $\eta$ such that 
		$$\lim_{t\to1^-}k_{D'}(f(\varphi(\tilde\gamma(t))),\psi(\tilde\gamma(t)))=0,$$
		and thus
		\begin{align*}k_{D'}(f(\gamma(t)),\psi(\tilde\gamma(t)))&\leq k_{D'}(f(\gamma(t)),f(\varphi(\tilde\gamma(t))))+
			k_{D'}(f(\varphi(\tilde\gamma(t))),\psi(\tilde\gamma(t)))\\&\leq k_{D}(\gamma(t),\varphi(\tilde\gamma(t)))+
			k_{D'}(f(\varphi(\tilde\gamma(t))),\psi(\tilde\gamma(t)))\stackrel{t\to 1^-}\longrightarrow 0.\end{align*}
		Moreover, by  Corollary \ref{georintogeor} the curve $t\mapsto f(\gamma(t))$ is contained in a geodesic region with vertex $\eta$, hence it is strongly restricted.
		\endproof

		\begin{proposition}\label{fmenopifn} 
			Let $D\subset\C^d$ and $D'\subset\C^{q}$ be  $\C$-proper convex domains  and  let $\xi\in \partial D,\eta\in \partial D' $ be  points of locally finite type. Let
			$(u_i)_{i=0}^{q-1}$ be an orthonormal multitype basis at $\eta$ with multitype $(n_i)_{i=0}^{q-1}.$ Let  $f\colon D\to D'$ be a holomorphic map. 
			Assume that $\xi\in\partial D$ is a regular contact point with $K\textrm{-}\lim_{z\to\xi}f(z)=\eta$. Then
			\begin{itemize}
				\item[(i)] 
				$\langle f(z)-\eta,u_i\rangle=O_K(\delta_{D}(z)^{1/n_i}),$ for all \, $0\leq i\leq q-1;$
				\item[(ii)]
				$\langle f(z)-\eta,u_i\rangle= o_{K'}(\delta_{D}(z)^{1/n_i}),$ for all \, $1\leq i\leq q-1.$
			\end{itemize}
		\end{proposition}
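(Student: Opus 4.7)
The plan is to reduce both (i) and (ii) to the extrinsic characterization of $K$- and $K'$-convergence (Theorem \ref{extrinsicspecial}) applied in $D'$ at $\eta$ to the image sequence $(f(z_n))$. Two ingredients provide the bridge between the two domains: Corollary \ref{georintogeor} shows that $f$ sends $K$-regions at $\xi$ into $K$-regions at $\eta$, and Proposition \ref{ratiodeltas} shows that the ratio $\delta_{D'}(f(z))/\delta_D(z)$ is $K$-bounded at $\xi$.

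For (i), take any sequence $(z_n)$ that $K$-converges to $\xi$. By Corollary \ref{georintogeor} the sequence $(f(z_n))$ $K$-converges to $\eta$, and by Proposition \ref{ratiodeltas} there is $C>0$ with $\delta_{D'}(f(z_n))\leq C\delta_D(z_n)$ for $n$ large. Applying Theorem \ref{extrinsicspecial}(i) to the $K$-convergent sequence $(f(z_n))$ in $D'$ at $\eta$ with the multitype basis $(u_i)$ yields
$$\langle f(z_n)-\eta, u_i\rangle = O\bigl(\delta_{D'}(f(z_n))^{1/n_i}\bigr) = O\bigl(\delta_D(z_n)^{1/n_i}\bigr),$$
which is precisely (i).

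For (ii), the key extra step is to show that $f$ sends $K'$-convergent sequences to $K'$-convergent sequences. Fix a sequence $(z_n)$ $K'$-converging to $\xi$ and choose a complex geodesic $\varphi\colon\D\to D$ with endpoint $\xi$ together with $\zeta_n\in\D$ such that $k_D(z_n,\varphi(\zeta_n))\to 0$; by Remark \ref{rmkspecialnontg} we have $\zeta_n\to 1$ non-tangentially. The map $f\circ\varphi\colon\D\to D'$ has a regular contact point at $1$ with endpoint $\eta$: indeed $\tilde\varphi$ is a geodesic ray with endpoint $\xi$, so by Proposition \ref{eqdefcontact} the curve $f\circ\tilde\varphi$ is an almost-geodesic ray with endpoint $\eta$. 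By Proposition \ref{normalderivative}, the normal derivative $(f\circ\varphi)'_N(1)>0$ exists, and we may pick a complex geodesic $\psi\colon\D\to D'$ with endpoint $\eta$ and $\psi'_N(1)=(f\circ\varphi)'_N(1)$. By Corollary \ref{approchfintyp+},
$$\angle\lim_{\zeta\to 1} k_{D'}\bigl(f(\varphi(\zeta)),\psi(\zeta)\bigr)=0,$$
so $k_{D'}(f(\varphi(\zeta_n)),\psi(\zeta_n))\to 0$; combining with the Kobayashi contraction inequality $k_{D'}(f(z_n),f(\varphi(\zeta_n)))\leq k_D(z_n,\varphi(\zeta_n))\to 0$ we obtain $k_{D'}(f(z_n),\psi(\zeta_n))\to 0$. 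Since $K'$-convergence implies $K$-convergence, step (i) already gave $f(z_n)\stackrel{K}{\to}\eta$, and we conclude that $(f(z_n))$ is $K'$-convergent to $\eta$. Applying Theorem \ref{extrinsicspecial}(ii) to $(f(z_n))$ yields
$$\langle f(z_n)-\eta, u_i\rangle = o\bigl(\delta_{D'}(f(z_n))^{1/n_i}\bigr)=o\bigl(\delta_D(z_n)^{1/n_i}\bigr)$$
for all $1\leq i\leq q-1$, proving (ii).

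The main obstacle is the preservation of $K'$-convergence by $f$; once this is in hand, the two statements fall out of Theorem \ref{extrinsicspecial} combined with Proposition \ref{ratiodeltas}. The $K'$-preservation argument is essentially the sequential analogue of the curve argument in Proposition \ref{fphisigmac}, and the crucial point is that Corollary \ref{approchfintyp+} supplies a \emph{non-tangential} limit at $1$, which matches the non-tangential convergence $\zeta_n\to 1$ forced by Remark \ref{rmkspecialnontg}.
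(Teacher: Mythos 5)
Your proof is correct, and part (i) follows the paper's argument essentially verbatim (the paper phrases the bound via Remark \ref{reformulation} rather than Theorem \ref{extrinsicspecial} directly, but that is the same statement). For part (ii), however, you take a genuinely different route. The paper works with the holomorphic auxiliary function $\langle f(z)-\eta,u_i\rangle/(1-\tilde\rho(z))^{1/n_i}$, where $\tilde\rho$ is a left inverse of a complex geodesic $\psi$ in $D'$: it shows this function is $K$-bounded (by part (i) and Lemma \ref{lemmadistanza}), proves it tends to $0$ along the single curve $t\mapsto f(\varphi(t))$ (which is strongly restricted by Proposition \ref{fphisigmac}), and then upgrades to the restricted $K$-limit via the Lindel\"of principle (Theorem \ref{localLindelof}). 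You instead prove the purely metric statement that $f$ maps $K'$-convergent sequences at $\xi$ to $K'$-convergent sequences at $\eta$ --- a sequential analogue of Proposition \ref{fphisigmac}, obtained from Proposition \ref{eqdefcontact}, Proposition \ref{normalderivative}, Corollary \ref{approchfintyp+} and the contraction property --- and then apply the extrinsic characterization Theorem \ref{extrinsicspecial}(ii) in $D'$ together with the $K$-bound on $\delta_{D'}(f(z))/\delta_D(z)$. Your route bypasses the Lindel\"of principle and the left inverse entirely (so there is no branch of the $n_i$-th root to worry about), and the $K'$-preservation lemma you isolate is of independent use; the paper's route has the advantage of fitting the function-theoretic machinery it must set up anyway for the main theorem, where the Lindel\"of principle is indispensable for quantities like $\langle df_z(v_0),u_0\rangle$ that are not controlled by the extrinsic characterization alone. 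Both arguments are sound.
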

		\proof
		Let $R>0$,  let $\varphi\colon \D\to D$ be a complex geodesic with endpoint $\xi$. By  Proposition \ref{ratiodeltas} there exists $C_1\geq 0$ such that for all $z\in A(\varphi,R)$
		$$\delta_{D'}(f(z))\leq C_1\delta_D(z).$$
		We now prove (i). We have \begin{equation*}\label{eqpi}\frac{|\langle f(z)-\eta,u_i\rangle|}{\delta_{D}(z)^{1/n_i}}\leq C_1^{1/n_i}\frac{|\langle f(z)-\eta,u_i\rangle|}{\delta_{D'}(f(z))^{1/n_i}}.\end{equation*}
		By Corollary \ref{georintogeor} $f$ sends $K$-regions with vertex $\xi$ into $K$-regions with vertex $\eta$, so the right-hand side is bounded by Remark \ref{reformulation}, which proves (i).
		
		We  prove (ii).    Let $\tilde \rho\colon D\to \D$ be a left inverse of $\varphi$, and let $1\leq i\leq q-1.$
		By Lemma \ref{lemmadistanza} there exists $C_2\geq 0$ such that $\delta_D(\varphi(t))\leq C_2|1-\tilde \rho(\varphi(t))|$ for all $t\in [0,1)$, so
		$$\frac{|\langle f(\varphi(t))-\eta,u_i\rangle|}{|1-\tilde{\rho}(\varphi(t))|^{1/n_i}}\leq C_2^{1/n_i}\frac{|\langle f(\varphi(t))-\eta,u_i\rangle|}{\delta_D(\varphi(t))^{1/n_i}}\leq (C_1C_2)^{1/n_i}\frac{|\langle f(\varphi(t))-\eta,u_i\rangle|}{\delta_{D'}(f(\varphi(t)))^{1/n_i}}\stackrel{t\to 1^-}\longrightarrow 0,$$
		where we used Remark \ref{reformulation} and the fact that, thanks to  Proposition \ref{fphisigmac}, the curve $t\mapsto f(\varphi(t))$ is (strongly) restricted.
		By point (i) above the holomorphic  function $\frac{\langle f(z)-\eta,u_i\rangle}{(1-\tilde{\rho}(z))^{1/n_i}}$ is $K$-bounded, and thus the 
		Lindel\"of principle (Theorem \ref{localLindelof}) yields
		$$K'\textrm{-}\lim_{z\to\xi}\frac{\langle f(z)-\eta,u_i\rangle}{(1-\tilde{\rho}(z))^{1/n_i}}=0.$$
		By Lemma \ref{lemmadistanza} we have (ii).
		\endproof
		
		\begin{proof}[Proof of \eqref{proofKbounded} in Theorem \ref{genJWC}]
			Let $\varphi\colon \D\to D$ be a complex geodesic with endpoint $\xi$, and let $\tilde \rho\colon D\to \D$ be a left inverse of $\varphi$.
			Let  $0\leq i\leq q-1$ and  $v\in \C^d\setminus\{0\}.$
			Thanks to Lemma \ref{lemmadistanza} it is enough to prove that the function
			$$(1-\tilde \rho(z))^{\frac{1}{m_\xi(v)}-\frac{1}{n_i}}\langle df_z(v),u_i\rangle$$ is $K$-bounded.
			Let $z\in A(\varphi, R)$.
			Since $D$ is convex, by  \cite[Lemma 11.1.2, Proposition 11.1.4, Theorem 11.2.1]{JarPfl} there exists\footnote{Notice that $\kappa_\D(0,1)=2$.} a  complex geodesic  $\psi\colon\D\to D$ such that $\psi(0)=z$ and  $$\psi'(0)=2v\slash \kappa_D(z,v).$$
			Let $r\in(0,1)$, then by the Cauchy formula
			$$\langle df_z(v),u_i\rangle=\frac{\kappa_D(z,v)}{2}\langle df_z(\psi'(0)),u_i\rangle=\frac{\kappa_D(z,v)}{4\pi i}\int_{|\zeta|=r}\frac{\langle f(\psi(\zeta))-\eta,u_i \rangle}{\zeta^2}d\zeta,$$
			so
			$$(1-\tilde{\rho}(z))^{1/m_\xi(v)-1/n_i}\langle df_z(v),u_i\rangle=$$$$\frac{1}{4\pi}\int_{-\pi}^{\pi}\frac{\langle f(\psi(re^{i\theta}))-\eta,u_i \rangle}{(1-\tilde{\rho}(\psi(re^{i\theta})))^{1/n_i}}\cdot\left(\frac{1-\tilde{\rho}(\psi(re^{i\theta}))}{1-\tilde{\rho}(z)}\right)^{1/n_i}\cdot\frac{\kappa_D(z,v)(1-\tilde{\rho}(z))^{1/m_\xi(v)}}{re^{i\theta}}d\theta.$$
			Notice that, since $\psi$ is a complex geodesic, $k_D(\psi(re^{i\theta}),z)=k_\D(0,r)$, so $\psi(re^{i\theta})\in A(\varphi,R_r)$ where $R_r:=R+k_\D(0,r)$, which implies by the proof of Proposition \ref{abatek}
			$$\tilde{\rho}(\psi(re^{i\theta}))\in A^\D(\id_\D,R_r)\subset K^\D_0(1,e^{R_r}).$$
			It follows that the first and third terms are bounded respectively by Proposition \ref{fmenopifn} and Theorem \ref{KtypeLtype} combined with Lemma \ref{lemmadistanza}.
			Let us consider the second term. Using \eqref{koranyiball} we have 
			\begin{align*}\label{rhozrhopsi}\left|\frac{1-\tilde{\rho}(\psi(re^{i\theta}))}{1-\tilde{\rho}(z)}\right|&\leq e^{R_r}\frac{1-|\tilde{\rho}(\psi(re^{i\theta}))|}{1-|\tilde{\rho}(z)|}\\&\leq2e^{R_r}\exp(k_\D(0,\tilde{\rho}(z))-k_\D(0,\tilde{\rho}(\psi(re^{i\theta}))))\\&\leq 2e^{R_r+k_\D(0,r)}.\end{align*}
			hence it is bounded.
		\end{proof}
		\subsection{Second step: proof of Theorem \ref{JWC1}}
		By the previous argument for all $v\in \C^d\setminus\{0\}$ the function $ \delta_D(z)^{\frac{1}{m_\xi(v)}-1}\frac{\partial f}{\partial v}(z)$ is $K$-bounded.
		\begin{proof}[Proof of (i) in Theorem \ref{JWC1}]
			By the Lindel\"of Principle (Theorem \ref{localLindelof}) it is sufficient to prove that the limit (i)   holds along the normal segment, which by Lemma \ref{normalrestricted} is a strongly restricted curve.
			First of all we prove that $$\lim_{t\to0^+}\bar{\eta}\frac{\partial f}{\partial n_\xi}(\xi-tn_\xi)$$
			exists and it is positive. Let $B\subset D$ be an Euclidean ball internally tangent to $\partial D$ at $\xi$. By Proposition \ref{ratiodeltas+} (noticing that, on the inner normal segment at $\xi$ we have $\delta_D=\delta_B$ close to $\xi$) it follows  that the restriction $f|_B\colon B\to \D$ has a regular contact point    at $\xi$ with $K$-limit $\eta$. Applying  Rudin's Julia-Wolff-Carath\'eodory theorem to $f|_B$ we obtain the existence and positivity of the limit. Denote by $\bar{\eta}\frac{\partial f}{\partial n_\xi}(\xi)$ such limit.
			
			Now let $\varphi\colon\D\to D$ be a complex geodesic with $\varphi(0)=p$ and endpoint $\xi$, and define $\nu:=\varphi'_N(1)n_\xi$. Consider the normal segment $\sigma\colon[t_0,1)\to D$ given by $\sigma(t)=\xi+(t-1)\nu$, then by Proposition \ref{prop:JFCGeneral}, Lemma \ref{normalrestricted} and the fundamental theorem of calculus  we have
			\begin{align*}
				\lambda_{\xi,p}&=\lim_{t\to1^-}\exp(k_\D(0,t)-k_\D(f(\varphi(t)),0))=\lim_{t\to1^-}\exp(k_\D(0,t)-k_\D(f(\sigma(t)),0))\\&=\lim_{t\to1^-}\frac{1+t}{1-t}\cdot\frac{1-|f(\sigma(t))|}{1+|f(\sigma(t))|} =\lim_{t\to1^-}\frac{1-|f(\sigma(t))|}{1-t} =\lim_{t\to1^-}\frac{1}{1-t}\int_t^1\frac{{\rm Re}\left(\overline{f(\sigma(s)})\frac{\partial f}{\partial \nu}(\sigma(s))\right)}{|f(\sigma(s))|}ds\\& =\lim_{t\to1^-}\frac{\varphi_N'(1)}{1-t}\int_t^1\frac{{\rm Re}\left(\overline{f(\sigma(s)})\frac{\partial f}{\partial n_\xi}(\sigma(s))\right)}{|f(\sigma(s))|}ds=\varphi_N'(1)\bar{\eta}\frac{\partial f}{\partial n_\xi}(\xi)=\frac{1}{|\Omega_\xi(p)|}\bar{\eta}\frac{\partial f}{\partial n_\xi}(\xi).
			\end{align*}
		\end{proof}

		\begin{proof}[Proof of (ii) in  Theorem \ref{JWC1}]  
			Set $m:=m_\xi(v)$ and $s:=1/m$. First of all we prove the result in dimension 2 for $f\colon \mathbb{E}_m\to\D$ where $\mathbb{E}_m$ is the egg domain, $\xi=(1,0)\in\partial\mathbb{E}_m$ is a regular contact point and $v=(0,1)$. 
			
			We start writing  $f$ as follows:
			$$f(z_0,z_1)=f(z_0,0)+\frac{\partial f}{\partial z_1}(z_0,0)z_1+\sum_{k=2}^{\infty}\frac{1}{k!}\frac{\partial^kf}{\partial z_1^k}(z_0,0)z_1^k.$$
			Define the holomorphic function $g\colon \D\to \C$  as
			$$g(z_0)=\frac{1}{2}\frac{\partial f}{\partial z_1}(z_0,0)(1-z_0)^{s-1},$$ where we are using  the principal value of the $m$-th root. 
			We need to show $g(t)\to0$ if $t\to1^-$.
			Define the holomorphic function $h\colon \mathbb{E}_m\to \C$  as $$h(z_0,z_1)=f(z_0,0)+\frac{z_1}{2}\frac{\partial f}{\partial z_1}(z_0,0)=f(z_0,0)+z_1(1-z_0)^{1-s}g(z_0).$$
			We want to prove that $h(\mathbb{E}_m)\subseteq\D$. Fix $z_0\in\D$ and set $r:=\sqrt[m]{1-|z_0|^2}$. By the Schwarz-Pick lemma applied to $f(z_0,\cdot)\colon r\D\to \D$, we have
			$$\frac{\left|\frac{\partial f}{\partial z_1}(z_0,z_1)\right|}{1-|f(z_0,z_1)|^2}\leq\frac{r}{r^2-|z_1|^2},$$
			which implies that at $z_1=0$ we have $$\left|\frac{\partial f}{\partial z_1}(z_0,0)\right|\leq\frac{1-|f(z_0,0)|^2}{r}.$$
			Finally, for all $z_1\in r\D$,
			\begin{align*}|h(z_0,z_1)|&=\left|f(z_0,0)+\frac{z_1}{2}\frac{\partial f}{\partial z_1}(z_0,0)\right|\leq |f(z_0,0)|+\frac{|z_1|}{2}\left|\frac{\partial f}{\partial z_1}(z_0,0)\right|\\&\leq |f(z_0,0)|+\frac{|z_1|}{2r}(1-|f(z_0,0)|^2)\leq |f(z_0,0)|+\frac{1}{2}(1-|f(z_0,0)|^2)<1,\end{align*}
			so $h(\mathbb{E}_m)\subseteq\D$.
			
			Fix $a>0$, set $A:=\frac{a^2}{a^2+1}$  and define the curve
			$$\zeta_a\colon (A,1)\to \D,\quad \zeta_a(t)=t+ia(1-t).$$
			Notice that for $t\in (A,1)$ we have
			$|\zeta_a(t)|^2<t$,
			so we can find $\eta_a(t)\in\C$ is such that 
			$$1-t<|\eta_a(t)|^m< 1-|\zeta_a(t)|^2$$
			and 
			$$\eta_a(t)(1-\zeta_a(t))^{1-s}g(\zeta_a(t))\in\R_{\geq0}.$$
			Define the function 
			$$\sigma_a\colon (A,1)\to \C^2,\quad \sigma_a(t)=(\zeta_a(t),\eta_a(t)).$$ 
			Notice that $\sigma_a((A,1))\subset \mathbb{E}_m$.
			Now we  estimate 
			$\limsup_{t\to1^-}|g(\zeta_a(t))|.$
			Consider the holomorphic function $\ell\colon \D\to \D$ defined as $\ell(\zeta)=f(\zeta,0)$. The point $1\in\partial\D$ is a regular contact point for $\ell$, hence  by the one-dimensional Julia--Wolff--Carath\'eodory theorem (see e.g. \cite[Theorem 1.2.7]{Abatebook}) there exists $\alpha>0$  such that
			$$\frac{1-\ell(\zeta_a(t))}{1-\zeta_a(t)}=\alpha+o(1),$$
			and thus
			$\ell(\zeta_a(t))=1-(\alpha+o(1))(1-ia)(1-t)$. Since $h(\mathbb{E}_m)\subseteq\D$ we have
			\begin{align*}1&\geq \Re h(\sigma_a(t))=1-(\alpha+o(1))(1-t)+|\eta_a(t)||1-\zeta_a(t)|^{1-s}|g(\zeta_a(t))|\\&>1-(\alpha+o(1))(1-t)+|1-ia|^{1-s}|g(\zeta_a(t))|(1-t).\end{align*}
			So
			$$\limsup_{t\to1^-}|g(\zeta_a(t))|\leq \frac{\alpha}{|1-ia|^{1-s}}.$$ Similarly, we obtain
			$\limsup_{t\to1^-}|g(\bar\zeta_a(t))|\leq \frac{\alpha}{|1-ia|^{1-s}},$ and since  $g\colon\D\to\C$ is $K$-bounded at $1$ 
			it follows that
			$$\limsup_{t\to1^-}|g(t)|\leq \frac{\alpha}{|1-ia|^{1-s}}.$$ Letting $a\to+\infty$ we have 
			$\lim_{t\to 1^-}g(t)=0$.
			
			For the general case, by Proposition \ref{ratiodeltas+} we can reduce to the case  $d=2$ by  cutting $D$ with the complex plane $\xi+\text{span}_\C\{n_\xi,v\}$.
			Notice that the line type remains the same. Now $D$ has the following defining function near the origin $$r(z):=\Re z_0+ H(z_1)+R(z)$$
			where $H\colon\C\to\C$ is a non-negative $m$-homogeneous ($H(tz_1)=t^{m}H(z_1)$) convex polynomial that is not zero and 
			$$R(z)=o(|z_0|+|z_1|^{m}).$$
			Now we can find $\varepsilon>0$ small enough such that the egg domain
			$$E=\{z\in\C^2:|z_0+\varepsilon|^2+|z_1|^{m}<\varepsilon^2\}$$
			is contained in $D$. We conclude applying the first part of the proof to $f|_E$.
		\end{proof}
		This concludes the proof of Theorem \ref{JWC1}. Combining it with Lemma \ref{dilgeoinv}, we immediately obtain the following corollary.
		
		\begin{corollary}\label{rhoJWC}
			Let $D\subset\C^d$ be a $\C$-proper convex domain and let $\xi\in\partial D$ be a point of locally finite type.
			Let $\varphi\colon\D\to D$ be a complex geodesic with endpoint $\xi$ and let  $\tilde{\rho}\colon D\to \D$ be a  left inverse of $\varphi$. Then if $v\in \C^d$,
			$$K'\textrm{-}\lim_{z\to\xi}d\tilde{\rho}_z(v)=\langle v,n_\xi\rangle\alpha_\xi(\tilde \rho)=\langle v,n_\xi\rangle|\Omega^{D}_\xi(\varphi(0))|.$$
		\end{corollary}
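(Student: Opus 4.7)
The proof is essentially a direct application of Theorem \ref{JWC1} combined with Lemma \ref{dilgeoinv}, so the plan is short.

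First, I would observe that $\tilde\rho\colon D\to\D$ is a holomorphic function on a $\C$-proper convex domain, and by Lemma \ref{dilgeoinv} the boundary point $\xi\in\partial D$ is a regular contact point of $\tilde\rho$. Moreover, that same lemma identifies
\[
\alpha_\xi(\tilde\rho)=|\Omega^D_\xi(\varphi(0))|=\frac{1}{\varphi'_N(1)}.
\]
Thus Theorem \ref{JWC1} applies to the map $\tilde\rho$, and the only thing left to compute is the $K$-limit $\eta\in\partial\D$ of $\tilde\rho$ at $\xi$, which determines the ``rotation factor'' in the formula of Theorem \ref{JWC1}.

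Second, I would identify $\eta=1$. This is immediate from $\tilde\rho\circ\varphi=\id_\D$: the geodesic ray $\tilde\varphi(t)=\varphi(\tanh(t/2))$ inside $D$ has endpoint $\xi$, and its image under $\tilde\rho$ is the geodesic ray $t\mapsto\tanh(t/2)$ in $\D$ with endpoint $1\in\partial\D$. Since $\tilde\varphi$ is (in particular) contained in a $K$-region with vertex $\xi$, the existence of the $K$-limit forces $\eta=1$.

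Finally, for any $v\in\C^d$, since $\tilde\rho$ is holomorphic one has $d\tilde\rho_z(v)=\frac{\partial\tilde\rho}{\partial v}(z)$, so Theorem \ref{JWC1} yields
\[
K'\textrm{-}\lim_{z\to\xi}d\tilde\rho_z(v)=\langle v,n_\xi\rangle\,\eta\,\alpha_\xi(\tilde\rho)=\langle v,n_\xi\rangle\,|\Omega^D_\xi(\varphi(0))|,
\]
which is the claim. There is no real obstacle here; the only small care needed is to verify the $K$-limit computation $\eta=1$, which however is forced by the fact that $\tilde\rho$ retracts $\tilde\varphi$ onto a radius of $\D$ with endpoint $1$.
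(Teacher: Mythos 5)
Your proposal is correct and is exactly the paper's argument: the paper states the corollary as an immediate combination of Theorem \ref{JWC1} and Lemma \ref{dilgeoinv}, and your write-up merely makes explicit the (correct) identification of the $K$-limit $\eta=1$ via $\tilde\rho\circ\varphi=\id_\D$.
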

		
		\subsection{Third step: proof of Theorem  \ref{genJWC}}
		We are left with proving points (i),(ii),(iii) of Theorem \ref{genJWC}. 
		\begin{proof}[Proof of (i) and (ii) in Theorem \ref{genJWC}]
			Let $\varphi\colon \D\to D$ be a complex geodesic with $\varphi(0)=p$ and endpoint $\xi$, and let $\psi\colon\D\to D'$ be a complex geodesic with $\psi(0)=p'$ and endpoint $\eta$. Let $\tilde{\rho}\colon D'\to \D$ be a  left inverse of $\psi$. 
			First of all notice that $\xi$ is a regular contact point for the function $\tilde{\rho}\circ f$, with $K$-limit $1\in \partial \D$.
			Let $0\leq i\leq d-1$. For all $t\in [0,1)$ we have
			$$\frac{\partial (\tilde{\rho}\circ f)}{\partial v_j}(\varphi(t))=d\tilde{\rho}_{f(\varphi(t))}( df_{\varphi(t)}(v_j))
			=\sum_{i=0}^{q-1} \langle df_{\varphi(t)}(v_j),u_i   \rangle d\tilde{\rho}_{f(\varphi(t))} (u_i).
			$$
			
			By  Theorem \ref{JWC1} we have
			$$\frac{\partial (\tilde{\rho}\circ f)}{\partial v_0}(\varphi(t))\stackrel{t\to 1^-}\to\alpha_\xi(\tilde{\rho}\circ f)=\alpha_\eta(\tilde{\rho})\alpha_\xi(f)=\alpha_\xi(f)|\Omega_\eta^{D'}(p')|,$$
			while for all $1\leq j\leq d-1$,
			$$\frac{\frac{\partial(\tilde\rho\circ f)}{\partial v_j}(\varphi(t))}{\delta_D(\varphi(t))^{1-\frac{1}{m_j}}}\stackrel{t\to 1^-}\to 0.$$
			Fix $1\leq i\leq q-1$. Notice that $f(\varphi(t))$ is a  restricted curve.
			We have 
			$$\frac{\langle df_{\varphi(t)}(v_j),u_i   \rangle d\tilde{\rho}_{f(\varphi(t))} (u_i)}{{\delta_D(\varphi(t))^{1-\frac{1}{m_j}}}}=\frac{\langle df_{\varphi(t)}(v_j),u_i   \rangle}{\delta_D(\varphi(t))^{\frac{1}{n_i}-\frac{1}{m_j}}}\frac{d\tilde{\rho}_{f(\varphi(t))} (u_i)}{\delta_{D'}(f(\varphi(t)))^{1-\frac{1}{n_i}}}\frac{\delta_{D'}(f(\varphi(t)))^{1-\frac{1}{n_i}}}{\delta_D(\varphi(t))^{1-\frac{1}{n_i}}}\stackrel{t\to 1^-}\longrightarrow 0,$$
			thanks to  \eqref{proofKbounded}, Theorem \ref{JWC1}, and Proposition \ref{ratiodeltas}.
			
			Moreover, $ d\tilde{\rho}_{f(\varphi(t))} (u_0)\stackrel{t\to 1^-}\to \alpha_\eta(p')= |\Omega_\eta^{D'}(p')|>0.$
			It follows that 
			$$\langle df_{\varphi(t)}(v_0),u_0  \rangle \stackrel{t\to 1^-}\to\alpha_\xi(f),$$ and for all $1\leq j\leq d-1$,
			$$\frac{\langle df_{\varphi(t)}(v_j),u_0   \rangle}{\delta_D(\varphi(t))^{1-\frac{1}{m_j}}} \stackrel{t\to 1^-}\to 0,$$
			and the Lindel\"of principle (Theorem \ref{localLindelof}) together with Lemma \ref{lemmadistanza} yield the result.

		\end{proof}

		\begin{proof}[Proof of (iii) in Theorem \ref{genJWC}] 
			Define $$\varphi_t(\zeta)=\xi-\varepsilon(1-\zeta)(1-t)n_\xi.$$
			Notice that there exists $\varepsilon>0$ such that $\varphi_t(\D)\subset D'$ for all $t\in(0,1)$. We want to show that if we set $\sigma(t):=\varphi_t(0)=\xi-\varepsilon(1-t)n_\xi$, we have
			$$\lim_{t\to 1^-}(1-t)^{1-1/n_i}\langle df_{\sigma(t)}(n_\xi),u_i\rangle=0.$$
			Fix $r\in(0,1)$. Notice that for all $\theta$ the curve $t\mapsto\varphi_t(re^{i\theta})$ is contained in the cone 
			with vertex $\xi$, axis $ n_\xi$ and amplitude $2\arcsin(r)<\pi$.
			By \cite[Proposition 2.4]{Mercer}
			\begin{equation}\label{ultimosforzo}
				\left|\log\frac{\delta_D(\varphi_t(re^{i\theta}))}{\delta_D(\varphi_t(0))}\right|\leq k_D(\varphi_t(re^{i\theta}),\varphi_t(0))\leq k_\D(0,r).
			\end{equation}
			Using the Cauchy formula we have 
			\begin{align*}(1-t)^{1-1/n_i}\langle df_{\sigma(t)}(n_\xi),u_i\rangle&=\frac{1}{\varepsilon(1-t)^{1/n_i}}\langle df_{\varphi_t(0)}(\varphi_t'(0)),u_i\rangle\\&=\frac{1}{2\varepsilon\pi}\int_{-\pi}^{\pi}\frac{\langle f(\varphi_t(re^{i\theta}))-\eta,u_i\rangle}{\delta_D(\varphi_t(re^{i\theta}))^{1/n_i}}\cdot\left(\frac{\delta_D(\varphi_t(re^{i\theta}))}{(1-t)}\right)^{1/n_i}\cdot\frac{1}{ re^{i\theta}}d\theta.
			\end{align*}
			Notice that the first factor in the integral is bounded by (i) in Proposition \ref{fmenopifn}. The second factor  is bounded by (\refeq{ultimosforzo}) since  $\delta_D(\varphi_t(0))=\varepsilon(1-t)$. Finally, by (ii) in Proposition \ref{fmenopifn} the first factor converges pointwise to $0$ as $t\to 1^-$, so we can conclude by the dominated convergence theorem.
		\end{proof}
		This ends the proof of Theorem \ref{genJWC}.
 
		\subsection{Examples and consequences}
		\begin{example}\label{exJWC}
			We now give some examples  that show that our results are sharp for holomorphic maps between egg domains. Let $m\geq2$ be an even integer and let $\mathbb{E}_m$ be the egg domain (see (\refeq{eggdomain})).
			Now consider the holomorphic function
			$\vartheta \colon\D\to\D$ given by
			$$\vartheta(\zeta)=\exp\left(-\frac{\pi}{2}-i\log(1-\zeta)\right).$$
			Notice that as $\zeta\to1$ the function $\vartheta(\zeta)$ spirals around the origin without limit.
			Moreover
			$$\vartheta'(z)=\frac{i}{1-\zeta}\vartheta(\zeta).$$
			The function $\varphi\colon \D\to \mathbb{E}_m$ given by $\varphi(\zeta)=(\zeta,0)$ is a complex geodesic with endpoint $\xi:=(1,0)$ and the function $\tilde \rho\colon \mathbb{E}_m\to\D$ given by $\tilde \rho(z_0,z_1)=z_0$ is a left inverse of $\varphi$, so by Lemma \ref{lemmadistanza} the function $\delta_{\mathbb{E}_m}(z)/(1-z_0)$ and its reciprocal are $K$-bounded at $\xi$.
			Notice also the quantity $\frac{z_1^{m}}{1-z_0}$ is $K$-bounded at $\xi$ by Proposition \ref{fmenopifn}. 
			
			Let $f\colon \mathbb{E}_{m}\to\D$ be given by
			$$f(z)=z_0+\frac{1}{2}z_1^{m}\vartheta(z_0).$$
			Notice that $\xi$ is a regular contact point for $f$.
			Then
			$$\frac{\partial f}{\partial z_0}(z)=1+\frac{i}{2}\frac{z_1^{m}}{1-z_0}\vartheta(z_0)$$
			and
			$$(1-z_0)^{\frac{1}{m}-1}\frac{\partial f}{\partial z_1}(z)=\frac{m}{2}\left(\frac{z_1^{m}}{1-z_0}\right)^\frac{m-1}{m}\vartheta(z_0).$$
			The two functions are $K$-bounded at $\xi$ with sharp exponents and  have restricted $K$-limit at $\xi$, but they do not have $K$-limit at $\xi$. Indeed they do not have limit along the curve $\gamma_\lambda(t)=(t,\lambda\sqrt[m]{1-t^2})$, with $\lambda\in\D^*$, which $K$-converges to $\xi$ but does not $K'$-converge to $\xi$.
			
			Now we discuss two examples of holomorphic mappings $f\colon \mathbb{E}_{m_1}\to \mathbb{E}_{m_2}$ depending on the  order relation between $m_1$ and $m_2$.
			\begin{itemize}
				\item Let $2\leq m_1\leq m_2$. Let $f\colon \mathbb{E}_{m_1}\to \mathbb{E}_{m_2}$ be given by
				$$f(z)=\left(z_0,\frac{1}{2^{1/m_1}}\frac{z_1}{(1-z_0)^\frac{m_2-m_1}{m_1m_2}}\vartheta(z_0)\right).$$
				Then
				$$(1-z_0)^{1-\frac{1}{m_2}}\frac{\partial f_2}{\partial z_0}(z)=\frac{1}{2^{1/m_1}}\left(\frac{m_2-m_1}{m_1m_2}+i\right)\left(\frac{z_1^{m_1}}{1-z_0}\right)^\frac{1}{m_1}\vartheta(z_0),$$
				$$(1-z_0)^{\frac{1}{m_1}-\frac{1}{m_2}}\frac{\partial f_2}{\partial z_1}(z)=\frac{1}{2^{1/m_1}}\vartheta(z_0).$$
				\item Let $2\leq m_2\leq m_1$ and fix $r>0$. Let $f\colon \mathbb{E}_{m_1}\to \mathbb{E}_{m_2}$ be given by
				$$f(z)=\left(\frac{z_0+r}{1+r},\frac{1}{2^{1/m_1}}\frac{r^{1/m_2}}{(1+r)^{2/m_2}}z_1(1-z_0)^\frac{m_1-m_2}{m_1m_2}\vartheta(z_0)\right).$$
				Then
				$$(1-z_0)^{1-\frac{1}{m_2}}\frac{\partial f_2}{\partial z_0}(z)=\frac{1}{2^{1/m_1}}\frac{r^{1/m_2}}{(1+r)^{2/m_2}}\left(\frac{m_2-m_1}{m_1m_2}+i\right)\left(\frac{z_1^{m_1}}{1-z_0}\right)^\frac{1}{m_1}\vartheta(z_0),$$
				$$(1-z_0)^{\frac{1}{m_1}-\frac{1}{m_2}}\frac{\partial f_2}{\partial z_1}(z)=\frac{1}{2^{1/m_1}}\frac{r^{1/m_2}}{(1+r)^{2/m_2}}\vartheta(z_0).$$
			\end{itemize}	
			In both cases, $\xi$ is a regular contact point. Notice that for all the functions we have sharp exponents for the $K$-boundedness. Moreover, $(1-z_0)^{1-\frac{1}{m_2}}\frac{\partial f_2}{\partial z_0}(z)$ has restricted $K$-limit at $\xi$ but  does not  have $K$-limit at $\xi$.
			Finally, $(1-z_0)^{\frac{1}{m_1}-\frac{1}{m_2}}\frac{\partial f_2}{\partial z_1}(z)$ does not  even have radial limit at $\xi$.
			
			Finally, the same example shows that formula \eqref{formulafalsa}  is false.
			Set $\eta:=(1,0)\in\partial \mathbb{E}_{m_2}$ and consider $u\in\C^2\backslash\{0\}$ such that $\langle u,n_\eta\rangle\neq0$ and $u\notin T_\eta^\C\partial \mathbb{E}_{m_2}$. Notice that $m_\eta(u)=1$ and $M_\eta(v)=m_2$. Then
			$\langle df_z(v), u\rangle$ is 
			$O_K\left((1-z_0)^{\frac{1}{m_2}-\frac{1}{m_\xi(v)}}\right)$ but not 
			$O_K\left((1-z_0)^{1-\frac{1}{m_\xi(v)}}\right)$.
		\end{example}

		We conclude with some consequences. The first is an asymptotic estimate, depending only on the multitypes at $\xi$ and $\eta$, of the complex Jacobian  of $f$ when the dimension of $D$ is equal to the dimension of $D'$. This generalizes the result \cite[Corollary, p.178]{Rudin}, which shows that if $D$ and $D'$ are balls, then the complex Jacobian  of $f$ is $K$-bounded.  \begin{corollary}
			Let $D,D'\subset\C^d$ be  $\C$-proper convex domains, and let $\xi\in\partial D,\eta\in \partial D'$ be points of locally finite type with multitype respectively $(m_j)_{j=0}^{d-1}$ and $(n_j)_{j=0}^{d-1}$.  Let $f\colon D\to D'$ be a holomorphic map, and 
			assume that $\xi$ is a regular contact point with   $K\textrm{-}\lim_{z\to\xi}f(z)=\eta$.
			Then $$\det df_z=O_K\left(\delta_D(z)^{\sum_{j=0}^{d-1}\frac{1}{n_j}-\frac{1}{m_j}}    \right).$$
		\end{corollary}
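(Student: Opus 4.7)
The plan is to deduce this directly from the $K$-boundedness estimate \eqref{proofKbounded} of Theorem \ref{genJWC} via the Leibniz expansion of the determinant, using the key symmetry that the sum of the exponents over a permutation is permutation-invariant.

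First I would represent $df_z$ in the multitype bases. Since $(v_j)_{j=0}^{d-1}$ is an orthonormal basis of $\C^d$ at the source and $(u_i)_{i=0}^{d-1}$ is an orthonormal basis of $\C^d$ at the target, the change-of-basis matrices are unitary, so the matrix $M(z)$ with entries
$$M_{ij}(z) := \langle df_z(v_j), u_i\rangle$$
satisfies $|\det df_z| = |\det M(z)|$. Thus it is enough to estimate $\det M(z)$.

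Next I would apply the Leibniz formula:
$$\det M(z) = \sum_{\sigma \in S_d} \mathrm{sgn}(\sigma)\,\prod_{j=0}^{d-1} M_{\sigma(j), j}(z).$$
Fix a permutation $\sigma \in S_d$. By \eqref{proofKbounded} of Theorem \ref{genJWC}, each factor satisfies
$$M_{\sigma(j), j}(z) = O_K\!\left(\delta_D(z)^{\frac{1}{n_{\sigma(j)}} - \frac{1}{m_j}}\right),$$
and since the product of finitely many $K$-bounded functions is $K$-bounded, multiplying over $j$ gives
$$\prod_{j=0}^{d-1} M_{\sigma(j), j}(z) = O_K\!\left(\delta_D(z)^{\sum_{j=0}^{d-1}\left(\frac{1}{n_{\sigma(j)}} - \frac{1}{m_j}\right)}\right).$$

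The crucial observation is that $\sum_{j=0}^{d-1} \frac{1}{n_{\sigma(j)}} = \sum_{j=0}^{d-1} \frac{1}{n_j}$ for every $\sigma \in S_d$, so the exponent is independent of the permutation and equals $\sum_{j=0}^{d-1}\bigl(\frac{1}{n_j} - \frac{1}{m_j}\bigr)$. Summing the $d!$ terms (a sum of $K$-bounded functions with the same weight is again $K$-bounded with that weight) yields
$$\det df_z = O_K\!\left(\delta_D(z)^{\sum_{j=0}^{d-1}\frac{1}{n_j} - \frac{1}{m_j}}\right),$$
which is exactly the claim. There is no real obstacle here: the whole argument is a routine algebraic manipulation once \eqref{proofKbounded} is available, the only noteworthy point being the invariance of $\sum_j 1/n_{\sigma(j)}$ under $\sigma$, which is what makes the exponent common to all terms.
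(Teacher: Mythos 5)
Your proof is correct and is exactly the argument the paper intends: the paper's proof consists of the single sentence ``the result follows computing the determinant of the Jacobian matrix of $f$ using Theorem \ref{genJWC}'', and your Leibniz expansion together with the observation that $\sum_j 1/n_{\sigma(j)}$ is permutation-invariant is the full version of that computation. The unitary change-of-basis remark handling $|\det df_z|=|\det M(z)|$ is also the right (implicit) ingredient.
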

		\begin{proof}
			Let $(v_j)_{j=0}^{d-1}$ (resp. $(u_j)_{j=0}^{d-1}$) be an orthonormal  multitype basis at $\xi$ (resp. at $\eta$). 
			The result follows  computing the determinant of the Jacobian matrix of $f$ using Theorem \ref{genJWC}.
		\end{proof}
		\begin{remark}
			If $H$ denotes the harmonic mean, then clearly
			$$\sum_{j=0}^{d-1}\frac{1}{n_j}-\frac{1}{m_j}=\frac{d}{H(n_0,\dots, n_{d-1})}-\frac{d}{H(m_0,\dots, m_{d-1})}.$$
			In particular, if the harmonic mean of the multitype at $\eta$ is strictly smaller than the harmonic mean of the multitype at $\xi$, then 
			$$K\textrm{-}\lim_{z\to \xi} \det df_z=0$$

		\end{remark}
		
		Theorem \ref{genJWC} has an interesting consequence for self-maps.
		\begin{corollary}
			Let $D\subset\C^d$ be a $\C$-proper convex domain and $\xi\in\partial D$ a point of locally finite type  and let $f\colon D\to D$ be a holomorphic self-map.
			If $\xi$ is a regular contact point with $K\textrm{-}\lim_{z\to\xi}f(z)=\xi$,
			then for all $p\in D$
			$$K'\textrm{-}\lim_{z\to\xi}\langle df_z(n_\xi),n_\xi\rangle=\lambda_{\xi,p,p}(f).$$
		\end{corollary}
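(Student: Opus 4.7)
The plan is to deduce this corollary as an immediate specialization of point (i) of Theorem \ref{genJWC}. The setup is the case $D'=D$, $\eta=\xi$, $p'=p$. I would first observe that under these identifications we may choose a single orthonormal multitype basis $(v_j)_{j=0}^{d-1}$ at $\xi$ and use it on both the source and the target, so that $v_0=u_0=n_\xi$. Theorem \ref{genJWC}(i) then gives directly
\begin{equation*}
K'\textrm{-}\lim_{z\to\xi}\langle df_z(n_\xi),n_\xi\rangle=\alpha_\xi,
\end{equation*}
provided we verify that the hypotheses of the theorem hold, namely that $\xi$ is a regular contact point for $f$ with $K$-limit $\eta=\xi$, which is precisely the assumption.

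The only remaining step is to identify the normalized dilation $\alpha_\xi$ with $\lambda_{\xi,p,p}(f)$. By definition,
\begin{equation*}
\alpha_\xi=\lambda_{\xi,p,p'}(f)\,\frac{\Omega^D_\xi(p)}{\Omega^{D'}_\eta(p')},
\end{equation*}
and choosing $p'=p\in D$ with $\eta=\xi$ and $D'=D$ makes the ratio of pluricomplex Poisson kernels equal to $1$, so $\alpha_\xi=\lambda_{\xi,p,p}(f)$. (That $\alpha_\xi$ does not depend on the choice of base-points was already established, so the resulting equality is consistent.) Combining the two displays yields the desired formula.

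There is no real obstacle: this is a direct corollary and all the analytic work, in particular the existence of the restricted $K$-limit of $\langle df_z(n_\xi),n_\eta\rangle$ and the formula for it in terms of the normalized dilation, has already been carried out in Theorem \ref{genJWC}. The only minor bookkeeping concerns making sure the same multitype basis is used on both sides so that the $v_0$ and $u_0$ of Theorem \ref{genJWC} are both literally $n_\xi$, and checking that the normalization factor collapses to $1$ when source, target, base-points, and boundary points coincide.
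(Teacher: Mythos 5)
Your proposal is correct and is exactly the argument the paper intends: specialize Theorem \ref{genJWC}(i) to $D'=D$, $\eta=\xi$, $p'=p$, where the ratio $\Omega^D_\xi(p)/\Omega^{D}_\xi(p)$ in the normalized dilation collapses to $1$, giving $\alpha_\xi=\lambda_{\xi,p,p}(f)$. The observation that base-point independence of $\alpha_\xi$ makes the statement consistent for all $p$ is also the right remark.
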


		We now prove a version of the Julia Lemma for  the pluricomplex Poisson kernel. 
		\begin{proposition}
			Let $D\subset\C^d$ and $D'\subset\C^{q}$ be  $\C$-proper convex domains, and  let $\xi\in \partial D,\eta\in \partial D' $ be  points of locally finite type.
			Let $f\colon D\to D'$ be a holomorphic map. \begin{itemize}
				\item[(i)] If the function $$z\mapsto \frac{\Omega^D_\xi(z)}{\Omega^{D'}_\eta(f(z))}$$ is bounded from above, then 
				$\xi$ is a regular contact point for $f$ with  $K\textrm{-}\lim_{z\to\xi}f(z)=\eta$.
				\item[(ii)] If  $\xi$ is a regular contact point for $f$ with  $K\textrm{-}\lim_{z\to\xi}f(z)=\eta$, then
				$$\sup_{z\in D}\frac{\Omega^D_\xi(z)}{\Omega^{D'}_\eta(f(z))}=\alpha_\xi=K'\textrm{-}\lim_{z\to\xi}\langle df_z(n_\xi),n_\eta\rangle.$$ 
			\end{itemize}
		\end{proposition}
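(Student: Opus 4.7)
The plan is to translate the Julia lemma (Proposition \ref{JuliaLemma}) and its refinement \eqref{conilsup} from the language of the horofunction $h_{\xi,p}$ into the language of the pluricomplex Poisson kernel $\Omega_\xi$ via the identity \eqref{orosferepoisson} of Remark \ref{omegahorofunction}, and then to invoke Theorem \ref{genJWC}(i) for the final equality involving the normal derivative.

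First, fix base-points $p\in D$ and $p'\in D'$. Since $\Omega^D_\xi(z)<0$ and $\Omega^{D'}_\eta(f(z))<0$, their quotient is positive and
$$\log\frac{\Omega^D_\xi(z)}{\Omega^{D'}_\eta(f(z))}=\log|\Omega^D_\xi(z)|-\log|\Omega^{D'}_\eta(f(z))|.$$
Applying \eqref{orosferepoisson} to both $(D,\xi,p,z)$ and $(D',\eta,p',f(z))$ and taking the difference, I would obtain the key identity
$$h^{D'}_{\eta,p'}(f(z))-h^D_{\xi,p}(z)=\log\frac{\Omega^D_\xi(z)}{\Omega^{D'}_\eta(f(z))}+\log\frac{|\Omega^{D'}_\eta(p')|}{|\Omega^D_\xi(p)|},$$
so the two functions of $z$ differ only by a constant depending on the base-points.

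For (i), this identity shows that $z\mapsto\Omega^D_\xi(z)/\Omega^{D'}_\eta(f(z))$ is bounded from above if and only if $z\mapsto h^{D'}_{\eta,p'}(f(z))-h^D_{\xi,p}(z)$ is bounded from above. Since $\eta$ is a point of locally finite type by hypothesis, the equivalence $(3)\Leftrightarrow(4)$ in Proposition \ref{JuliaLemma} then yields that $\xi$ is a regular contact point with $K\textrm{-}\lim_{z\to\xi}f(z)=\eta$, which is exactly the conclusion.

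For (ii), taking the supremum over $z\in D$ in the displayed identity and using \eqref{conilsup} from Proposition \ref{prop:JFCGeneral} gives
$$\log\sup_{z\in D}\frac{\Omega^D_\xi(z)}{\Omega^{D'}_\eta(f(z))}=\log\lambda_{\xi,p,p'}+\log\frac{|\Omega^D_\xi(p)|}{|\Omega^{D'}_\eta(p')|}=\log\alpha_\xi,$$
where the last step is just the definition of the normalized dilation (noting that the signs of $\Omega^D_\xi(p)$ and $\Omega^{D'}_\eta(p')$ cancel in the ratio). The remaining identity $\alpha_\xi=K'\textrm{-}\lim_{z\to\xi}\langle df_z(n_\xi),n_\eta\rangle$ is precisely Theorem \ref{genJWC}(i) applied to the multitype bases with $v_0=n_\xi$ and $u_0=n_\eta$.

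I do not anticipate any real obstacle: the statement is essentially a dictionary translation of results already established. The only mild care required is in the bookkeeping of signs when passing between $\Omega_\xi$ (which is negative) and $|\Omega_\xi|$ inside the logarithms.
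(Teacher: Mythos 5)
Your proposal is correct and follows essentially the same route as the paper: translate horofunctions into the Poisson kernel via \eqref{orosferepoisson}, apply the implication $(3)\Rightarrow(4)$ of the Julia Lemma for (i), and combine \eqref{conilsup} with the definition of $\alpha_\xi$ and Theorem \ref{genJWC}(i) for (ii). The base-point bookkeeping and sign cancellations are handled correctly.
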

		\begin{proof}
Point (i) follows from the implication $(3)\Rightarrow(4)$ in the Julia Lemma (Theorem \ref{JuliaLemma}) and from \eqref{orosferepoisson}.
			We prove (ii). Fix $p\in D,p'\in D'$.
			From  \eqref{conilsup} and  \eqref{orosferepoisson} we obtain
			$$\sup_{z\in D}\left(\log|\Omega^{D'}_\eta(p')|-\log|\Omega^{D'}_\eta(f(z))|- \log|\Omega^{D}_\xi(p)|+\log|\Omega^{D}_\xi(z)|\right)=\log\lambda_{\xi,p,p'},$$
			and the result immediately follows.
		\end{proof}
		\begin{remark}
			Point (ii) of the previous proposition  yields an interesting estimate from below of $K'\textrm{-}\lim_{z\to\xi}\langle df_z(n_\xi),n_\eta\rangle$.
			As an example, if $f\colon\mathbb{E}_m\to\mathbb{E}_m$ if a holomorphic self-map such that $\xi=(1,0)$ is a regular contact point with
			$K\textrm{-}\lim_{z\to\xi}f(z)=\xi$,  then by Example \ref{poisegg} if $f(0)=(z_0,z_1)$ is the image of the origin, we have
			$$K'\textrm{-}\lim_{z\to\xi}\langle df_z(e_0),e_0\rangle \geq\frac{|1-z_0|^2}{1-|z_0|^2-|z_1|^m}.$$
		\end{remark}

		\begin{corollary}
			Let $D,D'\subset \C^d$ be  $\C$-proper convex domains, and let $\xi\in\partial D,\eta\in \partial D' $ be  points of locally finite type. Let $f\colon D\to D'$ be a biholomorphism and assume that there exists a sequence $(z_n)$ in $D$ which is $K$-converging to $\xi$ such that $f(z_n)\to \eta$. Then for all $w\in D$
			$$\frac{\Omega_\eta(w)}{\Omega_\xi(f(w))}=\alpha_\xi=K'\textrm{-}\lim_{z\to\xi}\langle df_z(n_\xi),n_\eta\rangle.$$
		\end{corollary}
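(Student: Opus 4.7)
The strategy is to apply part (ii) of the preceding proposition to both $f$ and $f^{-1}$ and combine the two inequalities, using the chain rule for the normalized dilation. The statement as written appears to contain a typo (for $w\in D$ the kernels in the displayed formula evaluate at points of the wrong domain); I interpret the intended identity as $\Omega^D_\xi(w)/\Omega^{D'}_\eta(f(w))=\alpha_\xi(f)$, which is the natural companion to the supremum formula already established.

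First I would verify that the Julia lemma hypotheses are satisfied symmetrically. The hypothesis gives a $K$-converging sequence $(z_n)\to\xi$ with $f(z_n)\to\eta$, so by the implication $(1)\Rightarrow(4)$ of Proposition \ref{JuliaLemma} the point $\xi$ is a regular contact point of $f$ with $K\textrm{-}\lim_{z\to\xi}f(z)=\eta$. By Corollary \ref{georintogeor}, $f$ sends a $K$-region containing $(z_n)$ into a $K$-region at $\eta$, hence the sequence $w_n:=f(z_n)$ is $K$-converging to $\eta$ and satisfies $f^{-1}(w_n)=z_n\to\xi$. Applying the Julia lemma to the biholomorphism $f^{-1}\colon D'\to D$ with this sequence, $\eta$ is a regular contact point of $f^{-1}$ with $K$-limit $\xi$.

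Next, the chain rule \eqref{omegachainrule} applied to $f^{-1}\circ f=\mathrm{id}_D$ gives $\alpha_\xi(f)\cdot\alpha_\eta(f^{-1})=\alpha_\xi(\mathrm{id}_D)=1$, where the last equality follows at once from the definition (since $\lambda_{\xi,p,p}(\mathrm{id}_D)=1$). Hence $\alpha_\eta(f^{-1})=1/\alpha_\xi(f)$. Part (ii) of the preceding proposition now applies to both maps and yields
\begin{equation*}
\sup_{z\in D}\frac{\Omega^D_\xi(z)}{\Omega^{D'}_\eta(f(z))}=\alpha_\xi(f),\qquad \sup_{w\in D'}\frac{\Omega^{D'}_\eta(w)}{\Omega^D_\xi(f^{-1}(w))}=\frac{1}{\alpha_\xi(f)}.
\end{equation*}

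Finally, substituting $w=f(z)$ in the second supremum gives $\Omega^{D'}_\eta(f(z))/\Omega^D_\xi(z)\leq 1/\alpha_\xi(f)$ for every $z\in D$. Since both pluricomplex Poisson kernels are strictly negative, multiplying through by the negative quantity $\alpha_\xi(f)\,\Omega^D_\xi(z)$ flips the inequality and produces $\Omega^D_\xi(z)/\Omega^{D'}_\eta(f(z))\geq\alpha_\xi(f)$. Combining this lower bound with the first supremum forces pointwise equality $\Omega^D_\xi(z)/\Omega^{D'}_\eta(f(z))=\alpha_\xi(f)$ for all $z\in D$, which is the claim. The second equality $\alpha_\xi=K'\textrm{-}\lim_{z\to\xi}\langle df_z(n_\xi),n_\eta\rangle$ is already contained in part (i) of Theorem \ref{genJWC} (and restated in (ii) of the preceding proposition), so no further argument is needed. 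The only delicate step is the bookkeeping of signs when inverting the quotient inequalities; the main conceptual point is the use of the chain rule together with the symmetric applicability of the Julia lemma to $f^{-1}$, which is precisely what a biholomorphism affords.
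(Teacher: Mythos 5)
Your proof is correct, and you are right that the displayed formula in the statement has the kernels attached to the wrong domains; the intended identity is $\Omega^D_\xi(w)/\Omega^{D'}_\eta(f(w))=\alpha_\xi$. However, your route is genuinely different from (and considerably longer than) the paper's. The paper's proof is a one-liner: since a biholomorphism is a Kobayashi isometry, for every $w\in D$ one has
$\log\lambda_{\xi,w,f(w)}=\liminf_{z\to\xi}\bigl(k_D(z,w)-k_{D'}(f(z),f(w))\bigr)=0$,
so choosing the base-points $p=w$, $p'=f(w)$ in the definition of the normalized dilation (which is base-point independent by the lemma preceding it) gives directly $\alpha_\xi=\lambda_{\xi,w,f(w)}\,\Omega^D_\xi(w)/\Omega^{D'}_\eta(f(w))=\Omega^D_\xi(w)/\Omega^{D'}_\eta(f(w))$. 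You instead sandwich the quotient between the two supremum identities for $f$ and $f^{-1}$, linked by the chain rule $\alpha_\eta(f^{-1})=1/\alpha_\xi(f)$; this works and the sign bookkeeping with the negative kernels is handled correctly, but it invokes heavier machinery (Corollary \ref{georintogeor}, Corollary \ref{chainrule}, and both directions of the sup formula) to reach a fact that falls out of the definition once one notices $\lambda_{\xi,w,f(w)}=1$. One small point: when you apply the Julia lemma to $f^{-1}$ you should also record that $\lambda_{\eta,f(p),p}(f^{-1})<+\infty$; this is immediate from the same isometry identity, which is in effect the paper's entire argument.
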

		\begin{proof}
			It is enough to notice that, since $f$ is a biholomorphism,  
			$$\log\lambda_{\xi,w,f(w)}=\liminf_{z\to\xi} k_D(z,w)-k_{D'}(f(z),f(w))=0$$  for all $w\in D$.
		\end{proof}
		
		We conclude with two extrinsic characterizations of regular contact points (notice the analogy with \cite[Propositions 1.2.6, 1.2.8]{Abatebook}).
		
		\begin{corollary}\label{critcontactfun}
			Let $D\subset\C^d$ be a $\C$-proper convex domain and $\xi\in\partial D$ be a point of locally finite type. Let
			$f\colon D\to\D$ be a holomorphic map, then $\xi$ is a regular contact point if and only if
			\begin{equation}\label{critcontactfun1}
				\lim_{t\to0^+}|f(\xi-tn_\xi)|=1
			\end{equation}
			and
			\begin{equation}\label{critcontactfun2}
				\limsup_{t\to0^+}\left|\frac{\partial f}{\partial n_\xi}(\xi-tn_\xi)\right|<\infty.	
			\end{equation}
		\end{corollary}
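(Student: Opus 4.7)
My plan is to treat both implications by working along the inner normal segment $\sigma(t) = \xi - t n_\xi$: the forward direction will be immediate from Theorem \ref{JWC1}, while the converse will reduce to condition $(3)$ of Proposition \ref{ratiodeltas+} via the fundamental theorem of calculus.

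For the forward implication, assume $\xi$ is a regular contact point with $K$-limit $\eta \in \partial \D$. By Lemma \ref{normalrestricted}, $\sigma$ (suitably reparametrized) is strongly restricted at $\xi$, so $f(\sigma(t)) \to \eta$ as $t \to 0^+$, giving \eqref{critcontactfun1}. Part $(i)$ of Theorem \ref{JWC1} says $\frac{\partial f}{\partial n_\xi}$ has $K'$-limit equal to $\eta \alpha_\xi$ at $\xi$; in particular this limit is finite along $\sigma$, yielding \eqref{critcontactfun2}.

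For the converse, \eqref{critcontactfun2} furnishes $M > 0$ and $t_0 > 0$ with $|\partial f/\partial n_\xi (\sigma(s))| \leq M$ for $0 < s < t_0$. Since $\frac{d}{ds} f(\sigma(s)) = -\frac{\partial f}{\partial n_\xi}(\sigma(s))$, the fundamental theorem of calculus gives $|f(\sigma(t')) - f(\sigma(t))| \leq M|t-t'|$ for $t, t' \in (0, t_0)$, so $\eta := \lim_{t \to 0^+} f(\sigma(t))$ exists in $\overline{\D}$, and \eqref{critcontactfun1} forces $|\eta| = 1$. Letting $t' \to 0^+$ in the estimate yields $|\eta - f(\sigma(t))| \leq M t$. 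Because $\partial D$ is $C^L$ with $L \geq 2$ near $\xi$, the inner normal realizes the distance to the boundary for short times, so $\delta_D(\sigma(t)) = t$ and
$$\frac{\delta_\D(f(\sigma(t)))}{\delta_D(\sigma(t))} \leq \frac{|\eta - f(\sigma(t))|}{t} \leq M.$$
Hence $\liminf_{z \to \xi} \delta_\D(f(z))/\delta_D(z) < +\infty$, and Proposition \ref{ratiodeltas+} concludes that $\xi$ is a regular contact point.

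I do not expect a substantial obstacle: the forward direction is an immediate specialization of the JWC theorem for disc-valued functions, and the converse is a short Cauchy-criterion/FTC argument combined with the extrinsic criterion of Proposition \ref{ratiodeltas+}. The only routine check is the local comparison $\delta_D(\sigma(t)) = t$ (for small $t$), which holds because $\partial D$ has positive reach at the $C^L$ point $\xi$.
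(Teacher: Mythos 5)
Your proof is correct and follows essentially the same route as the paper: the forward direction is read off from Theorem \ref{JWC1} along the (restricted) inner normal segment, and the converse integrates the bounded normal derivative along that segment to bound $\bigl(1-|f(\xi-tn_\xi)|\bigr)/t$ and then invokes condition (3) of Proposition \ref{ratiodeltas+}. The only cosmetic difference is that the paper applies the fundamental theorem of calculus to $|f(\sigma(s))|$ directly (anchored at the limit $1$ from \eqref{critcontactfun1}), whereas you apply it to $f(\sigma(s))$ and first extract the limit $\eta$ by a Cauchy argument; your explicit check that $\delta_D(\xi-tn_\xi)=t$ for small $t$ is a point the paper also uses (implicitly, via the internally tangent ball).
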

		\proof 
		Assume that $\xi$ is a regular contact point for $f$, then (\refeq{critcontactfun1}) follows from the definition while (\refeq{critcontactfun2}) follows from  Theorem \ref{JWC1}.
		
		Conversely, assume (\refeq{critcontactfun1}) and (\refeq{critcontactfun2}).
		Let $t_0>0$ and $M> 0$ such that for all $0\leq t\leq t_0$ we have $\left|\frac{\partial f}{\partial n_\xi}(\xi-tn_\xi)\right|\leq M.$ 
		Up to taking a smaller $t_0$ we have  for all $0\leq t\leq t_0$
		\begin{align*}\frac{\delta_\D(f(\xi-tn_\xi))}{t}&=\frac{1-|f(\xi-tn_\xi)|}{t}\\&=\frac{1}{t}\int_0^{t}\frac{\Re(\overline{f(\xi-sn_\xi)}\frac{\partial f}{\partial n_\xi}(\xi-sn_\xi))}{|f(\xi-sn_\xi)|}ds\\&\leq\frac{1}{t}\int_0^{t}\left|\frac{\partial f}{\partial n_\xi}(\xi-sn_\xi)\right|ds \leq M,
		\end{align*}
		which implies by Proposition \ref{ratiodeltas+} that $\xi$ is a regular contact point.

	\begin{corollary}\label{critcontactmap}
		Let $D\subset\C^d$ and $D'\subset\C^{q}$ be  $\C$-proper convex domains and let $\xi\in\partial D,\eta\in \partial D'$ be  points of locally finite type. Let $f\colon D\to D'$ be a holomorphic map, then $\xi$ is a regular contact point  with  $K\textrm{-}\lim_{z\to\xi}f(z)=\eta$  if and only if
		\begin{equation}\label{critcontactmap1}
			\lim_{t\to0^+}f(\xi-tn_\xi)=\eta
		\end{equation}
		and
		\begin{equation}\label{critcontactmap2}
			\limsup_{t\to0^+}|\langle df_{\xi-tn_\xi}(n_\xi),n_\eta\rangle|<\infty.
		\end{equation}
	\end{corollary}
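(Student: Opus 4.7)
The plan is to mirror Corollary \ref{critcontactfun} and handle the two implications separately. For the forward direction, assume $\xi$ is a regular contact point with $K\textrm{-}\lim_{z\to\xi}f(z)=\eta$. By Lemma \ref{normalrestricted} the inner normal segment $t\mapsto\xi-tn_\xi$ lies at bounded Kobayashi distance from a complex geodesic with endpoint $\xi$, hence it is contained in a $K$-region with vertex $\xi$. Consequently $f(\xi-tn_\xi)\to\eta$ as $t\to0^+$, giving \eqref{critcontactmap1}. Since the outer normals satisfy $m_0=n_0=1$, the estimate \eqref{proofKbounded} in Theorem \ref{genJWC} gives $\langle df_z(n_\xi),n_\eta\rangle=O_K(1)$, which in particular yields \eqref{critcontactmap2} along the normal segment.

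For the converse, the idea is to use convexity to convert the pointwise bound \eqref{critcontactmap2} into a linear bound on $\delta_{D'}(f(\xi-tn_\xi))$, and then invoke Julia's Lemma. Since $D'$ is convex and $n_\eta$ is the outer normal at $\eta$, the open half-space $H:=\{w\in\C^q:\Re\langle w-\eta,n_\eta\rangle<0\}$ contains $D'$, so for $w\in D'$
$$\delta_{D'}(w)\leq\delta_H(w)=-\Re\langle w-\eta,n_\eta\rangle.$$
Setting $g(t):=\langle f(\xi-tn_\xi)-\eta,n_\eta\rangle$, we have $g(t)\to 0$ as $t\to0^+$ by \eqref{critcontactmap1} and $g'(t)=-\langle df_{\xi-tn_\xi}(n_\xi),n_\eta\rangle$. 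Calling $M$ the bound from \eqref{critcontactmap2}, the fundamental theorem of calculus yields $|g(t)|\leq Mt$ for $t>0$ small, whence
$$\delta_{D'}(f(\xi-tn_\xi))\leq|g(t)|\leq Mt=M\,\delta_D(\xi-tn_\xi).$$

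Combining this with Lemma \ref{stimekob} applied at both $\xi$ and $\eta$ (using \eqref{critcontactmap1} to ensure $f(\xi-tn_\xi)$ lies eventually in the relevant neighbourhood of $\eta$) one deduces that $k_D(\xi-tn_\xi,p)-k_{D'}(f(\xi-tn_\xi),p')$ is bounded above as $t\to 0^+$, hence $\lambda_{\xi,p,p'}<+\infty$. Since the normal segment $K$-converges to $\xi$ and $f(\xi-tn_\xi)\to\eta$, Julia's Lemma (Proposition \ref{JuliaLemma}) upgrades this to $K\textrm{-}\lim_{z\to\xi}f(z)=\eta$, and so $\xi$ is a regular contact point with the prescribed $K$-limit. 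The key technical ingredient is the half-space inequality $\delta_{D'}(w)\leq-\Re\langle w-\eta,n_\eta\rangle$, which makes essential use of $n_\eta$ being an outer normal to the convex domain $D'$; once this geometric observation is in hand, everything else is a routine assembly of Lemma \ref{normalrestricted}, Lemma \ref{stimekob}, the estimate \eqref{proofKbounded}, and the Julia Lemma.
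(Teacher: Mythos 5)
Your proof is correct, and the forward direction is the same as the paper's (the paper quotes (i) of Theorem \ref{genJWC}, you quote \eqref{proofKbounded} with $m_0=n_0=1$; both work since the inner normal segment is a restricted curve by Lemma \ref{normalrestricted}). The converse is where you diverge in execution: the paper reduces to the scalar case by projecting onto the normal coordinate, noting $\pi(D')\subseteq\H$ and $\frac{\partial(\pi\circ f)}{\partial n_\xi}=\langle df_z(n_\xi),n_\eta\rangle$, then invokes Corollary \ref{critcontactfun} for $\mathscr C^{-1}\circ\pi\circ f$ and the distance-decreasing property of $\pi$ to bound the dilation; you instead run the fundamental-theorem-of-calculus argument directly in $D'$, using the supporting half-space at $\eta$ to get $\delta_{D'}(f(\xi-tn_\xi))\leq|g(t)|\leq Mt$ and then Lemma \ref{stimekob} on both sides. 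These are the same underlying idea (the paper's Corollary \ref{critcontactfun} itself is proved by exactly this FTC computation), so your route just inlines what the paper delegates; it buys a self-contained argument at the cost of redoing the scalar lemma in vector form. One small imprecision: you write $Mt=M\,\delta_D(\xi-tn_\xi)$, but in general $\delta_D(\xi-tn_\xi)\leq t$ with equality failing; you need the reverse comparison $\delta_D(\xi-tn_\xi)\geq ct$ for small $t$, which holds because $\partial D$ is $C^2$ near $\xi$ (take a ball internally tangent at $\xi$). With that constant inserted, the chain $\log\frac{\delta_{D'}(f(\sigma(t)))}{\delta_D(\sigma(t))}\leq\log(M/c)$ plus Lemma \ref{stimekob} gives $\lambda_{\xi,p,p'}<+\infty$, and the appeal to Proposition \ref{JuliaLemma} (implication $(1)\Rightarrow(4)$) closes the argument as you say.
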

	\proof 
	Assume that $\xi$ is a regular contact point with  $K\textrm{-}\lim_{z\to\xi}f(z)=\eta$ . 
	Then \eqref{critcontactmap1} is obvious and \eqref{critcontactmap2} follows from  (i) in Theorem \ref{genJWC}.

	Conversely, assume (\refeq{critcontactmap1}) and (\refeq{critcontactmap2}). Assume $\eta=0$, $n_\eta=e_0$.
	Let $\pi\colon\C^{q}\to\C$ be the projection on the first component. By the convexity of $D'$ we have $\pi(D')\subseteq\H$. Notice that
	$$\frac{\partial(\pi\circ f)}{\partial n_\xi}(z)=\langle df_z(n_\xi),n_\eta\rangle$$
	so by Corollary \ref{critcontactfun} $\xi$ is a regular contact point for the function $\mathscr{C}^{-1}\circ\pi\circ f\colon D\to\D$ (and thus also for $\pi\circ f\colon D\to\H$). Now fix $p\in D$ and $p'\in D'$, then 	$$\liminf_{t\to0^+}k_D(p,\xi-tn_\xi)-k_{D'}(p',f(\xi-tn_\xi))\leq\liminf_{t\to0^+} k_D(p,\xi-tn_\xi)-k_{\H}(\pi(p'),\pi(f(\xi-tn_\xi)))<+\infty ,$$
	hence $\lambda_{\xi,p,p'}(f)<+\infty$. Finally,  the Julia Lemma (Proposition \ref{JuliaLemma}) yields the result.
	\endproof

	Our results also give us information at points of infinite type.
	\begin{corollary}
		Let $D\subset\C^d$ be  a $\C$-proper convex domain, and  let $\xi\in\partial D$ be a smooth point of infinite line type. Let $f\colon D\to \D$ be a holomorphic function such that
		$$\lim_{t\to0^+}|f(\xi-tn_\xi)|=1$$
		and
		$$\limsup_{t\to0^+}\left|\frac{\partial f}{\partial n_\xi}(\xi-tn_\xi)\right|<\infty.	$$
		Let  $v\in\C^d$ with $m_\xi(v)=+\infty$, then for all $s<1$ and for all sequences $(z_n)$ converging non-tangentially to $\xi$ we have
		$$\frac{\frac{\partial f}{\partial v}(z_n)}{\delta_D(z_n)^s}\stackrel{n\to+\infty} \longrightarrow 0.$$
	\end{corollary}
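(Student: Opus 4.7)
The strategy is to approximate $D$ from the inside near $\xi$ by a family of smoothly bounded convex domains $E_L$ of finite type $L$ at $\xi$, apply Theorem \ref{JWC1} to the restriction $f|_{E_L}$, and then let $L$ be large once $s < 1$ is given.

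\emph{Construction of $E_L$.} Choose coordinates so that $\xi = 0$, $n_\xi = e_0$, and, normalizing, $v = e_1$; note that $v$ is necessarily complex tangent, since non-tangent directions have type $1$. By smoothness of $\partial D$ at $\xi$, the defining function of $D$ near $0$ can be written as $r(z) = \Re z_0 + \psi(z_1,\ldots,z_{d-1}) + O(|z_0|\,\|z\|)$, where $\psi$ is smooth, convex, nonnegative, vanishes together with its gradient at $0$, and has $\psi(z_1,0,\ldots,0)$ vanishing to infinite order at $z_1=0$. For each even integer $L \geq 2$, set
\[
E_L := \bigl\{z : |z_0 + \varepsilon_L|^2 + A_L|z_1|^L + B_L\,(|z_2|^2 + \cdots + |z_{d-1}|^2) < \varepsilon_L^2\bigr\}.
\]
For $\varepsilon_L$ small enough and $A_L, B_L$ large enough, comparison of Taylor expansions — using $\psi(z_1,0,\ldots,0) = o(|z_1|^L)$ along with the bound $\psi(0,z') = O(\|z'\|^2)$ afforded by smoothness — gives $E_L \subset D$ in a neighborhood of $\xi$. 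The domain $E_L$ is bounded, convex, $\C$-proper, with smooth boundary near $\xi$, outer normal $n_\xi$ at $\xi$, and multitype $(1, L, 2, \ldots, 2)$ at $\xi$ in the basis $(e_0, e_1, \ldots, e_{d-1})$.

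\emph{Applying the finite-type results.} The hypotheses $|f(\xi - tn_\xi)| \to 1$ and $|\partial f/\partial n_\xi(\xi - tn_\xi)| = O(1)$ depend only on $f$ along the inner normal segment, which lies in $E_L$ for small $t$. By Corollary \ref{critcontactfun}, $\xi$ is a regular contact point of $f|_{E_L} : E_L \to \D$. Since $m_\xi^{E_L}(v) = L$, Theorem \ref{JWC1}(ii) yields
\[
K'\textrm{-}\!\lim_{z\to\xi,\ z\in E_L}\, \frac{\partial f/\partial v(z)}{\delta_{E_L}(z)^{1-1/L}} \,=\, 0.
\]

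\emph{Transfer and conclusion.} Let $(z_n)$ be any non-tangential sequence in $D$ converging to $\xi$. The proof of Corollary \ref{rk->nt}(ii) produces an inner Euclidean ball $B \subset D$ tangent at $\xi$ and a constant $A > 0$ with $\|z_n - \xi\| \leq A\,\delta_D(z_n)$; inserting such $z_n$ into the explicit defining function of $E_L$ shows $\delta_{E_L}(z_n) \asymp \delta_D(z_n)$ for $z_n$ sufficiently close to $\xi$. Theorem \ref{extrinsicspecial}(ii) now certifies that $(z_n)$ is restricted in $E_L$: all complex tangent components of $z_n - \xi$ are $O(\|z_n-\xi\|) = O(\delta_{E_L}(z_n))$, which is $o(\delta_{E_L}(z_n)^{1/m})$ for any $m \geq 2$. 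Hence
\[
\frac{|\partial f/\partial v(z_n)|}{\delta_D(z_n)^{1-1/L}} \;\leq\; C\, \frac{|\partial f/\partial v(z_n)|}{\delta_{E_L}(z_n)^{1-1/L}} \;\longrightarrow\; 0.
\]
Given $s < 1$, choose an even $L$ with $1 - 1/L > s$. Eventually $\delta_D(z_n) \leq 1$, so $\delta_D(z_n)^s \geq \delta_D(z_n)^{1-1/L}$ and the above estimate dominates $|\partial f/\partial v(z_n)|/\delta_D(z_n)^s$, which therefore tends to $0$.

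The main technical obstacle is the containment $E_L \subset D$ near $\xi$: the polynomial $A_L|z_1|^L + B_L\|z'\|^2$ must dominate the smooth convex function $\psi$ in a neighborhood of the origin. This ultimately relies on the infinite-order vanishing of $\psi$ along $v$ together with the second-order control $\psi(0, z') = O(\|z'\|^2)$ in the remaining complex tangent directions, with the cross terms $O(|z_0|\,\|z\|)$ absorbed into the $|z_0|^2$ part of the egg defining function.
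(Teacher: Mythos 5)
Your overall strategy is the same as the paper's: approximate $D$ from inside by a convex domain of finite type $L$ at $\xi$, apply Theorem \ref{JWC1} (via Corollary \ref{critcontactfun} to get the regular contact point) to the restriction of $f$, and let $L\to\infty$ at the end. The transfer step (non-tangential sequences in $D$ are restricted in the inner domain, and the two boundary distances are comparable along such sequences) is correct and matches what the paper does. The one place where your argument as written does not hold up is the containment $E_L\subset D$, which you yourself single out as the main technical obstacle: the claim that the cross terms $O(|z_0|\,\|z\|)$ can be ``absorbed into the $|z_0|^2$ part'' is false for $L>2$. Indeed $|z_0|\,|z_1|$ is not dominated by any constant multiple of $|z_0|^2+A_L|z_1|^L$ near the origin (test $z_0=-t$, $z_1=t^{2/L}$: the cross term is $t^{1+2/L}$ while the right-hand side is $\asymp t^2$, and $1+2/L<2$), and points of this shape do lie in $E_L$. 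So the Taylor-remainder bookkeeping you propose does not close.

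The containment itself is true, and the clean way to get it is again convexity, applied to $r$ jointly rather than only to $\psi$: writing $z=\tfrac13(3z_0,0,0)+\tfrac13(0,3z_1,0)+\tfrac13(0,0,3z'')$ gives
\begin{equation*}
r(z)\;\le\;\tfrac13\,r(3z_0,0,0)+\tfrac13\,r(0,3z_1,0)+\tfrac13\,r(0,0,3z'')\;=\;\Re z_0+O(|z_0|^2)+o(|z_1|^L)+O(\|z''\|^2),
\end{equation*}
where the first term is Taylor in $z_0$ alone, the second is the infinite-order vanishing along $v$, and the third is $C^2$-smoothness plus the vanishing of the linear part in the tangential directions. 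This estimate has no cross terms at all and is exactly what is needed to conclude $E_L\subset D$ for suitable $\varepsilon_L, A_L, B_L$. Note that the paper's construction sidesteps the issue entirely: it sets $r_L:=r+\sum_{j\ge1}|z_j|^L$ and $D_L:=\{r_L<0\}$, so that $D_L\subset D$ is immediate from $r\le r_L$, and $m^{D_L}_\xi(v)=\min\{m^D_\xi(v),L\}$ by inspection; it then only needs the $O_K$ part of Theorem \ref{JWC1} (boundedness in $K$-regions), whereas you invoke the sharper $o_{K'}$ statement and consequently must also verify that your sequence is restricted in $E_L$ — correct, but an unnecessary extra step for the stated conclusion.
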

	\proof
	First of all assume that $\xi$ is the origin and $n_\xi=e_0$. Let $r\colon \C^d\to\R$ be a smooth convex defining function of $D$ (for instance, the signed distance function), and define 
	$$r_{L}(z):=r(z)+\sum_{j=1}^{d-1}|z_j|^L$$
	where $L$ is an even integer. Notice that the origin is a point of locally finite type for $D_L:=\{r_L<0\}$ with line type $L$. Moreover, since $r\leq r_L$ we have $D_L\subset D$.
	Now for all $v\in\C^d$, by definition we have
	$$m^{D_L}_\xi(v)=\min\{m^D_\xi(v),L\}.$$
	By Corollary \ref{critcontactfun} the origin is a regular contact point for the map $f|_{D_L}:D_L\to \D$. Then by Theorem \ref{JWC1} and recalling that a sequences which converges non-tangentially to $\xi$ is contained in a $K$-region of $D_L$ (Corollary \ref{rk->nt}), we have that for all sequence $(z_n)$ converging non-tangentially to $\xi$
	$$\frac{\partial f}{\partial v}(z_n) \delta_D(z_n)^{\frac{1}{\min\{m_\xi(v),L\}}-1}$$
	is bounded.
	We conclude letting $L\to+\infty$.
	\endproof
	
	\textbf{Acknowledgement.} We want to thank the anonymous referee for the careful reading of the paper and for many useful comments.

\end{document}